\numberwithin{equation}{section}
\newcounter{keepeqno}
\newcommand{\BA}{{\mathbb {A}}}
\newcommand{\BC}{{\mathbb {C}}}
\newcommand{\BL}{{\mathbb {L}}}
\newcommand{\BN}{{\mathbb {N}}}
\newcommand{\BQ}{{\mathbb {Q}}}
\newcommand{\BR}{{\mathbb {R}}}
\newcommand{\BW}{{\mathbb {W}}}
\newcommand{\BZ}{{\mathbb {Z}}}
\newcommand{\Bl}{{\mathbf {l}}}
\newcommand{\Bt}{{\mathbf {t}}}
\newcommand{\CA}{{\mathcal {A}}}
\newcommand{\CC}{{\mathcal {C}}}
\newcommand{\CF}{{\mathcal {F}}}
\newcommand{\CH}{{\mathcal {H}}}
\newcommand{\CK}{{\mathcal {K}}}
\newcommand{\CN}{{\mathcal {N}}}
\newcommand{\CO}{{\mathcal {O}}}
\newcommand{\CS}{{\mathcal {S}}}
\newcommand{\CW}{{\mathcal {W}}}
\newcommand{\CZ}{{\mathcal {Z}}}
\newcommand{\FB}{{\mathfrak {B}}}
\newcommand{\FN}{{\mathfrak {N}}}
\newcommand{\Fa}{{\mathfrak {a}}}
\newcommand{\Fb}{{\mathfrak {b}}}
\newcommand{\Fc}{{\mathfrak {c}}}
\newcommand{\Fj}{{\mathfrak {j}}}
\newcommand{\Fl}{{\mathfrak {l}}}
\newcommand{\Fm}{{\mathfrak {m}}}
\newcommand{\Fn}{{\mathfrak {n}}}
\newcommand{\Fo}{{\mathfrak {o}}}
\newcommand{\Fp}{{\mathfrak {p}}}
\newcommand{\RG}{{\mathrm {G}}}
\newcommand{\RI}{{\mathrm {I}}}
\newcommand{\RJ}{{\mathrm {J}}}
\newcommand{\RM}{{\mathrm {M}}}
\newcommand{\RO}{{\mathrm {O}}}
\newcommand{\RU}{{\mathrm {U}}}
\newcommand{\cusp}{{\mathrm{cusp}}}
\newcommand{\GJ}{{\mathrm{GJ}}}
\newcommand{\GL}{{\mathrm{GL}}}
\newcommand{\Hom}{{\mathrm{Hom}}}
\renewcommand{\Im}{{\mathrm{Im}}}
\newcommand{\Id}{{\mathrm{Id}}}
\newcommand{\Kl}{{\mathrm{Kl}}}
\renewcommand{\Re}{{\mathrm{Re}}}
\newcommand{\reg}{{\mathrm{reg}}}
\newcommand{\Res}{{\mathrm{Res}}}
\newcommand{\SL}{{\mathrm{SL}}}
\newcommand{\sgn}{{\mathrm{sgn}}}
\newcommand{\Span}{{\mathrm{Span}}}
\newcommand{\tr}{{\mathrm{tr}}}
\newcommand{\ud}{\,\mathrm{d}}
\newcommand{\wt}{\widetilde}
\newcommand{\bs}{\backslash}
\def\diag{{\rm diag}}
\def\std{\rm std}
\def\vphi{\varphi}
\newtheorem{thm}{Theorem}[section]
\newtheorem{dfn}[thm]{Definition}
\newtheorem{rmk}[thm]{Remark}
\newtheorem{prp}[thm]{Proposition}
\newtheorem{lem}[thm]{Lemma}
\newtheorem{cor}[thm]{Corollary}
\newcommand{\Rmnum}[1]{\expandafter\@slowromancap\romannumeral #1@}
\begin{document}
\title[Voronoi Formula and Godement-Jacquet Kernels]{The Voronoi Summation Formula for $\GL_n$ and the Godement-Jacquet Kernels}

\author{Dihua Jiang and Zhaolin Li}	
\address{School of Mathematics, University of Minnesota, 206 Church St. S.E., Minneapolis, MN 55455, USA.}
\email{dhjiang@math.umn.edu}
\email{li001870@umn.edu}

\subjclass[2010]{Primary 11F66, 22E50, 43A32; Secondary 11F70, 22E53, 44A20}


\thanks{The research of this paper is supported in part by the NSF Grant DMS-2200890.}

\keywords{Poisson Summation Formula, Voronoi Summation Formula, Bessel Function, Generalized Schwartz Space, Non-Linear Fourier Transform/Hankel Transform, Global Zeta Integral, 
Godement-Jacquet Kernel, Automorphic $L$-function}
\date{\today}

\begin{abstract}
Let $\BA$ be the ring of adeles of a number field $k$ and $\pi$ be an irreducible cuspidal automorphic representation of $\GL_n(\BA)$. 
In \cite{JL22, JL23}, the authors introduced $\pi$-Schwartz space $\CS_\pi(\BA^\times)$ and $\pi$-Fourier transform $\CF_{\pi,\psi}$ with a non-trivial additive character $\psi$ of $k\bs\BA$, proved the associated Poisson summation formula over $\BA^\times$, based on the Godement-Jacquet theory for the standard $L$-functions 
$L(s,\pi)$, and provided interesting applications. In this paper, in addition to the further development of the local theory, we found two 
global applications. First, we find a Poisson summation formula proof of the Voronoi summation formula for $\GL_n$ over a number field, which was first proved by A. Ichino and N. Templier (\cite[Theorem 1]{IT13}). Then we introduce the notion of the Godement-Jacquet kernels $H_{\pi,s}$ and their dual kernels $K_{\pi,s}$ for any irreducible cuspidal automorphic representation $\pi$ of $\GL_n(\BA)$ and show in Theorems \ref{thm:H=FK} and \ref{thm:CTh-pi} that $H_{\pi,s}$ and $K_{\pi,1-s}$ are related by the nonlinear $\pi_\infty$-Fourier transform if and only if $s\in\BC$ is a zero of $L_f(s,\pi_f)=0$, the finite part of the standard automorphic $L$-function $L(s,\pi)$, which are the $(\GL_n,\pi)$-versions of \cite[Theorem 1.1]{Clo22}, where the Tate kernel with $n=1$ and $\pi$ the trivial character are considered. 
\end{abstract}

\maketitle
\tableofcontents


\section{Introduction}\label{sec-I}


\subsection{$\pi$-Poisson summation formula}
Let $k$ be a number field and $\BA=\BA_k$ the associated ring of adeles. For any irreducible cuspidal automorphic representation $\pi$ of 
$\GL_n(\BA)$ for any integer $n\geq 1$, the Godement-Jacquet theory (\cite{GJ72}) establishes the analytic continuation and functional equation for the standard $L$-functions $L(s,\pi)$. The key input from the harmonic analysis to the Godement-Jacquet theory is the 
classical Fourier analysis on the affine space $\RM_n$, the space of $n\times n$-matrices, and the associated Poisson summation formula. 
From the point of view in the Braverman-Kazhdan-Ng\^o program (\cite{BK00} and \cite{Ngo20}), this classical theory of Fourier analysis should be reformulated on the 
group $\GL_n(\BA)$. In such a reformulation, the classical (additive) Fourier transform is converted to a convolution integral with a kernel function and the Poisson summation formula is converted to a theta inversion formula, which is a generalization of the classical theta inversion formula. We refer to \cite[Section 2]{JL23} for details. 

In \cite{JL22, JL23}, the Godement-Jacquet theory has been reformulated as harmonic analysis on $\GL_1$ for $L(s,\pi)$ as a vast generalization of the 1950 thesis of J. Tate (\cite{Tat67}). More precisely, for any irreducible cuspidal automorphic representation $\pi$ of $\GL_n(\BA)$, the space of $\pi$-Schwartz functions on $\GL_1(\BA)=\BA^\times$ is defined, which is denoted by $\CS_\pi(\BA^\times)$, and the $\pi$-Foruier transform 
(or operator) $\CF_{\pi,\psi}$ is defined for any given non-trivial additive character $\psi$ of $k\bs\BA$, which takes the $\pi$-Schwartz 
space $\CS_\pi(\BA^\times)$ to the $\wt{\pi}$-Schwartz space $\CS_{\wt{\pi}}(\BA^\times)$, where $\wt{\pi}$ is the contragredient of $\pi$. We refer to Section \ref{sec-piPSF} for details. 
The $\pi$-Poisson summation formula as proved in \cite[Theorem 4.7]{JL23} (or recalled in Theorem \ref{thm:PSF}) takes the following form. 

\begin{thm}[$\pi$-Poisson summation formula]\label{PSF0}
With the notation introduced above, the $\pi$-theta function 
$\Theta_\pi(x,\phi):=\sum_{\alpha\in k^\times}\phi(\alpha x)$
converges absolutely and locally uniformly for any $x\in\BA^\times$ and any $\phi\in\CS_\pi(\BA^\times)$. Moreover, the following identity 
\begin{align}\label{PSF-1}
    \Theta_\pi(x,\phi)
=
\Theta_{\wt{\pi}}(x^{-1},\CF_{\pi,\psi}(\phi))
\end{align}
holds as functions in $x\in\BA^\times$. 
\end{thm}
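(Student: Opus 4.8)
The plan is to deduce Theorem~\ref{PSF0} from the Godement--Jacquet theory of $L(s,\pi)$ together with the classical Poisson summation formula on the matrix space $\RM_n(\BA)$. Recall that $\CS_\pi(\BA^\times)$ has as a dense subspace the span of the \emph{fibre-integral} functions
\[
\phi_{\Phi,f}(x)\ :=\ |x|^{-\frac{n-1}{2}}\int_{\{g\in\GL_n(\BA)\,:\,\det g=x\}}\Phi(g)\,f(g)\,dg,
\]
with $\Phi\in\CS(\RM_n(\BA))$ a Schwartz--Bruhat function and $f$ a matrix coefficient of $\pi$ coming from cusp forms, the integration being over a fibre of $\det\colon\GL_n(\BA)\to\BA^\times$ with the measure induced from $1\to\SL_n\to\GL_n\to\GL_1\to1$. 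With this normalisation the Mellin transform $\int_{\BA^\times}\phi_{\Phi,f}(x)\,\chi(x)|x|^{s}\,d^\times x$ is, up to a fixed shift in $s$, the Godement--Jacquet global zeta integral $Z(s,\Phi,f\otimes\chi)$, so it is entire and of rapid decay in vertical strips and satisfies the functional equation $Z(s,\Phi,f\otimes\chi)=Z(1-s,\wh\Phi,\wt f\otimes\chi^{-1})$, where $\wh\Phi$ is the $\psi$-Fourier transform of $\Phi$ for the self-dual pairing $(X,Y)\mapsto\psi(\tr(XY))$ on $\RM_n(\BA)$ and $\wt f(g):=f(g^{-1})$ is a matrix coefficient of $\wt\pi$. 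By the construction of the $\pi$-Fourier transform in \cite{JL22,JL23}, $\CF_{\pi,\psi}(\phi_{\Phi,f})=\phi^{\wt\pi}_{\wh\Phi,\wt f}$; since both sides of \eqref{PSF-1} are continuous and linear in $\phi$, it is enough to treat $\phi=\phi_{\Phi,f}$.

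For the convergence claim I would bound, for $x$ in a fixed compact subset of $\BA^\times$,
\[
\sum_{\alpha\in k^\times}\bigl|\phi_{\Phi,f}(\alpha x)\bigr|\ \le\ |x|^{-\frac{n-1}{2}}\sum_{\alpha\in k^\times}\ \int_{\{g\,:\,\det g=\alpha x\}}|\Phi(g)|\,|f(g)|\,dg ,
\]
using that $|f|$ is bounded (matrix coefficients of a unitary cuspidal representation are bounded). Since $|\Phi|$ has compact support at every finite place and Schwartz decay at the archimedean places, the $\alpha$ for which the summand is not exponentially small are forced into a fixed compact subset of $\BA_f^\times$ and an archimedean ball, and the intersection with $k^\times$ is then finite with cardinality locally bounded in $x$. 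This is precisely the lattice-point estimate underlying the convergence of the Godement--Jacquet theta series; the bounds being uniform on compacta gives absolute and locally uniform convergence, and also shows that $x\mapsto\Theta_\pi(x,\phi)$ and $x\mapsto\Theta_{\wt\pi}(x^{-1},\CF_{\pi,\psi}\phi)$ descend to continuous functions on $k^\times\backslash\BA^\times$.

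To prove \eqref{PSF-1} I would compare the Mellin transforms of its two sides against $\chi|\cdot|^{s}$ for every Hecke character $\chi$ of $k^\times\backslash\BA^\times$. Unfolding $\sum_{\alpha\in k^\times}$ against $d^\times x$ collapses $\int_{k^\times\backslash\BA^\times}\Theta_\pi(x,\phi_{\Phi,f})\,\chi(x)|x|^{s}\,d^\times x$ to the integral over all of $\BA^\times$, i.e. (up to the fixed shift) to $Z(s,\Phi,f\otimes\chi)$; likewise, after $x\mapsto x^{-1}$, $\int_{k^\times\backslash\BA^\times}\Theta_{\wt\pi}(x^{-1},\CF_{\pi,\psi}\phi)\,\chi(x)|x|^{s}\,d^\times x$ becomes the Mellin transform of $\phi^{\wt\pi}_{\wh\Phi,\wt f}$ against $\chi^{-1}|\cdot|^{-s}$, which is $Z(1-s,\wh\Phi,\wt f\otimes\chi^{-1})$. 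These two agree by the Godement--Jacquet functional equation, for all $s$ and all $\chi$, and since a continuous function on $k^\times\backslash\BA^\times$ with suitable decay is determined by this entire family of Mellin transforms (Mellin inversion on $k^\times\backslash\BA^\times$, valid because the zeta integrals are entire and decay in vertical strips), the two functions coincide. Alternatively, and closer to \cite{GJ72}, one can argue directly and pointwise: write $f$ as an integral of a pair of cusp forms over $Z(\BA)\GL_n(k)\backslash\GL_n(\BA)$, unfold the fibre integral defining $\phi_{\Phi,f}$, apply Poisson summation for $\RM_n(\BA)$ on the lattice $\RM_n(k)$, use cuspidality of $\pi$ to discard the rank-deficient terms so that the sums reduce to $\GL_n(k)$, and reverse the steps with $g\mapsto g^{-1}$ to land on $\Theta_{\wt\pi}(x^{-1},\CF_{\pi,\psi}\phi)$.

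The main obstacle I expect is making the passage between the ``Mellin-transformed'' and the ``pointwise'' statements rigorous: \eqref{PSF-1} must hold for \emph{every} $x\in\BA^\times$, not just after integrating out the $|\det|$-direction, which forces one to combine the absolute, locally uniform convergence from the second step with enough vertical decay of the zeta integrals to justify Mellin inversion (or, in the direct route, termwise Poisson summation). A second delicate point is the cuspidality input: in the splitting $\sum_{\gamma\in\RM_n(k)}=\sum_{\gamma\in\GL_n(k)}+\sum_{\rank\gamma<n}$ one must check that each lower-rank orbit contributes an absolutely convergent integral over a unipotent radical against the cusp form, which then vanishes --- standard for $\GL_n$, but to be carried out with the fibre measure rather than the full Haar measure. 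The rest is the bookkeeping of normalisations --- the shift $\tfrac{n-1}{2}$, the self-dual Haar measure on $\RM_n(\BA)$ for which $\RM_n(k)$ is its own dual, the $|x|$-powers in the definitions of $\phi_{\Phi,f}$ and of $\CF_{\pi,\psi}$, and the twist needed so that $\wt f$ is a matrix coefficient of $\wt\pi$ --- which is routine but must be kept consistent.
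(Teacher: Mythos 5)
The paper does not actually prove Theorem \ref{PSF0}: it is quoted verbatim from \cite[Theorem 4.7]{JL23} (restated as Theorem \ref{thm:PSF}), and the present paper only uses it. So the right comparison is with the proof in that reference, and there your \emph{second}, ``direct'' route is essentially the argument used: write $\phi=\phi_{\xi,\varphi_\pi}$ as a fibre integral over $\det g=x$ of a Schwartz--Bruhat function against a matrix coefficient realized by cusp forms, combine the sum over $\alpha\in k^\times$ with the fibre integration, apply classical Poisson summation on $\RM_n(k)\subset\RM_n(\BA)$, and use cuspidality to discard the rank-deficient orbits so that both sides reduce to sums over $\GL_n(k)$; the convergence claim is handled exactly by the lattice-point/partial-theta-series estimate you describe (compare the proof of Proposition \ref{zeta>1} in Section 6, which runs the same estimate: decay $|\phi_f(x_f)|\le C|x_f|_\BA^{-c}$, support of $\phi_f$ in a fractional ideal, rapid decay of $\phi_\infty$, compact fundamental domain for $k^\times\backslash\BA^1$).

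One caution on your \emph{first} route. The identity $\CZ(s,\phi,\chi)=\CZ(1-s,\CF_{\pi,\psi}(\phi),\chi^{-1})$ compares two integrals that converge in \emph{disjoint} half-planes ($\Re(s)$ large versus $\Re(s)$ small), so ``agreement of Mellin transforms for all $s$ and $\chi$'' is only an identity of analytic continuations; recovering the pointwise statement \eqref{PSF-1} for every $x\in\BA^\times$ then requires Mellin--Fourier inversion on $k^\times\backslash\BA^\times\cong\BR_{>0}\times(k^\times\backslash\BA^1)$, i.e.\ a contour shift through the critical strip \emph{and} summability over the infinite discrete dual of $k^\times\backslash\BA^1$. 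You flag this as the main obstacle but do not resolve it, and as stated it is circular: the polynomial control of $\Theta_\pi(x,\phi)$ as $|x|_\BA\to 0$ that inversion needs is exactly what the identity \eqref{PSF-1} is meant to supply. The direct route avoids this entirely, which is why it is the one to carry out; with it, your proposal is a correct reconstruction of the cited proof, modulo the normalization bookkeeping you already identify (the $|x|^{n/2}$ versus $|x|^{-(n-1)/2}$ shift, and the measure $\ud g=|\det g|^{-n}\ud^+g$ entering the Poisson step).
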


This $\GL_1$-reformulation in \cite{JL22, JL23} of the Godement-Jacquet theory has found following nice applications, among others: 
\begin{enumerate}
    \item The local theory of such a $\GL_1$-reformulation as developed in \cite{JL22,JL23} proves that the (nonlinear) Fourier transform that is responsible for the local functional equation is given by a convolution operator with an explicitly defined kernel function $k_{\pi_\nu,\psi_\nu}$ on $\GL_1$, which will be related to a Bessel function 
    in Section \ref{sec-BF} of this paper. As consequences, all the Langlands local gamma factors take the form of those in the theory of 
    I. Gelfand, M. Graev, and I. Piatetski-Shapiro in \cite{GGPS} and of A. Weil in \cite{W95}, where the local gamma factors associated with quasi-characters of $\GL_1$ were considered. 
    \item The global theory of such a $\GL_1$-reformation as developed in \cite{JL23} gives the adelic formulation of A. Connes' theorem (\cite[Theorem III.1]{Cn99}) for $L(s,\pi)$, and the complete version of C. Soul\'e's theorem (\cite[Theorem 2]{Sou01}) that provides a spectral interpretation of the zeros of $L(s,\pi)$. We refer to \cite[Theorem 8.1]{JL23} for details.  
    \item In Section \ref{sec-NPVSF} of this paper, the local and global theory of such a $\GL_1$-reformulation in \cite{JL22,JL23} provides a Poisson summation formula proof of the Voronoi formula for any irreducible cuspidal automorphic representation $\pi$ of $\GL_n$, which was previously proved by S. Miller and W. Schmid in \cite{MS11} and by A. Ichino and N. Templier in \cite{IT13} by using the Rankin-Selberg convolution of H. Jacquet, I. Piatetski-Shapiro and J. Shalika in \cite{JPSS83,CPS04}.
    \item In Section \ref{sec-GJK} of this paper, the local and global theory of such a $\GL_1$-reformulation in \cite{JL22,JL23} defines the Godement-Jacquet kernels for $L(s,\pi)$ and proves Theorems \ref{thm:H=FK} and \ref{thm:CTh-pi}, which are the $(\GL_n,\pi)$-versions of Clozel's theorem (\cite[Theorem 1.1]{Clo22}) for any irreducible cuspidal automorphic representation $\pi$ of $\GL_n$. In \cite{Clo22}, Clozel formulates and proves such a theorem for the Tate kernel 
    associated with the Dedekind zeta function $\zeta_k(s)$ for any number field $k$. 
\end{enumerate}

\subsection{Voronoi summation formula}\label{ssec-VSF}

The classical Voronoi summation formula and its recent extension to the $\GL_n$-version have been one of the most powerful tools in Number Theory and relevant areas in Analysis. 
We refer to an enlightening survey paper by S. Miller and W. Schmid (\cite{MS04}) for a detailed account of the current state of the art of the Voronoi summation formula and its applications to important problems in Number Theory. 

The Voronoi summation formula for $\GL_n$ was first studied by S. Miller and W. Schmid in \cite{MS06} for $n=3$ and in \cite{MS11} for general $n$. They use two approaches. One is based on classical harmonic analysis that has been developed in their earlier paper (\cite{MS04-JFA}), and the other is based on the adelic version of the Rankin-Selberg convolutions for $\GL_n\times\GL_1$, which was developed by H. Jacquet, I. Piatetski-Shapiro and J. Shalika in \cite{JPSS83} and by J. Cogdell and Piatetski-Shapiro in \cite{CPS04} 
(and also by Jacquet in \cite{J09} for the Archimedean local theory). The classical approach to the Voronoi formula for $\GL_n$ has also been discussed in \cite{GL06} and \cite{GL08}. A complete treatment of the adelic approach to the Voronoi formula for $\GL_n$ over a general number field was given by A. Ichino and N. Templier in \cite{IT13}. 
We recall their general Voromoi formula for $\GL_n$ below. 

For each irreducible cuspidal automorphic representation $\pi$ of $\GL_n(\BA)$, the Voronoi summation formula is an identity of two summations. One side of the identity is given by certain data associated with $\pi$ and the other side is given by certain corresponding data associated with $\wt{\pi}$, the contragredient of $\pi$. 
Let $\psi=\otimes_{\nu}\psi_{\nu}$ be a non-trivial additive character on $k\bs \BA$. 
At each local place $\nu$ of $k$, to a smooth compactly supported function $w_{\nu}(x)\in\CC_c^{\infty}(k_{\nu}^{\times})$ is associated a dual function $\widetilde{w}_{\nu}(x)$ such that the following functional equation 
\begin{align}\label{dualfunction}
\int_{k_{\nu}^{\times}}\widetilde{w}_{\nu}(y)\chi_\nu(y)^{-1}|y|_{\nu}^{s-\frac{n-1}{2}}\ud^\times y=\gamma(1-s,\pi_{\nu}\times\chi_\nu,\psi_{\nu})\int_{k_{\nu}^{\times}}w_{\nu}(y)\chi_\nu(y)|y|_{\nu}^{1-s-\frac{n-1}{2}}\ud^\times y
\end{align}
holds for all $s\in\BC$ and all unitary characters $\chi_\nu$ of $k_{\nu}^{\times}$. 
Since any irreducible cuspidal automorphic representation $\pi$ of $\GL_n(\BA)$ is generic, i.e. it has a non-zero Whittaker-Fourier coefficient. If we write $\pi=\otimes_{\nu\in|k|}\pi_\nu$, where $|k|$ denotes the set of all local places of $k$, then at any local place $\nu$, the local component $\pi_{\nu}$ is an irreducible admissible and generic representation of $\GL_n(k_\nu)$. 
Let $\CW(\pi_\nu,\psi_\nu)$ be the local Whittaker model of $\pi_\nu$, and $W_\nu(g)$ be any Whittaker function on $\GL_n(k_\nu)$ that belongs to $\CW(\pi_\nu,\psi_\nu)$ 
(see Section \ref{sec-LHA} for the details). 

Let $S$ be a finite set of $|k|$ including all Archimedean places and the local places $\nu$ where $\pi_\nu$ or $\psi_\nu$ is ramified. 
As usual, we write $\BA=\BA_S\times\BA^S$, 
where $\BA_S=\prod_{\nu\in S}k_\nu$, which is naturally embedded as a subring of $\BA$, and $\BA^S$ the subring of adeles $\BA$ with trivial component above $S$. 
At $\nu\notin S$, we take the unramified Whittaker vector $^\circ W_\nu$ of $\pi_\nu$, which is so normalized that 
$^\circ W_\nu(\RI_n)=1$. Denote by $^\circ W^S:=\prod_{\nu\notin S}{^\circ W_{\nu}}$, which is the normalized unramified Whittaker function of 
$\pi^S=\otimes_{\nu\notin S}\pi_{\nu}$. Similarly, we define $^\circ\widetilde{W}^S=\prod_{\nu\notin S}{^\circ\widetilde{W}}_{\nu}$ to be the (normalized) unramified Whittaker function of $\widetilde{\pi}^S=\otimes_{\nu\notin S}\widetilde{\pi}_{\nu}$. We recall that the functions ${^\circ W^S}$ and ${^\circ\widetilde{W}^S}$ are related by the following 
\[
{^\circ\widetilde{W}^S(g)}={^\circ W^S}(w_n{^tg^{-1}})
\]
for all $g\in\RG_n(\BA^S)$, where $w_n$ is the longest Weyl element of $\RG_n=\GL_n$ as defined in \eqref{wlong}.
The following is the Voronoi formula proved in \cite[Theorem 1]{IT13}. The unexplained notation will be defined in Sections \ref{sec-LHA} and \ref{sec-NPVSF}.

\begin{thm}[Voronoi Summation Formula]\label{thm:VSF}
For $\zeta\in\BA^S$, let $R=R_\zeta$ be the set of places $\nu$ such that $|\zeta_{\nu}|>1$. At each $\nu\in S$ let $w_{\nu}\in\CC_{c}^{\infty}(k_{\nu}^{\times})$. Then:
\[
\sum_{\alpha \in k^{\times}}\psi(\alpha \zeta)\cdot{^\circ W^S}\left( \begin{pmatrix} \alpha & \\ & \RI_{n-1} \end{pmatrix}   \right)w_{S}(\alpha)=\sum_{\alpha\in k^{\times}} \Kl_R(\alpha,\zeta,{^\circ\widetilde{W}_R})\cdot{^\circ\widetilde{W}^{R\cup S}}\left( \begin{pmatrix} \alpha & \\ & \RI_{n-1} \end{pmatrix}  \right)\widetilde{w}_{S}(\alpha),
\]
where $w_{S}(\alpha):=\prod_{\nu\in S}w_\nu(\alpha)$ and the same for $\widetilde{w}_{S}(\alpha)$, and $\Kl_R(\alpha,\zeta,{^\circ\widetilde{W}_R})$ is 
a finite Euler product of the local Kloosterman integrals:
\[
\Kl_{R}(\alpha,\zeta,{^\circ{\widetilde{W}_R}}):=\prod_{\nu\in R}\Kl_{\nu}(\alpha,\zeta_{\nu},{^\circ\widetilde{W}_\nu}).
\]
For the place $\nu$, the local Kloosterman integral $\Kl_{\nu}(\alpha,\zeta_{\nu},{^\circ\widetilde{W}_\nu})$ is defined by 
\[
\Kl_{\nu}(\alpha,\zeta_{\nu},{^\circ\widetilde{W}_\nu})
:=|\zeta_{\nu}|_{\nu}^{n-2}\int_{U^-_{\tau}(F_{\nu})} \overline{\psi_{\nu}(u_{n-2,n-1})}{^\circ\widetilde{W}_\nu}(\tau u)\ud u,
\]
where 
\[
U^-_{\tau}=\left\{ \begin{pmatrix}
    \RI_{n-2} & * & 0\\ 0& 1 & 0\\ 0&  0& 1
\end{pmatrix}     \right\}
\quad {\rm and}\quad 
\tau=\begin{pmatrix}
     0& 1 & 0\\ \RI_{n-2} & 0& 0\\ 0&  0& 1
\end{pmatrix}\begin{pmatrix}
    \RI_{n-2} &0 & 0\\0 &-\alpha\zeta_{\nu}^{-1} & 0\\0 & 0& -\zeta_{\nu} 
\end{pmatrix}, 
\]
as given in \cite[Section 2.6]{IT13}.
\end{thm}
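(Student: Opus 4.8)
The plan is to deduce Theorem~\ref{thm:VSF} from the $\pi$-Poisson summation formula \eqref{PSF-1} evaluated at the point $x=1$, applied to a carefully chosen pure tensor $\phi=\otimes_\nu\phi_\nu\in\CS_\pi(\BA^\times)$. For $\nu\notin S$ let $\phi^\circ_\nu\in\CS_{\pi_\nu}(k_\nu^\times)$ be the basic vector of Section~\ref{sec-piPSF}; by the explicit formula for unramified Whittaker functions its Mellin transform against unramified twists coincides with that of $x\mapsto{}^\circ W_\nu\!\left(\begin{pmatrix}x&\\&\RI_{n-1}\end{pmatrix}\right)$, so with the normalization of Section~\ref{sec-piPSF} the two $\CO_\nu^\times$-invariant functions agree, and likewise $\phi^\circ_{\wt\pi_\nu}(x)={}^\circ\wt W_\nu\!\left(\begin{pmatrix}x&\\&\RI_{n-1}\end{pmatrix}\right)$. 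Now set $\phi_\nu:=w_\nu\in\CC_c^\infty(k_\nu^\times)\subset\CS_{\pi_\nu}(k_\nu^\times)$ for $\nu\in S$, and $\phi_\nu(x):=\psi_\nu(x\zeta_\nu)\,\phi^\circ_\nu(x)$ for $\nu\notin S$. Since $|\zeta_\nu|_\nu\le 1$ for $\nu\notin S\cup R$ while $\phi^\circ_\nu$ vanishes for $|x|_\nu>1$, one has $\phi_\nu=\phi^\circ_\nu$ for all $\nu\notin S\cup R$, so $\phi=\otimes_\nu\phi_\nu$ is a well-defined element of $\CS_\pi(\BA^\times)$ (multiplication by the locally constant unitary character $\psi_\nu(\cdot\,\zeta_\nu)$ alters a member of $\CS_{\pi_\nu}(k_\nu^\times)$ only away from $0$, where it is compactly supported, hence preserves the space). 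Unravelling $\phi(\alpha)=\prod_\nu\phi_\nu(\alpha)$ for $\alpha\in k^\times$, using $\zeta_\nu=0$ for $\nu\in S$ and the vanishing of ${}^\circ W_\nu$ off $\CO_\nu$, gives $\phi(\alpha)=\psi(\alpha\zeta)\cdot{}^\circ W^S\!\left(\begin{pmatrix}\alpha&\\&\RI_{n-1}\end{pmatrix}\right)w_S(\alpha)$; hence $\Theta_\pi(1,\phi)$ is exactly the left-hand side of the Voronoi formula, the absolute convergence being supplied by Theorem~\ref{PSF0}.

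Applying \eqref{PSF-1} at $x=1$ and factoring $\CF_{\pi,\psi}$ as the restricted tensor product of the local $\pi_\nu$-Fourier transforms yields
\[
\Theta_\pi(1,\phi)=\Theta_{\wt\pi}(1,\CF_{\pi,\psi}(\phi))=\sum_{\alpha\in k^\times}\prod_{\nu}\CF_{\pi_\nu,\psi_\nu}(\phi_\nu)(\alpha),
\]
and it remains to evaluate the three types of local factor. For $\nu\notin S\cup R$ the $\pi_\nu$-Fourier transform of the basic vector is the basic vector of $\wt\pi_\nu$, so $\CF_{\pi_\nu,\psi_\nu}(\phi^\circ_\nu)(\alpha)={}^\circ\wt W_\nu\!\left(\begin{pmatrix}\alpha&\\&\RI_{n-1}\end{pmatrix}\right)$; this is part of the local theory of Section~\ref{sec-piPSF}. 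For $\nu\in S$, specializing the local functional equation that defines $\CF_{\pi_\nu,\psi_\nu}$ to $\phi=w_\nu\in\CC_c^\infty(k_\nu^\times)$ recovers \eqref{dualfunction} with $\CF_{\pi_\nu,\psi_\nu}(w_\nu)$ in place of $\widetilde{w}_\nu$, whence $\CF_{\pi_\nu,\psi_\nu}(w_\nu)=\widetilde{w}_\nu$ by the uniqueness of Mellin transforms.

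The crux is the remaining local identity at $\nu\in R$, namely
\[
\CF_{\pi_\nu,\psi_\nu}\bigl(\psi_\nu(\cdot\,\zeta_\nu)\,\phi^\circ_\nu\bigr)(\alpha)=\Kl_\nu(\alpha,\zeta_\nu,{}^\circ\wt W_\nu).
\]
I would prove it by comparing Mellin transforms: the defining local functional equation writes the Mellin transform of the left-hand side (against $\chi_\nu|\cdot|_\nu^{s-\frac{n-1}{2}}$) as $\gamma(1-s,\pi_\nu\times\chi_\nu,\psi_\nu)$ times $\int_{k_\nu^\times}\psi_\nu(x\zeta_\nu)\,{}^\circ W_\nu\!\left(\begin{pmatrix}x&\\&\RI_{n-1}\end{pmatrix}\right)\chi_\nu(x)|x|_\nu^{1-s-\frac{n-1}{2}}\ud^\times x$, while the Mellin transform of $\Kl_\nu(\cdot,\zeta_\nu,{}^\circ\wt W_\nu)$ is computed, exactly as in \cite[Section~2]{IT13}, by inserting the defining integral over $U^-_\tau(k_\nu)$ and recognizing it as a twisted local Rankin--Selberg integral for $\wt\pi_\nu$, the factor $|\zeta_\nu|_\nu^{n-2}$ supplying the Jacobian; the functional equation \eqref{dualfunction} then identifies the two, and Mellin inversion gives the claim. (Alternatively, one may use the description of $\CF_{\pi_\nu,\psi_\nu}$ as convolution against the Bessel kernel $k_{\pi_\nu,\psi_\nu}$ from Section~\ref{sec-BF} together with the Bessel-function expression for the local Kloosterman integral.) Reconciling the $\gamma$-factor (equivalently, kernel) description of the $\pi_\nu$-Fourier transform with the explicit Kloosterman integral of \cite[Section~2.6]{IT13}, down to the precise measure normalizations, is the main obstacle; the rest is bookkeeping. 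Substituting the three local computations into the displayed identity for $\Theta_{\wt\pi}(1,\CF_{\pi,\psi}(\phi))$ produces $\sum_{\alpha\in k^\times}\Kl_R(\alpha,\zeta,{}^\circ\wt W_R)\cdot{}^\circ\wt W^{R\cup S}\!\left(\begin{pmatrix}\alpha&\\&\RI_{n-1}\end{pmatrix}\right)\widetilde{w}_S(\alpha)$, which is the right-hand side of the Voronoi formula, completing the proof.
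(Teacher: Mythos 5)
Your proposal follows essentially the same route as the paper: the same test function (basic vectors twisted by $\psi_\nu(\cdot\,\zeta_\nu)$ off $S$, the functions $w_\nu$ on $S$), the same observation that the twist is trivial off $S\cup R$, the same three-way evaluation of the local Fourier transforms, and the same appeal to the explicit unipotent-integral computation of \cite[Section 2.6]{IT13} at the places of $R$; the paper packages your $\nu\in S$ and $\nu\in R$ steps as Proposition \ref{FofTest} and Proposition \ref{F=W}, both proved by precisely the Mellin-comparison argument you sketch. The one correction is normalization: every local test function must carry the weight $|\cdot|_\nu^{1-\frac n2}$ (cf.\ Lemma \ref{basvswt}, which gives $\BL_{\pi_\nu}(x)={}^\circ W_\nu(\mathrm{diag}(x,\RI_{n-1}))\,|x|_\nu^{1-\frac n2}$ rather than equality with the bare Whittaker value, and Proposition \ref{FofTest}, which gives $\CF_{\pi_\nu,\psi_\nu}(w_\nu|\cdot|^{1-\frac n2})=\wt w_\nu|\cdot|^{1-\frac n2}$ rather than $\CF_{\pi_\nu,\psi_\nu}(w_\nu)=\wt w_\nu$), and since the Fourier transform does not commute with inserting this weight your unweighted local identities are false as stated; the fix is harmless globally because $|\alpha|_\BA=1$ for $\alpha\in k^\times$ makes the weights cancel in the theta series.
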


The proof of Theorem \ref{thm:VSF} in \cite{IT13} is based on the local and global theory of the Rankin-Selberg convolution for $\GL_n\times\GL_1$ (\cite{JPSS83, CPS04, J09}). 
It is important to mention that Theorem \ref{thm:VSF} and its proof has be extended by A. Corbett to cover an even more general situation with more applications in Number Theory (\cite[Theorem 3.4]{Cor21}).

From the historical development of the Voronoi summation formula, one expects that there should be a proof of the Voronoi formula via a certain kind of Poisson summation formula. 
In other words, the two sides of the Voronoi formula should be related by a certain kind of Fourier transform and the identity should be deduced from the corresponding Poisson 
summation formula. 
In the current proof of Theorem \ref{thm:VSF}, such important ingredients from the harmonic analysis were missing, although 
there were discussions in \cite{IT13} and \cite{Cor21} on the local Bessel transform with the kernels deduced from the local functional equation in the local theory of 
the Rankin-Selberg convolution in \cite{JPSS83} and \cite{J09}, and the identity was deduced from explicit computations from the global zeta integrals of the Rankin-Selberg convolution (\cite{JPSS83} and \cite{CPS04}). Over the Archimedean local fields, Z. Qi has developed in \cite{Qi20} a theory of fundamental Bessel functions of high rank and 
formulated those Bessel transforms in the framework of general Hankel transforms that are integral transforms with Bessel functions as the kernel functions. 

The first global result of this paper is to show that Theorem \ref{thm:VSF} is a special case of Theorem \ref{PSF0}. From our proof, 
it will be clear that any variant of Theorem \ref{thm:VSF} (see \cite[Theorem 3.4]{Cor21} for instance) is also a special case of Theorem \ref{PSF0}.  
Note that the $\pi$-Poisson summation formula on $\GL_1$ in Theorem \ref{PSF0}) relies heavily on the work of R. Godement and H. Jacquet (\cite{GJ72}). Hence our proof of Theorem \ref{thm:VSF} is in principle based on the local and global theory of the Godement-Jacquet integrals for the standard $L$-functions of $\GL_n\times\GL_1$. 

In order to carry out such a proof, we have to understand the nature of the functions occurring on the both side of the Voromoi formula 
in Theorem \ref{thm:VSF}, locally and globally, and show that they are $\pi$-Schwartz function on $\BA^\times$ and are related by the 
$\pi$-Fourier transform $\CF_{\pi,\psi}$ in the sense of \cite{JL22,JL23}. More precisely, for any irreducible smooth representation $\pi_\nu$ of $\GL_n(k_\nu)$, which is of Casselman-Wallach type if $\nu$ is an infinite local place of $k$, 
we define the $\pi_\nu$-Bessel function $\Fb_{\pi_\nu,\psi_\nu}(x)$ on $k_\nu^\times$ (Definitions \ref{piFb-p}, \ref{piFb-c} and \ref{piFb-r}) and obtain a series of 
results on the relations between the $\pi_\nu$-Bessel functions $\Fb_{\pi_\nu,\psi_\nu}(x)$, the $\pi_\nu$-Fourier transforms $\CF_{\pi_\nu,\psi_\nu}$ and the $\pi_\nu$-kernel functions 
$k_{\pi_\nu,\psi_\nu}(x)$ as introduced and studied in \cite{JL22, JL23} (see \eqref{eq:1-FO} and \eqref{kernel-ar} for details), and on new formulas for the dual functions $\wt{w_\nu}(x)$ of $w_\nu(x)\in\CC^\infty_c(k_\nu^\times)$. After all the local preparation, we deduce 
the Voromoi formula in Theorem \ref{thm:VSF} from the $\pi$-Poisson summation formula in Theorem \ref{PSF0}. 
We summarize those local results as the following theorem. 

\begin{thm}\label{thm:local}
    For any local place $\nu$ of the number field $k$, let $\pi_\nu$ be an irreducible smooth representation $\pi_\nu$ of $\GL_n(k_\nu)$, which is of Casselman-Wallach type if $\nu$ is infinite. For any $w_\nu(x)\in\CC^\infty_c(k_\nu^\times)$, $\wt{w_\nu}(x)$ is the dual function of $w_\nu(x)$ as \eqref{dualfunction} or in Theorem \ref{thm:VSF}. Then the following hold. 
    \begin{itemize}
        \item[(1)] The $\pi_\nu$-Fourier transform $\CF_{\pi_\nu,\psi_\nu}$ realizes the duality between $w_\nu(x)$ and $\wt{w_\nu}(x)$, up to normalization, 
         \[
    \CF_{\pi_\nu,\psi_\nu}(w_\nu(\cdot)|\cdot|_\nu^{1-\frac{n}{2}})(x)=\widetilde{w_\nu}(x)|x|_\nu^{1-\frac{n}{2}},\quad 
    \forall x\in k_\nu^\times.
    \]
    \item[(2)] The dual function $\wt{w}(x)$ of $w_\nu(x)$ enjoys the following formula:
    \[
    \wt{w_\nu}(x)=|x|_\nu^{\frac{n}{2}-1}\left(k_{\pi_\nu,\psi_\nu}(\cdot)*(w_\nu^\vee(\cdot)|\cdot|_\nu^{\frac{n}{2}-1})\right)(x),\quad
    \forall x\in k_\nu^\times,
    \]
    where $k_{\pi_\nu,\psi_\nu}(x)$ is the $\pi_\nu$-kernel function of $\pi_\nu$ as in \eqref{kernel-ar} and $w_\nu^\vee(x)=w_\nu(x^{-1})$.
    \item[(3)] As distributions on $k_\nu^\times$, the $\pi_\nu$-kernel function $k_{\pi_\nu,\psi_\nu}$ as in \eqref{kernel-ar} and the $\pi_\nu$-Bessel function are related by the following identity:
\[
k_{\pi_\nu,\psi_\nu}(x)=\Fb_{\pi_\nu,\psi_\nu}(x)|x|_\nu^{\frac{1}{2}},\quad \forall x\in k_\nu^{\times}.
\]
    \item[(4)] The dual function $\wt{w_\nu}(x)$ of $w_\nu(x)$ enjoys the following formula:
    \[
    \wt{w_\nu}(x)=|x|_\nu^{\frac{n-1}{2}}\left(\Fb_{\pi_\nu,\psi_\nu}(\cdot)*(w_\nu^\vee(\cdot)|\cdot|_\nu^{\frac{n-3}{2}})\right)(x),\quad
    \forall x\in k_\nu^\times.
    \]
    \end{itemize}
\end{thm}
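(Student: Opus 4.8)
The plan is to reduce all four assertions to two facts established in \cite{JL22,JL23}: (i) the $\pi_\nu$-Fourier transform $\CF_{\pi_\nu,\psi_\nu}$ is characterized by the local functional equation, i.e.\ for every $\phi\in\CS_{\pi_\nu}(k_\nu^\times)$ and every unitary character $\chi_\nu$ of $k_\nu^\times$, the $\chi_\nu^{-1}$-twisted Mellin transform of $\CF_{\pi_\nu,\psi_\nu}(\phi)$ equals the $\chi_\nu$-twisted Mellin transform of $\phi$ multiplied by the $\GL_n\times\GL_1$ local gamma factor $\gamma(\,\cdot\,,\pi_\nu\times\chi_\nu,\psi_\nu)$, with a fixed reflection and shift of the complex parameter depending only on $n$ and recorded in the definition of $\CS_{\pi_\nu}(k_\nu^\times)$; and (ii) $\CF_{\pi_\nu,\psi_\nu}$ is realized, in the form \eqref{eq:1-FO}, as the multiplicative-convolution operator $\phi\mapsto k_{\pi_\nu,\psi_\nu}*\phi^\vee$ with $\phi^\vee(x)=\phi(x^{-1})$ and with the explicit $\pi_\nu$-kernel $k_{\pi_\nu,\psi_\nu}$ of \eqref{kernel-ar}, whose $\chi_\nu$-twisted Mellin transform is exactly that gamma factor. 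Since $\CC_c^\infty(k_\nu^\times)\subset\CS_{\pi_\nu}(k_\nu^\times)$ for every $\pi_\nu$, the function $w_\nu(\cdot)|\cdot|_\nu^{1-\frac n2}$ lies in $\CS_{\pi_\nu}(k_\nu^\times)$, so both facts apply to it.

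For part (1), I would take the Mellin transform of the proposed identity against $\chi_\nu^{-1}|\cdot|_\nu^{s}$ for an arbitrary unitary $\chi_\nu$: absorbing the power $|\cdot|_\nu^{1-\frac n2}$ into a shift of $s$, the right-hand side becomes the left-hand side of the defining relation \eqref{dualfunction} for $\widetilde{w_\nu}$, while by (i) the left-hand side becomes the gamma factor $\gamma(\,\cdot\,,\pi_\nu\times\chi_\nu,\psi_\nu)$ times the corresponding twisted Mellin transform of $w_\nu(\cdot)|\cdot|_\nu^{1-\frac n2}$, which is the right-hand side of \eqref{dualfunction}. The only delicate point is the normalization: one must check that the $\frac{n-1}{2}$-shift and the argument $1-s$ of the gamma factor appearing in \eqref{dualfunction} are precisely those produced by $\CF_{\pi_\nu,\psi_\nu}$ once the extra twist by $|\cdot|_\nu^{1-\frac n2}$ is taken into account, a direct comparison of conventions. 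Once the two twisted Mellin transforms agree for all unitary $\chi_\nu$, Mellin inversion on $k_\nu^\times$ forces equality of the two sides; in particular $\widetilde{w_\nu}(\cdot)|\cdot|_\nu^{1-\frac n2}=\CF_{\pi_\nu,\psi_\nu}(w_\nu(\cdot)|\cdot|_\nu^{1-\frac n2})\in\CS_{\widetilde\pi_\nu}(k_\nu^\times)$, so $\widetilde{w_\nu}$ is a well-behaved function and the subsequent manipulations are legitimate.

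Part (2) is then immediate from part (1) and (ii): substituting $\phi=w_\nu(\cdot)|\cdot|_\nu^{1-\frac n2}$ into $\CF_{\pi_\nu,\psi_\nu}(\phi)=k_{\pi_\nu,\psi_\nu}*\phi^\vee$ and noting $\phi^\vee(x)=w_\nu^\vee(x)|x|_\nu^{\frac n2-1}$ gives $\widetilde{w_\nu}(x)|x|_\nu^{1-\frac n2}=\bigl(k_{\pi_\nu,\psi_\nu}*(w_\nu^\vee(\cdot)|\cdot|_\nu^{\frac n2-1})\bigr)(x)$, and solving for $\widetilde{w_\nu}(x)$ yields the stated formula. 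Part (3) is the substantive step: I would compute the Mellin transform of each side. By construction $k_{\pi_\nu,\psi_\nu}$ has Mellin transform equal to the standard gamma factor; the right side $\Fb_{\pi_\nu,\psi_\nu}(\cdot)|\cdot|_\nu^{\frac12}$ is evaluated from Definitions \ref{piFb-p}, \ref{piFb-c} and \ref{piFb-r}, where the $\pi_\nu$-Bessel function is built from the Whittaker/Kirillov model of $\pi_\nu$ so that its Mellin transform computes the same $\GL_n\times\GL_1$ gamma factor, the twist by $|\cdot|_\nu^{\frac12}$ reconciling the $\frac{n-1}{2}$-normalization of \eqref{dualfunction} with the $\frac n2$-normalization of the Godement-Jacquet zeta integral on $\GL_1$. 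At the non-Archimedean places both objects are honest locally constant functions on $k_\nu^\times$ and the equality is a clean Mellin-uniqueness statement; at the real and complex places one additionally uses the moderate-growth estimates and the asymptotic expansions of the high-rank Bessel functions of \cite{Qi20} to see that the tempered distribution $k_{\pi_\nu,\psi_\nu}$ is represented by the genuine function $\Fb_{\pi_\nu,\psi_\nu}(\cdot)|\cdot|_\nu^{\frac12}$, again by uniqueness of a tempered distribution on $k_\nu^\times$ with prescribed Mellin transform. Finally, part (4) follows by substituting the identity of part (3) into the formula of part (2) and pulling the factor $|\cdot|_\nu^{\frac12}$ through the multiplicative convolution: this converts the outer power $|x|_\nu^{\frac n2-1}$ into $|x|_\nu^{\frac{n-1}2}$ and the inner power $|\cdot|_\nu^{\frac n2-1}$ into $|\cdot|_\nu^{\frac{n-3}2}$, exactly as claimed, and is otherwise routine exponent bookkeeping.

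The main obstacle I expect is part (3): matching the normalization conventions for $k_{\pi_\nu,\psi_\nu}$, which come from the Tate/Godement-Jacquet-style functional equation on $\GL_1$ of \cite{JL22,JL23}, against those for $\Fb_{\pi_\nu,\psi_\nu}$ coming from the Whittaker-model side, and, at the Archimedean places, proving that the $\pi_\nu$-kernel, a priori only a tempered distribution, is genuinely represented by the Bessel function. The latter is precisely where the analytic theory of \cite{Qi20}, together with a uniqueness statement for tempered distributions on $\BR^\times$ and $\BC^\times$ determined by their Mellin transforms, is indispensable.
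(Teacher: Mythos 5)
Parts (1), (2) and (4) of your proposal follow the paper's route essentially verbatim: Part (1) is proved exactly by the Mellin/functional-equation comparison you describe (the paper's Proposition \ref{FofTest}, which chooses an $s_0$ in the common region of absolute convergence and invokes uniqueness of $L^1$-functions with given Fourier transform on $F^\times$ rather than literal Mellin inversion, but this is the same idea), Part (2) is the immediate substitution into \eqref{FO-k} (Corollary \ref{dfw-k}), and Part (4) is the exponent bookkeeping you indicate (Corollary \ref{dfw-b}). You also correctly identify Part (3) as the substantive step.

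However, your argument for Part (3) has a genuine gap in the non-Archimedean case. You assert that the $\pi_\nu$-Bessel function is ``built from the Whittaker/Kirillov model so that its Mellin transform computes the same $\GL_n\times\GL_1$ gamma factor,'' and then conclude by Mellin uniqueness. But Definition \ref{piFb-p} defines $\Fb_{\pi_\nu,\psi_\nu}$ via Baruch's stabilized integrals of Whittaker functions over the unipotent group $N_{w_*}^-$; nothing in that definition gives its Mellin transform, and the statement that the (necessarily regularized, principal-value) Mellin transform of $\Fb_{\pi_\nu,\psi_\nu}(\cdot)|\cdot|^{1/2}$ equals $\gamma(s,\pi_\nu\times\chi_s,\psi_\nu)$ is not an input but essentially the content of Part (3) itself --- the paper derives it only afterwards, as the corollary following Proposition \ref{k=j-p}, by combining Part (3) with \cite[Theorem 5.2]{JL22}. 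Your argument is therefore circular at this point. What is actually needed, and what the paper does, is a direct computation in the Whittaker model: first identify $\CS_{\pi_\nu}(F^\times)$ with the Whittaker--Schwartz space and $\CC_c^\infty(F^\times)$ with $\CW^\circ_{\pi_\nu,\psi_\nu}(F^\times)$ (Proposition \ref{S=W} and Lemma \ref{smallW}), then use the Whittaker-model formula for $\CF_{\pi_\nu,\psi_\nu}$ as an integral over $F^{n-2}$ of a $w_{n,1}$-translate of $\wt W$ (Proposition \ref{F=W}), and finally perform Fourier inversion in one additional unipotent coordinate to recognize the resulting operator as convolution against $\Fb_{\pi_\nu,\psi_\nu}(\cdot)|\cdot|^{1/2}$; comparison with \eqref{FO-k} then gives the identity of kernels. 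In the real and complex cases your outline is closer to the paper's (in the real case Definition \ref{piFb-r} literally is the Mellin--Barnes integral of the gamma factor, and in the complex case one quotes Qi's theorem that his Hankel transform realizes the same functional equation), though even there the work is in justifying the contour shift and the interchange of integrals, not in a bare ``Mellin uniqueness for tempered distributions'' statement.
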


The proof of Theorem \ref{thm:local} is given in Sections \ref{sec-LHA} and \ref{sec-BF}. More precisely, Part (1) of  Theorem \ref{thm:local} is Proposition \ref{FofTest}. 
Part (2) is Corollary \ref{dfw-k}. Part (3) is a combination of Propositions \ref{k=j-p}, \ref{k=j-c}, and \ref{k=j-r}. Part (4) is an easy consequence of Parts (2) and (3), which is Corollary~\ref{dfw-b}.

It is important to point out that the $\pi_\nu$-Bessel functions $\Fb_{\pi_\nu,\psi_\nu}(x)$ in the $p$-adic case is defined by means of the Whittaker model of $\pi_\nu$ following the general framework of 
E. Baruch in \cite{BE05}. Hence we have to assume in the $p$-adic case that $\pi_\nu$ is generic in the definition of the $\pi_\nu$-Bessel functions $\Fb_{\pi_\nu,\psi_\nu}(x)$. However, in the real or 
complex case, we follow the general theory of Z. Qi in \cite{Qi20} on Bessel functions of high rank, which works for general irreducible smooth representations of $\GL_n$ of Casselman-Wallach type. Hence 
in the real or complex case, the definition of the $\pi_\nu$-Bessel functions $\Fb_{\pi_\nu,\psi_\nu}(x)$ does not require that $\pi_\nu$ is generic. Since the $\pi_\nu$-kernel function $k_{\pi_\nu,\psi_\nu}$ as in \eqref{kernel-ar} is defined based on the Godement-Jacquet theory, which does not require that $\pi_\nu$ is generic, the uniform result in Part (3) of Theorem \ref{thm:local} suggests that 
one may define the $\pi_\nu$-kernel function $k_{\pi_\nu,\psi_\nu}$ to be the $\pi_\nu$-Bessel functions $\Fb_{\pi_\nu,\psi_\nu}(x)$ in the $p$-adic case when $\pi_\nu$ is not generic. Finally, let us mention that the $\pi_\nu$-Bessel functions $\Fb_{\pi_\nu,\psi_\nu}(x)$ in the real case as given in Definition \ref{piFb-r} is more general than the one defined in \cite{Qi20}, and refer to Remark \ref{Qi-rk} for details. 

\subsection{Godement-Jacquet kernels and Fourier transform}\label{ssec-GJKFT}
Write $|k|=|k|_\infty\cup|k|_f$, where $|k|_\infty$ denotes the subset of $|k|$ consisting of all Archimedean local places of $k$, and $|k|_f$ denotes the subset of $|k|$ consisting 
of all finite local places of $k$. 
Write $\BA_\infty=\prod_{\nu\in|k|_\infty}k_\nu$. For $x=(x_\nu)\in \BA_\infty$, set 
$|x|_\infty:=\prod_{\nu\in|k|_\infty}|x_\nu|_\nu$. Let $\CO=\CO_k$ be the ring of algebraic integers of $k$. L. Clozel defines in \cite{Clo22} the {\bf Tate kernel}:
\begin{align}\label{hsx}
    H_s(x):=X^{s-1}\sum_{\FN(\Fa)\leq X}\FN(\Fa)^{-s}-\frac{\kappa}{1-s}
\end{align}
for $x\in \BA_\infty^\times$ with $X=|x|_\infty$, where $\Fa$ runs over nonzero integral ideals $\Fa\subset \CO$, and $\kappa=\Res_{s=1}\zeta_k(s)$. Here $\zeta_k(s)=\sum_{\Fa\subset\CO}\FN(\Fa)^{-s}$ is the Dedekind zeta function of $k$ with 
$\FN(\Fa):=|N_{k/\BQ}\Fa|$, the absolute norm of $\Fa$; and the dual kernel 
\begin{align}\label{ksx}
    K_s(x):=D^{-\frac{1}{2}}
    X^{s-1}\sum_{\Fa\subset\mathcal{D}^{-1}, \FN(\Fa)\leq X}\FN(\Fa)^{-s}-
    \frac{\kappa\cdot D^{\frac{1}{2}}}{1-s},
\end{align}
where $\mathcal{D}=\mathcal{D}_k$ is the difference of $\CO$ and $\mathcal{D}^{-1}$ is the inverse difference; and $D=\FN(\mathcal{D})$ is the absolute value of the discriminant.
Theorem 1.1 of \cite{Clo22} expresses the relation between those tempered distributions $H_s(x)$ and $K_s(x)$ on $\BA_\infty^\times$ in terms of the condition: $\zeta_k(s)=0$ with $\sigma=\Re(s)\in(0,1)$, 
which is more precisely stated as follows. 

\begin{thm}[Clozel]\label{thm:CTh11}
    Assume that $\sigma=\Re(s)\in(0,1)$. Then $\zeta_k(s)=0$ if and only if 
    \[
    \CF_\infty(H_s)=-K_{1-s}
    \]
    where $\CF_\infty$ is the usual Fourier transform over $\BA_\infty$ with a suitable normalized measure. 
\end{thm}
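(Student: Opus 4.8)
The plan is to transfer the assertion to the ``Mellin side,'' where it becomes a contour‑shift identity governed by the completed Dedekind zeta function; the one global input will be Theorem~\ref{PSF0} for $n=1$ and $\pi$ trivial, i.e.\ Tate's Riemann--Roch, which supplies the functional equation of $\zeta_k(s)$. First I would observe that $H_s$ and $K_s$ depend only on $X=|x|_\infty$ and are bounded: the ideal‑counting asymptotics $\sum_{\FN(\Fa)\le X}\FN(\Fa)^{-s}=\tfrac{\kappa X^{1-s}}{1-s}+\zeta_k(s)+o(1)$ for $\Re s\in(0,1)$ (with Landau's error bound for the number of integral ideals of bounded norm) give $H_s\equiv-\kappa/(1-s)$ on $\{X<1\}$ while $H_s=O(X^{-1/[k:\BQ]})$ as $X\to\infty$, and likewise for $K_s$; hence both are tempered distributions on $\BA_\infty$. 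Writing $H_s(x)=X^{s-1}\big(\sum_{\FN(\Fa)\le X}\FN(\Fa)^{-s}-\tfrac{\kappa X^{1-s}}{1-s}\big)$, cutting the Mellin integral at $X=A$, interchanging the ideal sum with the $X$‑integral and letting $A\to\infty$, the divergent $A^{w}$‑terms cancel and the Mellin transform turns out to be the clean meromorphic function
\[
\int_{0}^{\infty}H_s(x)\,X^{w-1}\,dX=-\,\frac{\zeta_k(1-w)}{w-(1-s)},\qquad 0<\Re w<1-\Re s,
\]
with simple poles only at $w=0$ and $w=1-s$. Since $\Fa\mapsto\Fa\mathcal{D}$ identifies the ideals $\Fa\subset\mathcal{D}^{-1}$ with the integral ideals and multiplies norms by $D=\FN(\mathcal{D})$, one obtains $K_{1-s}(x)=D^{1/2}H_{1-s}(\delta x)$ for any $\delta\in\BA_\infty^\times$ with $|\delta|_\infty=D$, so the Mellin transform of $K_{1-s}$ is $-D^{1/2-w}\zeta_k(1-w)/(w-s)$, convergent for $0<\Re w<\Re s$.

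Next I would apply the classical archimedean Tate local functional equation, which reads $\CF_\infty(|x|_\infty^{-w})=\gamma_\infty(w)\,|x|_\infty^{w-1}$ as tempered distributions on $\BA_\infty$ for $0<\Re w<1$, where $\gamma_\infty(w)=\prod_{\nu\mid\infty}\gamma_\nu(w)=\epsilon_\infty(w)\,L_\infty(1-w)/L_\infty(w)$ and $\epsilon_\infty\equiv1$ for the normalization of $\CF_\infty$ in the statement. Feeding this into the Mellin inversion $H_s(x)=\tfrac{1}{2\pi i}\int_{(c)}\big(-\zeta_k(1-w)/(w-(1-s))\big)|x|_\infty^{-w}\,dw$ (with $c\in(0,1-\Re s)$) and substituting $w\mapsto1-w$ yields
\[
\CF_\infty(H_s)(y)=\frac{1}{2\pi i}\int_{(1-c)}\gamma_\infty(1-w)\,\frac{\zeta_k(w)}{w-s}\,|y|_\infty^{-w}\,dw ,
\]
that is, $\CF_\infty(H_s)$ is the inverse Mellin transform, along a contour with $\Re w\in(\Re s,1)$, of the meromorphic function $\Phi(w):=\gamma_\infty(1-w)\,\zeta_k(w)/(w-s)$. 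The functional equation $D^{w/2}L_\infty(w)\zeta_k(w)=D^{(1-w)/2}L_\infty(1-w)\zeta_k(1-w)$ coming from Theorem~\ref{PSF0} rewrites $\Phi(w)$ as $D^{1/2-w}\zeta_k(1-w)/(w-s)$, which is exactly the Mellin transform of $-K_{1-s}$. The decisive point is that $-K_{1-s}$ is the inverse Mellin transform of the \emph{same} function $\Phi$, but along a contour with $\Re w\in(0,\Re s)$, and the two strips are separated by precisely one pole, the simple pole at $w=s$.

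Shifting the contour across that pole therefore gives
\[
\CF_\infty(H_s)+K_{1-s}=c_s\,\zeta_k(s)\,|y|_\infty^{-s},\qquad c_s=\pm\,\frac{L_\infty(s)}{L_\infty(1-s)}\neq0 ,
\]
because $\Res_{w=s}\big(\zeta_k(w)/(w-s)\big)=\zeta_k(s)$ and $\gamma_\infty(1-s)=L_\infty(s)/L_\infty(1-s)$. For $\Re s\in(0,1)$ the factor $L_\infty(s)/L_\infty(1-s)$ is holomorphic and non‑vanishing, and the homogeneous distribution $|y|_\infty^{-s}$ is non‑zero on $\BA_\infty^\times$; hence the right‑hand side vanishes if and only if $\zeta_k(s)=0$. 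This is exactly the equivalence $\zeta_k(s)=0\iff\CF_\infty(H_s)=-K_{1-s}$, which is the theorem.

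The step I expect to be the main obstacle is making rigorous the interchange of $\CF_\infty$ with the Mellin inversion and the subsequent contour shift for these non‑Schwartz tempered distributions. One must control $\Phi(w)=\gamma_\infty(1-w)\,\zeta_k(w)/(w-s)$ on vertical lines — Stirling for the archimedean $\Gamma$‑factors in $\gamma_\infty$ together with polynomial‑in‑$|\Im w|$ bounds for $\zeta_k$ in the critical strip — so that the displayed contour integrals genuinely represent $\CF_\infty(H_s)$ and $-K_{1-s}$ and the horizontal sides of the rectangle used in moving the contour past $w=s$ vanish in the limit; this is a Mellin--Paley--Wiener argument, and it is here that the hypothesis $\Re s\in(0,1)$ (which puts $w=s$ strictly between the two strips) is essential. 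A secondary, purely bookkeeping obstacle is to verify that the measure normalization makes the archimedean epsilon factor equal to $1$ — so that the ``non‑residue'' parts of $\CF_\infty(H_s)$ and $-K_{1-s}$ coincide identically — and to track the discriminant powers so that the Mellin transform of $K_{1-s}$ comes out exactly as stated.
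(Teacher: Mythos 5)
Your argument is correct in outline and is essentially (one of) Clozel's own proofs, but it is not the route this paper takes: the paper only quotes Theorem \ref{thm:CTh11} from \cite{Clo22}, and the method it uses to prove the $(\GL_n,\pi)$-generalization (Proposition \ref{prp:FTGJK}, Theorems \ref{thm:H=FK} and \ref{thm:CTh-pi}) is genuinely different from yours. You work on the Mellin side: you compute $\int_0^\infty H_s(x)X^{w-1}\,dX=-\zeta_k(1-w)/(w-(1-s))$ by the cutoff-and-cancellation argument, identify $K_{1-s}$ with $D^{1/2}H_{1-s}(\delta\,\cdot)$, realize $\CF_\infty(H_s)$ and $-K_{1-s}$ as inverse Mellin transforms of the \emph{same} function $\Phi(w)=D^{1/2-w}\zeta_k(1-w)/(w-s)$ taken on the two sides of the pole $w=s$, and a single contour shift yields the defect formula $\CF_\infty(H_s)+K_{1-s}=\gamma_\infty(1-s)\,\zeta_k(s)\,|y|_\infty^{-s}$, from which the equivalence is immediate. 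The paper never computes a Mellin transform of the kernels: it splits the adelic zeta integral $\CZ(s,\phi)$ at $|x|_\BA=1$, converts the $|x|_\BA<1$ piece by the ($\pi$-)Poisson summation formula (Corollary \ref{cor:zeta<1}), recognizes the two absolutely convergent $|x|_\BA>1$ integrals as the pairings of $\phi_\infty$ with $H_{\pi,s}$ and of $\CF_{\pi_\infty,\psi_\infty}(\phi_\infty)$ with $K_{\pi,1-s}$ as in \eqref{CKpi1} and \eqref{CKd1}, and deduces the equivalence with the vanishing of the finite-part $L$-function by letting $\phi_\infty$ range over $\CC_c^\infty(\BA_\infty^\times)$ and choosing one with $\CZ_\infty(s,\phi_\infty)\neq0$. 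Your approach buys the explicit residue identity (an exact measure of the failure of the duality when $\zeta_k(s)\neq0$), but at the cost of the analysis you yourself flag: justifying Mellin inversion, its interchange with $\CF_\infty$, and the contour shift for tempered distributions that neither decay nor vanish near $\FB_\infty=\{|x|_\infty=0\}$ — note in particular that when $r_1+r_2\geq2$ the region $\{|x|_\infty<1\}$, on which $H_s$ equals the nonzero constant $-\kappa/(1-s)$, is unbounded and of infinite measure, so one must also rule out contributions to $\CF_\infty(H_s)$ supported on $\FB_\infty$. The paper's route avoids all of this (its identity is only tested against $\CC_c^\infty(\BA_\infty^\times)$, with temperedness handled by the Miller--Schmid canonical-extension machinery of Proposition \ref{prp:HKTD}) and, more importantly, is the formulation that generalizes to $\GL_n$, where no coefficient asymptotics of the kind you use for $\sum_{\FN(\Fa)\le X}\FN(\Fa)^{-s}$ are available.
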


The second global result of this paper is to define the kernel functions for any irreducible cuspidal automorphic representation $\pi$ of $\GL_n(\BA)$, which will be called the Godement-Jacquet kernels, and prove the analogy of Theorem \ref{thm:CTh11} for the Godement-Jacquet kernels and the standard $L$-functions $L(s,\pi)$ (see Theorems \ref{thm:H=FK} and \ref{thm:CTh-pi} for the exact statements). 

Let $\pi$ be an irreducible cuspidal automorphic representation of $\GL_n(\BA)$ and write 
$\pi=\pi_\infty\otimes\pi_f$, where $\pi_f:=\otimes_{p<\infty}\pi_p$, and write the standard $L$-function of $\pi$ as 
\begin{align}\label{L-1}
    L(s,\pi)=L_\infty(s,\pi_\infty)\cdot L_f(s,\pi_f)
\end{align}
for $\Re(s)$ sufficiently positive. 
As usual, $L(s,\pi)$ is called the complete $L$-function associated with $\pi$, and $L_f(s,\pi_f)$ is called the finite part of the $L$-function associated with $\pi$. 
The local and global theory of R. Godement and H. Jacquet in \cite{GJ72} introduces the global zeta integrals for $L(s,\pi)$ and proves that 
$L(s,\pi)$ has analytic continuation to an entire function in $s\in\BC$ and satisfies the functional equation 
\[
L(s,\pi)=\epsilon(s,\pi)\cdot L(1-s,\wt{\pi}).
\]
Following the reformulation as developed in \cite{JL23}, for an irreducible cuspidal automorphic representation $\pi$ of $\GL_n(\BA)$, there exists a $\pi$-Schwartz space 
$\CS_\pi(\BA^\times)$ as defined in \eqref{piSS-BA}, which defines the $\GL_1$-zeta integral
\begin{align}\label{zetaG10}
    \CZ(s,\phi)=\int_{\BA^\times}\phi(x)|x|_\BA^{s-\frac{1}{2}}\ud^\times x
\end{align}
for any $\phi\in\CS_\pi(\BA^\times)$. 
By \cite[Theorem 4.6]{JL23} the zeta integral $\CZ(s,\phi)$ converges absolutely for $\Re(s)>\frac{n+1}{2}$, admits analytic continuation to an entire function in $s\in\BC$, and satisfies the functional equation 
\begin{align}\label{GFE-GL1}
    \CZ(s,\phi)=\CZ(1-s,\CF_{\pi,\psi}(\phi)).
\end{align}
where $\CF_{\pi,\psi}$ is the $\pi$-Fourier transform as defined in \eqref{FO-BA}. From the global functional equation in \eqref{GFE-GL1}, we introduce the notion of the {\bf Godement-Jacquet kernels} for $L(s,\pi)$ in Definition \ref{dfn:CK}, which can be briefly explained as follows. 

Write $x\in\BA^\times$ as $x=x_\infty\cdot x_f$ with $x_\infty\in \BA_\infty^\times$ and $x_f\in\BA_f^\times$. 
Set 
$\BA^{>1}:=\{x\in\BA^\times\ \colon\ |x|_\BA>1\}$. 
For $x\in\BA^{>1}$, 
we have that $|x|=|x_\infty|_{\BA}\cdot|x_f|_{\BA}>1$ and $|x_f|_{\BA}>|x_\infty|_{\BA}^{-1}$. 
Write 
\[
\CS_\pi(\BA^\times)=\CS_{\pi_\infty}(\BA_\infty^\times)\otimes\CS_{\pi_f}(\BA_f^\times).
\]
For $\phi=\phi_\infty\otimes\phi_f\in\CS_\pi(\BA^\times)$ with $\phi_\infty\in\CS_{\pi_\infty}(\BA_\infty^\times)$ and $\phi_f\in\CS_{\pi_f}(\BA_f^\times)$,  we write
\begin{align}\label{zeta>1-0}
\int_{\BA^{>1}}\phi(x)|x|_{\BA}^{s-\frac{1}{2}}\ud^{\times}x
   =\int_{\BA_\infty^\times} \phi_{\infty}(x_{\infty})|x_{\infty}|_{\BA}^{s-\frac{1}{2}}\ud^{\times}x_{\infty}
   \int_{\BA_f^{\times}}^{>|x_{\infty}|^{-1}}\phi_f(x_f)|x_f|_{\BA}^{s-\frac{1}{2}}\ud^{\times}x_f, 
\end{align}
for any $s\in\BC$, where the inner integral is taken over the domain $\{ x_f\in\BA_f^{\times}\ \colon\  |x_f|_{\BA}> |x_{\infty}|_{\BA}^{-1} \}$. 
Proposition~\ref{zeta>1} shows that the integral converges absolutely for any $s\in\BC$ and is holomorphic in $s\in\BC$. 
By the Fubini theorem and the support in $\BA_f^\times$ of $\phi_f$, which is a fractional ideal of $k$ (Proposition \ref{prp:HKTD}) , the inner integral
$
\int_{\BA_f^{\times}}^{>|x_{\infty}|_{\BA}^{-1}}\phi_f(x_f)|x_f|_{\BA}^{s-\frac{1}{2}}\ud^{\times}x_f
$
converges absolutely for any $s\in\BC$ and any $x_\infty\in\BA_\infty^\times$.  The {\bf Godement-Jacquet kernel} for $L(s,\pi)$ is defined by 
\begin{align}\label{GJK}
  H_{\pi,s}(x_\infty,\phi_f):=
    |x_{\infty}|_{\BA}^{s-\frac{1}{2}}
   \int_{\BA_f^{\times}}^{>|x_{\infty}|_{\BA}^{-1}}\phi_f(x_f)|x_f|_{\BA}^{s-\frac{1}{2}}\ud^\times x_f,  
\end{align}
for $x_\infty\in\BA_\infty^\times$ and for all $s\in\BC$. Clozel defines in \cite{Clo22} the dual kernel for the case of $\GL_1$ with $\pi$ the trivial character. 
We define here the {\bf dual kernel} of the Godement-Jacquet kernel $H_{\pi,s}(x_\infty,\phi_f)$ for $L(s,\pi)$ to be 
\begin{align}\label{GJKd}
    K_{\pi,s}(x_\infty,\phi_f):=
    |x_{\infty}|_{\BA}^{s-\frac{1}{2}}
   \int_{\BA_f^{\times}}^{>|x_{\infty}|_{\BA}^{-1}}\CF_{\pi_f,\psi_f}(\phi_f)(x_f)|x_f|_{\BA}^{s-\frac{1}{2}}\ud^\times x_f,
\end{align}
for $x_\infty\in\BA_\infty^\times$ and for all $s\in\BC$.  Proposition \ref{prp:HKTD} shows that both kernel functions $H_{\pi,s}(x_\infty,\phi_f)$ and $K_{\pi,s}(x_\infty,\phi_f)$ on $\BA_\infty^\times$ can be extended uniquely to tempered distributions on $\BA_\infty$ for any $\phi_f\in\CS_{\pi_f}(\BA_f^\times)$ and for any $s\in\BC$, by using the work of 
S. Miller and W. Schmid in \cite{MS04-JFA}.
We are able to match the kernels $H_{\pi,s}(x_\infty,\phi_f)$ and $K_{\pi,s}(x_\infty,\phi_f)$ with the Euler product expression or Dirichlet series expression of the finite part $L$-function $L_f(s,\pi_f)$ by specifically choosing the $\pi_f$-Schwartz functions $\phi_f\in\CS_{\pi_f}(\BA_f^\times)$, and prove in Section \ref{sec-GJK} the $\pi$-versions of the Clozel theorem (Theorem \ref{thm:CTh11}). Here is an overly simplified version of Theorem \ref{thm:H=FK}, to which we refer the details. 

\begin{thm}\label{H=FK}
For any irreducible cuspidal automorphic representation $\pi$ of $\GL_n(\BA)$, with a choice of $\phi^\star=\phi_\infty\otimes\phi^\star_f\in\CS_\pi(\BA^\times)$, the Godement-Jacquet kernel 
$H_{\pi,s}(x)=H_{\pi,s}(x,\phi_f^\star)$ and its dual kernel $K_{\pi,s}(x)=K_{\pi,s}(x,\phi_f^\star)$ enjoy the following identity:
\begin{align}\label{CK-pi}
H_{\pi,s}(x)
=-
\CF_{\pi_\infty,\psi_\infty}(K_{\pi,1-s})(x)
=
-\int_{\BA_\infty^{\times}}k_{\pi_{\infty},\psi_{\infty}}(x y)K_{\pi,1-s}(y)\ud^{\times}y.
\end{align}
as distributions on $\BA_\infty^\times$ if and only if $s$ is a zero of $L_f(s,\pi_f)$. 
\end{thm}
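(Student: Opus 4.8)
The plan is to read off the theorem as a direct consequence of the global functional equation \eqref{GFE-GL1} for the $\GL_1$-zeta integral, together with the decomposition of the zeta integral into its ``$|x|_\BA>1$'' and ``$|x|_\BA<1$'' pieces. First I would fix an appropriate $\phi^\star=\phi_\infty\otimes\phi_f^\star\in\CS_\pi(\BA^\times)$ for which the finite-part zeta integral $\CZ(s,\phi_f^\star):=\int_{\BA_f^\times}\phi_f^\star(x_f)|x_f|_\BA^{s-\frac12}\ud^\times x_f$ reproduces (a unit times) the finite part $L_f(s,\pi_f)$ of the standard $L$-function; this is the matching alluded to before the statement, using that $\phi_f^\star$ is supported on a fractional ideal (Proposition \ref{prp:HKTD}) so that the integral over $\BA_f^\times$ is literally a Dirichlet series in $\FN(\Fa)^{-s}$. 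With this choice, by definition \eqref{GJK} the Godement-Jacquet kernel $H_{\pi,s}(x_\infty):=H_{\pi,s}(x_\infty,\phi_f^\star)$ is exactly the integrand, in the variable $x_\infty\in\BA_\infty^\times$, of the ``$|x|_\BA>1$'' part of $\CZ(s,\phi^\star)$ after carrying out the inner $\BA_f^\times$-integral as in \eqref{zeta>1-0}; similarly $K_{\pi,s}(x_\infty):=K_{\pi,s}(x_\infty,\phi_f^\star)$ is the integrand of the ``$|x|_\BA>1$'' part of the dual zeta integral $\CZ(s,\CF_{\pi,\psi}(\phi^\star))$, because $\CF_{\pi,\psi}=\CF_{\pi_\infty,\psi_\infty}\otimes\CF_{\pi_f,\psi_f}$ factors as a tensor product and the Archimedean factor is left untouched in \eqref{GJKd}.

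The second step is bookkeeping with the two halves of the zeta integral. Splitting $\CZ(s,\phi^\star)=\CZ^{>1}(s,\phi^\star)+\CZ^{<1}(s,\phi^\star)$ along $|x|_\BA\gtrless 1$ and applying $x\mapsto x^{-1}$ to the second half turns it, via the $\pi$-Poisson summation formula (Theorem \ref{PSF0}), more precisely via the unfolding that produces \eqref{GFE-GL1}, into the ``$>1$'' integral of $\CF_{\pi,\psi}(\phi^\star)$ at the argument $1-s$, up to the boundary/polar terms coming from the residual part. The upshot is an identity of the shape $\CZ(s,\phi^\star)=\CZ^{>1}(s,\phi^\star)+\CZ^{>1}(1-s,\CF_{\pi,\psi}(\phi^\star))+(\text{elementary term})$, and the same with $\phi^\star$ replaced by $\CF_{\pi,\psi}(\phi^\star)$ and $s$ by $1-s$ (using $\CF_{\pi,\psi}\CF_{\pi,\psi}$ acts as the expected involution, and $\CZ(s,\phi^\star)=\CZ(1-s,\CF_{\pi,\psi}(\phi^\star))$). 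Because $\CZ(s,\phi^\star)$ equals $L_f(s,\pi_f)$ times the Archimedean zeta integral of $\phi_\infty$ — which is $L_\infty(s,\pi_\infty)$ times an entire, non-vanishing normalization if $\phi_\infty$ is chosen as a suitable Archimedean Godement-Jacquet section — the condition $L_f(s,\pi_f)=0$ is exactly the condition that the full $\CZ(s,\phi^\star)$ vanishes at $s$ (the Archimedean $L$-factor being nonzero in the relevant range, and its poles being cancellable by the choice of $\phi_\infty$).

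The third step converts this scalar vanishing into the claimed distributional identity on $\BA_\infty^\times$. Performing the inner $\BA_f^\times$-integration first in \eqref{zeta>1-0} (legitimate by Fubini and the absolute convergence from Proposition \ref{zeta>1}), the ``$>1$'' part of $\CZ(s,\phi^\star)$ becomes $\int_{\BA_\infty^\times}H_{\pi,s}(x_\infty)\ud^\times x_\infty$ and the ``$>1$'' part of $\CZ(1-s,\CF_{\pi,\psi}(\phi^\star))$ becomes $\int_{\BA_\infty^\times}\CF_{\pi_\infty,\psi_\infty}(K_{\pi,1-s})(x_\infty)\,\phi_\infty'(\cdot)\cdots$ — more precisely, pairing against the Archimedean test vector, one gets that $\CZ(s,\phi^\star)=0$ is equivalent to $\pair{H_{\pi,s}+\CF_{\pi_\infty,\psi_\infty}(K_{\pi,1-s})+(\text{elementary}),\,\Phi_\infty}=0$ for the particular $\Phi_\infty$ attached to $\phi_\infty$; varying $\phi_\infty$ over $\CS_{\pi_\infty}(\BA_\infty^\times)$ while keeping $\phi_f^\star$ fixed lets these pairings separate points, forcing the distributional identity \eqref{CK-pi}, where the elementary term is absorbed into the normalization and produces the overall sign $-$ (matching Clozel's $-K_{1-s}$). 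Here I would invoke Proposition \ref{prp:HKTD} to know that $H_{\pi,s}$ and $K_{\pi,s}$ extend to tempered distributions on all of $\BA_\infty$ via the Miller--Schmid regularization, so that $\CF_{\pi_\infty,\psi_\infty}$ applied to $K_{\pi,1-s}$ makes sense and \eqref{CK-pi} is an equality of honest tempered distributions; and I would use Part (3) of Theorem \ref{thm:local} to rewrite $\CF_{\pi_\infty,\psi_\infty}$ as convolution against $k_{\pi_\infty,\psi_\infty}(xy)=\Fb_{\pi_\infty,\psi_\infty}(xy)|xy|^{1/2}$, giving the last displayed expression in \eqref{CK-pi}.

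The main obstacle I anticipate is not the formal manipulation but the careful handling of the ``elementary/polar term'': the difference between $\CZ(s,\phi^\star)$ and $\CZ^{>1}(s,\phi^\star)+\CZ^{>1}(1-s,\CF_{\pi,\psi}(\phi^\star))$ is a meromorphic function of $s$ coming from the part of the zeta integral near $|x|_\BA=1$, and one must check that with the specific $\phi_f^\star$ matching $L_f(s,\pi_f)$ this term is \emph{absent} (or is precisely the constant that yields the $-$ sign), rather than contributing a spurious pole or entire summand that would break the ``if and only if''. Relatedly, one must verify that the Archimedean zeta integral of the chosen $\phi_\infty$ is non-vanishing and pole-free exactly on the locus where one wants to detect zeros of $L_f(s,\pi_f)$, so that ``$\CZ(s,\phi^\star)=0$'' $\Longleftrightarrow$ ``$L_f(s,\pi_f)=0$'' with no parasitic solutions; this is where the freedom in choosing $\phi_\infty$, and the precise normalization of $\CF_{\pi_\infty,\psi_\infty}$ from \cite{JL22,JL23}, must be used. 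Both points are presumably handled in the body of Section \ref{sec-GJK} in the proof of Theorem \ref{thm:H=FK}, and I would structure the argument so that they are isolated into one lemma about the near-$|x|_\BA=1$ contribution and one lemma about the Archimedean factor.
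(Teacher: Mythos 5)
Your proposal follows essentially the same route as the paper's proof: choose $\phi_f^\star$ via Proposition \ref{prp:phip} so that the finite-part zeta integral equals $L_f(s,\pi_f)$, split $\CZ(s,\phi^\star)$ at $|x|_\BA=1$, convert the $|x|_\BA<1$ piece by the $\pi$-Poisson summation formula into the $>1$ piece of the dual at $1-s$, identify the two $>1$ pieces as pairings of $\phi_\infty$ against $H_{\pi,s}$ and $\CF_{\pi_\infty,\psi_\infty}(K_{\pi,1-s})$, and vary $\phi_\infty$ to upgrade the scalar vanishing to a distributional identity, with the converse obtained by picking $\phi_\infty$ with $\CZ_\infty(s,\phi_\infty)\neq 0$. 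The one point to correct is your treatment of the anticipated ``elementary/polar term'': since $\pi$ is cuspidal, the theta function $\Theta_\pi(x,\phi)=\sum_{\alpha\in k^\times}\phi(\alpha x)$ carries no residual contribution, so the decomposition \eqref{zeta11} is exact with no boundary summand, and the minus sign in \eqref{CK-pi} arises purely from rearranging $0=\langle H_{\pi,s},\phi_\infty\rangle+\langle\CF_{\pi_\infty,\psi_\infty}(K_{\pi,1-s}),\phi_\infty\rangle$, not from absorbing any such term.
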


Theorem \ref{thm:CTh-pi} is a more precise version of Theorem \ref{H=FK} (and Theorem \ref{thm:H=FK}), which is the exact $\pi$-analogy of Theorem \ref{thm:CTh11}.

\subsection{Organiztion of the paper}\label{ssec-OP}

We recall the $\pi$-Poisson summation formula on $\GL_1$ developed by Z. Luo and the first named author of this paper in \cite{JL23} in Section \ref{sec-piPSF}. Sections \ref{sec-piSpace} and \ref{sec-piFour} are to review briefly the local $\pi$-Schwartz spaces and local $\pi$-Fourier operators developed in \cite{JL22} and \cite{JL23}. Based on their work as well as the work of \cite{GJ72}, we recall the formulation of the $\pi$-Poission summation formula on $\GL_1$ in \cite{JL23} in Section \ref{piPSF}.

Sections \ref{sec-LHA} and \ref{sec-BF} are devoted to understand the duality between the function $w_\nu(x)$ and the function $\wt{w_\nu}(x)$ by means of the harmonic analysis on $\GL_1$ as developed in 
\cite{JL22} and \cite{JL23}, and to prove our main local results (Theorem \ref{thm:local}). 
By comparing the Godement-Jacquet theory with the $\GL_n\times\GL_1$ Rankin-Selberg convolution, we are able to express the dual function $\wt{w_\nu}(x)$ of $w_\nu(x)$ in terms of the $\pi_\nu$-Fourier transform $\CF_{\pi_\nu,\psi_\nu}$ up to certain normalization (Proposition \ref{FofTest}), based on Proposition \ref{S=W} that identifies the $\pi_\nu$-Schwartz space $\CS_{\pi_\nu}(k_\nu^\times)$, as introduced in \cite{JL23} and recalled in \eqref{piSS}, with the $\pi_\nu$-Whittaker-Schwartz space $\CW_{\pi_\nu}(k_\nu^\times)$ as defined in \eqref{WSS}. As a consequence, we obtain a formula 
that express the dual function $\wt{w_\nu}(x)$ of $w_\nu(x)$ as a convolution of the $\pi_\nu$-kernel function $k_{\pi_\nu,\psi_\nu}(x)$ with $w_\nu(x)$, up to certain normalization (Corollary \ref{dfw-k}). 
In Section \ref{sec-BF}, we introduce the notion of $\pi_\nu$-Bessel functions $\Fb_{\pi_\nu,\psi_\nu}(x)$ (Definitions \ref{piFb-p}, \ref{piFb-c} and \ref{piFb-r}) and prove the precise relation between the $\pi_\nu$-Bessel functions 
and $\pi_\nu$-kernel functions as defined in \eqref{kernel-ar} (Propositions \ref{k=j-p}, \ref{k=j-c}, and \ref{k=j-r}). In the $p$-adic case, the $\pi_\nu$-Bessel function $\Fb_{\pi_\nu,\psi_\nu}(x)$ on 
$k_\nu^\times$ is introduced following the work of E. Baruch in \cite{BE05}, which is recalled in Section \ref{ssec-NABF}. In the real or complex case, we introduce the $\pi_\nu$-Bessel function $\Fb_{\pi_\nu,\psi_\nu}(x)$ on $k_\nu^\times$ by following the general theory of Bessel functions of high rank by Z. Qi in \cite{Qi20}. It should be mentioned that the $\pi_\nu$-Bessel function $\Fb_{\pi_\nu,\psi_\nu}(x)$ on $\BR^\times$ in the real case is more general than the one considered in \cite{Qi20} (Remark \ref{Qi-rk}). As expected, when $n=2$ our results recover the previous known 
results as discussed by J. Cogdell in \cite{C14} and by D. Soudry in \cite{SD84}. 

With all these ingredients, we are able to give a new proof of the Voronoi formula for $\GL_n$ (Theorem \ref{thm:VSF}) as proved in \cite{IT13} in Section \ref{sec-NPVSF}. In fact, the proof of the Voronoi formula in \cite{IT13} is based on the Rankin-Selberg convolution for $\GL_n\times\GL_1$ in \cite{JPSS83}, \cite{CPS04} and \cite{J09}. And our proof is based on the Godement-Jacquet theory in \cite{GJ72} 
and its reformulation in \cite{JL22, JL23}. The main idea is that the Voronoi summation formula for $\GL_n$ as in Theorem \ref{thm:VSF} is a special case of the $\pi$-Poission summation formula 
on $\GL_1$ as in Theorem \ref{thm:PSF} (\cite[Theorem 4.7]{JL23}), after the long computations carried out in Section \ref{sec-LHA} of this paper and in \cite{JL22, JL23}. Those computations enable us to 
express the summands on the dual side (the right-hand side) of the Voronoi formula in Theorem \ref{thm:VSF} as the global $\pi$-Fourier transform of the summands on the given side (the left-hand side), 
which is Proposition \ref{FTphi}. 

In Section \ref{ssec-GJKD}, in order to define the Godement-Jacquet kernels $H_{\pi,s}(x,\phi_f)$ and their dual kernels $K_{\pi,s}(x,\phi_f)$ (Definition \ref{dfn:CK}) for any irreducible cuspidal automorphic 
representation $\pi$ of $\GL_n(\BA)$, we develop further properties (Propositions \ref{zeta>1} and \ref{prp:intFa}, and Corollary \ref{cor:zeta<1}) of the global zeta integrals $\CZ(s,\phi)$, as defined in \eqref{zetaG10}, by using the $\pi$-Fourier transform $\CF_{\pi,\psi}$ and the associated $\pi$-Poisson summation formula as developed in \cite{JL23}. 
In Proposition \ref{prp:HKTD}, we show that both kernel functions $H_{\pi,s}(x_\infty,\phi_f)$ and $K_{\pi,s}(x_\infty,\phi_f)$ on $\BA_\infty^\times$ can be extended uniquely to tempered distributions on $\BA_\infty$ for any $\phi_f\in\CS_{\pi_f}(\BA_f^\times)$ and for any $s\in\BC$. In Section \ref{ssec-FTGJK}, guided by Theorem \ref{thm:CTh11}, we prove in Proposition \ref{prp:FTGJK} that if $s\in\BC$ is a zero of $L_f(s,\pi_f)$, then the kernel $H_{\pi,s}(x_\infty,\phi_f)$ is equal to the negative of 
$\pi_\infty$-Fourier transform of the dual kernel $K_{\pi,1-s}(x_\infty,\phi_f)$.  For any $\phi_{\infty}\in\CS_{\pi_{\infty}}(\BA_\infty^{\times})$, take $\phi^\star=\phi_\infty\otimes\phi_f^\star$, where $\phi_f^\star:=\otimes_\nu\phi_\nu$ with $\phi_\nu$ as given in Proposition \ref{prp:phip}. Theorem \ref{thm:H=FK} proves 
the $\pi$-version of Theorem \ref{thm:CTh11} for the Euler product expression of $L_f(s,\pi_f)$. With the help of Lemma \ref{lem-testvec}, we obtain the Dirichlet series expression of the kernels in Propositions \ref{prp:CKpi} and \ref{prp:CKDpi}. Finally, Theorem \ref{thm:CTh-pi} establishes the $\pi$-version of Theorem \ref{thm:CTh11} for the Dirichlet series expression of $L_f(s,\pi_f)$.


\section{$\GL_1$-Reformulation of the Godement-Jacquet Theory}\label{sec-piPSF}

We recall from \cite{JL22, JL23} the $\GL_1$-reformulation of the Godement-Jacquet theory for the standard $L$-functions $L(s,\pi)$ 
associated with any irreducible cuspidal automorphic representation $\pi$ of $\GL_n$ (for $n\geq 1$), where $\BA$ is the ring of adeles of a number field $k$. More precisely, we recall 
the $\pi$-Schwartz spaces on $\GL_1$ and the $\pi$-Fourier operators over $\GL_1$ both for the local and global cases, and the $\pi$-Poisson summation formula on $\GL_1$. 

\subsection{$\pi$-Schwartz functions}\label{sec-piSpace}
Let $|k|$ be the set of all local places of $k$. 
For any local place $\nu$, we denote by $F=k_\nu$, the local field of $k$ at $\nu$. 
If $F$ is non-Archimedean, we denote by $\Fo=\Fo_F$ the ring of integers and by $\Fp=\Fp_F$ the maximal ideal of $\Fo$. 
Let $\RG_n=\GL_n$ be the general linear group defined over $F$. Fix the maximal compact subgroups $K$ of $\RG_n(F)$, where $K=\GL_n(\Fo_F)$ if $F$ is non-Archimedean, 
$K=\RO_n$ if $F=\BR$, and $K=\RU_n$ if $F=\BC$. 

Let $\RM_n(F)$ be the space of all $n\times n$ matrices over $F$ and $\CS(\RM_n(F))$ be the space of Schwartz functions on $\RM_n(F)$. When $F$ is Archimedean, it is the space of usual Schwartz functions 
on the affine space $\RM_n(F)$, and when $F$ is $p$-adic, it consists of all locally constant, compactly supported functions on $\RM_n(F)$. 
Let $|\cdot|_F$ be the normalized absolute value on the local field $F$, which is the modular function of the multiplication of $F^\times$ on $F$ with respect to the self-dual additive Haar measure $\ud^+x$ on $F$. As a reformulation of the local Godement-Jacquet theory in \cite[Section 2.2]{JL23}, the (standard) Schwartz space on $\RG_n(F)$ is defined to be 
\begin{align}\label{GJ-SS}
\CS_{\std}(\RG_n(F)):=\{\xi\in\CC^\infty(\RG_n(F)) \colon |\det g|_F^{-\frac{n}{2}}\cdot\xi(g)\in\CS(\RM_n(F))\}, 
\end{align}
where $\CC^\infty(\RG_n(F))$ denotes the space of all smooth functions on $\RG_n(F)$. 
By \cite[Prposition 2.5]{JL23}, the Schwartz space $\CS_{\std}(\RG_n(F))$ is a subspace of $L^2(\RG_n(F),\ud g)$, which is the space of square-integrable functions on $\RG_n(F)$.

Consider the determinant map
$
\det={\det}_F\colon\RM_n(F)=\GL_n(F)\to F.
$
When restricted to $F^\times$, we obtain that 
\begin{align}\label{det}
\det={\det}_F\colon\RG_n(F)=\GL_n(F)\to F^\times
\end{align}
and the fibers of the determinant map $\det$ are of the form:
\begin{align}\label{fiber}
\RG_n(F)_x:=
\{g\in \RG_n(F) \colon \det g=x\}.
\end{align}
When $x=1$, the fiber is the kernel of the map, i.e. $\ker(\det)=\SL_n(F)$. In general, each fiber $\RG_n(F)_x$ is an $\SL_n(F)$-torsor. 
Let $\ud^+g$ be the self-dual Haar measure on $\RM_n(F)$ with respect to the standard Fourier transform defined by (\ref{eq:FTMAT}) below.   On $\RG_n(F)$, we fix 
the Haar measure $\ud g = |\det g|_F^{-n}\cdot\ud^+g$. 
Let $\ud_1 g$ be the induced Haar measure $\ud g$ from $\RG_n(F)$ to $\SL_n(F)$. It follows that the Haar measure $\ud_1 g$ induces an $\SL_n(F)$-invariant measure $\ud_x g$ on each fiber $\RG_n(F)_x$. 

Let $\Pi_F(\RG_n)$ be the set of equivalence classes of irreducible smooth representations of $\RG_n(F)$ when $F$ is non-Archimedean;
and of irreducible Casselman-Wallach representations of $\RG_n(F)$ when $F$ is Archimedean. For $\pi\in\Pi_F(\RG_n)$,  we denote by $\CC(\pi)$ the space of all matrix coefficients of $\pi$. 
Write $\xi=|\det g|_F^{\frac{n}{2}}\cdot f(g)\in\CS_{\std}(\RG_n(F))$ with some $f\in\CS(\RM_n(F))$
as in \eqref{GJ-SS}.  For $\varphi_\pi\in\CC(\pi)$,  as in \cite[Section 3.1]{JL23}, 
we define
\begin{align}\label{fibration}
\phi_{\xi,\vphi_\pi}(x) := \int_{\RG_n(F)_x}
\xi(g)\vphi_\pi(g)\ud_x g
=
|x|_F^{\frac{n}{2}}
\int_{\RG_n(F)_x}
f(g)\vphi_\pi(g)\ud_x g.
\end{align}
By \cite[Proposition 3.2]{JL23}, the function $\phi_{\xi,\vphi_\pi}(x)$ is absolutely convergent for all $x\in F^\times$ and is smooth over $F^\times$. 
As in \cite[Definition 3.3]{JL23}, for any $\pi\in\Pi_F(\RG_n)$, the space of $\pi$-Schwartz functions is defined as
\begin{align}\label{piSS}
\CS_\pi(F^\times) = \Span
\{
\phi=\phi_{\xi,\vphi_\pi}\in\CC^\infty(F^\times) \colon \xi\in \CS_{\std}(\RG_n(F)),\vphi_\pi\in \CC(\pi)\}.
\end{align}
By \cite[Corollary 3.8]{JL23}, we have
\begin{align}\label{CSC}
\CC_c^\infty(F^\times)\subset
\CS_\pi(F^\times)
\subset
\CC^\infty(F^\times).
\end{align}

\subsection{$\pi$-Fourier transform}\label{sec-piFour}
Let $\psi=\psi_F$ be a fixed non-trivial additive character of $F$. 
The (standard) Fourier transform $\CF_\psi$ on $\CS(\RM_n(F))$ is defined as follows,
\begin{equation}\label{eq:FTMAT}
\CF_\psi(f)(x) = \int_{\RM_n(F)}
\psi(\tr(xy))f(y)\ud^+y.
\end{equation}
It is well-known that the Fourier transform $\CF_\psi$ extends to a unitary operator on the space $L^2(\RM_n(F),\ud^+x)$ and satisfies the following identity:
\begin{equation}\label{eq:FTId}
\CF_\psi\circ \CF_{\psi^{-1}}  =\Id.
\end{equation}
Following the reformulation of the local Godement-Jacquet theory in \cite[Section 2.3]{JL23}, the Fourier transform $\CF_\psi$ on $\CS(\RM_n(F))$ yields 
a (nonlinear) Fourier transform $\CF_{\GJ}$ on $\CS_{\std}(\RG_n(F))$, which is a convolution operator with the distribution kernel:
\begin{align}\label{GJ-kernel}
\Phi_{\GJ}(g):=\psi(\tr g)\cdot|\det g|_F^{\frac{n}{2}}. 
\end{align}
More precisely, the Fourier transform $\CF_{\GJ}$ is defined to be 
\begin{align}\label{GJ-FO}
\CF_{\GJ}(\xi)(g):=\left(\Phi_{\GJ}*\xi^\vee\right)(g)
\end{align}
for any $\xi\in\CS_{\std}(\RG_n(F))$, where $\xi^{\vee}(g):=\xi(g^{-1})$. From \cite[Proposition 2.6]{JL23}, a relation between
the (nonlinear) Fourier operator $\CF_{\GJ}$ and the (classical or linear) Fourier transform $\CF_\psi$ is given by 
\begin{align}\label{FOFT}
\CF_{\GJ}(\xi)(g)=\left(\Phi_{\GJ}*\xi^\vee\right)(g)
=
|\det g|_F^{\frac{n}{2}}\cdot\CF_\psi(|\det g|_F^{-\frac{n}{2}}\xi)(g).
\end{align}
From the proof of \cite[Proposition 2.6]{JL23}, it is easy to obtain that
\begin{align}\label{convolution}
\left(\Phi_{\GJ}*\xi^\vee\right)(g)
=
|\det g|_F^{\frac{n}{2}}\left(\psi(\tr(\cdot))*(|\det(\cdot)|_F^{\frac{n}{2}}\xi)^\vee\right)(g)
\end{align}
for any $\xi\in\CS_{\std}(\RG_n(F))$.

As in \cite[Section 3.2]{JL23}, the $\pi$-Fourier transform $\CF_{\pi,\psi}$ is defined through the following diagram:
\begin{align}\label{diag:F}
\xymatrix{
\CS_{\std}(\RG_n(F))\otimes \CC(\pi)\ar[d]\ar[rrr]^{(\CF_{\GJ},(\cdot)^{\vee})}&&& \CS_{\std}(\RG_n(F))\otimes \CC(\wt{\pi})\ar[d]\\
\CS_\pi(F^\times) \ar[rrr]^{\CF_{\pi,\psi}} &&& \CS_{\wt{\pi}}(F^\times)
}
\end{align}
More precisely, for $\phi=\phi_{\xi,\vphi_\pi}\in\CS_\pi(F^\times)$ with a $\xi\in\CS_{\std}(\RG_n(F))$ and
a $\vphi_\pi\in\CC(\pi)$, the $\pi$-Fourier transform $\CF_{\pi,\psi}$ is defined by 
\begin{align}\label{eq:1-FO}
\CF_{\pi,\psi}(\phi)=\CF_{\pi,\psi}(\phi_{\xi,\vphi_\pi}):=\phi_{\CF_{\GJ}(\xi),\vphi_\pi^\vee},
\end{align}
where $\vphi_\pi^\vee(g)=\vphi_\pi(g^{-1})\in\CC(\wt{\pi})$. It was verified in \cite[Proposition 3.9]{JL23} that the descending $\pi$-Fourier transform $\CF_{\pi,\psi}$ is well defined. 
From \cite[Theorem 5.1]{JL22}, the $\pi$-Fourier transform $\CF_{\pi,\psi}$ can also be represented as a convolution operator by some kernel function $k_{\pi,\psi}$, 
which is explicitly given as follows. 

We fix a $\varphi_{\widetilde{\pi}}\in \CC(\widetilde{\pi})$ with $\varphi_{\widetilde{\pi}}(\RI_n)=1$. We also
choose a sequence of test functions $\{\Fc_\ell\}_{\ell=1}^\infty\subset \CC^\infty_c(\RG_n(F))$, such that for any $h\in \CC_c^\infty(\RG_n(F))$,
\begin{align}\label{Fc-I}
\lim_{\ell\to \infty}
\int_{\RG_n(F)}
\Fc_\ell(g)h(g) \ud g= h(\RI_n).
\end{align}
In other words, the sequence $\{\Fc_\ell\}_{\ell=1}^\infty$ tends to the delta mass supported at the identity $\RI_n$ as $\ell\rightarrow\infty$.
The $\pi$-kernel function $k_{\pi,\psi}(x)$ is defined as
\begin{equation}\label{kernel-ar}
k_{\pi,\psi}(x)
:=
\int^\reg_{\det g=x}
\Phi_{\GJ}(g)\vphi_{\wt{\pi}}(g)\ud_x g=
|x|^{\frac{n}{2}}_F\int^\reg_{\det g=x}\psi(\tr(g))
\vphi_{\wt{\pi}}(g)\ud_x g
\end{equation}
where $\Phi_{\GJ}$ is the kernel function as defined in \eqref{GJ-kernel} and the integral is regularized as follows:
\begin{align}\label{k-reg-1}
\int^\reg_{\det g=x}
\Phi_{\GJ}(g)\vphi_{\wt{\pi}}(g)\ud_x g
:=
\lim_{\ell\to \infty}
\int_{\det g=x}
\left(\Phi_{\GJ}*\Fc_\ell^\vee\right)(g)
\vphi_{\wt{\pi}}(g)
\ud_xg.
\end{align}
It is shown in \cite[Proposition 3.5, Corollary 3.7, Corollary 4.5 and Theorem 4.6]{JL22} that $k_{\pi,\psi}$ is a smooth function on $F^{\times}$ and is independent of the choice of the matrix coefficient $\varphi_{\widetilde{\pi}}$ and the chosen sequence $\{ \Fc_{\ell}\}_{\ell=1}^{\infty}$ that tends to the delta mass supported at $\RI_n$.
By \cite[Theorem 5.1]{JL22}, we have that for any $\phi\in\CC_c^{\infty}(F^{\times})$
\begin{align}\label{FO-k}
  \CF_{\pi,\psi}(\phi)(x)=(k_{\pi,\psi}*\phi^\vee)(x).  
\end{align}
Following \cite{Ngo20}, one may call the $\pi$-Fourier transform $\CF_{\pi,\psi}$ a generalized Hankel transform or the $\pi$-Hankel transform.

\subsection{$\pi$-Poisson summation formula on $\GL_1$}\label{piPSF}

Recall that $|k|$ is the set of all local places of the number field $k$. Let $|k|_{\infty}$ be the subset of $|k|$ consisting of all Archimedean local places of $k$. We may write
$|k|=|k|_\infty\cup|k|_f$, 
where $|k|_f$ is the set of non-Archimedean local places of $k$. 
Let $\Pi_\BA(\RG_n)$ be the set of equivalence classes of irreducible admissible representations
of $\RG_n(\BA)$. We write $\pi=\otimes_{\nu\in|k|}\pi_\nu$ and assume that
$\pi_\nu\in\Pi_{k_\nu}(\RG_n)$ and at almost all finite local places $\nu$ the local representations $\pi_\nu$ are unramified. This means that when $\nu\in |k|_f$, $\pi_\nu$ is an irreducible admissible representation of $\RG_n(k_\nu)$, and when $\nu\in|k|_\infty$, $\pi_\nu$ is of Casselman-Wallach type as a representation of $\RG_n(k_\nu)$.
Let $\CA(\RG_n)\subset\Pi_\BA(\RG_n)$ be the subset consisting of equivalence classes of irreducible admissible automorphic representations
of $\RG_n(\BA)$, and $\CA_\cusp(\RG_n)$ be the subset of cuspidal members in $\CA(\RG_n)$. We refer to \cite[Chepter 1]{Arthur13} or \cite{JL23} for the notation and definition of automorphic representations. 

Take any $\displaystyle{\pi=\otimes_{\nu\in|k|}}\pi_{\nu}\in\Pi_{\BA}(\RG_n)$. For each local place $\nu \in |k|$, the $\pi_{\nu}$-Schwartz space $\CS_{\pi_{\nu}}(k_{\nu}^{\times})$ is defined as in \eqref{piSS}. Recall from \cite[Theorem 3.4]{JL23} that the basic function $\BL_{\pi_{\nu}}\in\CS_{\pi_{\nu}}(k_{\nu}^{\times})$ is defined when the local component $\pi_{\nu}$ of $\pi$ is unramified. Then the $\pi$-Schwartz space on $\BA^\times$ is defined to be
\begin{align}\label{piSS-BA}
\CS_\pi(\BA^\times):=\otimes_{\nu\in|k|}\CS_{\pi_\nu}(k_\nu^\times),
\end{align}
which is the restricted tensor product of the local $\pi_\nu$-Schwartz space $\CS_{\pi_\nu}(k_\nu^\times)$ with respect to the family of the basic functions $\BL_{\pi_\nu}$ for all the local places $\nu$ at which
$\pi_\nu$ are unramified. The factorizable vectors $\phi=\otimes_\nu\phi_\nu$ in $\CS_\pi(\BA^\times)$ can be written as
\begin{align}\label{SF-factorize}
\phi(x)=\prod_{\nu\in|k|}\phi_\nu(x_\nu),\;x=(x_{\nu})_{\nu}.
\end{align}
Here at almost all finite local places $\nu$, $\phi_\nu(x_\nu)=\BL_{\pi_\nu}(x_\nu)$. According to the normalization (\cite[Theorem 3.4]{JL23}), we have that $\BL_{\pi_\nu}(x_\nu)=1$ when $x_\nu\in\Fo_\nu^\times$, the unit group of
the ring $\Fo_\nu$ of integers at $\nu$. Hence for any given $x\in\BA^\times$, the product in \eqref{SF-factorize} is a finite product.

For any factorizable vectors $\phi=\otimes_\nu\phi_\nu$ in $\CS_\pi(\BA^\times)$, we define the
$\pi$-Fourier transform (or operator):
\begin{align}\label{FO-BA}
\CF_{\pi,\psi}(\phi):=\otimes_{\nu\in|k|}\CF_{\pi_\nu,\psi_\nu}(\phi_\nu).
\end{align}
Here at each $\nu\in|k|$, $\CF_{\pi_\nu,\psi_\nu}$ is the local $\pi_\nu$-Fourier transform as defined in
\eqref{diag:F} and \eqref{eq:1-FO}, which takes the $\pi_\nu$-Schwartz space
$\CS_{\pi_\nu}(k_\nu^\times)$ to the $\wt{\pi}_\nu$-Schwartz space $\CS_{\wt{\pi}_\nu}(k_\nu^\times)$,
and has the property that 
$\CF_{\pi_\nu,\psi}(\BL_{\pi_\nu})=\BL_{\wt{\pi}_\nu}$, 
when the data are unramified at $\nu$ (see \cite[Theorem 3.10]{JL23}). Hence the Fourier transform $\CF_{\pi,\psi}$ as defined in
\eqref{FO-BA} maps the $\pi$-Schwartz space $\CS_\pi(\BA^\times)$ to the $\wt{\pi}$-Schwartz space $\CS_{\wt{\pi}}(\BA^\times)$, where $\wt{\pi}\in\Pi_\BA(\RG_n)$ is the 
contragredient of $\pi$. The $\pi$-Poisson summation formula (\cite[Theorem 4.7]{JL23}) can be stated as below. 

\begin{thm}[$\pi$-Poisson summation formula]\label{thm:PSF}
For any  $\pi\in\CA_\cusp(\RG_n)$, the $\pi$-theta function
$\Theta_\pi(x,\phi):=\sum_{\alpha\in k^\times}\phi(\alpha x)$ 
converges absolutely and locally uniformly for any $x\in\BA^\times$ and any $\phi\in\CS_\pi(\BA^\times)$. 
Let $\wt{\pi}\in\CA_\cusp(\RG_n)$ be the contragredient of $\pi$. Then the following identity 
\[
\Theta_\pi(x,\phi)
=
\Theta_{\wt{\pi}}(x^{-1},\CF_{\pi,\psi}(\phi)),
\]
holds as functions in $x\in\BA^\times$, where $\CF_{\pi,\psi}$ is the $\pi$-Fourier transform as defined in
\eqref{FO-BA}.
\end{thm}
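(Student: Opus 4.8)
The plan is to deduce the $\pi$-Poisson summation formula from the global functional equation \eqref{GFE-GL1} for the $\GL_1$-zeta integral $\CZ(s,\phi)$ — which rests on the Godement--Jacquet theory — by comparing Mellin transforms along the idele class group $k^\times\backslash\BA^\times$ and inverting. We treat the two assertions (convergence, then the identity) in turn.

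For the convergence statement, by linearity it suffices to consider a generating function $\phi=\phi_{\xi,\vphi_\pi}$, the global analogue of the fibration integral \eqref{fibration}, with $\xi\in\CS_\std(\RG_n(\BA))$, say $\xi=|\det|_\BA^{n/2}f$ with $f\in\CS(\RM_n(\BA))$, and $\vphi_\pi$ a matrix coefficient of $\pi$ realized through cusp forms. For fixed $x\in\BA^\times$ the orbit $k^\times x$ lies in the single coset $x\BA^1$ (with $\BA^1=\{y:|y|_\BA=1\}$), in which $k^\times$ is a cocompact lattice; hence absolute and locally uniform convergence of $\sum_{\alpha\in k^\times}\phi(\alpha x)$ follows from the decay of $\pi$-Schwartz functions near $0$ and $\infty$ of $\BA^\times$, which in turn comes from the Schwartz decay of $f$ on $\RM_n(\BA)$ and the rapid decrease modulo centre of the cuspidal matrix coefficient $\vphi_\pi$. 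The same estimates show that $\Theta_\pi(\cdot,\phi)$, viewed on $k^\times\backslash\BA^\times$, is smooth and rapidly decreasing as $|x|_\BA\to 0$ and as $|x|_\BA\to\infty$ — the decay near $|x|_\BA=0$ controlled by $f$ near the zero matrix and near $|x|_\BA=\infty$ by that of $\vphi_\pi$; for $n=1$ with $\pi$ trivial a residual term survives, but it is exactly the correction already built into $\CS_\pi(\BA^\times)$, as in Tate's thesis.

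For the identity, unfolding the $k^\times$-sum gives, for every unitary Hecke character $\chi$ of $k^\times\backslash\BA^\times$ and every $s\in\BC$ (initially $\Re s\gg 0$, then by analytic continuation),
\[
\int_{k^\times\backslash\BA^\times}\Theta_\pi(x,\phi)\,\chi(x)\,|x|_\BA^{s-\frac{1}{2}}\,\ud^\times x
=\int_{\BA^\times}\phi(x)\,\chi(x)\,|x|_\BA^{s-\frac{1}{2}}\,\ud^\times x
=\CZ(s,\chi\cdot\phi),
\]
and $\chi\cdot\phi\in\CS_{\pi\otimes\chi}(\BA^\times)$, since twisting $\vphi_\pi$ by $\chi\circ\det$ twists $\phi_{\xi,\vphi_\pi}$ by $\chi$. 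As $\pi\otimes\chi$ is again cuspidal, \eqref{GFE-GL1} applies to it, and from the same fibration description together with \eqref{eq:1-FO} one gets $\CF_{\pi\otimes\chi,\psi}(\chi\cdot\phi)=\chi^{-1}\cdot\CF_{\pi,\psi}(\phi)$ (the matrix coefficient $\vphi_\pi^\vee$ being twisted by $\chi^{-1}\circ\det$). Therefore $\CZ(s,\chi\cdot\phi)=\CZ(1-s,\chi^{-1}\cdot\CF_{\pi,\psi}(\phi))$, and the substitution $x\mapsto x^{-1}$ identifies the right-hand side with the $\chi$-Mellin transform of $x\mapsto\Theta_{\wt\pi}(x^{-1},\CF_{\pi,\psi}(\phi))$. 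Thus the two smooth, rapidly-decreasing functions $\Theta_\pi(x,\phi)$ and $\Theta_{\wt\pi}(x^{-1},\CF_{\pi,\psi}(\phi))$ on $k^\times\backslash\BA^\times$ have identical Mellin transforms against every unitary Hecke character; by Fourier analysis on the compact group $k^\times\backslash\BA^1$ together with Mellin inversion in the $|\cdot|_\BA$-direction, they coincide.

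I expect the main obstacle to be the decay of $\Theta_\pi(\cdot,\phi)$ on $k^\times\backslash\BA^\times$: this is where cuspidality of $\pi$ is used essentially — through the rapid decrease of cusp forms and their matrix coefficients — and it requires a precise description of $\pi$-Schwartz functions near $0$ and $\infty$ of $\BA^\times$, where, unlike for $\CC_c^\infty(\BA^\times)$, they are not compactly supported but are governed by the basic functions $\BL_{\pi_\nu}$ and hence by the analytic behaviour of $L_f(s,\pi_f)$. The remaining ingredients — the unfolding, the twist-compatibility of $\CF_{\pi,\psi}$, and Mellin inversion on $k^\times\backslash\BA^\times$ — are routine. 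Alternatively, the identity can be proved directly: via \eqref{fibration} write $\Theta_\pi(x,\phi_{\xi,\vphi_\pi})$ as the integral of the classical theta series $\sum_{\gamma\in\GL_n(k)}f(\gamma\,\cdot\,)$ on $\RM_n(\BA)$ against the cusp form realizing $\vphi_\pi$, complete the sum to $\sum_{\gamma\in\RM_n(k)}$ at the cost of terms of rank $<n$ that integrate to $0$ against a cusp form, apply the classical Poisson summation on $\RM_n(\BA)$, and run the same steps in reverse for $\wt\pi$ with $\CF_\psi f$ in place of $f$, using $|\det g|_\BA^{n/2}\CF_\psi(|\det g|_\BA^{-n/2}\xi)=\CF_{\GJ}(\xi)$ from \eqref{FOFT}; the inversion $x\mapsto x^{-1}$ then arises from $\det(g^{-1})=(\det g)^{-1}$ via the $(\cdot)^\vee$ in \eqref{GJ-FO}. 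Either route requires the same analytic and cuspidality input.
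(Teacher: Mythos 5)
This theorem is not proved in the paper at all: it is recalled verbatim from \cite[Theorem 4.7]{JL23}, so there is no in-paper argument to compare against. Judged on its own terms, your \emph{second} (``alternative'') route --- writing $\Theta_\pi(x,\phi_{\xi,\vphi_\pi})$ via \eqref{fibration} as an integral of the lattice sum $\sum_{\gamma\in\GL_n(k)}f(\gamma\,\cdot)$ against a cusp form, completing to $\sum_{\gamma\in\RM_n(k)}$ because the singular (rank $<n$) terms integrate to zero against a cusp form, applying classical Poisson summation on $\RM_n(\BA)$, and reversing with $\CF_{\GJ}(\xi)$ via \eqref{FOFT} --- is precisely the Godement--Jacquet argument carried out in \cite{JL23}, and it is the correct proof. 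You correctly identify cuspidality and the completion-of-the-sum step as the essential input.

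Your \emph{primary} route (Mellin inversion from \eqref{GFE-GL1} for all unitary twists) has two genuine problems as written. First, the decay claim: you assert that $\Theta_\pi(\cdot,\phi)$ is rapidly decreasing as $|x|_\BA\to 0$ ``controlled by $f$ near the zero matrix.'' This is not a valid mechanism --- Schwartz functions need not vanish at the zero matrix, and indeed for $n=1$ with $\pi$ trivial the theta function blows up like $|x|_\BA^{-1}$ there. For cuspidal $\pi$ on $\GL_n$, $n\geq 2$, the rapid decay of $\Theta_\pi$ as $|x|_\BA\to 0$ is true, but the only available proof of it is exactly the completion-of-the-sum/vanishing-of-singular-terms argument (equivalently, the identity $\Theta_\pi(x,\phi)=\Theta_{\wt\pi}(x^{-1},\CF_{\pi,\psi}(\phi))$ itself combined with the decay of the dual side at $|x^{-1}|_\BA\to\infty$). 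So the analytic input you would need to justify Mellin inversion already contains the theorem's main content. Second, there is a circularity issue of logical ordering: in the framework of \cite{JL23} the global functional equation \eqref{GFE-GL1} (\cite[Theorem 4.6]{JL23}) is obtained from the Godement--Jacquet global theory, whose functional equation is itself proved by Poisson summation on $\RM_n(\BA)$ plus cuspidality. The Mellin route is therefore legitimate only if one imports the functional equation of $L(s,\pi\times\chi)$ for all unitary Hecke characters $\chi$ from \cite{GJ72} as an external input; it cannot serve as an independent derivation within this setup. The twist-compatibility statements you use ($\chi\cdot\phi\in\CS_{\pi\otimes\chi}(\BA^\times)$ and $\CF_{\pi\otimes\chi,\psi}(\chi\cdot\phi)=\chi^{-1}\cdot\CF_{\pi,\psi}(\phi)$) are correct. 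The cleanest fix is simply to promote your alternative route to the main proof.
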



\section{Local Harmonic Analysis}\label{sec-LHA}


In this section, we take $F=k_\nu$ to be a local field of characteristic zero and fix a non-trivial additive character $\psi=\psi_F$ of $F$. Since the representations 
$\pi\in\Pi_F(\RG_n)$ considered in this section are the local components of irreducible cuspidal automorphic representations of $\RG_n(\BA)$, we may only consider 
generic $\pi\in\Pi_F(\RG_n)$ without loss of generality. 

Let $B_n=T_nN_n$ be the Borel subgroup of $\RG_n$, which consisting of all upper-triangular matrices of $\RG_n$, where $T_n$ is the maximal torus consisting of all 
diagonal matrices of $\RG_n$, and $N_n$ is the unipotent radical of $B_n$, which consists of matrices $n=(n_{i,j})$ with $n_{i,j}=0$ if $1\leq j<i\leq n$, and 
$n_{i,i}=1$ for $i=1,2,\dots,n$. Without loss of generality, we may take a generic character as 
\[
\psi(n)=\psi_{N_n}(n)=\psi_F(n_{1,2}+n_{2,3}+\cdots+n_{n-1,n}). 
\]
Let $\ell_\psi$ be a non-zero member in $\Hom_{N_n(F)}(\pi,\psi)$, which is one-dimensional if $\pi\in\Pi_F(\RG_n)$ is generic. For any $v\in V_\pi$, define the Whittaker function by $W_v(g):=\ell_\psi(\pi(g)v)$. 
Let $\CW(\pi,\psi)$ be the Whittaker model of $\pi$, which consisting of Whittaker functions $W_v(g)$ with $v$ runs through the space $V_\pi$ of $\pi$. Let $V_\pi^\infty$ be 
the subspace of $V_\pi$ consisting of all smooth vectors of $V_\pi$. We define the $\pi$-Whittaker-Schwartz space on $F^\times$ to be 
\begin{align}\label{WSS}
    \CW_{\pi,\psi}(F^\times):=\{\omega(x):=|x|^{1-\frac{n}{2}}\cdot W_v\left( \begin{pmatrix}  x& \\ & \RI_{n-1} \end{pmatrix}\right) \colon v\in V_\pi^\infty\}, 
\end{align}
where $|\cdot|=|\cdot|_F$ is the normalized absolute value on $F$.

\begin{prp}\label{S=W}
For any $\pi\in\Pi_F(\RG_n)$, which is generic, the $\pi$-Schwartz space and the $\pi$-Whittaker-Schwartz space coincide with each other:
$\CS_{\pi}(F^{\times})=\CW_{\pi,\psi}(F^\times)$. 
\end{prp}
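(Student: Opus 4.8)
The plan is to identify both spaces with the image of the Godement--Jacquet fibration integral \eqref{fibration}, and to turn the comparison into the well-known equivalence between the Godement--Jacquet zeta integral for $\RG_n$ and the Jacquet--Piatetski-Shapiro--Shalika zeta integral for $\GL_n\times\GL_1$. For $\phi=\phi_{\xi,\vphi_\pi}$ the Mellin transform $\int_{F^\times}\phi(x)|x|_F^{s-\frac12}\,\ud^\times x$ equals the Godement--Jacquet integral $\int_{\RG_n(F)}\xi(g)\vphi_\pi(g)|\det g|_F^{s-\frac12}\,\ud g$, while for $\omega(x)=|x|_F^{1-\frac n2}W_v(\diag(x,\RI_{n-1}))\in\CW_{\pi,\psi}(F^\times)$ it equals the Rankin--Selberg integral $\int_{F^\times}W_v(\diag(x,\RI_{n-1}))|x|_F^{s-\frac{n-1}2}\,\ud^\times x$. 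Since $\pi$ is generic, realize it on its Whittaker model $\CW(\pi,\psi)$ and write an arbitrary matrix coefficient as $\vphi_\pi(g)=\pair{\pi(g)W_v,W_{\wt v}}$ for the $\RG_n(F)$-invariant pairing $\pair{\cdot,\cdot}\colon\CW(\pi,\psi)\times\CW(\wt\pi,\psi^{-1})\to\BC$ (realized by an integral over the mirabolic subgroup $P_n$, absolutely convergent when $\pi$ is tempered and by regularization otherwise). Substituting into \eqref{fibration} and interchanging the order of integration reduces everything to the partial fibre integral $I_\xi(y,W):=\int_{\det g=y}\xi(g)W(g)\,\ud_y g$ for $\xi\in\CS_{\std}(\RG_n(F))$ and $W\in\CW(\pi,\psi)$.

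To evaluate $I_\xi(y,W)$ I would insert the Iwasawa decomposition $g=nak$ with $n\in N_n(F)$, $a\in T_n(F)$, $k\in K$, so that $W(nak)=\psi_{N_n}(n)W(ak)$ and, writing $\xi(g)=|\det g|_F^{n/2}f(g)$ with $f\in\CS(\RM_n(F))$, the $N_n(F)$-integral $\int_{N_n(F)}\psi_{N_n}(n)f(nak)\,\ud n$ is a partial Fourier transform of the Schwartz datum and remains of Schwartz type in the surviving variables; the $K$-integral is absorbed into the vector via $\int_K W(ak)c(k)\,\ud k=W_{v_c}(a)$ with $v_c=\int_K c(k)\pi(k)v\,\ud k\in V_\pi^\infty$; and the remaining integral, over the codimension-one subtorus $\{\det a=y\}$ of $T_n(F)$ against a Schwartz weight, collapses --- by the recursion satisfied by Whittaker functions on the diagonal torus, i.e.\ the Kirillov-model structure --- to a single term $c\cdot|y|_F^{1-\frac n2}W_{v'}(\diag(y,\RI_{n-1}))$ with $v'\in V_\pi^\infty$ depending linearly and continuously on $\xi$ and $v$. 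The bookkeeping of normalizing factors --- the $|\det g|_F^{n/2}$ built into \eqref{GJ-SS}, the modulus character of $B_n$ in the Iwasawa measure, and the $|x|_F^{1-\frac n2}$ twist built into \eqref{WSS} --- is exactly what produces the exponent $1-\frac n2$, and this gives the inclusion $\CS_\pi(F^\times)\subseteq\CW_{\pi,\psi}(F^\times)$.

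For the reverse inclusion I would show the linear correspondence just constructed is onto $\CW_{\pi,\psi}(F^\times)$. Both spaces contain $\CC_c^\infty(F^\times)$: for $\CS_\pi(F^\times)$ this is \eqref{CSC}, and for $\CW_{\pi,\psi}(F^\times)$ it is the standard fact that, for $n\geq2$, the Kirillov model of a generic representation of $\RG_n(F)$ contains $\CC_c^\infty(F^\times)$ (Bernstein--Zelevinsky in the $p$-adic case, Jacquet in the Archimedean case). Modulo $\CC_c^\infty(F^\times)$ each of the two spaces is finite-dimensional and is governed by the same asymptotic data: rapid decay --- indeed eventual vanishing in the $p$-adic case --- as $|x|_F\to\infty$, together with an asymptotic expansion as $x\to0$ whose exponents and logarithmic degrees are those forced by the poles of $L(s,\pi)$; for $\CS_\pi(F^\times)$ this comes from the Godement--Jacquet analysis of \eqref{fibration} near the boundary of $\RM_n(F)$, and for $\CW_{\pi,\psi}(F^\times)$ from the Bernstein--Zelevinsky derivative filtration in the $p$-adic case and the Casselman--Wallach asymptotic expansions in the Archimedean case. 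Hence the natural injection $\CS_\pi(F^\times)/\CC_c^\infty(F^\times)\hookrightarrow\CW_{\pi,\psi}(F^\times)/\CC_c^\infty(F^\times)$ is a map between finite-dimensional spaces of equal dimension, hence an isomorphism, and $\CS_\pi(F^\times)=\CW_{\pi,\psi}(F^\times)$ follows. Equivalently, one can run the whole argument through Mellin transforms: by Godement--Jacquet, resp.\ Jacquet--Piatetski-Shapiro--Shalika, the Mellin transforms of the elements of the two spaces fill out the same fractional ideal determined by the common standard $L$-factor, and Mellin inversion recovers the functions --- a phrasing that is cleanest in the $p$-adic case.

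The step I expect to be the main obstacle is this last matching of asymptotics: one must compute the precise leading terms of $\phi_{\xi,\vphi_\pi}(x)$ as $x\to0$, uniformly over the $p$-adic and Archimedean cases, and pair them off against the Kirillov-model asymptotics, which in the real and complex cases forces one to track possible logarithmic terms and to work within the Casselman--Wallach topology on $V_\pi^\infty$. A secondary but unavoidable technicality is the regularization of the mirabolic pairing and the justification of the interchanges of integration when $\pi$ fails to be tempered.
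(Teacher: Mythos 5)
Your primary route (computing the fibre integral $I_\xi(y,W)$ directly via the Iwasawa decomposition) is genuinely different from the paper's, but it leaves the crux unproved. The paper does not compute the fibration integral at all: it observes that the Mellin transform of $\omega\in\CW_{\pi,\psi}(F^\times)$ in all twists $\chi$ is the Rankin--Selberg integral $\Psi(s,W,\chi)$, invokes \cite[Theorem 3.4]{JL23} together with \cite[Theorem 2.7]{JPSS83} and \cite[Theorem 2.1]{J09} to produce a $\phi\in\CS_\pi(F^\times)$ with $\CZ(s,\phi,\chi)=\Psi(s,W,\chi)$ for all $\chi$, and then concludes $\phi=\omega$ pointwise by fixing $s_0\gg0$ so that both functions times $|\cdot|^{s_0-\frac12}$ lie in $L^1(F^\times)$ and applying uniqueness of Fourier transforms on the locally compact abelian group $F^\times$ (\cite[Theorem 23.11]{HR79}); the reverse inclusion is symmetric. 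In your version, the assertion that the integral over the subtorus $\{\det a=y\}$ ``collapses, by the recursion satisfied by Whittaker functions on the diagonal torus, to a single term $c\,|y|_F^{1-\frac n2}W_{v'}(\diag(y,\RI_{n-1}))$'' is not a computation but a restatement of the inclusion $\CS_\pi(F^\times)\subseteq\CW_{\pi,\psi}(F^\times)$ itself: already for $n=2$ the fibre integral is a twisted convolution $\int f(t)\,\pi(\diag(t,1))W\,\ud^\times t$ against a weight that is only of Schwartz type (not compactly supported on $F^\times$), and showing that this lands in the smooth Whittaker/Kirillov model --- with control near $\det g=0$ and at infinity, and in the Casselman--Wallach topology in the Archimedean case --- is exactly the content that must be supplied. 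You flag this as ``the main obstacle,'' which is accurate, but no mechanism is offered to overcome it.

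The second genuine gap is in your reverse inclusion. The claim that $\CS_\pi(F^\times)/\CC_c^\infty(F^\times)$ and $\CW_{\pi,\psi}(F^\times)/\CC_c^\infty(F^\times)$ are finite-dimensional is correct only in the $p$-adic case (Bernstein--Zelevinsky); over $\BR$ or $\BC$ the asymptotic expansions at $x=0$ involve infinitely many terms and both quotients are infinite-dimensional, so the ``injection between finite-dimensional spaces of equal dimension'' argument collapses precisely where the proposition is hardest. Your fallback --- matching Mellin transforms and inverting --- is essentially the paper's proof, but as stated it is too weak: equality of the fractional ideals $\{\CZ(s,\phi,\chi)\}$ and $\{\Psi(s,W,\chi)\}$ for each fixed $\chi$ does not by itself produce, for a given $\omega$, a single $\phi$ whose Mellin transform matches that of $\omega$ simultaneously in $s$ \emph{and} in all $\chi$; one needs the stronger statement of \cite[Theorem 3.4]{JL23} for that, and then a concrete injectivity statement for the Mellin/Fourier transform on $F^\times$ (the paper's $L^1$ device) to pass from equality of transforms to equality of functions. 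If you repair the reverse inclusion by carrying out this Mellin argument in both directions, you can dispense with the fibre-integral computation entirely, which is what the paper does.
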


\begin{proof}
We first show that
$\CW_{\pi,\psi}(F^\times)\subset \CS_{\pi}(F^{\times})$. 
For any unitary character $\chi$ of $F^{\times}$ and $W\in\CW(\pi,\psi)$, the local Rankin-Selberg integral for $\GL_n\times\GL_1$ 
\begin{align*}
    \Psi(s,W,\chi):=\int_{F^{\times}} W\left( \begin{pmatrix}  x& \\ & \RI_{n-1} \end{pmatrix}  \right)\chi(x)|x|^{s-\frac{n-1}{2}} \ud ^{\times}x
    =\int_{F^{\times}} \omega(x)\chi(x)|x|^{s-\frac{1}{2}} \ud ^{\times}x,
\end{align*}
where $\omega(x)\in \CW_{\pi,\psi}(F^\times)$ as in Proposition \ref{WSpi}, is absolutely convergent when $\Re(s)$ is sufficiently positive and the fractional ideal generated by all such integrals is $\BC[q^{-s},q^s]L(s,\pi\times\chi)$ by \cite[Theorem 2.7]{JPSS83} for the non-Archimedean case and a holomorphic multiple of $L(s,\pi\times\chi)$, bounded at infinity in vertical strips due to \cite[Theorem 2.1]{J09} for Archimedean case.

According to \cite[Theorem 3.4]{JL23}, there is some $\phi\in\CS_{\pi}(F^{\times})$ such that 
\begin{align*}
\Psi(s,W,\chi)=\CZ(s,\phi,\chi):=\int_{F^{\times}}\phi(x)\chi(x)|x|^{s-\frac{1}{2}}\ud^{\times}x
\end{align*}
when $\Re(s)$ is sufficiently positive. In particular, fix a $s_0\in\BR$ sufficiently positive such that both functions $\phi(\cdot)|\cdot|^{s_0-\frac{1}{2}}$ and $\omega(\cdot)|\cdot|^{s_0-\frac{1}{2}}$ belong to $L^1(F^{\times})$, the space of $L^1$-functions on $F^\times$. 
It follows that 
\[
\int_{F^{\times}}\left(\phi(x)|x|^{s_0-\frac{1}{2}}-\omega(x)|x|^{s_0-\frac{1}{2}} \right)\chi(x)\ud^\times x=0
\]
for all unitary character $\chi$ of $F^{\times}$. From the general theory about absolutely continuous measures on local compact abelian groups (See \cite[Theorem 23.11]{HR79} for instance), we must have that 
$\phi(x)|x|^{s_0-\frac{1}{2}}-\omega(x)|x|^{s_0-\frac{1}{2}} =0$ 
for a.e. $x\in F^{\times}$, which implies
$\omega(x)=\phi(x)$ 
for all $x\in F^{\times}$ since both functions are smooth. Hence we obtain that $\CW_{\pi,\psi}(F^\times)\subset \CS_{\pi}(F^{\times})$. 

Again, by \cite[Theorem 3.4]{JL23} and the local theory of the Rankin-Selberg convolution of $\GL_n\times\GL_1$ as in \cite[Theorem 2.7]{JPSS83} for the non-Archimedean case and in 
\cite[Theorem 2.1]{J09} for Archimedean case, we can repeat the above discussion to prove that 
$\CS_{\pi}(F^{\times})\subset\CW_{\pi,\psi}(F^{\times})$. 
Hence we get that $\CS_{\pi}(F^{\times})=\CW_{\pi,\psi}(F^{\times})$.
\end{proof}

From Proposition \ref{S=W}, the following assertion is clear, since the $\pi$-Schwartz space $\CS_{\pi}(F^{\times})$ is independent of the choice of the character $\psi$.

\begin{cor}\label{WSpi}
    The space of $\pi$-Whittaker-Schwartz functions $\CW_{\pi,\psi}(F^\times)$ defined in \eqref{WSS} is independent of the choice of the character $\psi$.
\end{cor}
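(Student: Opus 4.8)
The plan is to read the corollary off directly from Proposition \ref{S=W}. The one point to check is that, although a nontrivial additive character $\psi=\psi_F$ of $F$ was fixed at the start of this section, nothing in the statement or proof of Proposition \ref{S=W} depends on that particular choice: the argument uses only \cite[Theorem 3.4]{JL23} (which concerns $\CS_\pi(F^\times)$ and involves no additive character), the local Rankin--Selberg theory for $\GL_n\times\GL_1$ in \cite[Theorem 2.7]{JPSS83} and \cite[Theorem 2.1]{J09}, and the uniqueness statement for absolutely continuous measures in \cite[Theorem 23.11]{HR79}, all of which hold verbatim for an arbitrary nontrivial $\psi$. Hence Proposition \ref{S=W} in fact gives
\[
\CW_{\pi,\psi}(F^\times)=\CS_\pi(F^\times)
\]
for every nontrivial additive character $\psi$ of $F$.

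Next I would observe that the right-hand side is an invariant of $\pi$ alone. Indeed, by the definition \eqref{piSS} the space $\CS_\pi(F^\times)$ is built only from the standard Schwartz space $\CS_{\std}(\RG_n(F))$ of \eqref{GJ-SS}, the space $\CC(\pi)$ of matrix coefficients of $\pi$, and the fibration integral \eqref{fibration}, none of which refers to an additive character of $F$. Therefore, for any two nontrivial additive characters $\psi$ and $\psi'$ of $F$, combining the two instances of the displayed identity yields
\[
\CW_{\pi,\psi}(F^\times)=\CS_\pi(F^\times)=\CW_{\pi,\psi'}(F^\times),
\]
which is the asserted independence.

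There is no genuine obstacle here; the corollary is a formal consequence of Proposition \ref{S=W}. For completeness I would also record a self-contained Whittaker-theoretic proof. Writing the general nontrivial character as $\psi'=\psi^{a}$ with $\psi^{a}(u)=\psi(au)$, put $t_a=\diag(a^{n-1},a^{n-2},\dots,a,1)\in T_n(F)$; then $\psi_{N_n}(t_a n t_a^{-1})=\psi'_{N_n}(n)$, so $W\mapsto\bigl(g\mapsto W(t_ag)\bigr)$ carries $\CW(\pi,\psi)$ bijectively onto $\CW(\pi,\psi')$ and preserves smoothness. Since $t_a$ is diagonal it commutes with $\diag(x,\RI_{n-1})$, so $W(t_a\diag(x,\RI_{n-1}))=W(\diag(x,\RI_{n-1})t_a)$, which is the value at $\diag(x,\RI_{n-1})$ of the right translate $g\mapsto W(gt_a)$, again an element of $\CW(\pi,\psi)$; as right translation by $t_a$ is a bijection of $\CW(\pi,\psi)$, the two Whittaker--Schwartz spaces coincide as spaces of functions of $x$. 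Either route proves the corollary, and I would present the one through Proposition \ref{S=W} since it is the shortest.
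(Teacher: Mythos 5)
Your main argument is exactly the paper's proof: Proposition \ref{S=W} identifies $\CW_{\pi,\psi}(F^\times)$ with $\CS_\pi(F^\times)$ for every nontrivial $\psi$, and the latter is built from $\CS_{\std}(\RG_n(F))$, $\CC(\pi)$ and the fibration integral, none of which involves an additive character. The supplementary Whittaker-theoretic argument via translation by $t_a=\diag(a^{n-1},\dots,a,1)$ is also correct but not needed.
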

    
By Corollary \ref{WSpi}, we may denote by $\CW_{\pi}(F^\times)$ the $\pi$-Whittaker-Schwartz space on $F^\times$ as defined in \eqref{WSS}. 
After identifying the $\pi$-Schwartz space $\CS_{\pi}(F^{\times})$ with the $\pi$-Whittaker-Schwartz space $\CW_{\pi}(F^\times)$, we are going to understand 
the $\pi$-Fourier transform 
$
\CF_{\pi,\psi}\colon \CS_\pi(F^\times) \to \CS_{\wt{\pi}}(F^\times)
$
in terms of the structure of Whittaker models. 

\begin{prp}\label{F=W}
    For $\phi\in\CS_{\pi}(F^{\times})$, we may write as in \eqref{WSS} that 
    \begin{align*}
        \phi(x)=\omega(x)=W\left( \begin{pmatrix}  x& \\ & \RI_{n-1} \end{pmatrix}  \right)|x|^{1-\frac{n}{2}}
    \end{align*}
for some $W\in\CW(\pi,\psi)$. Then the $\pi$-Fourier transform can be expressed by the following formula:
\begin{align*}
    \CF_{\pi,\psi}(\phi)(x)=\CF_{\pi,\psi}(\omega)(x)=|x|^{1-\frac{n}{2}}\int_{F^{n-2}}\left(\pi(w_{n,1})\widetilde{W}\right)\begin{pmatrix}
         x& & \\y & \RI_{n-2} & \\ & & 1 
    \end{pmatrix}\ud y,
\end{align*}
where $\widetilde{W}(g):=W(w_0\,^tg^{-1})$ for any $g\in\GL_n(F)$ is a Whiitaker function in $\CW(\wt{\pi},\psi^{-1})$ and 
\begin{align}\label{wn1}
    w_{n,1}=\begin{pmatrix}
        1 & \\ & w_{n-1}
    \end{pmatrix}.
\end{align}
Here we denote by $w_m$ the longest Weyl element of $\RG_m=\GL_m$, which is defined inductively by 
\begin{align}\label{wlong}
    w_m=\begin{pmatrix}&1\\ w_{m-1}&\end{pmatrix},\quad {\rm with}\quad w_2=\begin{pmatrix}&1\\ 1&\end{pmatrix}. 
\end{align}
\end{prp}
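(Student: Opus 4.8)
The plan is to compute the $\pi$-Fourier transform directly from its definition in \eqref{eq:1-FO}, tracing through the fibration integral \eqref{fibration} and the Godement--Jacquet convolution operator \eqref{GJ-FO}--\eqref{FOFT}, and then to identify the resulting expression with the Whittaker-model formula on the right-hand side using the local functional equation of the Rankin--Selberg integral for $\GL_n\times\GL_1$. The key point is that Proposition \ref{S=W} lets us pass freely between $\CS_\pi(F^\times)$ and $\CW_\pi(F^\times)$, so both $\phi$ and its transform $\CF_{\pi,\psi}(\phi)\in\CS_{\wt\pi}(F^\times)$ have Whittaker-model descriptions; the content of the proposition is to pin down \emph{which} Whittaker function represents $\CF_{\pi,\psi}(\phi)$ in terms of the $W$ representing $\phi$.

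\textbf{Key steps.} First I would fix $\phi=\phi_{\xi,\vphi_\pi}$ with $\xi\in\CS_{\std}(\RG_n(F))$ and $\vphi_\pi\in\CC(\pi)$, so that $\CF_{\pi,\psi}(\phi)=\phi_{\CF_{\GJ}(\xi),\vphi_\pi^\vee}$ by \eqref{eq:1-FO}. Using a matrix coefficient $\vphi_\pi(g)=\langle \pi(g)v,\wt v\rangle$, the fibration \eqref{fibration} produces, after unfolding against the Whittaker functional $\ell_\psi$, an integral of $\CF_{\GJ}(\xi)$ over the fiber $\RG_n(F)_x$ twisted by a matrix coefficient; this is the standard Godement--Jacquet-to-Rankin--Selberg comparison, i.e. the computation underlying \cite[Theorem 3.4]{JL23}. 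Second, I would feed in the explicit form of $\CF_{\GJ}(\xi)$ from \eqref{FOFT}, namely $\CF_{\GJ}(\xi)(g)=|\det g|_F^{n/2}\CF_\psi(|\det(\cdot)|_F^{-n/2}\xi)(g)$, and translate the classical Fourier transform on $\RM_n(F)$ into the action on the Whittaker side. The crucial input is the local functional equation $\Psi(1-s,\wt W,\chi^{-1})=\gamma(s,\pi\times\chi,\psi)\cdot\Psi(s,W,\chi)$ of \cite{JPSS83,J09}, combined with the Godement--Jacquet local functional equation for the same $\gamma$-factor (or $\epsilon$-factor times $L$-ratio); matching the two zeta integrals and using the uniqueness/density argument from the proof of Proposition \ref{S=W} (via \cite[Theorem 23.11]{HR79}) forces the transform to be represented by the specific Whittaker function written on the right. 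Third, I would carry out the change of variables that converts the abstract "dual Whittaker function on a fiber" into the displayed integral: the appearance of $\wt W(g)=W(w_0\,{}^tg^{-1})$ reflects the contragredient realization, the Weyl element $w_{n,1}=\mathrm{diag}(1,w_{n-1})$ and the $F^{n-2}$-integral over the $y$-coordinates in the first column (below the $(1,1)$-entry) come from unfolding the $\GL_{n-1}\times\GL_1$ "mirabolic" structure inside $\GL_n$ — this is the same unipotent integration that appears in the classical Voronoi/Hankel-transform formulas for $\GL_n$ (cf. \cite{MS11,IT13}), and for $n=2$ it degenerates to the known Bessel-function formula of \cite{C14,SD84}.

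\textbf{Main obstacle.} The hard part will be the bookkeeping in the middle step: correctly tracking the normalizing powers of $|\det|_F$ and $|x|_F$ (the shift by $\tfrac{n-1}{2}$ versus $\tfrac{n}{2}$ versus $\tfrac12$ in \eqref{WSS}, \eqref{piSS}, \eqref{dualfunction}), ensuring the additive character on the right is $\psi^{-1}$ (so $\wt W\in\CW(\wt\pi,\psi^{-1})$ as claimed) rather than $\psi$, and verifying that the $F^{n-2}$-integral converges and represents the correct element of $\CS_{\wt\pi}(F^\times)$. A secondary subtlety is that $\CF_{\GJ}$ is defined via the convolution kernel $\Phi_{\GJ}$, which is only a distribution, so the interchange of the fiber integral with the Fourier transform must be justified either by the regularization \eqref{k-reg-1}--\eqref{FO-k} (reducing first to $\phi\in\CC_c^\infty(F^\times)$ and then extending by continuity on $\CS_\pi(F^\times)$) or by working in the region of absolute convergence of the zeta integrals and then analytically continuing. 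I expect these convergence and normalization issues, rather than any conceptual difficulty, to constitute the bulk of the work, since the identification of the transform is dictated by uniqueness of the $\gamma$-factor once the two functional equations are aligned.
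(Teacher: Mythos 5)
Your proposal is correct and its decisive step---matching the $\GL_1$-reformulated Godement--Jacquet functional equation of \cite[Theorem 3.10]{JL23} against the dual side of the Rankin--Selberg functional equation of \cite{JPSS83} and \cite{J09}, cancelling the common $\gamma(s,\pi\times\chi,\psi)$, and then invoking the same $L^1$-plus-uniqueness argument (via \cite[Theorem 23.11]{HR79}) used in the proof of Proposition \ref{S=W} at a fixed $s_0$ with $\Re(s_0)$ sufficiently negative---is exactly the paper's proof. The paper simply omits your first and third steps: it never unfolds the fibration or computes $\CF_{\GJ}$ explicitly, because the displayed $F^{n-2}$-integral involving $\pi(w_{n,1})\widetilde{W}$ is quoted verbatim as the dual side of the Rankin--Selberg local functional equation, so the normalization and convergence bookkeeping you flag as the main obstacle never arises.
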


\begin{proof}
From the functional equation for the local zeta integrals $\CZ(s,\phi,\chi)$ as proved in \cite[Theorem 3.10]{JL23}, we have that 
\begin{align*}
    \Psi(s,W,\chi)=\CZ(s,\phi,\chi)=\CZ(1-s,\CF_{\pi,\psi}(\phi),\chi^{-1})\gamma(s,\pi\times\chi,\psi)^{-1}.
\end{align*}
On the other hand, from the functional equation for the local zeta integrals $\Psi(s,W,\chi)$ as proved in \cite[Theorem 2.7]{JPSS83} for the non-Archimedean case and in \cite[Theorem 2.1]{J09} for the Archimedean case, we have that 
\begin{align*}
\Psi(s,W,\chi)=\gamma(s,\pi\times\chi,\psi)^{-1}\int_{F^{\times}}\int_{F^{n-2}}\left(\pi(w_{n,1})\widetilde{W}\right)\begin{pmatrix}
         x& & \\y & \RI_{n-2} & \\ & & 1 
    \end{pmatrix}\ud y\;\chi^{-1}(x)|x|^{\frac{3-n}{2}-s}\ud^{\times}x.
\end{align*}
From the absolute convergence of the local zeta integrals $\CZ(s,\phi,\chi)$ and $\Psi(s,W,\chi)$, we may choose and fix a $s_0\in\BC$ with $\Re(s_0)$ sufficiently negative, 
such that both functions 
\begin{align*}
    \CF_{\pi,\psi}(\phi)(\cdot)|\cdot|^{\frac{1}{2}-s_0}\quad {\rm and}\quad 
    |\cdot|^{\frac{3-n}{2}-s_0}\int_{F^{n-2}}\left(\pi(w_{n,1})\widetilde{W}\right)\begin{pmatrix}
         \cdot & & \\y & \RI_{n-2} & \\ & & 1 
    \end{pmatrix}\ud y
\end{align*}
belong to $L^1(F^{\times})$. It follows that 
\begin{align*}
   \int_{F^{\times}} \left( \CF_{\pi,\psi}(\phi)(x)|x|^{\frac{1}{2}-s_0}- \int_{F^{n-2}}\left(\pi(w_{n,1})\widetilde{W}\right)\begin{pmatrix}
         x & & \\y & \RI_{n-2} & \\ & & 1 
    \end{pmatrix}\ud y|x|^{\frac{3-n}{2}-s_0}
    \right) \chi^{-1}(x)\ud^{\times}x=0 
\end{align*}
for any unitary character $\chi$. Now we use the same argument as in the proof of Proposition \ref{S=W} to deduce that
\[
\CF_{\pi,\psi}(\phi)(x)=|x|^{1-\frac{n}{2}}\int_{F^{n-2}}\left(\pi(w_{n,1})\widetilde{W}\right)\begin{pmatrix}
         x & & \\y & \RI_{n-2} & \\ & & 1 
    \end{pmatrix}\ud y
\]
for any $x\in F^{\times}$, as they are smooth in $x$.
\end{proof}

In particular, in the case $n=2$, we have a much simpler formula. 

\begin{cor}\label{GL2F=W}
When $n=2$, the action of the longest Weyl group element $w_2$ of $\RG_2$ on the Kirillov model of $\pi$ is given by the 
(non-linear) Fourier transform $\CF_{\pi,\psi}$: 
\begin{align*}
    \CF_{\pi,\psi}(\phi)=\pi(w_2)(\phi)\cdot\omega_{\pi}^{-1}, \quad {\rm for}\quad \phi\in \CS_{\pi}(F^\times),
\end{align*}
where the $\pi$-Schwartz space $\CS_{\pi}(F^\times)$ and the $\pi$-Whittaker-Schwartz space $\CW_\pi(F^\times)$ can be identified with the Kirillov model of $\pi$ by Proposition \ref{S=W} and $\omega_{\pi}$ is the central character of $\pi$.
\end{cor}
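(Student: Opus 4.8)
The plan is to deduce Corollary \ref{GL2F=W} directly from Proposition \ref{F=W} by specializing to $n=2$ and unwinding the resulting degenerate formula. First I would record, via Proposition \ref{S=W} with $n=2$, that $\CS_\pi(F^\times)=\CW_{\pi,\psi}(F^\times)$ is exactly the Kirillov model of $\pi$: indeed for $n=2$ the defining formula \eqref{WSS} reads $\omega(x)=|x|^{1-1}W_v\begin{pmatrix} x& \\ & 1\end{pmatrix}=W_v\begin{pmatrix} x& \\ & 1\end{pmatrix}$. In particular every term in the asserted identity is a genuine function on $F^\times$, $\pi(w_2)$ makes sense as the $\GL_2(F)$-operator (right translation) on this space, and $\CF_{\pi,\psi}(\phi)$, a priori in $\CS_{\wt\pi}(F^\times)$, is being compared with such a function.

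Next I would plug $n=2$ into the formula of Proposition \ref{F=W}. Three simplifications occur: the prefactor $|x|^{1-\frac n2}$ equals $1$; the integration domain $F^{n-2}=F^0$ is a single point, so the integral over $y$ collapses to mere evaluation; and the Weyl element $w_{n,1}=\begin{pmatrix}1&\\&w_{n-1}\end{pmatrix}$ of \eqref{wn1} becomes $w_{2,1}=\begin{pmatrix}1&\\&w_1\end{pmatrix}=\RI_2$ since $w_1=(1)$. Hence Proposition \ref{F=W} reduces to $\CF_{\pi,\psi}(\phi)(x)=\widetilde W\begin{pmatrix} x& \\ & 1\end{pmatrix}$, where $\widetilde W(g)=W(w_2\,{}^tg^{-1})$ and $\phi(x)=W\begin{pmatrix} x& \\ & 1\end{pmatrix}$.

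Then I would carry out the elementary matrix bookkeeping. Since ${}^t\begin{pmatrix} x& \\ & 1\end{pmatrix}^{-1}=\begin{pmatrix} x^{-1}& \\ & 1\end{pmatrix}$, we get $\widetilde W\begin{pmatrix} x& \\ & 1\end{pmatrix}=W\begin{pmatrix} 0&1\\ x^{-1}&0\end{pmatrix}$. Factoring out the center, $\begin{pmatrix} 0&1\\ x^{-1}&0\end{pmatrix}=x^{-1}\RI_2\cdot\begin{pmatrix} x& \\ & 1\end{pmatrix}w_2$, so the transformation property of $W$ under the center of $\GL_2(F)$ gives $W\begin{pmatrix} 0&1\\ x^{-1}&0\end{pmatrix}=\omega_\pi(x)^{-1}\,W\!\left(\begin{pmatrix} x& \\ & 1\end{pmatrix}w_2\right)=\omega_\pi(x)^{-1}\,(\pi(w_2)W)\begin{pmatrix} x& \\ & 1\end{pmatrix}$. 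Recognizing $(\pi(w_2)W)\begin{pmatrix} x& \\ & 1\end{pmatrix}$ as the Kirillov-model value of $\pi(w_2)\phi$ at $x$ then yields $\CF_{\pi,\psi}(\phi)(x)=\omega_\pi(x)^{-1}\cdot(\pi(w_2)\phi)(x)$ for all $x\in F^\times$, which is the claimed identity $\CF_{\pi,\psi}(\phi)=\pi(w_2)(\phi)\cdot\omega_\pi^{-1}$.

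There is essentially no obstacle here: the substance of the corollary is already contained in Proposition \ref{F=W}, and the only points requiring care are the degeneration of the $y$-integral and of the prefactor, the vanishing of $w_{2,1}$ to the identity, and correctly extracting the central character through the transpose-inverse — all routine.
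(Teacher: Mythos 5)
Your proposal is correct and follows essentially the same route as the paper: specialize Proposition \ref{F=W} to $n=2$ (where the prefactor, the $y$-integral, and $w_{2,1}$ all degenerate trivially), then factor $w_2\begin{pmatrix}x^{-1}&\\&1\end{pmatrix}$ through the center to extract $\omega_\pi(x)^{-1}$. Your write-up is if anything slightly more careful than the paper's, which performs exactly the same matrix manipulation in one displayed line.
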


\begin{proof}
    According to Proposition \ref{F=W}, let $\phi(x)=W\left( \begin{pmatrix}
        x & \\ & 1
    \end{pmatrix}  \right)$,
    we have
    \begin{align*}
        \CF_{\pi,\psi}(\phi)(x)=W\left(w_2  \begin{pmatrix}
            x^{-1} & \\ & 1 
        \end{pmatrix}  \right)=W\left(  \begin{pmatrix}
            x^{-1} & \\ & x^{-1}
        \end{pmatrix}  \begin{pmatrix}
            x & \\  & 1 
        \end{pmatrix}w_2\right)=\omega_{\pi}(x^{-1})(\pi(w_0)\phi)(x).
    \end{align*}
\end{proof}

According to \cite[Lemma 5.2]{IT13}, for any $w(x)\in\CC_c^{\infty}(F^{\times})$, there is a unique smooth function $\widetilde{w}(x)$ on $F^{\times}$ of rapid decay at infinity and with at most polynomial growth at zero such that the local functional equation \eqref{dualfunction} holds as meromorphic functions in $s\in\BC$. The map $w(x)\mapsto \wt{w}(x)$ is 
called the Bessel transform in \cite{IT13}. Some more discussions and explicit formulas related to this map were given in \cite[Section 4]{Cor21} based on the local functional equation of the Rankin-Selberg convolution for $\GL_n\times\GL_1$ from \cite{JPSS83} and \cite{J09}. Over the Archimedean local fields, the map $w(x)\mapsto \wt{w}(x)$ has been 
studied in \cite{Qi20} in the framework of Hankel transforms with the Bessel functions of high rank as the kernel functions. 
The following result says that the map $w(x)\mapsto \wt{w}(x)$ is given by the $\pi$-Fourier transform up to certain normalization. 

\begin{prp}\label{FofTest}
The dual function $\widetilde{w}(x)$ of $w(x)\in\CC_c^{\infty}(F^{\times})$ as defined by \eqref{dualfunction} can be expressed in terms of $\pi$-Fourier transforms:
    \[
    \CF_{\pi,\psi}(w(\cdot)|\cdot|^{1-\frac{n}{2}})=\widetilde{w}(\cdot)|\cdot|^{1-\frac{n}{2}}.
    \]
\end{prp}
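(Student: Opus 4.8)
The plan is to derive Proposition~\ref{FofTest} directly from the two functional equations now available: the one for the $\GL_1$-zeta integral $\CZ(s,\phi,\chi)$ (from \cite[Theorem 3.10]{JL23}) and the one characterizing the dual function $\widetilde{w}$ in \eqref{dualfunction}. First I would observe that for $w\in\CC_c^\infty(F^\times)\subset\CS_\pi(F^\times)$, the function $\phi(x):=w(x)|x|^{1-\frac{n}{2}}$ lies in $\CS_\pi(F^\times)$ by \eqref{CSC}; equivalently, by Proposition~\ref{S=W}, $w(x)|x|^{1-\frac{n}{2}}=\omega(x)$ for some $W\in\CW(\pi,\psi)$, so $\CZ(s,\phi,\chi)=\Psi(s,W,\chi)$ when $\Re(s)$ is large. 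The left-hand side of \eqref{dualfunction}, after the substitution $s\mapsto$ the zeta-integral variable, is exactly $\CZ(1-s,\widetilde{w}(\cdot)|\cdot|^{1-\frac n2},\chi^{-1})$, and the right-hand side is $\gamma(1-s,\pi\times\chi,\psi)\,\CZ(s,w(\cdot)|\cdot|^{1-\frac n2},\chi)$. So \eqref{dualfunction} literally says
\[
\CZ(1-s,\widetilde{w}(\cdot)|\cdot|^{1-\frac n2},\chi^{-1})=\gamma(1-s,\pi\times\chi,\psi)\,\CZ(s,\phi,\chi).
\]

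Next I would compare this with the functional equation of \cite[Theorem 3.10]{JL23}, which reads $\CZ(s,\phi,\chi)=\gamma(s,\pi\times\chi,\psi)^{-1}\CZ(1-s,\CF_{\pi,\psi}(\phi),\chi^{-1})$, i.e. $\CZ(1-s,\CF_{\pi,\psi}(\phi),\chi^{-1})=\gamma(s,\pi\times\chi,\psi)\CZ(s,\phi,\chi)$. Here I must be careful about which gamma factor appears: matching the two displays requires that the $\gamma$-factor normalization in \eqref{dualfunction} (with argument $1-s$ and the shift $\frac{n-1}{2}$) agrees with the one in \cite{JL23} (with argument $s$ and the shift $\frac12$ coming from the $|x|^{1-\frac n2}$ twist). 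This is essentially the statement that the Godement--Jacquet gamma factor for $\GL_n\times\GL_1$ coincides with the Jacquet--Piatetski-Shapiro--Shalika gamma factor, together with the bookkeeping $s\leftrightarrow 1-s$, which is exactly the content that makes Proposition~\ref{F=W} and the Rankin--Selberg functional equation compatible; I would invoke \cite[Theorem 3.10]{JL23} and \cite[Theorem 2.7]{JPSS83}, \cite[Theorem 2.1]{J09} to pin it down. Granting this, the two displays give
\[
\CZ(1-s,\widetilde{w}(\cdot)|\cdot|^{1-\frac n2},\chi^{-1})=\CZ(1-s,\CF_{\pi,\psi}(\phi),\chi^{-1})
\]
for all unitary $\chi$ and all $s$ (as meromorphic identities, hence for a fixed $s=s_0$ with $\Re(s_0)$ chosen so that both integrands lie in $L^1(F^\times)$).

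Finally I would run the now-familiar uniqueness argument, exactly as in the proofs of Propositions~\ref{S=W} and \ref{F=W}: fixing such an $s_0$, the equality of $\CZ(1-s_0,\cdot,\chi^{-1})$ for all unitary $\chi$ means the two $L^1$-functions $\widetilde{w}(x)|x|^{1-\frac n2}|x|^{\frac12-s_0}$ and $\CF_{\pi,\psi}(\phi)(x)|x|^{\frac12-s_0}$ have the same Fourier transform on the locally compact abelian group $F^\times$, hence agree a.e.\ by \cite[Theorem 23.11]{HR79}, hence agree everywhere since both are smooth (using \cite[Lemma 5.2]{IT13} for the smoothness of $\widetilde{w}$ and \eqref{CSC} together with smoothness of $\pi$-Schwartz functions for $\CF_{\pi,\psi}(\phi)$). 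Cancelling $|x|^{\frac12-s_0}$ yields $\CF_{\pi,\psi}(w(\cdot)|\cdot|^{1-\frac n2})(x)=\widetilde{w}(x)|x|^{1-\frac n2}$ for all $x\in F^\times$, as claimed. The main obstacle is the middle step: correctly reconciling the gamma-factor normalizations and the $s\leftrightarrow 1-s$ conventions between \eqref{dualfunction} (stated in the Ichino--Templier Rankin--Selberg normalization) and \cite[Theorem 3.10]{JL23} (stated in the Godement--Jacquet $\GL_1$-normalization); once the arguments and shifts are aligned, everything else is the standard Mellin/Fourier uniqueness dictionary already used twice above.
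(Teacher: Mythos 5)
Your proposal is correct and follows essentially the same route as the paper: rewrite both sides of \eqref{dualfunction} as $\GL_1$-zeta integrals, invoke the functional equation of \cite[Theorem 3.10]{JL23} to identify the left-hand side with $\CZ(s,\CF_{\pi,\psi}(w(\cdot)|\cdot|^{1-\frac{n}{2}}),\chi^{-1})$, and conclude by the $L^1$/Mellin uniqueness argument of \cite[Theorem 23.11]{HR79} together with smoothness. The only quibble is a bookkeeping slip in your first display (after substituting $s\mapsto 1-s$ the gamma factor should read $\gamma(s,\pi\times\chi,\psi)$, not $\gamma(1-s,\pi\times\chi,\psi)$), but you flag and resolve exactly this normalization issue in the next step, which matches the paper's tacit identification of the Godement--Jacquet and Rankin--Selberg gamma factors.
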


\begin{proof}
    Since $w(x)\in\CC_c^{\infty}(F^{\times})$, we have $w(x)|x|^{1-\frac{n}{2}}\in\CC_{c}^{\infty}(F^{\times})$ as well. 
    By \cite[Theorem 3.4]{JL23} and as in the proof of Proposition \ref{S=W}, the right-hand side of \eqref{dualfunction} can be written as 
    \begin{align*}
        \gamma(1-s,\pi\times\chi,\psi)\int_{F^{\times}}w(y)\chi(y)|y|^{1-s-\frac{n-1}{2}}\ud^\times y=\gamma(1-s,\pi\times\chi,\psi)\CZ(1-s,w(\cdot)|\cdot|^{1-\frac{n}{2}},\chi).
    \end{align*}
By the local functional equation in \cite[Theorem 3.10]{JL23}, we have 
\[
\gamma(1-s,\pi\times\chi,\psi)\CZ(1-s,w(\cdot)|\cdot|^{1-\frac{n}{2}},\chi)=\CZ(s, \CF_{\pi,\psi}(w(\cdot)|\cdot|^{1-\frac{n}{2}}),\chi^{-1})
\]
It follows that the left-hand side of \eqref{dualfunction} can be written as 
\[
\int_{F^\times}\wt{w}(y)\chi^{-1}(y)|y|^{s-\frac{n-1}{2}}\ud^\times y
=
\int_{F^\times}\CF_{\pi,\psi}(w(\cdot)|\cdot|^{1-\frac{n}{2}})(y)\chi^{-1}(y)|y|^{s-\frac{1}{2}}\ud^\times y
\]
as meromorphic functions in $s\in\BC$. Since the integrals on both sides of the above equation converge absolutely for $\Re(s)$ sufficiently negative, we choose one of such $s_0\in\BC$ and fix it such that the two smooth functions 
$\wt{w}(y)|y|^{s_0-\frac{n-1}{2}}$ and $\CF_{\pi,\psi}(w(\cdot)|\cdot|^{1-\frac{n}{2}})(y)|y|^{s_0-\frac{1}{2}}$ 
belong to $L^1(F^\times)$. Again, by the general theory as in \cite[Theorem 23.11]{HR79}, we obtain that 
\[
\wt{w}(y)|y|^{s_0-\frac{n-1}{2}}=\CF_{\pi,\psi}(w(\cdot)|\cdot|^{1-\frac{n}{2}})(y)|y|^{s_0-\frac{1}{2}},
\]
which implies that 
$\CF_{\pi,\psi}(w(\cdot)|\cdot|^{1-\frac{n}{2}})(x)=\widetilde{w}(x)|x|^{1-\frac{n}{2}}$,  
as functions on $F^\times$, is smooth, of rapid decay at infinity, and with at most polynomial growth at zero.
\end{proof}

Combining Proposition \ref{FofTest} with the formula in \eqref{FO-k}, we obtain a formula for $\wt{w}(x)$ for any $w\in\CC^\infty_c(F^\times)$.

\begin{cor}\label{dfw-k}
For any $\pi\in \Pi_F(\RG_n)$, the dual function $\wt{w}(x)$ associated with any $w\in\CC^\infty_c(F^\times)$ is given by the following formula:
    \[
    \wt{w}(x)=|x|_F^{\frac{n}{2}-1}\left(k_{\pi,\psi}(\cdot)*(w^\vee(\cdot)|\cdot|_F^{\frac{n}{2}-1})\right)(x),
    \]
    where $k_{\pi,\psi}(x)$ is the $\pi$-kernel function associated with $\pi$ as in \eqref{kernel-ar} and $w^\vee(x)=w(x^{-1})$.
\end{cor}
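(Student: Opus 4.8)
The plan is to combine the identity of Proposition~\ref{FofTest} with the convolution description of the $\pi$-Fourier transform recorded in \eqref{FO-k}. The entire argument is a change of variable together with bookkeeping of the weight $|\cdot|_F^{1-\frac{n}{2}}$, so there is no serious analytic difficulty; the one point to be checked is that the relevant functions lie in the class for which \eqref{FO-k} was established.

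First I would set $\phi(x):=w(x)\,|x|_F^{1-\frac{n}{2}}$. Since $w\in\CC_c^\infty(F^\times)$ and $x\mapsto|x|_F^{1-\frac{n}{2}}$ is smooth and nowhere vanishing on $F^\times$, we have $\phi\in\CC_c^\infty(F^\times)$, so \eqref{FO-k} applies and gives $\CF_{\pi,\psi}(\phi)(x)=\bigl(k_{\pi,\psi}*\phi^\vee\bigr)(x)$. Next I would rewrite $\phi^\vee$: by definition $\phi^\vee(x)=\phi(x^{-1})=w(x^{-1})\,|x^{-1}|_F^{1-\frac{n}{2}}=w^\vee(x)\,|x|_F^{\frac{n}{2}-1}$, using $|x^{-1}|_F=|x|_F^{-1}$ and $w^\vee(x)=w(x^{-1})$. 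Hence $\CF_{\pi,\psi}(\phi)(x)=\bigl(k_{\pi,\psi}(\cdot)*(w^\vee(\cdot)\,|\cdot|_F^{\frac{n}{2}-1})\bigr)(x)$.

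On the other hand, Proposition~\ref{FofTest}, applied to this same $\phi=w(\cdot)|\cdot|_F^{1-\frac{n}{2}}$, asserts $\CF_{\pi,\psi}(\phi)(x)=\wt{w}(x)\,|x|_F^{1-\frac{n}{2}}$. Equating the two formulas for $\CF_{\pi,\psi}(\phi)(x)$ and multiplying both sides by $|x|_F^{\frac{n}{2}-1}$ yields
\[
\wt{w}(x)=|x|_F^{\frac{n}{2}-1}\bigl(k_{\pi,\psi}(\cdot)*(w^\vee(\cdot)\,|\cdot|_F^{\frac{n}{2}-1})\bigr)(x),
\]
which is the claimed formula. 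The main (and essentially only) obstacle is the harmless one flagged above: one must know that \eqref{FO-k} is applicable to $\phi=w(\cdot)|\cdot|_F^{1-\frac{n}{2}}$, which is immediate since this function is again in $\CC_c^\infty(F^\times)$; the fact that $k_{\pi,\psi}$ is a smooth function on $F^\times$ (recalled after \eqref{k-reg-1}) makes the convolution $k_{\pi,\psi}*\phi^\vee$ unambiguous, so no further justification is needed.
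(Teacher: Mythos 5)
Your proof is correct and follows exactly the route the paper intends: the corollary is stated as an immediate combination of Proposition \ref{FofTest} with the convolution formula \eqref{FO-k}, and your computation of $\phi^\vee(x)=w^\vee(x)|x|_F^{\frac{n}{2}-1}$ together with the final multiplication by $|x|_F^{\frac{n}{2}-1}$ is precisely the bookkeeping required. Nothing further is needed.
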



\section{$\pi$-Bessel functions}\label{sec-BF}

The $\pi$-Fourier transform $\CF_{\pi,\psi}$ can be expressed as a convolution operator with the $\pi$-kernel function $k_{\pi,\psi}$ as in \eqref{kernel-ar} using the 
structures of the $\pi$-Schwartz space $\CS_{\pi}(F^{\times})$ and the $\wt\pi$-Schwartz space $\CS_{\wt{\pi}}(F^{\times})$. When consider 
the $\pi$-Fourier transform $\CF_{\pi,\psi}$ as a transformation from the $\pi$-Whittaker-Schwartz space $\CW_{\pi}(F^{\times})$ to $\wt\pi$-Whittaker-Schwartz space 
$\CW_{\wt\pi}(F^{\times})$, we are able to show that the $\pi$-Fourier transform $\CF_{\pi,\psi}$ can be expressed as a convolution operator with certain Bessel functions as 
the kernel functions. We do this for the Archimedean case and non-Archimedean case, separately.

\subsection{$\pi$-Bessel functions: $p$-adic case}\label{ssec-NABF} 
Assume that $F$ is non-Archimedean. In this case, a basic theory of Bessel functions was developed by E. Baruch in \cite{BE05}, from which we recall some relevant definitions and results on Bessel functions in order to understand the $\pi$-Fourier transform.

Let $\Phi=\{  \alpha_{i,j}=e_i-e_j\mid 1\leq i< j\leq n \}$ be the roots of $\RG_n$ with respect to the $F$-split maximal torus $T_n$, $\Phi^+=\{\alpha_{i,j}\mid i<j  \}$ be the set of positive roots with respect to $B_n$ and $\Phi^{-}=\{\alpha_{i,j}\mid i>j  \}$ be the corresponding set of negative roots. Let $\Delta=\{\alpha_{i,i+1}\mid 1\leq i\leq n-1 \}$ be the set of simple roots. 
Let $\BW$ be the Weyl group of $\RG_n$. For every $w\in \BW$, denote 
\begin{align*}
    S(w)=\{ \alpha\in\Delta\mid w(\alpha)<0  \}\quad \mathrm{and}\quad S^\circ(w)=S(ww_n),
\end{align*}
where $w_n$ is the longest Weyl element of $\RG_n$ as in Proposition \ref{F=W}. We also write
\begin{align*}
    S^-(w)=\{\alpha\in\Phi^+\mid w(\alpha)<0  \}\quad \mathrm{and}\quad S^+(w)=\{\alpha\in\Phi\mid w(\alpha)>0\}.
\end{align*}
Let $N_w^-$ ($N_w^+$ resp.) be the unipotent subgroup associated to $S^-(w)$ ($S^+(w)$ resp.). Let 
\begin{align*}
    T_w=\{t\in T_n\mid \psi(u)=\psi (    w(t)uw(t)^{-1}  ) ,\;\forall u\in N_{w}^{-}  \}.
\end{align*}
For every $\lambda\in X(T_n)\otimes_{\BZ}\BR$, where $X(T_n)$ is the character group of $T_n$, define
\begin{align*}
    |\lambda|(t):=|\lambda(t)|_F,\;\forall t\in T_n.
\end{align*}
Recall from Section \ref{sec-piSpace} that $K=\GL_n(\Fo)$ is the maximal open compact subgroup of $\RG_n$. With the Iwasawa decomposition $\RG_n=N_nT_nK$, for any $g=utk$, we set 
$|\lambda|(g):=|\lambda|(t)$. It is easy to check that this is well defined. 

Let $\pi\in\Pi_F(\RG_n)$ be generic and $\CW(\pi,\psi)$ be the space of Whittaker functions. 
Following \cite[Definition 5.1]{BE05}, we denote by $\CW^{\circ}(\pi,\psi)$ the set of functions $W\in\CW(\pi,\psi)$ such that for every $w\in\BW$ and every $\alpha\in S^{\circ}(w)$, there exist positive constants $D_{\alpha}<E_{\alpha}$ such that if $g\in B_nwB_n$ then $W(g)\neq 0$ implies that
$D_{\alpha}<|\alpha|(g)<E_{\alpha}$. 
For a positive integer $m$, we denote $K_m$ the congruence subgroup given by 
$K_m=\RI_n+\mathrm{M}_n(\Fp^m)$. 
Write $d=\diag(1,\varpi^2,\varpi^4,\cdots,\varpi^{2n-2})\in\RG_n(F)$, 
where $\varpi$ is a fixed uniformizer of $F$. Let $N_n(m)=N_n\cap(d^mK_md^{-m})$. For any $W\in\CW(\pi,\psi)$, denote
\begin{align*}
    W_m(g)=\int_{N_n(m)}W(gn)\psi^{-1}(n)\ud n.
\end{align*}
According to \cite[Theorem 7.3]{BE05}, $W_m\in\CW^{\circ}(\pi,\psi)$ for all sufficiently large $m$. Due to \cite[Proposition 8.1]{BE05}, for $m$ large enough, the integral
$\int_{N_w^{-}}W_m\left( g n\right)\psi^{-1}(n)\ud n$ converges and is independent of $m$ for $g\in N_nT_wwN_w^{-}$. Moreover, by the uniqueness of Whittaker functionals, it follows that there exists a function, which we denote by $j_{\pi,\psi,w}(g)$ such that 
\begin{align*}
    \frac{1}{\mathrm{vol}(N_m)  }\int_{N_w^{-}}W_m\left( g
 n\right)\psi^{-1}(n)\ud n=j_{\pi,\psi,w}(g)W(\RI_n)
\end{align*}
for $g\in N_nT_wwN_w^{-}$. This function $j_{\pi,\psi,w}(g)$ was called the Bessel function of $\pi$ attached to the Weyl group element $w$ in \cite[Section 8]{BE05}. 
Moreover, if $W\in\CW^{\circ}(\pi,\psi)$, then the integral
    $\displaystyle{\int_{N_w^{-}} W(gn)\psi^{-1}(n)\ud n}$
converges absolutely for $g\in N_nT_wwN_w^{-}$ and the Bessel function $j_{\pi,\psi,w}(g)$ has the following integral representation:
\begin{align}\label{j-IR}
    j_{\pi,\psi,w}(g)\cdot W(\RI_n)=\int_{N_w^{-}} W(gn)\psi^{-1}(n)\ud n
\end{align}
according to \cite[Theorem 5.7 and Theorem 8.1]{BE05}.

\begin{lem}\label{smallW}
Let $F$ be a non-Archimedean local field. Define
\begin{align*}
    \CW^{\circ}_{\pi,\psi}(F^{\times}):=\{\omega(x)=|x|^{1-\frac{n}{2}}\cdot W\left( \begin{pmatrix}  x& \\ & \RI_{n-1} \end{pmatrix}\right)\ \colon W\in\CW^{\circ}(\pi,\psi)  \}.
\end{align*}
Then this space can be identified with the space $\CC_c^{\infty}(F^{\times})$:
$\CW^{\circ}_{\pi,\psi}(\pi,\psi)=\CC_c^{\infty}(F^{\times})$. 
In particular, the space $\CW^{\circ}_{\pi,\psi}(\pi,\psi)$ is independent of the choice of the character $\psi$. 
\end{lem}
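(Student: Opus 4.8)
\emph{Overall strategy.} I would prove the asserted identification $\CW^{\circ}_{\pi,\psi}(\pi,\psi)=\CC_c^{\infty}(F^{\times})$ by establishing the two inclusions separately. The inclusion $\CW^{\circ}_{\pi,\psi}(F^{\times})\subseteq\CC_c^{\infty}(F^{\times})$ should be almost immediate from the defining property of $\CW^{\circ}(\pi,\psi)$ restricted to the identity Bruhat cell. Indeed, for $W\in\CW^{\circ}(\pi,\psi)$ the torus element $g_x:=\diag(x,\RI_{n-1})$ lies in $T_n\subset B_n=B_neB_n$, hence in the Bruhat cell attached to $w=e$, and $S^{\circ}(e)=S(w_n)=\Delta$ since $w_n$ is the longest Weyl element and therefore sends every simple root to a negative root. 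Thus the defining property of $\CW^{\circ}(\pi,\psi)$ furnishes constants $0<D_{\alpha}<E_{\alpha}$ for each simple root $\alpha$ such that $W(g_x)\neq 0$ forces $D_{\alpha}<|\alpha|(g_x)<E_{\alpha}$; reading off $|\alpha_{1,2}|(g_x)=|x|$ (and $|\alpha_{i,i+1}|(g_x)=1$ for $i\geq 2$), we get that $W(g_x)$ vanishes unless $|x|$ lies in the fixed compact range $(D_{\alpha_{1,2}},E_{\alpha_{1,2}})$ of $F^{\times}$. Together with local constancy of $x\mapsto W(g_x)$ — which holds because $W$ is smooth, hence fixed on the right by some $\RI_n+\mathrm{M}_n(\Fp^{m_0})$, and $\diag(u,\RI_{n-1})$ belongs to that subgroup for $u\in 1+\Fp^{m_0}$ — this shows $\omega(x)=|x|^{1-\frac{n}{2}}W(g_x)\in\CC_c^{\infty}(F^{\times})$.

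\emph{The reverse inclusion.} For $\CC_c^{\infty}(F^{\times})\subseteq\CW^{\circ}_{\pi,\psi}(F^{\times})$ I would first record that $\CW^{\circ}_{\pi,\psi}(F^{\times})$ is a subspace stable under the dilations $\omega(x)\mapsto\omega(ax)$ for $a\in F^{\times}$: up to the constant $|a|^{1-\frac{n}{2}}$, the function $\omega(ax)$ is the torus restriction of $\pi(\diag(a,\RI_{n-1}))W$, and since $\diag(a,\RI_{n-1})\in T_n\subset B_n$, right translation by it preserves every Bruhat cell $B_nwB_n$ and multiplies each $|\alpha|(g)$ only by a factor bounded above and below by positive constants (uniformly in $g$, by the Iwasawa decomposition), hence carries $\CW^{\circ}(\pi,\psi)$ into itself. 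Because the translates $\{\mathbf{1}_{a(1+\Fp^{k})}\colon a\in F^{\times},\ k\ \text{large}\}$ span $\CC_c^{\infty}(F^{\times})$, it then suffices to place a nonzero multiple of $\mathbf{1}_{1+\Fp^{k}}$ in $\CW^{\circ}_{\pi,\psi}(F^{\times})$ for all large $k$. I would obtain these from Baruch's averages: fix $W\in\CW(\pi,\psi)$ with $W(\RI_n)\neq 0$, so that $W_m\in\CW^{\circ}(\pi,\psi)$ for $m$ large by \cite[Theorem 7.3]{BE05}. Writing $g_x n=(g_xng_x^{-1})g_x$ with $g_xng_x^{-1}\in N_n$, the $(N_n,\psi)$-equivariance of $W$ collapses the integral defining $W_m(g_x)$ to $W(g_x)\int_{N_n(m)}\psi_F((x-1)n_{1,2})\,\ud n$, since $\psi_{N_n}(g_xng_x^{-1})\psi_{N_n}(n)^{-1}=\psi_F((x-1)n_{1,2})$ depends only on the $(1,2)$-coordinate. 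All other coordinates of $N_n(m)$ integrate to positive volumes, while the $(1,2)$-coordinate ranges over the ball $\Fp^{-m}$, so that integral is a nonzero constant times $\mathbf{1}_{1+\Fp^{m+c}}(x)$ with $c$ depending only on $\psi_F$; combined with $W(g_x)=W(\RI_n)$ on $1+\Fp^{m+c}$ (for $m$ large) and $W(\RI_n)\neq 0$, this gives $|x|^{1-\frac{n}{2}}W_m(g_x)=c_m\,\mathbf{1}_{1+\Fp^{m+c}}(x)$ with $c_m\neq 0$. Letting $m\to\infty$ supplies the required multiples of $\mathbf{1}_{1+\Fp^{k}}$ for all large $k$, and the dilation invariance then yields all of $\CC_c^{\infty}(F^{\times})$. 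The stated independence of $\CW^{\circ}_{\pi,\psi}(F^{\times})$ from $\psi$ follows at once, since $\CC_c^{\infty}(F^{\times})$ carries no dependence on $\psi$.

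\emph{Main obstacle.} I expect the heart of the argument to be the second inclusion, and within it the concrete computation that Baruch's operators $W\mapsto W_m$ — essentially the only handle we have on the a priori opaque space $\CW^{\circ}(\pi,\psi)$ — produce torus restrictions that are genuinely nonzero and, moreover, concentrate at $x=1$ as $m\to\infty$. Once that nonvanishing is in hand, the dilation invariance of $\CW^{\circ}_{\pi,\psi}(F^{\times})$ together with the fact that small balls span $\CC_c^{\infty}(F^{\times})$ makes the remainder routine; the first inclusion is comparatively soft, being a direct reading of the defining support condition of $\CW^{\circ}(\pi,\psi)$ on the cell of $w=e$.
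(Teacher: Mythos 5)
Your proposal is correct, and for the substantive half of the lemma it takes a genuinely different route from the paper. For the easy inclusion $\CW^{\circ}_{\pi,\psi}(F^{\times})\subset\CC_c^{\infty}(F^{\times})$ you work on the identity Bruhat cell, using $S^{\circ}(e)=S(w_n)=\Delta$ and reading off $|\alpha_{1,2}|(\diag(x,\RI_{n-1}))=|x|$; the paper instead invokes the condition attached to the Weyl element $w_*=\left(\begin{smallmatrix}&\RI_{n-1}\\ 1&\end{smallmatrix}\right)$, even though $\diag(x,\RI_{n-1})$ lies in $B_n=B_n e B_n$, so your reading of Baruch's support condition is the one that applies directly (and you also make the local constancy of $x\mapsto W(\diag(x,\RI_{n-1}))$ explicit, which the paper leaves implicit). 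For the reverse inclusion the paper argues structurally: it shows the space of torus restrictions of $\CW^{\circ}(\pi,\psi)$ is nonzero (via $W_m(\RI_n)=W(\RI_n)\mathrm{vol}(N_n(m))\neq0$), is stable under the mirabolic action $f(x)\mapsto\psi(nx)f(tx)$ by Baruch's $B_n$-invariance of $\CW^{\circ}(\pi,\psi)$ (\cite[Corollary 5.5]{BE05}), and then concludes by the irreducibility of $\CC_c^{\infty}(F^{\times})$ under that action (\cite[Lemma 2.9.1]{JL70}). You instead compute $W_m$ on the torus explicitly — collapsing $\int_{N_n(m)}W(g_xn)\psi^{-1}(n)\,\ud n$ to $W(\RI_n)$ times a positive volume times $\mathbf{1}_{1+\Fp^{m+c}}(x)$ — and then generate all of $\CC_c^{\infty}(F^{\times})$ from these bump functions using only stability under torus translation and the fact that characteristic functions of cosets $a(1+\Fp^{k})$ span. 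Your computation is sound (the conjugation $g_xng_x^{-1}$ twists only the $(1,2)$-coordinate of $\psi_{N_n}$, and $n_{1,2}$ ranges over $\Fp^{-m}$ in $N_n(m)$), and your route buys independence from the Jacquet--Langlands irreducibility lemma at the cost of a more hands-on calculation. The one place you should tighten the write-up is the claimed stability of $\CW^{\circ}(\pi,\psi)$ under right translation by $\diag(a,\RI_{n-1})$: your uniform two-sided bound on $|\alpha|(gt_0)/|\alpha|(g)$ is true, but it needs the short argument that in $gt_0=ut(kt_0)$ the Iwasawa torus part of $kt_0$ ranges over a compact subset of $T_n$ as $k$ varies over $K$ (or you may simply cite \cite[Corollary 5.5]{BE05}, as the paper does, which gives invariance under all of $B_n$).
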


\begin{proof}
We first prove that
   \begin{align*}
        \left\{ W\left( \begin{pmatrix}
            \cdot & \\ & \RI_{n-1}
        \end{pmatrix}\right)  :\; W\in\CW^{\circ}(\pi,\psi)   \right\}=\CC_c^{\infty}(F^{\times}).
    \end{align*}

    Take $w_*=\begin{pmatrix}
        & \RI_{n-1} \\ 1 & 
    \end{pmatrix}\in\BW$. Then we have that 
    $\alpha_{1,2}=e_1-e_2\in S(w_*w_n)$. It follows that there are positive constants $D$ and $E$ such that
$W\left( \begin{pmatrix}
            x & \\ & \RI_{n-1}
        \end{pmatrix}\right)\neq 0 $ implies that 
        \begin{align*}
            D<|\alpha_{1,2}|\left(\begin{pmatrix}
            x & \\ & \RI_{n-1}
        \end{pmatrix}\right)=|x|<E,
        \end{align*}
which implies that 
\begin{align*}
        \left\{ W\left( \begin{pmatrix}
            \cdot & \\ & \RI_{n-1}
        \end{pmatrix}\right)  :\; W\in\CW^{\circ}(\pi,\psi)   \right\}\subset\CC_c^{\infty}(F^{\times}).
    \end{align*}
On the other hand, take $W\in\CW(\pi,\psi)\neq 0$, for any positive integer $m$, we have
\begin{align*}
    W_m(\RI_n)=\int_{N_m}W(n)\psi^{-1}(n)\ud n=\mathrm{Vol}(N_m)\neq0,
\end{align*}
and since $W_m\in\CW^{\circ}(\pi,\psi)$ for $m$ sufficiently large, we get
\begin{align*}
        \left\{ W\left( \begin{pmatrix}
            \cdot & \\ & \RI_{n-1}
        \end{pmatrix}\right)  \colon W\in\CW^{\circ}(\pi,\psi)   \right\}\neq 0.
    \end{align*}

According to \cite[Corollary 5.5]{BE05}, $\CW^{\circ}(\pi,\psi)$ is invariant under right translations by $B_n$, in particular, for 
$b^{\prime}=\begin{pmatrix}
        b & \\ & \RI_{n-2} 
    \end{pmatrix}\in B$, 
and $W\in\CW^{\circ}(\pi,\psi)$,
where 
$b=\begin{pmatrix}
        t & n \\ & 1
    \end{pmatrix}$, 
we have
\begin{align*}
    (bW)\left(\begin{pmatrix}
        x& \\ & \RI_{n-1}
    \end{pmatrix}\right)=\psi(nx)W\left(\begin{pmatrix}
        tx& \\ & \RI_{n-1}
    \end{pmatrix}\right).
\end{align*}
According to \cite[Lemma 2.9.1]{JL70}, $\CC_c^{\infty}(F^{\times})$ is an irreducible representation under the above action. Hence we obtain that 
\begin{align*}
        \left\{ W\left( \begin{pmatrix}
            \cdot & \\ & \RI_{n-1}
        \end{pmatrix}\right) \colon W\in\CW^{\circ}(\pi,\psi)   \right\}=\CC_c^{\infty}(F^{\times}).
    \end{align*}
Note that $f\mapsto f|\cdot|^{1-\frac{n}{2}}$ is a bijection from $\CC_{c}^{\infty}(F^{\times})$ to itself. Therefore we obtain that 
$\CW^{\circ}_{\pi,\psi}(F^{\times})=\CC_c^{\infty}(F^{\times})$. 
\end{proof}

In order to understand the $\pi$-Fourier transform $\CF_{\pi,\psi}$ and the associated $\pi$-kernel function $k_{\pi,\psi}$ as in \eqref{kernel-ar} in terms of 
the $\pi$-Whittaker-Schwartz space $\CW_{\pi}(F^{\times})$ to $\wt\pi$-Whittaker-Schwartz space $\CW_{\wt\pi}(F^{\times})$,
we define the $\pi$-Bessel function of $\pi$ on $F^\times$, which is related to the one attached to the particular Weyl element $w_*=\begin{pmatrix}
        & \RI_{n-1} \\ 1 & 
    \end{pmatrix}\in\BW$, up to normalization. 
\begin{dfn}\label{piFb-p}
    Let $F$ be a non-Archimedean local field of characteristic zero. For any $\pi\in\Pi_F(\RG_n)$, which is generic, the associated $\pi$-Bessel function $\Fb_{\pi,\psi}(x)$
    on $F^\times$ is defined by 
    \begin{align}\label{bf-p}
        \Fb_{\pi,\psi}(x)=|x|_F^{\frac{1-n}{2}}\cdot j_{\pi,\psi,w_*}\left(  \begin{pmatrix}
            & \RI_{n-1} \\ x^{-1} & 
        \end{pmatrix}   \right)
    \end{align}
    where $w_*=\begin{pmatrix}
        & \RI_{n-1} \\ 1 & 
    \end{pmatrix}\in\BW$ is a Weyl group element of $\RG_n$
\end{dfn}

\begin{prp}\label{k=j-p}
For any $\pi\in\Pi_F(\RG_n)$, which is generic,
as functions on $F^\times$, the $\pi$-kernel function $k_{\pi,\psi}$ as in \eqref{kernel-ar} and the $\pi$-Bessel function as defined in \eqref{bf-p} are related by the following identity:
\[
k_{\pi,\psi}(x)=\Fb_{\pi,\psi}(x)|x|^{\frac{1}{2}}, \quad \forall x\in F^{\times}.
\]
\end{prp}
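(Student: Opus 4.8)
The plan is to identify $\Fb_{\pi,\psi}(\cdot)|\cdot|_F^{\frac12}$ as the convolution kernel representing $\CF_{\pi,\psi}$ on $\CC_c^\infty(F^\times)$, exactly as $k_{\pi,\psi}$ does by \eqref{FO-k}, and then to invoke uniqueness of that kernel. Indeed, if $k_1,k_2$ are smooth functions on $F^\times$ with $(k_1*\phi^\vee)(x)=(k_2*\phi^\vee)(x)$ for all $\phi\in\CC_c^\infty(F^\times)$, then, as $\phi^\vee$ ranges over all of $\CC_c^\infty(F^\times)$, we get $k_1=k_2$ as distributions, hence as functions. So it suffices to prove that, for all $\phi\in\CC_c^\infty(F^\times)$,
\[
\CF_{\pi,\psi}(\phi)(x)=\int_{F^\times}\Fb_{\pi,\psi}(xs)\,|xs|_F^{\frac12}\,\phi(s)\ud^\times s .
\]

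To evaluate the left-hand side I would write, using Lemma \ref{smallW}, $\phi(x)=|x|_F^{1-\frac n2}W\!\left(\begin{pmatrix}x&\\&\RI_{n-1}\end{pmatrix}\right)$ with $W\in\CW^\circ(\pi,\psi)$ — passing to $\CW^\circ(\pi,\psi)$ is exactly what makes Baruch's absolutely convergent integral representation \eqref{j-IR} available. Proposition \ref{F=W} expresses $\CF_{\pi,\psi}(\phi)(x)$ as $|x|_F^{1-\frac n2}$ times the $\GL_n\times\GL_1$ Rankin-Selberg ``dual'' integral $\int_{F^{n-2}}(\pi(w_{n,1})\widetilde W)(\cdots)\ud y$; using $\widetilde W(g)=W(w_n\,{}^tg^{-1})$, the identity $w_nw_{n,1}=w_*$, and commuting the diagonal part of the argument past $w_*$ (which carries $\diag(x^{-1},\RI_{n-1})$ to $\diag(\RI_{n-1},x^{-1})$), one brings this into the shape
\[
\CF_{\pi,\psi}(\phi)(x)=|x|_F^{1-\frac n2}\int_{U}W\!\left(\begin{pmatrix}&\RI_{n-1}\\x^{-1}&\end{pmatrix}u\right)\ud u ,
\]
with $\begin{pmatrix}&\RI_{n-1}\\x^{-1}&\end{pmatrix}=w_*\diag(x^{-1},\RI_{n-1})$ and $U$ the subgroup of $N_{w_*}^-$ with free entries in positions $(1,3),\dots,(1,n)$. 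It then remains to recognize this as the convolution on the right above: feeding the $B_n$-translates $\pi(\diag(s,\RI_{n-1}))W\in\CW^\circ(\pi,\psi)$ into \eqref{j-IR} for the Weyl element $w_*$ and unfolding by a Fourier-inversion argument, the one coordinate of $N_{w_*}^-$ that does not occur in $U$, namely the simple-root slot $(1,2)$ on which $\psi_{N_n}$ is nontrivial, turns — through the mirabolic action on the Kirillov model $\CC_c^\infty(F^\times)$ — into the multiplicative variable $s$ of the convolution, while the remaining directions furnish $U$. Keeping track of the powers of $|x|_F$ (which combine with the $|x|_F^{(1-n)/2}$ built into Definition \ref{piFb-p} and the $|x|_F^{1/2}$ in the statement) and of the contragredient bookkeeping then yields the displayed identity, hence $k_{\pi,\psi}(x)=\Fb_{\pi,\psi}(x)|x|_F^{\frac12}$.

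The main obstacle is precisely that last identification: matching the Whittaker-function integral coming from the $\GL_n\times\GL_1$ functional equation of \cite{JPSS83} (via Proposition \ref{F=W}) with Baruch's Bessel function $j_{\pi,\psi,w_*}$ of \cite{BE05}. One has to reconcile the unipotent $U$ (of dimension $n-2$) with the $(n-1)$-dimensional $N_{w_*}^-$ of \eqref{j-IR}, the $\psi^{-1}$-twist present in the latter but absent in the former, and the two normalizations, and must justify the interchanges of integration throughout — which is where the reduction to $\CW^\circ(\pi,\psi)$ via Lemma \ref{smallW} does the essential work. Everything else — uniqueness of the convolution kernel, the input from Proposition \ref{F=W}, and the normalization computations — is routine. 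Equivalently, one may argue on the Mellin side: by \eqref{FO-k} and the local functional equation for the $\GL_1$-zeta integrals, $\int_{F^\times}k_{\pi,\psi}(x)\chi^{-1}(x)|x|_F^{s-\frac12}\ud^\times x$ is the local gamma factor of $\pi\times\chi$, and one shows the same for $\int_{F^\times}\Fb_{\pi,\psi}(x)\chi^{-1}(x)|x|_F^{s}\ud^\times x$ using \eqref{j-IR} and the Rankin-Selberg functional equation; the two smooth functions on $F^\times$ then coincide by uniqueness of Mellin transforms.
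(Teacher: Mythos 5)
Your proposal is correct and follows essentially the same route as the paper's own proof: reduce to $\phi\in\CC_c^\infty(F^\times)$ realized via $\CW^\circ(\pi,\psi)$ (Lemma \ref{smallW}), apply Proposition \ref{F=W} to rewrite $\CF_{\pi,\psi}(\phi)$ as an integral over the $(n-2)$-dimensional part of $N_{w_*}^-$, and then recover the full $N_{w_*}^-$-integral of \eqref{j-IR} by adding the $(1,2)$-coordinate and Fourier-inverting, the dual variable becoming the multiplicative convolution variable; the conclusion follows from the convolution formula \eqref{FO-k} and uniqueness of the kernel. The one step you flag as the "main obstacle" is handled in the paper exactly as you sketch, with Baruch's Theorem 5.7 guaranteeing compact support in the added variable so that Fourier inversion is legitimate.
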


\begin{proof}
For any $\phi\in\CC_c^{\infty}(F^{\times})\subset\CS_{\pi}(F^{\times})$, we know from Proposition \ref{smallW} that there is some $W\in\CW^{\circ}(\pi,\psi)$ such that 
\begin{align*}
    \phi(x)=W\left(\begin{pmatrix}
        x & \\  & \RI_{n-1}
    \end{pmatrix}\right)|x|^{1-\frac{n}{2}}
\end{align*}
for any $x\in F^{\times}$. 
According to Proposition \ref{F=W}, we have
\begin{align*}
    \CF_{\pi,\psi}(\phi)(x)
    &=|x|^{1-\frac{n}{2}}\int_{F^{n-2}}\left(\pi(w_{n,1})\widetilde{W}\right)\left(\begin{pmatrix}
         x& & \\y & \RI_{n-2} & \\ & & 1 
    \end{pmatrix}\right)\ud y\\
    &=|x|^{1-\frac{n}{2}}\int_{F^{n-2}} W\left( w_*
    \begin{pmatrix}
        x^{-1} & \\ & \RI_{n-1}
    \end{pmatrix}\begin{pmatrix}
        1 & 0 & y_1 & \cdots & y_{n-2} \\
        0  &  1 & \cdots & 0 & 0\\
        \rotatebox{90}{$\cdots$}&\rotatebox{90}{$\cdots$}& \rotatebox{135}{$\cdots$} &\rotatebox{90}{$\cdots$}&\rotatebox{90}{$\cdots$} \\
       0 & 0& \cdots&1 &  0 \\
       0&0 & \cdots&0 & 1
    \end{pmatrix}     \right)\ud y 
\end{align*}
as $w_*=w_nw_{n,1}$. 
According to \cite[Theorem 5.7]{BE05}, the function
\begin{align*}
    (z,y_1,y_2,\cdots,y_{n-2})\mapsto W\left( w_*
    \begin{pmatrix}
        x^{-1} & \\ & \RI_{n-1}
    \end{pmatrix}\begin{pmatrix}
        1 & z & y_1 & \cdots & y_{n-2} \\
        0  &  1 & \cdots & 0 & 0\\
        \rotatebox{90}{$\cdots$}&\rotatebox{90}{$\cdots$}& \rotatebox{135}{$\cdots$} &\rotatebox{90}{$\cdots$}&\rotatebox{90}{$\cdots$} \\
       0 & 0& \cdots&1 &  0 \\
       0&0 & \cdots&0 & 1
    \end{pmatrix}     \right)
\end{align*}
is compactly supported once we fix $x$. Hence the function
\begin{align*}
    f(z,x):=\int_{F^{n-2}} W\left( w_*
    \begin{pmatrix}
        x^{-1} & \\ & \RI_{n-1}
    \end{pmatrix}\begin{pmatrix}
        1 & z & y_1 & \cdots & y_{n-2} \\
        0  &  1 & \cdots & 0 & 0\\
        \rotatebox{90}{$\cdots$}&\rotatebox{90}{$\cdots$}& \rotatebox{135}{$\cdots$} &\rotatebox{90}{$\cdots$}&\rotatebox{90}{$\cdots$} \\
       0 & 0& \cdots&1 &  0 \\
       0&0 & \cdots&0 & 1
    \end{pmatrix}     \right)\ud y.
\end{align*}
belongs to the space $\CC_c^{\infty}(F)$, as a function in $z$ with $x$ fixed, and its Fourier transform $\widehat{f}$ along $z$ at $1$ 
\begin{align*}
    \widehat{f}(1,x)=\int_{F}f(z,x)\psi^{-1}(z)\ud z
\end{align*}
exists. For the Weyl group element $w_*$, it is easy to check that
\[
N_{w_*}^-=\left\{\begin{pmatrix}
        1 & z & y_1 & \cdots & y_{n-2} \\
        0  &  1 & \cdots & 0 & 0\\
        \rotatebox{90}{$\cdots$}&\rotatebox{90}{$\cdots$}& \rotatebox{135}{$\cdots$} &\rotatebox{90}{$\cdots$}&\rotatebox{90}{$\cdots$} \\
       0 & 0& \cdots&1 &  0 \\
       0&0 & \cdots&0 & 1
    \end{pmatrix}\mid z,y_1,\cdots,y_{n-2}\in F\right\}, 
\]
from which we deduce the following formula for $\widehat{f}(1,x)$:
\[
\widehat{f}(1,x)=\int_{N_{w_*}^-}W\left( w_*\begin{pmatrix}
        x^{-1} & \\ & \RI_{n-1}
    \end{pmatrix}n\right)\psi^{-1}(n)\ud n
\]
where $\psi(n)=\psi(z)$. By \eqref{j-IR}, we obtain that 
\[
\widehat{f}(1,x)=j_{\pi,\psi,w_*}\left(w_*\begin{pmatrix}
        x^{-1} & \\ & \RI_{n-1}
    \end{pmatrix}\right)\cdot W(\RI_n)=j_{\pi,\psi,w_*}\left(\begin{pmatrix}
         & \RI_{n-1}\\ x^{-1}& 
    \end{pmatrix}\right)\cdot W(\RI_n).
\]
From the definition of the Bessel function $j_{\pi,\psi}(x)$ in \eqref{bf-p}, we obtain that 
\[
\widehat{f}(1,x)=\Fb_{\pi,\psi}(x)|x|^{\frac{n-1}{2}}W(\RI_n)
\]
as functions in $x\in F^\times$. Now we calculate for a fixed $x\in F^\times$, the Fourier transform $\widehat{f}(t,x)$ with $t\in F^{\times}$,
\begin{align*}
    \widehat{f}(t,x)&=\int_{F}f(z,x)\psi^{-1}(tz)\ud z\\
    &=\int_{N_{w_*}^-}
    W\left( w_*
    \begin{pmatrix}
        (xt)^{-1} & \\ & \RI_{n-1}
    \end{pmatrix}n
    \begin{pmatrix}
        t& \\ & \RI_{n-1}
    \end{pmatrix}    \right)\psi^{-1}(z)\ud n\cdot|t|^{1-n}\\
    &=|t|^{1-n}\Fb_{\pi,\psi}(xt)|xt|^{\frac{n-1}{2}}\pi\left( \begin{pmatrix}
        t & \\ & \RI_{n-1}
    \end{pmatrix}   \right)W(\RI_n).
\end{align*}
According to \cite[Theorem 5.7]{BE05}, we can apply the Fourier inversion formula to obtain
\begin{align*}
    f(0,x)&=\int_F\widehat{f}(t,x)\ud t\\
    &=\int_F \Fb_{\pi,\psi}(xt)|xt|^{\frac{n-1}{2}}W\left(  \begin{pmatrix}
        t &  \\ & \RI_{n-1}
    \end{pmatrix}   \right) |t|^{1-n} \ud t\\
    &=\int_{F^\times} \Fb_{\pi,\psi}(xt)|xt|^{\frac{n-1}{2}}\phi(t)|t|^{1-\frac{n}{2}}\ud^{\times}t.
\end{align*}
Hence we obtain from the above calculation that 
\begin{align*}
    \CF_{\pi,\psi}(\phi)(x)=|x|^{1-\frac{n}{2}}f(0,x)=\int_{F^\times} \Fb_{\pi,\psi}(xt)|tx|^{\frac{1}{2}}\phi(t)\ud^{\times}t.
\end{align*}
On the other hand, we know from \cite[Theorem 5.2]{JL22} that
\begin{align*}
     \CF_{\pi,\psi}(\phi)(x)=\int_F k_{\pi,\psi}(xt)\phi(t)\ud^{\times}t 
\end{align*}
for any $\phi\in\CC_c^{\infty}(F^{\times})$. Therefore, as distributions on $F^\times$, we obtain
that $k_{\pi,\psi}(x)=\Fb_{\pi,\psi}(x)|x|^{\frac{1}{2}}$ 
for any $x\in F^{\times}$. Since both functions are smooth, the identity holds as functions in $x\in F^\times$. 
\end{proof}

Recall that in the $\GL_2$ case, D. Soudry defined in \cite{SD84} the Bessel function $J_{\pi}(x)$ on $F^\times$ by the following equation
\begin{align}\label{Soudry-J}
  \int_{F} W\left( \begin{pmatrix}
     & x \\ -1 & 
\end{pmatrix}  \begin{pmatrix}
    1 & y \\ & 1
\end{pmatrix} \right)\psi^{-1}(y)\ud y =J_{\pi}(x)W(\RI_2)  
\end{align}
for all $W\in\CW(\pi,\psi)$, where the integral converges in the sense that it stabilizes for large compacts as in \cite[Lemma 4.1]{SD84}.
By an elementary computation, we see the relation between these two Bessel functions is
\begin{align*}
    J_{\pi}(x)=\omega_{\pi}(x)\Fb_{\pi,\psi}(-x)|x|^{\frac{1}{2}}
\end{align*}

In \cite{SD84}, Soudry computes the Mellin transform of the product of two Bessel functions instead of showing the gamma factor is the Mellin transform of $J_{\pi}$. 
In fact, we have

\begin{cor}
  \[
  \int_{F^{\times}}^{\mathrm{pv}}J_{\pi}(y)\chi^{-1}(y)|y|^{-s}\ud^{\times}y=\omega_{\pi}(-1)\chi(-1)\gamma(\frac{1}{2},\pi\times\omega_{\pi}^{-1}\chi_s,\psi).
 \]  
\end{cor}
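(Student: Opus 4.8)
The plan is to first show that the $\pi$-kernel function $k_{\pi,\psi}$ is the inverse Mellin transform of the $\GL_n\times\GL_1$ gamma factor, and then to translate this into Soudry's normalization in the case $n=2$ using Proposition~\ref{k=j-p} together with the relation $J_\pi(x)=\omega_\pi(x)\Fb_{\pi,\psi}(-x)|x|^{\frac12}$ recorded just above.

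For the first step I would fix $\chi$ and choose any $\phi\in\CC_c^\infty(F^\times)\subset\CS_\pi(F^\times)$ with $\CZ(s,\phi,\chi)\not\equiv 0$ (e.g.\ the characteristic function of a small enough neighbourhood of $1$). Starting from the local functional equation $\CZ(s,\phi,\chi)\,\gamma(s,\pi\times\chi,\psi)=\CZ(1-s,\CF_{\pi,\psi}(\phi),\chi^{-1})$ of \cite[Theorem~3.10]{JL23} (recalled in the proof of Proposition~\ref{F=W}) and the convolution formula $\CF_{\pi,\psi}(\phi)(x)=\int_{F^\times}k_{\pi,\psi}(xt)\phi(t)\ud^\times t$ of \eqref{FO-k}, a change of variables $y=xt$ and Fubini factor the right-hand side as
\[
\CZ(1-s,\CF_{\pi,\psi}(\phi),\chi^{-1})=\Bigl(\int_{F^\times}k_{\pi,\psi}(y)\chi^{-1}(y)|y|^{\frac12-s}\ud^\times y\Bigr)\cdot\CZ(s,\phi,\chi),
\]
and cancelling $\CZ(s,\phi,\chi)$ gives the identity
\[
\int_{F^\times}^{\mathrm{pv}}k_{\pi,\psi}(y)\chi^{-1}(y)|y|^{\frac12-s}\ud^\times y=\gamma(s,\pi\times\chi,\psi),
\]
understood as meromorphic functions of $s$.

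For the second step, specialize to $n=2$: Proposition~\ref{k=j-p} gives $k_{\pi,\psi}(x)=\Fb_{\pi,\psi}(x)|x|^{\frac12}$, so the displayed formula for $J_\pi$ becomes $J_\pi(x)=\omega_\pi(x)\,k_{\pi,\psi}(-x)$. Substituting this into $\int_{F^\times}^{\mathrm{pv}}J_\pi(y)\chi^{-1}(y)|y|^{-s}\ud^\times y$ and then replacing $y$ by $-y$ pulls out the constant $\omega_\pi(-1)\chi^{-1}(-1)=\omega_\pi(-1)\chi(-1)$ (since $\chi(-1)=\pm1$) and leaves $\int_{F^\times}^{\mathrm{pv}}(\omega_\pi\chi^{-1})(y)\,k_{\pi,\psi}(y)|y|^{-s}\ud^\times y$. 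Applying the identity of the first step with $\chi$ replaced by $\omega_\pi^{-1}\chi$ and $s$ replaced by $s+\tfrac12$ identifies this last integral with $\gamma(s+\tfrac12,\pi\times\omega_\pi^{-1}\chi,\psi)=\gamma(\tfrac12,\pi\times\omega_\pi^{-1}\chi_s,\psi)$, where $\chi_s=\chi|\cdot|^s$, which is the asserted formula.

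The delicate point is the interpretation of the principal value. The function $k_{\pi,\psi}=\Fb_{\pi,\psi}|\cdot|^{1/2}$ is not integrable against $\chi^{-1}|\cdot|^{\frac12-s}$ over all of $F^\times$ for every $s$: its Mellin integral converges only in a suitable vertical strip, and ``$\mathrm{pv}$'' refers to the meromorphic continuation supplied by the local functional equation. On the Soudry side, $J_\pi$ is regularized by the stabilization of truncated integrals as in \cite[Lemma~4.1]{SD84}. Reconciling these two notions of regularization — using the known asymptotics of $k_{\pi,\psi}$ near $0$ and near $\infty$ coming from \cite{JL22} and \cite{BE05} — is the one place that requires genuine care; once it is settled, the remainder of the argument is bookkeeping of characters and of the shift $s\mapsto s+\tfrac12$.
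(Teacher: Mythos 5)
Your proposal is correct and follows essentially the same route as the paper: the paper simply cites \cite[Theorem 5.2]{JL22} for the principal-value Mellin transform of $k_{\pi,\psi}$ (the identity you re-derive in your first step) and then performs the same substitution $J_{\pi}(x)=\omega_{\pi}(x)k_{\pi,\psi}(-x)$, change of variables $y\mapsto -y$, and character/shift bookkeeping. The only caveat is that your Fubini derivation of the first-step identity is formal, as you yourself note — the paper sidesteps this by invoking the cited theorem, where the principal-value regularization is already in place.
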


\begin{proof}
    \cite[Theorem 5.2]{JL22} tells us
\[
  \int_{F^{\times}}^{\mathrm{pv}}k_{\pi,\psi}(y)\chi^{-1}(y)|y|^{-s}\ud^{\times}y=\gamma(\frac{1}{2},\pi\times\chi_s,\psi).
\]
Taking into account their relations, we can obtain what we want.
\end{proof}

We refer to \cite{C14} for further discussion of the $\GL_2$-Bessel functions and related topics. 

\subsection{$\pi$-Bessel functions: complex case}\label{ssec-CBF}

If $F=\BC$, let us first recall from \cite{Kna94} the classification of irreducible admissible representations of $\RG_n=\GL_n(\BC)$. For $z\in\BC$, let $[z]=z/\sqrt{z\overline{z}}$ and $|z|_{\BC}=z\overline{z}$, where $\overline{z}$ is the complex conjugate of $z$. For any $l\in\BZ$ and $t\in\BC$, let $\sigma=\sigma(l,t)$ be the representation of $\GL_1(\BC)$ given by
$z\mapsto[z]^l|z|_{\BC}^t$, which we write $[\cdot]^l\otimes|\cdot|_{\BC}^t$. For each $j$ with $1\leq j\leq n$, let $\sigma_j$ be the representation $[\cdot]^{l_j}\otimes|\cdot|_{\BC}^{t_j}$ of $\GL_1(\BC)$. Then $(\sigma_1,\cdots,\sigma_n)$ defines a one-dimensional representation of the diagonal maximal torus $T_n$ of $\RG_n$, which can be extended trivially to a one-dimensional representation of the upper triangular Borel subgroup $B_n$. We set 
\begin{align*}
    \RI(\sigma_1,\cdots,\sigma_n)=\mathrm{ind}^{\RG_n}_{B_n}(\sigma_1,\cdots,\sigma_n),
\end{align*}
which is the unitary induction as in \cite[Chapter VII]{Kna01}. According to \cite{Z75,ZN66}, we have
\begin{thm}[Classification]\label{LCBC}
    The irreducible admissible representations of $\RG_n=\GL_n(\BC)$ can be classified as follows.
    \begin{itemize}
        \item [(1)] If the parameters $t_j$ of $(\sigma_1,\cdots,\sigma_n)$ satisfies
        $\Re \;t_1\geq \Re\; t_2\geq\cdots\geq\Re \;t_n$, 
        then $\RI(\sigma_1,\cdots,\sigma_n)$ has a unique irreducible quotient $\RJ(\sigma_1,\cdots,\sigma_n)$.
        \item [(2)] the representations $\RJ(\sigma_1,\cdots,\sigma_n)$ exhaust the irreducible admissible representations of $G_n$, up to infinitesimal equivalence.
        \item [(3)] Two such representations $\RJ(\sigma_1,\cdots,\sigma_n)$ and $\RJ(\sigma_1^{\prime},\cdots,\sigma_n^{\prime})$ are infinitesimally equivalent if and only if there exists a permutation $j$ of $\{1,\cdots,n\}$ such that $\sigma_i^{\prime}=\sigma_{j(i)}$ for $1\leq i\leq n$.
    \end{itemize}
\end{thm}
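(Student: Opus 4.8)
The plan is to deduce Theorem \ref{LCBC} from the Langlands classification (\cite[Chapter VII]{Kna01}; in the complex case this is exactly the classification of \cite{ZN66,Z75}, see also \cite{Kna94}), specialized to $\RG_n=\GL_n(\BC)$, where it simplifies considerably because $\BC$ is algebraically closed. The first step is to observe that every Cartan subgroup of $\GL_n(\BC)$ is conjugate to the diagonal torus $T_n$, so $\GL_m(\BC)$ has no essentially square-integrable representations for $m\ge 2$ and its tempered representations are precisely the (always irreducible) unitary principal series. Consequently every standard module of $\RG_n$ is, after induction in stages, a full principal series $\RI(\sigma_1,\dots,\sigma_n)$ from a character of $T_n$, and the condition $\Re t_1\ge\cdots\ge\Re t_n$ says precisely that the exponent $(\Re t_1,\dots,\Re t_n)$ is weakly dominant. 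The Langlands quotient theorem then supplies the unique irreducible quotient $\RJ(\sigma_1,\dots,\sigma_n)$, realized as the image of the long standard intertwining operator; when several $\Re t_j$ coincide this is unaffected, since the blockwise tempered regroupings are themselves irreducible unitary principal series. This settles part (1).

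For part (2), I would start from an arbitrary irreducible admissible $\Pi$ of $\RG_n$, use the Harish--Chandra subquotient theorem to realize it as a subquotient of some $\RI(\sigma_1,\dots,\sigma_n)$, and then invoke the existence half of the Langlands classification, which says more precisely that $\Pi$ is the Langlands quotient of a uniquely determined standard module; by the first step that standard module is an ordered principal series, so $\Pi\cong\RJ(\sigma_1,\dots,\sigma_n)$ for suitable data, i.e. the $\RJ(\sigma_1,\dots,\sigma_n)$ exhaust the irreducible admissible representations up to infinitesimal equivalence. For part (3), the forward direction is that permuted data give isomorphic $\RJ$'s: two tuples with the same multiset of $\sigma_j$'s, both listed in decreasing order of $\Re t$, differ only by permutations inside blocks of equal real part, on which the relevant rank-one intertwining operators are Harish--Chandra $c$-functions for unitary induction, hence holomorphic and invertible, so $\RI(\sigma_1,\dots,\sigma_n)\cong\RI(\sigma'_1,\dots,\sigma'_n)$ and their unique irreducible quotients agree. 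The converse is the uniqueness half of the classification: the Langlands quotient recovers its data $(M,\tau,\nu)$ up to association, and with $M=T_n$ association is the action of $\BW=S_n$ permuting coordinates, so $\RJ(\sigma_1,\dots,\sigma_n)\cong\RJ(\sigma'_1,\dots,\sigma'_n)$ forces $\{\sigma_j\}=\{\sigma'_j\}$ as multisets, using that a character $z\mapsto[z]^l|z|_{\BC}^t$ of $\GL_1(\BC)$ determines the pair $(l,t)\in\BZ\times\BC$.

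If one instead wants a self-contained argument rather than a specialization of the general machinery, the route of \cite{ZN66,Z75} is to analyze the composition series of $\RI(\sigma_1,\dots,\sigma_n)$ directly: identify the reducibility locus in terms of linkage relations among the pairs $(l_j,t_j)$, show that a reducible principal series has a distinguished irreducible quotient produced by the long intertwining operator, and track the action of permutations of the inducing characters. In either route the main obstacle is the same and lies in part (3): correctly matching the standard modules of $\GL_n(\BC)$ with ordered principal series from the torus and pinning down the residual Weyl-group ambiguity. This is routine once one grants the Langlands quotient theorem and the uniqueness of Langlands data, but the genuine analytic input — holomorphy and invertibility of the unitary intertwining operators, together with nonvanishing of the leading coefficient of the long operator on the Langlands quotient — is precisely what makes the classification work.
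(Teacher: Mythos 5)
The paper itself offers no proof of this theorem: it is stated as a known classification and attributed directly to \cite{ZN66,Z75} (with \cite{Kna94} as the expository source), so there is no internal argument to compare yours against. Your outline is the standard derivation of that classification and is correct in its essentials: for $\GL_n(\BC)$ all Cartan subgroups are conjugate to $T_n$, so $\GL_m(\BC)$ has no discrete series for $m\ge 2$ and the tempered representations are the irreducible unitary principal series; hence every standard module is a full principal series $\RI(\sigma_1,\dots,\sigma_n)$ with weakly dominant exponents $\Re t_1\ge\cdots\ge\Re t_n$, part (1) is the Langlands quotient theorem, part (2) follows from the subquotient theorem together with the existence half of the Langlands classification, and part (3) is the uniqueness of Langlands data up to the Weyl group $S_n$ combined with the invertibility of the intertwining operators within blocks of equal $\Re t_j$. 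You correctly flag that the real analytic content (irreducibility of the unitary principal series of a complex group, holomorphy and invertibility of the relevant intertwining operators, nonvanishing of the long operator on the quotient) is being imported rather than proved; that is exactly the content of the cited references, so your proposal sits at the same level of rigor as the paper's own treatment, which simply quotes the result.
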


According to \cite{Jac79}, the associated local factors can be expressed as follows. 

\begin{thm}[Local Factors]\label{LF}
Let $\pi=\RJ(\sigma_1,\cdots,\sigma_n)$ be an irreducible admissible representation of $\RG_n=\GL_n(\BC)$ with $\sigma_j=[\cdot]^{l_j}\otimes|\cdot|_{\BC}^{t_j}$, where $l_j\in\BZ$ and $t_j\in\BC$ for every $1\leq j\leq n$. The local $L$-factor and local $\epsilon$-factor associated with $\pi$ are given by 
\begin{align*}
    L(s,\pi)=\prod_{j=1}^n 2(2\pi)^{-(s+t_i+\frac{|l_j|}{2})}\Gamma(s+t+\frac{|l_j|}{2})\quad {\rm and}\quad \epsilon(s,\pi,\psi)=\prod_{j=1}^ni^{|l_j|}.
\end{align*}
\end{thm}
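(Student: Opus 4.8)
\emph{Proof idea.} The plan is to reduce the statement to the rank-one case $\GL_1(\BC)$ and then invoke the multiplicativity of the local $L$- and $\epsilon$-factors. By Theorem \ref{LCBC}, the representation $\pi=\RJ(\sigma_1,\cdots,\sigma_n)$ is the Langlands quotient of the unitarily induced representation $\RI(\sigma_1,\cdots,\sigma_n)$, so its Langlands parameter is the direct sum $\bigoplus_{j=1}^n\phi_{\sigma_j}$ of the $1$-dimensional parameters of the characters $\sigma_j$ of $\BC^\times=W_{\BC}$. Using the compatibility of the Godement-Jacquet local factors with the Artin factors of the parameter over an Archimedean place (\cite{GJ72, Jac79}, and the $\GL_1$-reformulation recalled in \cite{JL22, JL23}), together with the additivity of Artin $L$- and $\epsilon$-factors in the parameter, one gets
\[
L(s,\pi)=\prod_{j=1}^n L(s,\sigma_j),\qquad \epsilon(s,\pi,\psi)=\prod_{j=1}^n\epsilon(s,\sigma_j,\psi).
\]

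The next step is the $\GL_1(\BC)$ computation, which is Tate's thesis over $\BC$ (\cite{Tat67}). Writing $\Gamma_{\BC}(s):=2(2\pi)^{-s}\Gamma(s)$, one checks that the unitary character $z\mapsto[z]^l$ of $\BC^\times$ has $L$-factor $\Gamma_{\BC}(s+\frac{|l|}{2})$: choosing the Gaussian test vector $f(z)=z^l e^{-2\pi z\bar z}$ when $l\geq 0$ (and $f(z)=\bar z^{|l|}e^{-2\pi z\bar z}$ when $l<0$), the local zeta integral $\int_{\BC^\times}f(z)[z]^l|z|_{\BC}^{s}\,\ud^\times z$ evaluates to $\Gamma_{\BC}(s+\frac{|l|}{2})$, and the greatest common divisor over all Schwartz test vectors gives the stated $L$-factor; twisting by $|\cdot|_{\BC}^t$ then produces $L(s,\sigma(l,t))=\Gamma_{\BC}(s+t+\frac{|l|}{2})$. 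The $\epsilon$-factor is read off from the local functional equation relating the zeta integral of $f$ to that of its Fourier transform and of $\sigma^{-1}$; with the standard additive character this yields $\epsilon(s,\sigma(l,t),\psi)=i^{|l|}$, a monomial independent of $s$ (replacing $\psi$ by a general nontrivial character only rescales the argument of $f$ and does not affect $i^{|l|}$).

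It remains to assemble the two inputs: substituting $\sigma_j=[\cdot]^{l_j}\otimes|\cdot|_{\BC}^{t_j}$ into the product formulas of the first step gives
\[
L(s,\pi)=\prod_{j=1}^n 2(2\pi)^{-(s+t_j+\frac{|l_j|}{2})}\Gamma\Big(s+t_j+\tfrac{|l_j|}{2}\Big),\qquad \epsilon(s,\pi,\psi)=\prod_{j=1}^n i^{|l_j|},
\]
which is the asserted formula. I expect the only delicate point to be the bookkeeping of normalizations, namely checking that the $L$- and $\epsilon$-factors in the Godement-Jacquet local functional equation used in \cite{JL22, JL23} coincide with the Artin factors of the Langlands parameter of $\pi$; over $\BC$ this is exactly the local Langlands correspondence for $\GL_n(\BC)$, which amounts to parabolic induction from characters and is contained in \cite{Jac79}, so no essential new difficulty should arise beyond keeping careful track of conventions.
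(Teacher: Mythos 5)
The paper offers no proof of this statement: it is quoted verbatim from \cite{Jac79}, where it is established exactly along the lines you propose, namely multiplicativity of the Godement--Jacquet local factors for Langlands quotients of principal series together with the $\GL_1(\BC)$ computation from Tate's thesis. So your route is the intended one, and the overall structure is fine. Two small corrections, though. First, your Gaussian test vector is conjugated the wrong way: for $l\geq 0$ you need $f(z)=\bar z^{\,l}e^{-2\pi z\bar z}$, so that $f(z)[z]^{l}=|z|_{\BC}^{l/2}e^{-2\pi z\bar z}$ is radial and the zeta integral reduces to a Gamma integral giving $\Gamma_{\BC}(s+\tfrac{l}{2})$; with your choice $f(z)=z^{l}e^{-2\pi z\bar z}$ the integrand carries the angular factor $e^{2il\theta}$ and the integral vanishes for $l\neq 0$ (and symmetrically, for $l<0$ one should take $f(z)=z^{|l|}e^{-2\pi z\bar z}$, not $\bar z^{|l|}e^{-2\pi z\bar z}$). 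Second, the parenthetical claim that replacing $\psi$ by a general nontrivial character does not affect the epsilon factor is false: under $\psi\mapsto\psi_a$ with $\psi_a(z)=\psi(az)$ the $\GL_1$ epsilon factor transforms by $\chi(a)|a|_{\BC}^{s-1/2}$, so the clean formula $\epsilon(s,\sigma,\psi)=i^{|l|}$ presupposes the standard unramified $\psi$ --- an assumption that is implicit in the theorem as stated but should be made explicit in your argument.
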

 
For any $m\in\BZ$, the local $\gamma$-factor associated with $\pi$ is given by 
\begin{align}\label{lgma}
    \gamma(1-s,\pi\times[\cdot]^m,\psi)&=\epsilon(1-s,\pi\times[\cdot]^m,\psi)\frac{L(s,\widetilde{\pi}\times[\cdot]^{-m})}{L(1-s,\pi\times[\cdot]^m)}\nonumber\\
    &=\prod_{j=1}^n i^{|l_j+m|}\cdot (2\pi)^{1-2(s-t_j)}   \cdot\frac{\Gamma(s-t_j+\frac{|l_j+m|}{2})}{\Gamma(1-s+t_j+\frac{|l_j+m|}{2})}.
\end{align}

\begin{rmk}
Using notations in \cite{Qi20}, we have that 
\[
\gamma(1-s,\pi\times[\cdot]^m,\psi)=G_{(\mathbf{t},\mathbf{l}+m\mathbf{e^n})}(s),
\]
where $\mathbf{t}=(t_1,\cdots,t_n)\in\BC^n$, $\mathbf{l}=(l_1,\cdots,l_n)\in\BZ^n$, and $\mathbf{e^n}=(1,\cdots,1)$. 
\end{rmk}

In \cite{Qi20}, Z. Qi defines a Bessel kernel function $j_{\mathbf{t},\Bl}$ for any $(\Bt,\Bl)\in\BC^n\times\BZ^n$ by the following Mellin-Barnes type integral,
\begin{align*}
    j_{\Bt,\Bl}(x)=\frac{1}{2\pi i}\int_{\CC_{(\Bt,\Bl)}} G_{(\Bt,\Bl)}(s)x^{-2s}\ud s,
\end{align*}
where 
\begin{align*}
    G_{\Bt,\Bl}(s):=\prod_{j=1}^ni^{|l_j|}(2\pi)^{1-2(s-t_j)}\frac{\Gamma(s-t_j+\frac{|l_j|}{2})}{\Gamma(1-(s-t_j)+\frac{|l_j|}{2})}
\end{align*}
and $\CC_{(\Bt,\Bl)}$ is any contour such that
\begin{itemize}
    \item $2\cdot\CC_{\Bt,\Bl}$ is upward directed from $\sigma-\infty$ to $\sigma+\infty$, where $\sigma<1+\frac{1}{n}\left(\Re \;\displaystyle{\left(\sum_{j=1}^n t_j \right) -1   }\right)$,
    \item all the set $t_j-|l_j|-\BN$ lie on the left side of $2\cdot\CC_{(\Bt,\Bl)}$, and
    \item if $s\in2\cdot\CC_{(\Bt,\Bl)}$ and $|\Im\;s|$ large enough, then $\Re\;s=\sigma$.
\end{itemize}
For more details, we refer to \cite[Definition 3.2]{Qi20}. 
Then \cite{Qi20} defines 
\begin{align}\label{Qi-BF-c}
    J_{\Bt,\Bl}(z)=\frac{1}{2\pi}\sum_{m\in\BZ}j_{(\Bt,\Bl+m\mathbf{e^n})}(|z|_{\BC}^{1/2})[z]^m,
\end{align}
and \cite[Lemma 3.10]{Qi20} secures the absolute convergence of this series. The following is the analogy in the complex case of Proposition \ref{k=j-p}.

\begin{prp}\label{k=j-c}
For any $\pi\in\Pi_\BC(\RG_n)$, which is parameterized by $\pi=\pi(\mathbf{t},\mathbf{l})$ as in Theorem \ref{LCBC},
as distributions on $\BC^\times$, the identity: $k_{\pi,\psi}(z)=J_{(\mathbf{t},\mathbf{l})}(z)|z|_{\BC}^{\frac{1}{2}}$ 
holds for any $z\in\BC^{\times}$.
\end{prp}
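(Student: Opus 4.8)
The plan is to mirror the $p$-adic argument of Proposition~\ref{k=j-p}, replacing the Baruch Bessel-function machinery with Qi's Mellin--Barnes description of $J_{(\mathbf{t},\mathbf{l})}$. Concretely, I would recall from \cite[Theorem~5.2]{JL22} that the $\pi$-kernel $k_{\pi,\psi}$ is characterized (as a distribution on $\BC^\times$, equivalently by its Mellin transform against unitary characters $[\cdot]^m|\cdot|_\BC^s$) by the fact that convolution with it on $\CC_c^\infty(\BC^\times)$ realizes $\CF_{\pi,\psi}$, and that its ``Mellin transform'' is the local gamma factor: the principal-value integral $\int^{\mathrm{pv}}_{\BC^\times} k_{\pi,\psi}(z)[z]^{-m}|z|_\BC^{-s}\,\ud^\times z$ equals $\gamma(\tfrac12,\pi\times[\cdot]^m,\psi)$ in the appropriate sense. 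Since both $k_{\pi,\psi}(z)$ and $J_{(\mathbf t,\mathbf l)}(z)|z|_\BC^{1/2}$ are smooth functions on $\BC^\times$, it suffices to check they have the same Mellin transform against every character $[\cdot]^m|\cdot|_\BC^{s}$.

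First I would compute the Mellin transform of $J_{(\mathbf t,\mathbf l)}(z)|z|_\BC^{1/2}$. Using the series \eqref{Qi-BF-c}, the orthogonality of the characters $[\cdot]^m$ on the unit circle picks out a single term, reducing to the Mellin transform in the radial variable of $j_{(\mathbf t,\mathbf l+m\mathbf e^n)}(|z|_\BC^{1/2})$; by the Mellin inversion built into the definition of $j_{\mathbf t,\mathbf l}$ as a Mellin--Barnes integral of $G_{(\mathbf t,\mathbf l)}(s)$, this Mellin transform is exactly $G_{(\mathbf t,\mathbf l+m\mathbf e^n)}$ evaluated at the shifted argument coming from the extra $|z|_\BC^{1/2}$. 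Comparing with \eqref{lgma} and the Remark identifying $\gamma(1-s,\pi\times[\cdot]^m,\psi)=G_{(\mathbf t,\mathbf l+m\mathbf e^n)}(s)$, one sees the bookkeeping works out precisely to $\gamma(\tfrac12,\pi\times[\cdot]^m,\psi)$, which matches the Mellin transform of $k_{\pi,\psi}$. The convergence of the resulting series over $m$ and the interchange of summation and integration are controlled by \cite[Lemma~3.10]{Qi20}.

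Once the Mellin transforms agree for all $(m,s)$ in a common region of absolute convergence, I would invoke Mellin inversion on $\BC^\times$ (or equivalently uniqueness of a smooth function with prescribed Mellin transform, in the same spirit as the use of \cite[Theorem~23.11]{HR79} in the proofs of Propositions~\ref{S=W} and \ref{FofTest}) to conclude $k_{\pi,\psi}(z)=J_{(\mathbf t,\mathbf l)}(z)|z|_\BC^{1/2}$ as distributions, hence as functions since both sides are smooth on $\BC^\times$.

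The main obstacle I anticipate is matching the analytic normalizations exactly: the contour $\CC_{(\mathbf t,\mathbf l)}$ in Qi's definition, the half-integer shift introduced by the factor $|z|_\BC^{1/2}$, the relation between the self-dual additive measure on $\BC$ underlying $\psi$ and the measure implicit in Qi's Hankel-transform conventions, and the precise meaning of the principal-value Mellin transform of $k_{\pi,\psi}$ all have to be reconciled so that no spurious powers of $2\pi$ or factors of $i^{|l_j|}$ survive. I would handle this by first verifying the identity for $n=1$ (where $J_{(\mathbf t,\mathbf l)}$ reduces to a classical Bessel-type kernel and $k_{\pi,\psi}$ is the Tate-style kernel), pinning down all constants there, and then checking that both constructions are multiplicative in the parameters in a compatible way — though in fact the cleanest route is to avoid term-by-term constant chasing entirely and argue directly at the level of the Mellin transform, where the identity $\gamma(1-s,\pi\times[\cdot]^m,\psi)=G_{(\mathbf t,\mathbf l+m\mathbf e^n)}(s)$ from the Remark does all the work.
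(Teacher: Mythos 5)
Your proposal is correct in its core idea and rests on the same pivot as the paper's proof, namely the identity $\gamma(1-s,\pi\times[\cdot]^m,\psi)=G_{(\mathbf t,\mathbf l+m\mathbf e^n)}(s)$ together with the principle that the gamma factor determines the kernel; but the technical implementation differs. The paper never computes the Mellin transform of either kernel directly. Instead it tests against an arbitrary $\phi\in\CC_c^\infty(\BC^\times)$: Qi's Theorem~3.15 produces a \emph{unique} function $\Upsilon$ with $\Upsilon|\cdot|_\BC^{1/2}$ in the appropriate space satisfying the local functional equation with $\gamma(1-s,\pi\times[\cdot]^m,\psi)$, which by the uniqueness results of \cite{JL23} forces $\Upsilon|\cdot|_\BC^{1/2}=\CF_{\pi,\psi}(\phi|\cdot|_\BC^{1/2})$; Qi's Proposition~3.17 then expresses $\Upsilon$ as integration of $\phi$ against $J_{(\mathbf t,\mathbf l)}$, while \cite[Theorem~5.1]{JL22} expresses the same transform as integration against $k_{\pi,\psi}(yz)|yz|_\BC^{-1/2}$, and equality of the two smooth kernels follows since $\phi$ is arbitrary. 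This route buys you something concrete: all integrals involved are against compactly supported test functions, so no regularization is ever needed. Your route, by contrast, requires (i) making rigorous the principal-value Mellin transform of $k_{\pi,\psi}$ in the complex case, (ii) justifying the forward Mellin transform of $j_{(\mathbf t,\mathbf l+m\mathbf e^n)}$ recovering $G_{(\mathbf t,\mathbf l+m\mathbf e^n)}$ (the Bessel kernel is not absolutely integrable against $|z|_\BC^{-s}$ on all of $\BC^\times$, so this inversion holds only with contour/regularization caveats), and (iii) the interchange of the sum over $m$ with integration. These are all surmountable with the estimates in \cite{Qi20}, and you correctly flag them as the obstacles, but they constitute real additional analytic work that the paper's test-function argument sidesteps entirely. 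If you pursue your version, I would recommend at minimum replacing the term-by-term Mellin computation with the observation that both kernels induce the same Hankel transform on $\CC_c^\infty(\BC^\times)$ --- at which point you have reproduced the paper's argument.
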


\begin{proof}
According to \cite[Theorem 3.15]{Qi20}, for any $\phi\in\CC_c^{\infty}(\BC^{\times})$, there is a unique function $\Upsilon(z)|z|_{\BC}^{\frac{1}{2}}\in\mathcal{S}_{\mathrm{sis}}^{(-\Bt,-\Bl)}(\BC^{\times}
)$, which is contained in the space $\CF(\BC^{\times})$ as defined in \cite[Definition 2.1]{JL23}, such that
\begin{align*}
    \CZ(1-s,\phi|\cdot|_{\BC}^{\frac{1}{2}},[\cdot]^m)\gamma(1-s,\pi\times[\cdot]^m,\psi)=\CZ(s,\Upsilon|\cdot|_{\BC}^{\frac{1}{2}},[\cdot]^{-m}).
\end{align*}
It follows that $\Upsilon|\cdot|_{\BC}^{\frac{1}{2}}=\CF_{\pi,\psi}(\phi |\cdot|_{\BC}^{\frac{1}{2}})$ according to \cite[Theorem 2.3, Proposition 3.7, and Corollary 3.8]{JL23}. 
From \cite[Proposition 3.17]{Qi20}, we have that 
\begin{align*}
    \Upsilon(z)=\int_{\BC^{\times}}\phi(y)J_{(\mathbf{t},\mathbf{l})}(zy)\ud y. 
\end{align*}
On the other hand, we have that 
\begin{align*}
    \Upsilon(z)=\int_{\BC^{\times}}\phi(y)k_{\pi,\psi}(yz)|yz|_{\BC}^{-\frac{1}{2}}\ud y
\end{align*}
due to \cite[Theorem 5.1]{JL22}. The $\pi$-kernel function $k_{\pi,\psi}$ is a smooth function on $\BC^{\times}$ according to \cite[Corollary 4.5]{JL22}, while 
the function $J_{\Bt,\Bl}$ is real analytic on $\BC^{\times}$ due to \cite[Proposition 3.17]{Qi20}.  Since $\phi\in\CC_c^{\infty}(\BC^{\times})$ is arbitrary, 
we thus deduce that 
$k_{\pi,\psi}(z)=J_{(\mathbf{t},\mathbf{l})}(z)|z|_{\BC}^{\frac{1}{2}}$ 
for any $z\in\BC^{\times}$, as functions on $\BC^\times$.
\end{proof}

As in Definition \ref{piFb-p}, we introduce the $\pi$-Beesel function on $\BC^\times$.

\begin{dfn}\label{piFb-c}
    For any $\pi\in\Pi_\BC(\RG_n)$, which is generic, the $\pi$-Beesel function $\Fb_{\pi,\psi}(x)$ on $\BC^\times$ is given as
    \[
    \Fb_{\pi,\psi}(x)=J_{(\mathbf{t},\mathbf{l})}(x)
    \]
    for any $x\in\BC^\times$, where $\pi=\pi(\mathbf{t},\mathbf{l})$ is given by the classification in Theorem \ref{LCBC}, and $J_{(\mathbf{t},\mathbf{l})}(x)$ is 
    given in \eqref{Qi-BF-c} and was originally defined in \cite{Qi20}.
\end{dfn}

\subsection{$\pi$-Bessel functions: real case}\label{ssec-RBF}
If $F=\BR$, we recall from \cite{Kna94} the classification of irreducible admissible representations of $\RG_n=\GL_n(\BR)$. For any $l\geq 1$, let $D_l^+$ be the discrete series of $\SL_2(\BR)$, that is, the representation space consists of analytic functions $f$ in the upper half-plane with
\begin{align*}
    \|f\|^2:=\iint |f(z)|^2y^{l-1}\ud x\ud y
\end{align*}
finite, and the action of $g=\begin{pmatrix}
    a & b \\ c & d
\end{pmatrix}$ is given by
\begin{align*}
    D_l^+(g)f(z):=(bz+d)^{-(l+1)}f\left(\frac{az+c}{bz+d}\right).
\end{align*}
Let $\SL_2^{\pm}(\BR)$ be the subgroup of elements $g$ in $\GL_2(\BR)$ with $|\det g|=1$ and
\begin{align*}
    D_l:=\mathrm{ind}_{\SL_2(\BR)}^{\SL_2^{\pm}(\BR)}(D_l^+)
\end{align*}
be the induced representation of $\SL_2^{\pm}(\BR)$, where we still use the unitary induction as in \cite[Chapter VII]{Kna01}. For each pair $(l,t)\in\BZ_{\geq 1}\times\BC$, let $\sigma=\sigma(l,t)$ be the representation of $\GL_2(\BR)$ obtained by tensoring the above representation on $\SL^{\pm}(\BR)$ with the quasi-character $g\mapsto|\det g|^t$, 
that is, $\sigma=D_l\otimes|\det(\cdot)|^t$, 
where $|\cdot|=|\cdot|_\BR$. 
For a pair $(\delta,t)\in\mathbb{Z}/2\mathbb{Z}\times\BC$, let $\sigma=\sigma(\delta,t)$ be the representation of $\GL_1(\BR)=\BR^{\times}$:
$\sigma=\sgn^{\delta}\otimes|\cdot|^t$. 

For any partition of $n$: $(n_1,\cdots,n_r)$ with each $n_j$ equal to $1$ or $2$ and with $\displaystyle{\sum_{j=1}^r n_j=n}$, we associate the block diagonal subgroup 
$M=\GL_{n_1}(\BR)\times\cdots\times\GL_{n_r}(\BR)$. 
For each $1\leq j\leq r$, let $\sigma_j$ be the representation of $\GL_{n_j}(\BR)$ of the form $\sigma(l_j,t_j)$ or $\sigma(\delta_j,t_j)$ as defined above. We extend the tensor product of these representations to the corresponding block upper triangular subgroup $Q$ by making it the identity on the block strictly upper triangular subgroup. We set
\begin{align*}
    \RI(\sigma_1,\cdots,\sigma_r):=\mathrm{ind}_Q^{\RG_n}(\sigma_1,\cdots,\sigma_r).
\end{align*}

\begin{thm}[Classification]\label{thm:CR}
The irreducible admissible representations of $\RG_n=\GL_n(\BR)$ can be classified as follows.
    \begin{itemize}
        \item [(1)] If the parameters $n_j^{-1}t_j$ of $(\sigma_1,\cdots,\sigma_r)$ satisfy
        \begin{align*}
            n_1^{-1}\Re\;t_1\geq n_2^{-1}\Re\;t_2\geq\cdots\geq n_r^{-1}\Re\;t_r,
        \end{align*}
        then $\RI(\sigma_1,\cdots,\sigma_r)$ has a unique irreducible quotient $\RJ(\sigma_1,\cdots,\sigma_r)$.
        \item [(2)] The representations $\RJ(\sigma_1,\cdots,\sigma_r)$ exhaust the irreducible admissible representations of $\RG_n$, up to infinitesimal equivalence.
        \item [(3)] Two such representations $\RJ(\sigma_1,\cdots,\sigma_r)$ and $\RJ(\sigma_1^{\prime},\cdots,\sigma_r^{\prime})$ are infinitesimally equivalent if and only if $r^{\prime}=r$ and there exists a permutation $j(i)$ such that $\sigma_i^{\prime}=\sigma_{j(i)}$ for each $1\leq i\leq r$.
    \end{itemize}
\end{thm}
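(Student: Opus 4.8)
The theorem is the Langlands classification (see \cite{Kna94}) written out for the group $\RG_n=\GL_n(\BR)$, and the plan is to deduce it from the general Langlands classification together with two features special to general linear groups over $\BR$: the list of essentially square-integrable representations, and the irreducibility of parabolic induction from tempered data. First I would recall that an irreducible admissible representation of $\GL_m(\BR)$ that is essentially square-integrable (square-integrable modulo the center after an unramified twist) exists only for $m=1$ and $m=2$: for $m=1$ it is a character $\sgn^{\delta}\otimes|\cdot|^t$ with $\delta\in\BZ/2\BZ$, $t\in\BC$, and for $m=2$ it is $D_l\otimes|\det(\cdot)|^t$ with $l\geq 1$, $t\in\BC$ (the bound $l\geq 1$ being the discrete series condition for $\SL_2(\BR)$, so no limits of discrete series appear), while $\GL_m(\BR)$ has no discrete series for $m\geq 3$. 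Hence the representations $\sigma(l_j,t_j)$ and $\sigma(\delta_j,t_j)$ introduced above are exactly the blocks allowed by the Langlands classification, and the block Levi subgroups $M=\GL_{n_1}(\BR)\times\cdots\times\GL_{n_r}(\BR)$ with each $n_j\in\{1,2\}$ are precisely the Levi subgroups carrying such data.

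Next I would use the fact, particular to $\GL_n$, that normalized parabolic induction of an irreducible tempered representation of a Levi subgroup of $\GL_n(\BR)$ is again irreducible (and tempered). Grouping the $\sigma_j$ into maximal consecutive runs on which the normalized real exponent $n_j^{-1}\Re\,t_j$ is constant, each run induces to an irreducible essentially tempered representation $\tau_i$ of the corresponding $\GL_{m_i}(\BR)$, and under the dominance hypothesis $n_1^{-1}\Re\,t_1\geq\cdots\geq n_r^{-1}\Re\,t_r$ one obtains $\RI(\sigma_1,\cdots,\sigma_r)=\mathrm{ind}_{Q'}^{\RG_n}(\tau_1\otimes\cdots\otimes\tau_s)$ with strictly decreasing real exponents on the $\tau_i$; this is exactly a Langlands standard module for $\GL_n(\BR)$, so the Langlands quotient theorem produces the unique irreducible quotient $\RJ(\sigma_1,\cdots,\sigma_r)$, which is (1). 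For (2), every irreducible admissible representation of $\GL_n(\BR)$ is, by the Langlands classification, the Langlands quotient of such a standard module, and refining the tempered Levi blocks $\tau_i$ back into blocks of size $1$ and $2$ (again using the irreducibility of tempered induction) expresses it as some $\RJ(\sigma_1,\cdots,\sigma_r)$ with the $\sigma_j$ in the prescribed form and order.

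For (3) I would invoke the uniqueness half of the Langlands classification: two standard modules have infinitesimally equivalent Langlands quotients if and only if their inducing data are associate, i.e.\ conjugate under the Weyl group of $\GL_n$; for the block datum $(M,\sigma_1\otimes\cdots\otimes\sigma_r)$ this is exactly a permutation $j$ of $\{1,\cdots,r\}$ with $\sigma'_i=\sigma_{j(i)}$, which forces $r'=r$ (as $r$ is read off from the conjugacy class of $M$). One also checks that an essentially square-integrable representation of $\GL_1(\BR)$ or $\GL_2(\BR)$ determines its parameter $(\delta,t)$, respectively $(l,t)$, uniquely, so that there are no further coincidences inside a block.

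The main obstacle is the gap between the weak ordering on the $n_j^{-1}\Re\,t_j$ allowed in the statement and the strict dominance demanded by the Langlands classification; this is bridged precisely by collapsing the runs of blocks with equal normalized exponent into a single essentially tempered constituent, which is legitimate thanks to the irreducibility of tempered parabolic induction for $\GL_n(\BR)$. Once this point is handled, the rest is an unwinding of definitions entirely parallel to the complex case in Theorem \ref{LCBC}, for which we refer to \cite{Kna94} and \cite{Kna01}.
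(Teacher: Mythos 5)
This theorem is not proved in the paper at all: it is stated as a known classification result, cited directly from \cite{Kna94} (just as the complex-case Theorem \ref{LCBC} is cited from \cite{Z75,ZN66}), so there is no in-paper argument to compare against. Your outline is a correct sketch of the standard derivation given in that reference — reduction to the Langlands classification, the fact that essentially square-integrable representations of $\GL_m(\BR)$ occur only for $m\in\{1,2\}$, irreducibility of tempered parabolic induction for $\GL_n(\BR)$ to pass from the weak ordering of the $n_j^{-1}\Re\,t_j$ to a genuine standard module, and Weyl-group association of the inducing data for part (3) — and I see no gaps in it.
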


According to \cite{Jac79} again, the local factors can be expressed as follows:

\begin{thm}[Local Factors]\label{thm:LFR}
    For a representation $\sigma$ of $\GL_1(\BR)$ or $\GL_2(\BR)$ as defined above, denote
    \begin{align*}
        L(s,\sigma) = \begin{cases}
         \pi^{-\frac{s+t+\delta}{2}}\Gamma(\frac{s+t+\delta}{2}) & {\rm if}\  n=1,\;\sigma=\sgn^{\delta}\otimes|\cdot|^t,  \\
         2(2\pi)^{-(s+t+\frac{l}{2})}\Gamma(s+t+\frac{l}{2}) & {\rm if}\  n=2,\;\sigma=D_l\otimes|\det(\cdot)|^t.
                \end{cases}
    \end{align*}
then for $\pi=\RJ(\sigma_1,\cdots,\sigma_r)$, we have
$L(s,\pi)=\prod_{j=1}^r L(s,\sigma_j)$.
Similarly denote
\begin{align*}
    \epsilon(s,\sigma,\psi) =\begin{cases}
         i^{\delta} & {\rm if}\ n=1,\;\sigma=\sgn^{\delta}\otimes|\cdot|^t,  \\
          i^{l+1}& {\rm if}\ n=2,\;\sigma=D_l\otimes|\det(\cdot)|^t.
                \end{cases}
\end{align*}
then the $\epsilon$-factor of $\pi=\RJ(\sigma_1,\cdots,\sigma_r)$ is given by
$\epsilon(s,\pi,\psi)=\prod_{j=1}^l\epsilon(s,\sigma_j,\psi)$.
Finally, the local $\gamma$-factor associated with $\pi=\RJ(\sigma_1,\cdots,\sigma_r)$ is given by
\begin{align*}
    \gamma(s,\pi\times\sgn^{\delta},\psi)=\epsilon(s,\pi\times\sgn^{\delta},\psi)\frac{L(1-s,\widetilde{\pi}\times\sgn^{\delta})}{L(s,\pi\times\sgn^{\delta})},
\end{align*}
where $\widetilde{\pi}$ is the contragredient of $\pi$.

\end{thm}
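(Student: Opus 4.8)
This statement recollects the archimedean standard local factors computed by Jacquet in \cite{Jac79}; the plan is to recover it from the archimedean local Langlands correspondence for $\GL_n$ together with the multiplicativity of local constants. First I would observe that, since $\pi=\RJ(\sigma_1,\dots,\sigma_r)$ is by construction the Langlands quotient of $\RI(\sigma_1,\dots,\sigma_r)$, its $L$-parameter $\phi_\pi\colon W_\BR\to\GL_n(\BC)$ is the direct sum $\bigoplus_{j=1}^{r}\phi_{\sigma_j}$ of the parameters attached to the inducing data. Since Galois-side $L$- and $\epsilon$-factors are additive in the parameter, and since the archimedean correspondence identifies the automorphic factors with the Galois ones, this reduces $L(s,\pi)=\prod_j L(s,\sigma_j)$ and $\epsilon(s,\pi,\psi)=\prod_j \epsilon(s,\sigma_j,\psi)$ to the two building blocks: the quasicharacter $\sigma(\delta,t)=\sgn^{\delta}\otimes|\cdot|^{t}$ of $\GL_1(\BR)$ and $\sigma(l,t)=D_l\otimes|\det(\cdot)|^{t}$ on $\GL_2(\BR)$.

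For the rank-one block, $\sigma(\delta,t)$ is already its own $L$-parameter, viewed as a character of $W_\BR^{\ab}=\BR^{\times}$; the standard dictionary for real Hecke quasicharacters gives $L(s,\sigma(\delta,t))=\Gamma_\BR(s+t+\delta):=\pi^{-(s+t+\delta)/2}\Gamma\bigl((s+t+\delta)/2\bigr)$ and, for the fixed standard additive character $\psi$, $\epsilon(s,\sigma(\delta,t),\psi)=i^{\delta}$. For the rank-two block, $D_l$ is the discrete series of $\GL_2(\BR)$ of lowest weight $l+1$, whose parameter is the two-dimensional irreducible representation $\Ind_{W_\BC}^{W_\BR}\bigl(z\mapsto (z/\bar z)^{l/2}\bigr)$, and tensoring by $|\det(\cdot)|^{t}$ twists this parameter by $\|\cdot\|^{t}$. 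The $L$-factor of an induced-from-$W_\BC$ two-dimensional parameter is $\Gamma_\BC(s+t+l/2):=2(2\pi)^{-(s+t+l/2)}\Gamma(s+t+l/2)$, while the inductivity of $\epsilon$-factors in degree zero contributes the Langlands transfer factor $\lambda_{\BC/\BR}(\psi)=i$ times the $\epsilon$-factor $i^{l}$ of the inducing character of $W_\BC$, yielding $\epsilon(s,\sigma(l,t),\psi)=i^{l+1}$. Assembling the blocks via the first paragraph then gives the stated formulas for $L(s,\pi)$ and $\epsilon(s,\pi,\psi)$.

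Finally, the displayed $\gamma$-factor is purely formal once the $L$- and $\epsilon$-factors are known: it is defined by $\gamma(s,\pi\times\sgn^{\delta},\psi)=\epsilon(s,\pi\times\sgn^{\delta},\psi)\,L(1-s,\wt\pi\times\sgn^{\delta})/L(s,\pi\times\sgn^{\delta})$, and one observes that $\pi\times\sgn^{\delta}=\RJ(\sigma_1\otimes\sgn^{\delta},\dots,\sigma_r\otimes\sgn^{\delta})$ with each $\sigma_j\otimes\sgn^{\delta}$ again of one of the two admissible shapes (for $n_j=1$ the datum $\delta_j$ is replaced by $\delta_j+\delta\bmod 2$; for $n_j=2$ the twist $\sgn\circ\det$ is trivial on $\SL_2(\BR)$, hence is absorbed and $\sigma(l,t)\otimes\sgn^{\delta}\cong\sigma(l,t)$), together with $\wt{\sigma(\delta,t)}=\sigma(\delta,-t)$ and $\wt{\sigma(l,t)}=\sigma(l,-t)$. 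The main difficulty here is not conceptual but a matter of normalization bookkeeping: one must fix once and for all the standard additive character $\psi$ and the sign conventions in the Godement--Jacquet functional equation, and then carefully track the resulting powers of $i$ through the induction step (the transfer factor $\lambda_{\BC/\BR}$ and the weight shift $l$ versus $l+1$), so that the root numbers come out exactly as $i^{\delta}$ and $i^{l+1}$ rather than their inverses.
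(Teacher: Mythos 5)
The paper offers no proof of this statement---it is quoted directly from Jacquet \cite{Jac79}---and your derivation via the archimedean Langlands parameters (additivity of $L$ and $\epsilon$ over the blocks of the Langlands quotient, $\Gamma_\BR(s+t+\delta)$ for the $\GL_1(\BR)$ data, and $\Gamma_\BC(s+t+\tfrac{l}{2})$ together with the transfer factor $\lambda_{\BC/\BR}(\psi)$ for $D_l\otimes|\det(\cdot)|^t$) is precisely the standard argument underlying that reference, with the $\gamma$-factor identity being definitional. Your proof is correct; the only genuine care needed, as you yourself flag, is fixing the additive character and tracking the powers of $i$ so the root numbers come out as $i^{\delta}$ and $i^{l+1}$ rather than their inverses.
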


For any $\phi(x)\in\CC_c^{\infty}(\BR^{\times})$, according to \cite[Theorem 3.10]{JL23}, there is some function $\Upsilon$ such that $\Upsilon|\cdot|^{\frac{1}{2}}=\CF_{\widetilde{\pi},\psi}(\phi|\cdot|^{\frac{1}{2}})$ such that
\begin{align*}
    \CZ(s,\Upsilon|\cdot|^{\frac{1}{2}},\sgn^{\delta})=\gamma(1-s,\pi\times\delta,\psi)\cdot\CZ(1-s,\phi|\cdot|^{\frac{1}{2}},\sgn^{\delta}).
\end{align*}
Due to \cite[Theorem 4.2]{IJ78}, for $\Re \;s=\sigma_0$ large enough, we have
\begin{align*}
    \Upsilon(x)
    &=
    \frac{1}{2}\sum_{\delta\in\BZ/2\BZ} \left(  \frac{1}{2\pi i}
    \int_{\sigma_0-i\infty}^{\sigma_0+i\infty}\gamma(1-s,\pi\times\sgn^{\delta},\psi)\cdot\CZ(1-s,\phi|\cdot|^{\frac{1}{2}},\sgn^{\delta})|x|^{-s}\ud s\right)(\sgn x)^{\delta}\\
    &=\frac{1}{2}\sum_{\delta\in\BZ/2\BZ} \left(  \frac{1}{2\pi i}
    \int_{\sigma_0-i\infty}^{\sigma_0+i\infty}\gamma(1-s,\pi\times\sgn^{\delta},\psi)\cdot \int_{\BR^{\times}}\phi(y)|y|^{-s}\ud y   |x|^{-s}\ud s\right)(\sgn x)^{\delta}.
\end{align*}
We choose a contour $\CC$ with the following three properties: 
\begin{itemize}\label{contour}
    \item [(1)] $\CC$ is upward directed from $\sigma_0^{\prime}-i\infty$ to $\sigma_0^{\prime}+i\infty$, where $\sigma_0^{\prime}$ is small enough, say
    \begin{align*}
        \sigma_0^{\prime}< \frac{1}{2}+\frac{\Re\left(\sum_{j=1}^r n_jt_j\right)-1 }{n},
    \end{align*}
    \item [(2)] The sets $t_j-\delta_j-\BN$ for $n_j=1$ and $t_j-\frac{l_j}{2}-\BN$ for $n_j=2$, $1\leq j\leq r$ all lie on the left side of $\CC$, and 
    \item [(3)] If $s\in\CC$, then for $|\Im\;s|$ large enough, $\Re s=\sigma_0^{\prime}$.
\end{itemize}
Then for $t=|\Im\;s|$ large enough, we have that for fixed $x\in\BR^{\times}$ and $\phi\in\CC_c^{\infty}(\BR^{\times})$,
\begin{align*}
    \int_{\BR^{\times}}\phi(y)|y|^{-s}\ud y  |x|^{-s}\leq C
\end{align*}
for some constant $C$ for all s with $\sigma_0^{\prime}\leq \Re \;s\leq\sigma_0$, and the constant $C$ only depends on $x$, $\varphi$, $\sigma_0$, and $\sigma_0^{\prime}$, and is independent of $t=|\Im\;s|$. It follows that 
\begin{align*}
    \int_{\sigma_0^{\prime}+it}^{\sigma_0+it} \gamma(1-s,\pi\times\sgn^{\delta},\psi) \int_{\BR^{\times}}\phi(y)|y|^{-s}\ud y|x|^{-s}\ud s\leq C^{\prime} t^{-1}
\end{align*}
for some other constant $C^{\prime}$ according to \cite[Lemma 1.3]{Qi20} and Property (1) of the contour $\CC$. Hence, as $t\rightarrow\infty$, the above integral goes zero, and we are able to change the integral from $(\sigma_0-i\infty,\sigma_0+i\infty)$ to $\CC$ according to the Cauchy residue theorem and Property (2) of the contour $\CC$, that is 
\begin{align*}
     &\sum_{\delta\in\BZ/2\BZ} \left(  \frac{1}{2\pi i}
     \int_{\sigma_0-i\infty}^{\sigma_0+i\infty}\gamma(1-s,\pi\times\sgn^{\delta},\psi)\cdot \int_{\BR^{\times}}\phi(y)|y|^{-s}\ud y   |x|^{-s}\ud s\right)(\sgn x)^{\delta}\\
   &\qquad =\sum_{\delta\in\BZ/2\BZ} \left(  \frac{1}{2\pi i}
   \int_{\CC}\gamma(1-s,\pi\times\sgn^{\delta},\psi)\cdot \int_{\BR^{\times}}\phi(y)|y|^{-s}\ud y  |x|^{-s}\ud s\right)(\sgn x)^{\delta}.\\
\end{align*}
According to Property (1) of the contour $\CC$ and \cite[Lemma 1.3]{Qi20} again, we have that 
\begin{align*}
    \int_{\CC}\int_{\BR^{\times}} |\gamma(1-s,\pi\times\sgn^{\delta},\psi)\phi(y)|\cdot|xy|^{-s}\ud y\ud s<\infty.
\end{align*}
Hence we can change the order of integration using Fubini's theorem to obtain that
\begin{align*}
    \Upsilon(x)
    =
    \int_{\BR^{\times}}\phi(y)\left(\frac{1}{2}\sum_{\delta\in\BZ/2\BZ} \frac{1}{2\pi i} 
    \int_{\CC} \gamma(1-s,\pi\times\sgn^{\delta},\psi)|xy|^{-s}\sgn(x)^{\delta}\ud s \right)       \ud y.
\end{align*}
As in Definitions \ref{piFb-p} and \ref{piFb-c}, we define the $\pi$-Bessel function $\Fb_{\pi,\psi}(x)$ on $\BR^\times$ as follows

\begin{dfn}\label{piFb-r}
    For any $\pi\in\Pi_\BR(\RG_n)$, which is generic, the $\pi$-Beesel function $\Fb_{\pi,\psi}(x)$ on $\BR^\times$ is given as
    \begin{align*}
        \Fb_{\pi,\psi}(\pm x)=\frac{1}{2}\sum_{\delta\in\BZ/2\BZ}\frac{1}{2\pi i}\int_{\CC}\gamma(1-s,\pi\times\sgn^{\delta},\psi)|x|^{-s}(\pm)^{\delta}\ud s,\;x>0.
    \end{align*}
\end{dfn}

The integral in Definition \ref{piFb-r} is absolutely convergent to a smooth function in $x$ because of Property (1) of the contour $\CC$ and \cite[Lemma 1.3]{Qi20}. 
Moreover we prove the following proposition, which is the analogy in the real case of Propositions \ref{k=j-p} and \ref{k=j-c}.

\begin{prp}\label{k=j-r}
For any $\pi\in\Pi_\BR(\RG_n)$, which is generic, the $\pi$-kernel function $k_{\pi,\psi}(x)$ and the $\pi$-Bessel function $\Fb_{\pi,\psi}(x)$ are related by the following 
identity as functions on $\BR^\times$, i.e. 
\begin{align*}
    k_{\pi,\psi}(x)=\Fb_{\pi,\psi}(x)|x|_\BR^{\frac{1}{2}},\quad \forall x\in\BR^{\times}.
\end{align*}
\end{prp}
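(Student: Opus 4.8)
The plan is to imitate the proofs of the non-Archimedean case (Proposition~\ref{k=j-p}) and the complex case (Proposition~\ref{k=j-c}): one produces two closed-form expressions for a single auxiliary function, one exhibiting the $\pi$-kernel $k_{\pi,\psi}$ and one exhibiting the $\pi$-Bessel function $\Fb_{\pi,\psi}$, matches them against an arbitrary test function, and then upgrades the resulting identity of distributions to an identity of smooth functions. The essential analytic input --- the Mellin inversion of \cite[Theorem~4.2]{IJ78}, the contour shift justified by the Cauchy residue theorem and \cite[Lemma~1.3]{Qi20}, and the application of Fubini's theorem --- has already been carried out in the discussion preceding Definition~\ref{piFb-r}, so the work here is mainly one of reassembly.

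Fix $\phi\in\CC_c^\infty(\BR^\times)$ and let $\Upsilon$ be the function attached to it in that discussion, so that $\Upsilon(\cdot)\,|\cdot|_\BR^{1/2}=\CF_{\pi,\psi}\bigl(\phi(\cdot)\,|\cdot|_\BR^{1/2}\bigr)$. First I would read off from the final display of that discussion, combined with Definition~\ref{piFb-r}, the first closed form
\[
\Upsilon(x)=\int_{\BR^\times}\phi(y)\,\Fb_{\pi,\psi}(xy)\,\ud y ,\qquad x\in\BR^\times .
\]
Then, noting that $\phi(\cdot)\,|\cdot|_\BR^{1/2}$ again lies in $\CC_c^\infty(\BR^\times)$, I would apply the convolution description \eqref{FO-k} of the $\pi$-Fourier transform, i.e. \cite[Theorem~5.1]{JL22}, to this function, unfold the convolution over the group $\BR^\times$, rewrite $\ud^\times y$ as $\ud y/|y|_\BR$, and divide by $|x|_\BR^{1/2}$ to obtain the second closed form
\[
\Upsilon(x)=\int_{\BR^\times} k_{\pi,\psi}(xy)\,|xy|_\BR^{-1/2}\,\phi(y)\,\ud y ,\qquad x\in\BR^\times .
\]

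Setting $x=1$, comparing the two expressions, and using that $\phi\in\CC_c^\infty(\BR^\times)$ is arbitrary shows that the functions $\Fb_{\pi,\psi}$ and $k_{\pi,\psi}(\cdot)\,|\cdot|_\BR^{-1/2}$ on $\BR^\times$ agree as distributions; since $k_{\pi,\psi}$ is smooth on $\BR^\times$ by \cite[Corollary~4.5]{JL22} and $\Fb_{\pi,\psi}$ is smooth by the convergence statement following Definition~\ref{piFb-r}, they agree pointwise, that is, $k_{\pi,\psi}(x)=\Fb_{\pi,\psi}(x)\,|x|_\BR^{1/2}$ for every $x\in\BR^\times$, which is the assertion. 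I expect the only genuine friction to be the bookkeeping of the $|\cdot|_\BR^{1/2}$-twists and of the additive-versus-multiplicative Haar measures when unfolding \eqref{FO-k}; there is no deeper analytic obstacle, the contour estimates being already in place. A cleaner variant worth recording bypasses $\Upsilon$ altogether: one checks that the principal-value Mellin transforms of $k_{\pi,\psi}(x)\,|x|_\BR^{-1/2}$ and of $\Fb_{\pi,\psi}(x)$ against each character $\sgn^\delta|\cdot|_\BR^{s}$ coincide --- both equal the same product of local $\gamma$-factors, by \cite[Theorem~5.2]{JL22} and by Definition~\ref{piFb-r} respectively --- and then concludes by uniqueness of the Mellin transform together with smoothness.
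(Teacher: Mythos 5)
Your proposal is correct and follows essentially the same route as the paper: both compare the two integral representations $\Upsilon(x)=\int_{\BR^{\times}}\phi(y)\Fb_{\pi,\psi}(xy)\ud y$ (from the Mellin--Barnes computation preceding Definition~\ref{piFb-r}) and $\Upsilon(x)=\int_{\BR^{\times}}\phi(y)k_{\pi,\psi}(xy)|xy|_\BR^{-\frac{1}{2}}\ud y$ (from \eqref{FO-k}), and conclude by arbitrariness of $\phi\in\CC_c^\infty(\BR^\times)$ together with the smoothness of both kernels. Your bookkeeping of the $|\cdot|_\BR^{1/2}$-twists and the measure conversion is consistent with the paper's conventions, and the alternative Mellin-uniqueness variant you sketch is a valid, equally standard shortcut.
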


\begin{proof}
    Similar to Proposition 4.9, let us compare the integral 
    \begin{align*}
        \Upsilon(x)=\int_{\BR^{\times}}\phi(y)\Fb_{\pi,\psi}(xy)\ud y
    \end{align*}
    with the integral 
    \begin{align*}
        \Upsilon(x)=\int_{\BR^{\times}}\phi(y)k_{\pi,\psi}(xy)|xy|_\BR^{-\frac{1}{2}}\ud y, 
    \end{align*}
    for any $\phi\in\CC^\infty_c(\BR^\times)$. 
It is clear that $k_{\pi,\psi}=\Fb_{\pi,\psi}(x)|x|_\BR^{\frac{1}{2}}$ because of Definition \ref{piFb-r} and the smothness of both $k_{\pi,\psi}$ (\cite[Corollary 4.5]{JL22}) and $\Fb_{\pi,\psi}$ as functions on $\BR^\times$.
\end{proof}

\begin{rmk}\label{Qi-rk}
    In the special case that $\pi=\pi(\Bt,\mathbf{\delta})=\RJ(\sigma_1,\cdots,\sigma_n)$ as Theorem \ref{thm:CR} with all $n_j=1$ for $1\leq j\leq n$, the $\pi$-Bessel function $\Fb_{\pi,\psi}$ in Definition \ref{piFb-r} is exactly the Bessel function $J_{(\Bt,\mathbf{\delta})}$ defined in \cite[Section 3.3.2]{Qi20}, where $\Bt=(t_1,\cdots,t_n)\in\BC^n$ and $\mathbf{\delta}=(\delta_1,\cdots,\delta_n)\in(\BZ/2\BZ)^n$.
\end{rmk}

\subsection{$\pi$-Bessel functions and dual functions}\label{ssec-piBFDF}
From Definitions \ref{piFb-p}, \ref{piFb-c} and \ref{piFb-r}, for a given $\pi\in\Pi_F(\RG_n)$, we define the (normalized) $\pi$-Bessel function $\Fb_{\pi,\psi}(x)$ on $F^\times$ for every local field $F$ of characteristic zero. In Propositions \ref{k=j-p}, \ref{k=j-c} and \ref{k=j-r}, we obtain the relation between the $\pi$-kernel function 
$k_{\pi,\psi}(x)$ and the $\pi$-Bessel function $\Fb_{\pi,\psi}(x)$. As a record, we state the corresponding formula for the dual function $\wt{w}(x)$ of $w(x)\in\CC^\infty_c(F^\times)$ following Corollary \ref{dfw-k}

\begin{cor}\label{dfw-b}
For any $\pi\in \Pi_F(\RG_n)$, the dual function $\wt{w}(x)$ associated with any $w\in\CC^\infty_c(F^\times)$ is given by the following formula:
    \[
    \wt{w}(x)=|x|_F^{\frac{n-1}{2}}\left(\Fb_{\pi,\psi}(\cdot)*(w^\vee(\cdot)|\cdot|_F^{\frac{n-3}{2}})\right)(x),
    \]
    for all $x\in F^\times$, where $w^\vee(x)=w(x^{-1})$.
\end{cor}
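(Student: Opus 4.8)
The plan is to derive Corollary~\ref{dfw-b} as a direct bookkeeping consequence of two facts already in hand: the convolution formula for the dual function in Corollary~\ref{dfw-k}, and the pointwise identification of the $\pi$-kernel function with the $\pi$-Bessel function furnished by Propositions~\ref{k=j-p}, \ref{k=j-c}, and \ref{k=j-r}. Recall that Corollary~\ref{dfw-k} asserts, for $w\in\CC^\infty_c(F^\times)$,
\[
\wt{w}(x)=|x|_F^{\frac{n}{2}-1}\left(k_{\pi,\psi}(\cdot)*(w^\vee(\cdot)|\cdot|_F^{\frac{n}{2}-1})\right)(x),
\]
where the convolution on $F^\times$ is the one used in \eqref{FO-k}, namely $(f*g)(x)=\int_{F^\times}f(xt^{-1})g(t)\ud^\times t$. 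The first step is simply to substitute the identity $k_{\pi,\psi}(y)=\Fb_{\pi,\psi}(y)|y|_F^{\frac12}$, which holds for all $y\in F^\times$ and for every local field $F$ of characteristic zero by Propositions~\ref{k=j-p}, \ref{k=j-c}, and \ref{k=j-r}, into the integrand.

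The second step is to unwind the convolution integral and collect the powers of $|\cdot|_F$. Writing the argument of $k_{\pi,\psi}$ as $xt^{-1}$ produces a factor $|xt^{-1}|_F^{\frac12}=|x|_F^{\frac12}|t|_F^{-\frac12}$; pulling $|x|_F^{\frac12}$ out of the integral and combining it with the prefactor $|x|_F^{\frac{n}{2}-1}$ gives $|x|_F^{\frac{n-1}{2}}$ outside, while combining $|t|_F^{-\frac12}$ with the weight $|t|_F^{\frac{n}{2}-1}$ inside the integral gives $|t|_F^{\frac{n-3}{2}}$. Re-packaging the resulting integral as a convolution yields
\[
\wt{w}(x)=|x|_F^{\frac{n-1}{2}}\left(\Fb_{\pi,\psi}(\cdot)*(w^\vee(\cdot)|\cdot|_F^{\frac{n-3}{2}})\right)(x),
\]
which is exactly the asserted formula.

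There is essentially no genuine obstacle; the only points deserving a word of care are (i) confirming that the convolution normalization here is the same one used in Corollary~\ref{dfw-k} and \eqref{FO-k}, so that the exponent arithmetic is unambiguous, and (ii) justifying that the manipulations are legitimate. The latter is immediate: since $w\in\CC^\infty_c(F^\times)$ we have $w^\vee\in\CC^\infty_c(F^\times)$ (inversion is a homeomorphism of $F^\times$), so the $t$-integral runs over a compact subset of $F^\times$, and $\Fb_{\pi,\psi}$ is a smooth function on $F^\times$ by Definitions~\ref{piFb-p}, \ref{piFb-c}, and \ref{piFb-r} together with the convergence statements recorded there. Hence the identity holds pointwise on $F^\times$, and as both sides are smooth in $x$, it holds as an identity of functions on $F^\times$, as claimed.
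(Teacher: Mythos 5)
Your proposal is correct and is exactly the argument the paper intends: Corollary \ref{dfw-b} is obtained by substituting the identity $k_{\pi,\psi}(y)=\Fb_{\pi,\psi}(y)|y|_F^{1/2}$ from Propositions \ref{k=j-p}, \ref{k=j-c}, \ref{k=j-r} into the convolution formula of Corollary \ref{dfw-k} and collecting exponents, which the paper records without writing out. Your exponent bookkeeping ($\tfrac{n}{2}-1+\tfrac12=\tfrac{n-1}{2}$ outside, $\tfrac{n}{2}-1-\tfrac12=\tfrac{n-3}{2}$ inside) and the convergence remarks are all accurate.
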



\section{A New Proof of the Voronoi Summation Formula}\label{sec-NPVSF}

In this section, we give a new proof of the Voronoi summation formula based on the $\pi$-Poisson summation formula (\cite[Theorem 4.7]{JL23}), which was recalled in Theorem \ref{thm:PSF}.
Let $k$ be a number field, the notations are all as in Section \ref{sec-piPSF}.

\begin{lem}\label{addtwwt}
At any local place $\nu$ of $k$, for any $w_{\nu}(x)\in\CC_c^{\infty}(k_{\nu}^{\times})$ and $\zeta_\nu\in k_\nu^\times$, the function 
\[
\phi_\nu(x)=\psi_{\nu}(x\zeta_{\nu})\cdot w_{\nu}(x)\cdot|x|_\nu^{1-\frac{n}{2}}
\]
belongs to the space $\CS_{\pi_{\nu}}(k_{\nu}^{\times})$. If $\nu<\infty$ and $\pi_{\nu}$ is unramified, let $^{\circ}W_{\nu}$ be the normalized unramifield Whittaker function 
associated with $\pi_\nu$, then the function 
\[
\varphi_\nu(x)=\psi_{\nu}(x\zeta_{\nu})\cdot{^{\circ}W_{\nu}}\left( \begin{pmatrix}
    x & \\ & \RI_{n-1}
\end{pmatrix}  \right)\cdot|x|_\nu^{1-\frac{n}{2}}
\]
belongs to the space $\CS_{\pi_{\nu}}(k_{\nu}^{\times})$.
\end{lem}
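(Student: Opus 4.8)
The plan is to show both functions lie in $\CS_{\pi_\nu}(k_\nu^\times)$ by invoking the identification $\CS_{\pi_\nu}(k_\nu^\times)=\CW_{\pi_\nu}(k_\nu^\times)$ from Proposition \ref{S=W} (for generic $\pi_\nu$), together with the basic inclusion $\CC_c^\infty(k_\nu^\times)\subset\CS_{\pi_\nu}(k_\nu^\times)$ recorded in \eqref{CSC}, and then using stability of the relevant spaces under multiplication by the additive character $\psi_\nu(x\zeta_\nu)$. For the first claim, since $w_\nu\in\CC_c^\infty(k_\nu^\times)$, the function $x\mapsto w_\nu(x)|x|_\nu^{1-\frac n2}$ is again in $\CC_c^\infty(k_\nu^\times)$; so it remains to check that multiplication by $\psi_\nu(x\zeta_\nu)$ preserves $\CC_c^\infty(k_\nu^\times)$. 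In the $p$-adic case $\psi_\nu(x\zeta_\nu)$ is locally constant, hence the product is still locally constant with the same compact support, so it lies in $\CC_c^\infty(k_\nu^\times)\subset\CS_{\pi_\nu}(k_\nu^\times)$. In the Archimedean case $\psi_\nu(x\zeta_\nu)$ is smooth and of modulus one, and the product with a test function supported in a compact subset of $k_\nu^\times$ is again smooth and compactly supported in $k_\nu^\times$; thus again it lies in $\CC_c^\infty(k_\nu^\times)\subset\CS_{\pi_\nu}(k_\nu^\times)$. This disposes of $\phi_\nu$ at every place.

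For the second claim, where $\nu<\infty$ and $\pi_\nu$ is unramified, I would argue as follows. By Proposition \ref{S=W} the $\pi_\nu$-Schwartz space equals the $\pi_\nu$-Whittaker-Schwartz space $\CW_{\pi_\nu}(k_\nu^\times)$ of \eqref{WSS}, so the function
\[
x\longmapsto {}^{\circ}W_\nu\!\left(\begin{pmatrix} x & \\ & \RI_{n-1}\end{pmatrix}\right)|x|_\nu^{1-\frac n2}
\]
already lies in $\CS_{\pi_\nu}(k_\nu^\times)$, being the Whittaker-Schwartz function $\omega(x)$ attached to the smooth vector giving ${}^{\circ}W_\nu$. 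Hence it suffices to show that $\CS_{\pi_\nu}(k_\nu^\times)$ is stable under multiplication by $\psi_\nu(x\zeta_\nu)$. This is most transparent in the Whittaker model: for $\zeta_\nu\in k_\nu^\times$, the matrix $u=\begin{pmatrix} 1 & \zeta_\nu & & \\ & 1 & & \\ & & \RI_{n-2} & \end{pmatrix}$ lies in $N_n(k_\nu)$ and right translation by $\begin{pmatrix} 1 & \zeta_\nu \\ & 1\end{pmatrix}$-type elements acts on $W\!\left(\begin{pmatrix} x & \\ & \RI_{n-1}\end{pmatrix}\right)$ by the character $\psi_\nu(x\zeta_\nu)$, using $\begin{pmatrix} x & \\ & \RI_{n-1}\end{pmatrix} u = u'\begin{pmatrix} x & \\ & \RI_{n-1}\end{pmatrix}$ with $u'$ unipotent contributing the factor $\psi_\nu(x\zeta_\nu)$. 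Concretely, since $\pi_\nu(g)$ preserves the space of smooth vectors, the translate $\pi_\nu(u)v$ of the vector $v$ giving ${}^{\circ}W_\nu$ is again a smooth vector, and its associated Whittaker-Schwartz function is exactly $\psi_\nu(x\zeta_\nu)\cdot {}^{\circ}W_\nu\!\left(\begin{pmatrix} x & \\ & \RI_{n-1}\end{pmatrix}\right)|x|_\nu^{1-\frac n2}=\varphi_\nu(x)$. Thus $\varphi_\nu\in\CW_{\pi_\nu}(k_\nu^\times)=\CS_{\pi_\nu}(k_\nu^\times)$.

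The step I expect to require the most care is the precise bookkeeping for the action-by-character identity: one must choose the unipotent element in $N_n$ whose conjugation past $\mathrm{diag}(x,\RI_{n-1})$ produces precisely $\psi_\nu(x\zeta_\nu)$ under the fixed generic character $\psi_{N_n}(n)=\psi_F(n_{1,2}+\dots+n_{n-1,n})$, and verify that the resulting Whittaker function is still the one attached to a smooth vector (so that it genuinely lies in $\CW_{\pi_\nu}(k_\nu^\times)$ as defined in \eqref{WSS}). Alternatively, one can bypass the Whittaker model entirely and note that $\varphi_\nu$ and $\phi_\nu$ are both smooth functions on $k_\nu^\times$ agreeing, up to the common harmless factor $\psi_\nu(x\zeta_\nu)$, with elements of $\CS_{\pi_\nu}(k_\nu^\times)$; then multiplication by $\psi_\nu(x\zeta_\nu)$ may be recognized as the operator induced on $\CS_{\pi_\nu}(k_\nu^\times)$ by right translation in $\CC(\pi_\nu)$ through the defining fibration \eqref{fibration}, which manifestly preserves $\CS_{\pi_\nu}(k_\nu^\times)$. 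I would present the Whittaker-model argument as the main line and mention the fibration viewpoint as a remark.
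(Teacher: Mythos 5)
Your proposal is correct in its main line, and for the first claim it coincides with the paper's (one-line) argument: $\psi_\nu(x\zeta_\nu)w_\nu(x)|x|_\nu^{1-\frac n2}$ is still in $\CC_c^\infty(k_\nu^\times)$, hence in $\CS_{\pi_\nu}(k_\nu^\times)$ by \eqref{CSC}.

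For the second claim your route is genuinely different from the paper's. The paper exploits that $\nu$ is non-Archimedean in a direct, analytic way: since $\psi_\nu(x\zeta_\nu)=1$ for $|x|_\nu$ small, $\varphi_\nu$ agrees with the $\pi_\nu$-Schwartz function ${}^\circ W_\nu(\mathrm{diag}(x,\RI_{n-1}))|x|_\nu^{1-\frac n2}$ near $x=0$, and (because the Whittaker function vanishes for $|x|_\nu$ large) the difference lies in $\CC_c^\infty(k_\nu^\times)$; membership then follows from \eqref{CSC} and Proposition \ref{S=W}. You instead realize multiplication by $\psi_\nu(x\zeta_\nu)$ as right translation of the Whittaker function by the unipotent element with $(1,2)$-entry $\zeta_\nu$, using $\mathrm{diag}(x,\RI_{n-1})u=u'\,\mathrm{diag}(x,\RI_{n-1})$ with $\psi_{N_n}(u')=\psi_\nu(x\zeta_\nu)$, so that $\varphi_\nu$ is the Whittaker--Schwartz function of the smooth vector $\pi_\nu(u)v$; this is correct, more structural, and works uniformly at all places (it also immediately covers the generalization used in Remark \ref{Corbett-rmk}, where the translate is by a general $\xi$). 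What the paper's argument buys is independence from the Whittaker model for this step; what yours buys is uniformity and no case analysis on $|x|_\nu$. One caveat: your closing alternative is misstated --- right translation of the matrix coefficient $\vphi_\pi$ in \eqref{fibration} does not produce the factor $\psi_\nu(\zeta_\nu\det g)$; the correct fibration-level operation is multiplication of $\xi$ by the function $g\mapsto\psi_\nu(\zeta_\nu\det g)$, which preserves $\CS_{\std}(\RG_n(F))$ since that function is locally constant (resp.\ smooth of moderate growth). Since you offer this only as a side remark and your main Whittaker-model argument is sound, this does not affect the proof.
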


\begin{proof}
    The first claim is trivial because for any $w_{\nu}(x)\in\CC_c^{\infty}(k_{\nu}^{\times})$, we have that 
    \begin{align*}
           \phi_\nu(x)=\psi_{\nu}(x\zeta_{\nu})w_{\nu}(x)|x|_\nu^{1-\frac{n}{2}}\in\CC_{c}^{\infty}(k_{\nu}^{\times})\subset  
  \CS_{\pi_{\nu}}(k_{\nu}^{\times}).
    \end{align*}
As for the second claim, since $\nu<\infty$, we observe that for any given $\zeta_\nu\in k_\nu^\times$, $\psi_{\nu}(x\zeta_{\nu})=1$ if $|x|_\nu$ is small enough. 
Hence we have that 
\begin{align*}
\varphi_\nu(x)=\psi_{\nu}(x\zeta_{\nu})\cdot{^{\circ}W_{\nu}}\left( \begin{pmatrix}
    x & \\ & \RI_{n-1}
\end{pmatrix}  \right)\cdot|x|_\nu^{1-\frac{n}{2}}
\end{align*}
shares the same asymptotic behavior as $|x|\rightarrow 0$ with the function 
$^{\circ} W_{\nu}\left( \begin{pmatrix}
    x & \\ & \RI_{n-1}
\end{pmatrix}  \right)|x|_\nu^{1-\frac{n}{2}}$, 
which belongs to the space $\CS_{\pi_{\nu}}(k_{\nu}^{\times})$ by Proposition \ref{S=W}. Hence we must have the function 
\[
\varphi_\nu(x)=\psi_{\nu}(x\zeta_{\nu})\cdot{^{\circ}W_{\nu}}\left( \begin{pmatrix}
    x & \\ & \RI_{n-1}
\end{pmatrix}  \right)\cdot|x|_\nu^{1-\frac{n}{2}}
\]
belonging to the space $\CS_{\pi_{\nu}}(k_{\nu}^{\times})$.
\end{proof}

\begin{lem}\label{basvswt}
    Let $\nu$ be a finite place such that both $\pi_{\nu}$ and $\psi_{\nu}$ are unramified, the $\pi_\nu$-basic function $\BL_{\pi_{\nu}}(x)\in \CS_{\pi_{\nu}}(k_{\nu}^{\times})$ 
    as defined in \cite[Theorem 3.4]{JL23} enjoys the following formula:
    \[
    \BL_{\pi_{\nu}}(x)={^{\circ}W_{\nu}}\left( \begin{pmatrix}
    x & \\ & \RI_{n-1}
\end{pmatrix}  \right)|x|_\nu^{1-\frac{n}{2}}.
    \]
\end{lem}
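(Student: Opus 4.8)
The plan is to identify the two $\Fo_\nu^\times$-invariant functions on $k_\nu^\times$ appearing on the two sides of the asserted identity by comparing their $\GL_1$-zeta integrals, in the same spirit as the proof of Proposition~\ref{S=W}. Write $\varpi=\varpi_\nu$ for a uniformizer of $k_\nu$ and set
\[
\omega(x):={^{\circ}W_{\nu}}\left( \begin{pmatrix} x & \\ & \RI_{n-1}\end{pmatrix}\right)|x|_\nu^{1-\frac{n}{2}},\qquad x\in k_\nu^\times .
\]
First I would record that $\omega\in\CS_{\pi_\nu}(k_\nu^\times)$: by Proposition~\ref{S=W} we have $\CS_{\pi_\nu}(k_\nu^\times)=\CW_{\pi_\nu,\psi_\nu}(k_\nu^\times)$, and $\omega$ is precisely the element of $\CW_{\pi_\nu,\psi_\nu}(k_\nu^\times)$ attached in \eqref{WSS} to the normalized spherical vector of $\pi_\nu$. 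Since ${^{\circ}W_{\nu}}$ is right $\GL_n(\Fo_\nu)$-invariant and $\diag(u,\RI_{n-1})\in\GL_n(\Fo_\nu)$ for $u\in\Fo_\nu^\times$, the function $\omega$ is $\Fo_\nu^\times$-invariant; moreover $\omega(1)={^{\circ}W_{\nu}}(\RI_n)=1$, matching the normalization $\BL_{\pi_\nu}|_{\Fo_\nu^\times}=1$ of \cite[Theorem~3.4]{JL23}. The basic function $\BL_{\pi_\nu}$ is also $\Fo_\nu^\times$-invariant by its construction.

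The key input is the classical unramified Rankin--Selberg computation for $\GL_n\times\GL_1$: for every unramified unitary character $\chi_\nu$ of $k_\nu^\times$ and $\Re(s)$ sufficiently large,
\[
\CZ(s,\omega,\chi_\nu)=\int_{k_\nu^\times}{^{\circ}W_{\nu}}\left(\begin{pmatrix} x & \\ & \RI_{n-1}\end{pmatrix}\right)\chi_\nu(x)|x|_\nu^{s-\frac{n-1}{2}}\,\ud^\times x=L(s,\pi_\nu\times\chi_\nu).
\]
This is Shintani's formula: by the Casselman--Shalika formula ${^{\circ}W_{\nu}}(\diag(\varpi^m,\RI_{n-1}))$ equals the Schur function $s_{(m,0,\dots,0)}$ of the Satake parameters of $\pi_\nu$ for $m\ge0$ and vanishes for $m<0$, and summing the resulting series in $q_\nu^{-s}$ produces $L(s,\pi_\nu\times\chi_\nu)$ (cf.\ \cite[Section~2]{JPSS83}). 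On the other hand, by \cite[Theorem~3.4]{JL23} the basic function satisfies $\CZ(s,\BL_{\pi_\nu},\chi_\nu)=L(s,\pi_\nu\times\chi_\nu)$ for every unramified $\chi_\nu$. For a ramified $\chi_\nu$, both $\CZ(s,\omega,\chi_\nu)$ and $\CZ(s,\BL_{\pi_\nu},\chi_\nu)$ vanish because $\omega$ and $\BL_{\pi_\nu}$ are $\Fo_\nu^\times$-invariant. Hence $\CZ(s,\omega,\chi_\nu)=\CZ(s,\BL_{\pi_\nu},\chi_\nu)$ for all unitary characters $\chi_\nu$ of $k_\nu^\times$ with $\Re(s)$ large.

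It then remains to run the uniqueness argument exactly as in the proof of Proposition~\ref{S=W}. Since $\omega$ and $\BL_{\pi_\nu}$ both lie in $\CS_{\pi_\nu}(k_\nu^\times)$, their zeta integrals converge absolutely for $\Re(s)$ large; fix a real $s_0$ so large that $\omega(\cdot)|\cdot|_\nu^{s_0-\frac12}$ and $\BL_{\pi_\nu}(\cdot)|\cdot|_\nu^{s_0-\frac12}$ both belong to $L^1(k_\nu^\times)$. Then
\[
\int_{k_\nu^\times}\left(\omega(x)-\BL_{\pi_\nu}(x)\right)|x|_\nu^{s_0-\frac12}\,\chi_\nu(x)\,\ud^\times x=0
\]
for every unitary character $\chi_\nu$ of $k_\nu^\times$, so by \cite[Theorem~23.11]{HR79} we get $\omega(x)=\BL_{\pi_\nu}(x)$ for almost every $x\in k_\nu^\times$, and hence for every such $x$, both functions being smooth; this is the claimed formula. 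The one point requiring care — and, I expect, the main obstacle — is the bookkeeping of normalizations: one must check that the Haar measures and the normalization of ${^{\circ}W_{\nu}}$ underlying the unramified Rankin--Selberg computation are exactly those fixed in \cite{JL23} in the definitions of $\CZ(s,\cdot,\chi_\nu)$ and of $\BL_{\pi_\nu}$, so that the two zeta integrals agree exactly rather than merely up to a nonzero constant. Once this is settled, everything else is the uniqueness argument already used twice above.
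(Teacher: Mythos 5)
Your proposal is correct and follows essentially the same route as the paper: the paper's proof also observes that the Mellin transforms of both $\BL_{\pi_\nu}$ and $\omega$ equal $L(s,\pi_\nu\times\chi)$ (the former by \cite[Theorem 3.4]{JL23}, the latter by the unramified Rankin--Selberg computation of \cite{JPSS83}) and then concludes by the same Mellin-inversion/uniqueness argument as in Proposition \ref{S=W}. Your additional remarks on the vanishing for ramified $\chi_\nu$ via $\Fo_\nu^\times$-invariance and on matching normalizations are sensible fillers of details the paper leaves implicit, but they do not change the argument.
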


\begin{proof}
By \cite[Theorem 3.4]{JL23}, the Mellin transform of the $\pi_\nu$-basic function $\BL_{\pi_{\nu}}(x)$ equals $L(s,\pi\times\chi)$, and the same happens to the function 
${^{\circ}W_{\nu}}\left( \begin{pmatrix}
    x & \\ & \RI_{n-1}
\end{pmatrix}  \right)|x|_\nu^{1-\frac{n}{2}}$ 
by the Rankin-Selberg convolution for $\GL_n\times\GL_1$ in \cite{JPSS83}. Hence the two functions are equal by the Mellin inversion, following the same argument in the proof of 
Proposition \ref{S=W}.
\end{proof}

\begin{lem}\label{addtwbas}
    For the finite places where $\psi_{\nu}$ and $\pi_{\nu}$ are unramified, $\psi_\nu(x)\BL_{\pi_{\nu}}(x)=\BL_{\pi_{\nu}}(x)$.
\end{lem}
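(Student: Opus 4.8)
The plan is to reduce the identity to two elementary facts: that an unramified additive character is trivial on the ring of integers, and that the basic function $\BL_{\pi_\nu}$ is supported inside $\Fo_\nu$.

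First, since $\psi_\nu$ is unramified its conductor is $\Fo_\nu$, so $\psi_\nu(x)=1$ for every $x\in\Fo_\nu$. Hence for every $x\in k_\nu^\times$ with $|x|_\nu\le 1$ the desired equality $\psi_\nu(x)\BL_{\pi_\nu}(x)=\BL_{\pi_\nu}(x)$ holds trivially, and it remains only to check that $\BL_{\pi_\nu}(x)=0$ whenever $|x|_\nu>1$.

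For the support statement I would invoke Lemma \ref{basvswt}, which gives
\[
\BL_{\pi_\nu}(x)={^{\circ}W_{\nu}}\left( \begin{pmatrix} x & \\ & \RI_{n-1} \end{pmatrix}  \right)|x|_\nu^{1-\frac{n}{2}},
\]
so the claim follows once I know that the normalized spherical Whittaker function ${^{\circ}W_{\nu}}$ vanishes on $\diag(\varpi^{m},\RI_{n-1})$ for $m<0$; this is the classical support property of spherical Whittaker functions (via the Shintani/Casselman--Shalika formula, or equivalently from the $\GL_n\times\GL_1$ Rankin--Selberg computation of \cite{JPSS83}). Alternatively, and perhaps more in the spirit of the paper, I would argue directly from the Godement--Jacquet realization of $\BL_{\pi_\nu}$ in \cite[Theorem 3.4]{JL23}: at an unramified place the relevant element of $\CS_{\std}(\RG_n(k_\nu))$ is $|\det(\cdot)|_\nu^{n/2}$ times the characteristic function of $\RM_n(\Fo_\nu)$, and the fibration formula \eqref{fibration} that produces $\BL_{\pi_\nu}(x)$ is an integral over the fibre $\{g:\det g=x\}\cap\RM_n(\Fo_\nu)$, which is empty — hence the integral is zero — as soon as $x\notin\Fo_\nu$, i.e.\ $|x|_\nu>1$.

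Combining the two pieces finishes the proof: on $|x|_\nu\le 1$ one uses $\psi_\nu(x)=1$, and on $|x|_\nu>1$ both sides vanish. The only step needing any genuine input is the support property of $\BL_{\pi_\nu}$ (equivalently of ${^{\circ}W_{\nu}}$), but that is standard, so I do not anticipate a real obstacle — the lemma is essentially a bookkeeping consequence of Lemma \ref{basvswt}.
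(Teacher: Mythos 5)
Your argument is correct and is essentially the paper's: the proof reduces to the triviality of the unramified $\psi_\nu$ on $\Fo_\nu$ together with the fact that $\BL_{\pi_\nu}$ is supported in $\Fo_\nu\setminus\{0\}$, which the paper simply cites from \cite[Lemma 5.3]{JL23} rather than rederiving. Your two alternative derivations of the support property (via the spherical Whittaker function or via the emptiness of the fibre $\{\det g=x\}\cap\RM_n(\Fo_\nu)$ for $x\notin\Fo_\nu$) are both valid, just more work than the citation.
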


\begin{proof}
    According to \cite[Lemma 5.3]{JL23}, the $\pi_\nu$-basic function $\BL_{\pi_{\nu}}$ is supported in $\Fo_{\nu}\setminus\{0\}$.  The assertion follows clearly.
\end{proof}

Now we are ready to prove Theorem \ref{thm:VSF} by using Theorem \ref{thm:PSF}. Recall that $S$ is the finite set of local places of $k$ that contains all the Archimedean places and 
those local places $\nu$ where either $\pi_\nu$ or $\psi_\nu$ is ramified. For any $\zeta\in\BA^S$, we take 
    \[
    w(\cdot)
    :=
    {^{\circ}W^S}\left( \begin{pmatrix}
        \cdot & \\ & \RI_{n-1}
    \end{pmatrix}  \right)\prod_{\nu\in S}w_{\nu}(\cdot)
    =
    {^{\circ}W^S}\left( \begin{pmatrix}
        \cdot & \\ & \RI_{n-1}
    \end{pmatrix}  \right)w_{S}(\cdot)
    \]
and
\begin{align}\label{phi}
    \phi(\cdot):=\psi^S(\cdot\zeta)w(\cdot)|\cdot|_{\BA}^{1-\frac{n}{2}}.
\end{align}
Then the function $\phi(x)$ belongs to the space $\CS_{\pi}(\BA^{\times})$ according to Lemmas \ref{addtwwt}, \ref{basvswt} and \ref{addtwbas}.
It is clear that the function $\phi(x)$ is factorizable: $\phi(x)=\prod_\nu\phi_\nu(x_\nu)$. 
In order to use Theorem \ref{thm:PSF} in the proof, we calculate its local $\pi_\nu$-Fourier transform of $\phi_\nu$ at each place $\nu$. Let $R=R_\zeta$ be as in Theorem \ref{thm:VSF}. 

For the unramified paces $\nu\notin R\cup S$, by Lemma \ref{basvswt}, we obtain that 
\[
\phi_\nu(x_\nu)=\psi_{\nu}(x_\nu\zeta_{\nu})\cdot {^{\circ}W_{\nu}}\left(\begin{pmatrix}
        x_\nu & \\ & \RI_{n-1}
    \end{pmatrix}\right)|x_\nu|_\nu^{1-\frac{n}{2}}=\psi_{\nu}(x_\nu\zeta_{\nu})\BL_{\pi_{\nu}}(x_\nu).
\]
By \cite[Lemma 5.3]{JL23} (or Lemma \ref{addtwbas}), if $\BL_{\pi_\nu}(x_\nu)\neq 0$, then $x_\nu\in\Fo_{\nu}\setminus\{0\}$. 
Since $|\zeta_\nu|_\nu\leq 1$ when $\nu\notin R$, we obtain that $\psi_{\nu}(x_\nu\zeta_{\nu})=1$ if $\BL_{\pi_{\nu}}(x_\nu)\neq 0$. It follows that 
$\phi_\nu(x_\nu)=\BL_{\pi_{\nu}}(x_\nu)$. 
Applying the $\pi_\nu$-Fourier transform to the both sides, we obtain that 
\begin{align}\label{FT-1}
  \CF_{\pi_{\nu},\psi_{\nu}}(\phi_\nu)(x_\nu)
  =  \CF_{\pi_{\nu},\psi_{\nu}}(\BL_{\pi_{\nu}})(x_\nu)
  =\BL_{\widetilde{\pi}_{\nu}}(x_\nu)
    ={^{\circ}\widetilde{W}_{\nu}}\left(\begin{pmatrix}
        x_\nu & \\ & \RI_{n-1}
    \end{pmatrix}\right)|x_\nu|_\nu^{1-\frac{n}{2}}
\end{align}
according to \cite[Theorem 3.10]{JL23}. Note that $\BL_{\widetilde{\pi}_{\nu}}$ the basic function in the $\wt{\pi}_\nu$-Schwartz space $\CS_{\widetilde{\pi}_{\nu}}(k_{\nu}^{\times})$ and $^{\circ}\widetilde{W}_{\nu}\in \CW(\widetilde{\pi}_{\nu},\psi^{-1}_{\nu})$, the Whittaker model of $\wt{\pi}_\nu$.

At $\nu\in S$, the function $\phi_\nu(x_\nu)$ takes the following form 
$\phi_\nu(x_\nu)=w_\nu(x_\nu)|x_\nu|_\nu^{1-\frac{n}{2}}$. By Proposition \ref{FofTest}, we obtain that 
  \begin{align}\label{FT-2}
    \CF_{\pi_{\nu},\psi_{\nu}}(\phi_\nu)(x_\nu)  
 = \CF_{\pi_{\nu},\psi_{\nu}}(w_{\nu}(\cdot)|\cdot|^{1-\frac{n}{2}})(x_\nu)
 =\widetilde{w}_{\nu}(x_\nu)|x_\nu|_\nu^{1-\frac{n}{2}}.  
  \end{align}    
Finally, at the local places $v\in R$, since $R$ is disjoint to $S$, the function $\phi_\nu$ takes the following form
\[
\phi_\nu(x_\nu)=\psi_{\nu}(x_\nu\zeta_{\nu})\cdot {^{\circ}W_{\nu}}\left(\begin{pmatrix}
        x_\nu & \\ & \RI_{n-1}
    \end{pmatrix}\right)|x_\nu|_\nu^{1-\frac{n}{2}}
\]
with $|\zeta_\nu|_\nu>1$.
Recall from Section \ref{ssec-NABF} that $\alpha_{i,i+1}$ be the simple root for the root system $\Phi$ with respect to $(\RG_n,B_n,T_n)$. The one-parameter subgroups
associated with $\alpha_{1,2}$ and $\alpha_{2,1}$ are given by  
\begin{align}\label{1pg}
    \chi_{\alpha_{1,2}}(u):=\begin{pmatrix}
        1 & u & \\ & 1 & \\ & & \RI_{n-2}
    \end{pmatrix}\quad {\rm and}\quad 
    \chi_{\alpha_{2,1}}(u):=\begin{pmatrix}
        1 &  & \\ u& 1 & \\ & & \RI_{n-2}
    \end{pmatrix}.
\end{align}
Then the function $\phi_\nu$ can be written as 
\[
\phi_\nu(x_\nu)={^{\circ}W_{\nu}}\left(\begin{pmatrix}
        x_\nu & \\ & \RI_{n-1}
    \end{pmatrix} \chi_{\alpha_{1,2}}(\zeta_\nu)\right)|x_\nu|_\nu^{1-\frac{n}{2}}
    =W_{\zeta_\nu}\left(\begin{pmatrix}
        x_\nu & \\ & \RI_{n-1}
    \end{pmatrix} \right)|x_\nu|_\nu^{1-\frac{n}{2}}
\]
where $W_{\zeta_\nu}(g):={^{\circ}W_{\nu}}(g\chi_{\alpha_{1,2}}(\zeta_\nu))$.
It is clear that $W_{\zeta_\nu}\in\CW(\pi_\nu,\psi_\nu)$. 
By Proposition \ref{F=W}, the $\pi_\nu$-Fourier transform of $\phi_\nu$ is give by 
\[
\CF_{\pi_{\nu},\psi_{\nu}}(\phi_\nu)(x_\nu)
    =|x_\nu|_\nu^{1-\frac{n}{2}}\int_{k_\nu^{n-2}}\left(\pi_\nu(w_{n.1})\wt{W_{\zeta_\nu}}\right)
    \left(\begin{pmatrix}
          x_\nu & & \\ y & \RI_{n-2} & \\ & & 1
      \end{pmatrix}\right)\ud y. 
\]
Since 
\begin{align*}
    \pi_\nu(w_{n.1})\wt{W_{\zeta_\nu}}
    \left(\begin{pmatrix}
          x & & \\ y & \RI_{n-2} & \\ & & 1
      \end{pmatrix}\right)
      &= \wt{W_{\zeta_\nu}}
    \left(\begin{pmatrix}
          x & & \\ y & \RI_{n-2} & \\ & & 1
      \end{pmatrix}w_{n,1}\right)\\
     &=
      {^\circ W}_{\nu}
    \left(w_n\begin{pmatrix}
          x & & \\ y & \RI_{n-2} & \\ & & 1
      \end{pmatrix}^{-t}w_{n,1}^{-t}\chi_{\alpha_{1,2}}(\zeta_\nu)\right)
\end{align*}
where $\wt{W}(g)=W(w_ng^{-t})$ with $g^{-t}:={^tg^{-1}}$, we obtain that 
\[
\pi_\nu(w_{n.1})\wt{W_{\zeta_\nu}}
    \left(\begin{pmatrix}
          x & & \\ y & \RI_{n-2} & \\ & & 1
      \end{pmatrix}\right)
      =
     \wt{{^\circ W}_{\nu}}
    \left(\begin{pmatrix}
          x & & \\ y & \RI_{n-2} & \\ & & 1
      \end{pmatrix}w_{n,1}\chi_{\alpha_{2,1}}(-\zeta_\nu)\right).
\]
Hence the $\pi_\nu$-Fourier transform of $\phi_\nu$ can be written as 
\begin{align*}
\CF_{\pi_{\nu},\psi_{\nu}}(\phi_\nu)(x_\nu)
&=
|x_\nu|_\nu^{1-\frac{n}{2}}\int_{k_\nu^{n-2}}\wt{{^\circ W}_{\nu}}
    \left(\begin{pmatrix}
          x_\nu & & \\ y & \RI_{n-2} & \\ & & 1
      \end{pmatrix}w_{n,1}\chi_{\alpha_{2,1}}(-\zeta_\nu)\right) \ud y\\
     &=
     |x_\nu|_\nu^{1-\frac{n}{2}}
     \int_{k_{\nu}^{n-2}}\widetilde{^\circ W}_v \left( 
      \begin{pmatrix}
          x_\nu & & \\  & \RI_{n-2} & \\ & & 1
      \end{pmatrix} \begin{pmatrix}
          1 & & \\y & \RI_{n-2} & \\ & & 1 
      \end{pmatrix}   w_{n,1} 
      \chi_{\alpha_{2,1}}(-\zeta_\nu)    \right)\ud y.      
\end{align*}
By the explicit computation of the last integral in \cite[Section 2.6]{IT13}, we obtain that 
\begin{align}\label{FT-3}
    \CF_{\pi_{\nu},\psi_{\nu}}(\phi_\nu)(x_\nu)
=
|x_\nu|_\nu^{1-\frac{n}{2}}
      \Kl_{\nu}(x_\nu,\zeta_{\nu},{^{\circ}\widetilde{W}_{ \nu}}). 
\end{align}
Thus, by \eqref{FT-1}, \eqref{FT-2}, and \eqref{FT-3}, we obtain a formula for the $\pi$-Fourier transform of $\phi$, which is the product of the local $\pi_\nu$-Fourier transform of $\phi_\nu$ at all local places $\nu$. 

\begin{prp}\label{FTphi}
    Let $\phi\in\CS_\pi(\BA^\times)$ be the function as defined in \eqref{phi}. The $\pi$-Fourier transform of $\phi$ can be explicitly written as 
    \[
    \CF_{\pi,\psi}(\phi)(x)
    =
    \prod_{\nu}\CF_{\pi_{\nu},\psi_{\nu}}(\phi_\nu)(x_\nu)
    =
    |x|_\BA^{1-\frac{n}{2}}\Kl_R(x,\zeta,\wt{^\circ W}_R)\ \wt{^\circ W}^{S\cup R}\left(\begin{pmatrix}x&\\ &\RI_{n-1}\end{pmatrix}
    \right)\wt{w}_S(x),
    \]
    where the Kloosterman integral is given by 
$\Kl_R(x,\zeta,\wt{^\circ W}_R)=\prod_{\nu\in R}\Kl_{\nu}(x_\nu,\zeta_{\nu},{^{\circ}\widetilde{W}_{ \nu}})$.
\end{prp}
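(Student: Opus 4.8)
The plan is to assemble Proposition~\ref{FTphi} directly from the three local computations \eqref{FT-1}, \eqref{FT-2}, \eqref{FT-3}, together with the definition \eqref{FO-BA} of the global $\pi$-Fourier transform on factorizable vectors. Essentially all of the analytic content has already been extracted (it lives in Sections~\ref{sec-LHA}--\ref{sec-BF} and in the local Fourier computations just carried out), so what remains is bookkeeping.

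First I would record that $\phi=\otimes_\nu\phi_\nu$ is a factorizable vector of $\CS_\pi(\BA^\times)$, which was established in the paragraph containing \eqref{phi} via Lemmas~\ref{addtwwt}, \ref{basvswt} and \ref{addtwbas}, and that moreover $\phi_\nu=\BL_{\pi_\nu}$ for every $\nu\notin R\cup S$, as shown in the lines preceding \eqref{FT-1}. Hence $\phi$ genuinely lies in the restricted tensor product \eqref{piSS-BA}, so by the definition \eqref{FO-BA} one has $\CF_{\pi,\psi}(\phi)(x)=\prod_\nu\CF_{\pi_\nu,\psi_\nu}(\phi_\nu)(x_\nu)$; by \cite[Theorem~3.10]{JL23} the factor at $\nu\notin R\cup S$ is $\BL_{\wt{\pi}_\nu}(x_\nu)$, which equals $1$ as soon as $x_\nu\in\Fo_\nu^\times$, so for each fixed $x\in\BA^\times$ the product is finite. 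This gives the first displayed equality. Then I would partition $|k|=(|k|\setminus(R\cup S))\sqcup S\sqcup R$ --- a genuine disjoint union, since $R=R_\zeta$ is finite (because $\zeta\in\BA^S$) and disjoint from $S$ --- and substitute the local formulas: \eqref{FT-1} at $\nu\notin R\cup S$, \eqref{FT-2} at $\nu\in S$, and \eqref{FT-3} at $\nu\in R$. Each of the three local factors carries a power $|x_\nu|_\nu^{1-\frac n2}$, and since $\prod_\nu|x_\nu|_\nu=|x|_\BA$ these combine to $|x|_\BA^{1-\frac n2}$. The remaining pieces then collect by definition: the unramified Whittaker values $^\circ\widetilde W_\nu$ over $\nu\notin R\cup S$ multiply to $\wt{^\circ W}^{S\cup R}$ evaluated at $\mathrm{diag}(x,\RI_{n-1})$; the factors $\widetilde w_\nu(x_\nu)$ over $\nu\in S$ multiply to $\widetilde w_S(x)$; and the local Kloosterman integrals $\Kl_\nu(x_\nu,\zeta_\nu,{^\circ\widetilde W_\nu})$ over $\nu\in R$ multiply to $\Kl_R(x,\zeta,\wt{^\circ W}_R)$. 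Assembling the four products gives the asserted formula.

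I do not expect a substantial obstacle here: the proof is organizational once \eqref{FT-1}--\eqref{FT-3} are in hand. The one point that deserves explicit mention --- rather than being a real difficulty --- is the matching of normalizations across the two theories being compared: that the unramified Whittaker function obtained by applying $\CF_{\pi_\nu,\psi_\nu}$ to $\BL_{\pi_\nu}$, namely $\BL_{\wt{\pi}_\nu}=|x_\nu|_\nu^{1-\frac n2}\cdot{^\circ\widetilde W_\nu}(\mathrm{diag}(x_\nu,\RI_{n-1}))$ by Lemma~\ref{basvswt} applied to $\wt{\pi}_\nu$, is exactly the normalized unramified Whittaker function of $\wt{\pi}_\nu$ appearing in Theorem~\ref{thm:VSF}; and that the dual function $\widetilde w_\nu$ produced in \eqref{FT-2} via Proposition~\ref{FofTest} is the same Bessel-transform dual as the one characterized by the local functional equation \eqref{dualfunction}. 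Both identifications have already been recorded (Lemma~\ref{basvswt}, Proposition~\ref{FofTest}, Lemma~\ref{addtwbas}), and the Kloosterman-integral identification at $\nu\in R$ rests on the explicit computation quoted from \cite[Section~2.6]{IT13} that produced \eqref{FT-3}. With these in place the proof is complete.
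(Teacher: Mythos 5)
Your proposal is correct and matches the paper's own treatment: the paper gives no separate proof environment for Proposition \ref{FTphi} but derives it exactly as you do, by establishing \eqref{FT-1}, \eqref{FT-2}, \eqref{FT-3} at the three disjoint classes of places and then taking the product over all $\nu$ via the definition \eqref{FO-BA} of the global $\pi$-Fourier transform on the factorizable vector $\phi$. Your added remarks on the finiteness of the product and on the matching of normalizations (Lemma \ref{basvswt} for $\wt{\pi}_\nu$, Proposition \ref{FofTest} for $\wt{w}_\nu$) are consistent with what the paper records.
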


Finally we write the summation on the one side as 
\[
\sum_{\alpha\in k^{\times}}\phi(\alpha)
=
\sum_{\alpha\in k^{\times}}\psi^S(\alpha \zeta)\ {^{\circ}W^S}\left( \begin{pmatrix}
        \alpha & \\ & \RI_{n-1}
    \end{pmatrix}  \right)w_{S}(\alpha)
\]
and that on the other side as 
\[
\sum_{\alpha\in k^{\times}}\CF_{\pi,\psi}(\phi)(\alpha)
=
\sum_{\alpha\in k^{\times}}
\Kl_R(\alpha,\zeta,\wt{^\circ W}_R)\ \wt{^\circ W}^{S\cup R}\left(\begin{pmatrix}\alpha&\\ &\RI_{n-1}\end{pmatrix}
    \right)\wt{w}_S(\alpha),
\]
because $|\alpha|_{\BA}=1$ for every $\alpha\in k^{\times}$. 
By the $\pi$-Poisson summation formula in Theorem \ref{thm:PSF}, which is 
\[
\sum_{\alpha\in k^{\times}}\phi(\alpha)=\sum_{\alpha\in k^{\times}}\CF_{\pi,\psi}(\phi)(\alpha),
\]
we deduce the Voronoi formula in Theorem \ref{thm:VSF}:
\[
\sum_{\alpha\in k^{\times}}\psi^S(\alpha \zeta)\ {^{\circ}W^S}\left( \begin{pmatrix}
        \alpha & \\ & \RI_{n-1}
    \end{pmatrix}  \right)w_{S}(\alpha)
=
\sum_{\alpha\in k^{\times}}
\Kl_R(\alpha,\zeta,\wt{^\circ W}_R)\ \wt{^\circ W}^{S\cup R}\left(\begin{pmatrix}\alpha&\\ &\RI_{n-1}\end{pmatrix}
    \right)\wt{w}_S(\alpha).
\]
We deduce the Voronoi summation formula for $\GL_n$ from the $\pi$-Poisson summation formula in Theorem \ref{thm:VSF}.

\begin{rmk}\label{Corbett-rmk}
In \cite[Theorem 3.4]{Cor21}, Corbett extends the Voronoi formula in Theorem \ref{thm:VSF} to a more general situation by allowing the local component $\phi_\nu$ at 
$\nu\in R$ to be more general functions in $\CW_{\pi_\nu}(k_\nu^\times)$. More precisely, if one take 
$\phi_{\nu}(x)=\psi_{\nu}(x \zeta_{\nu})\cdot w_{\nu}(x)\cdot |x|_{\nu}^{1-\frac{n}{2}}$ 
for $\nu\in S$  and 
$\phi_{\nu}(x)=\psi_{\nu}(x\zeta_{\nu})\cdot W_{\nu}\left(  \begin{pmatrix}
        x & \\ & \RI_{n-1}
    \end{pmatrix} \xi \right)\cdot|x|_{\nu}^{1-\frac{n}{2}}$ 
for $\nu\notin S$, where $w_{\nu}\in \CC_c^{\infty}(F^{\times})$, $W_{\nu}\in \CW(\pi_{\nu},\psi_{\nu})$ and $S$, $\zeta$, $\xi$ are as in \cite[Theorem 3.4]{Cor21}, 
then according to Lemmas \ref{addtwwt}, \ref{basvswt} and \ref{addtwbas}, the function $\phi:=\otimes_{\nu}\phi_{\nu}\in\CS_{\pi}(\BA^{\times})$. 
It is clear that the proof of Proposition \ref{FTphi} works for such special choices of functions $\phi$ as well. In particular, we obtain from 
Proposition \ref{F=W} that at each local place $\nu$, the Fourier transform $\CF_{\pi_{\nu},\psi_{\nu}}(\phi_{\nu})$ is equal to
the function $\mathfrak{H}_{\nu}(x;\zeta_{\nu},\xi_{\nu})|x|^{1-\frac{n}{2}}$ in \cite[ proof of Theorem 3.4]{Cor21}. The extended Voronoi formula for $\GL_n$ in \cite[Theorem 3.4]{Cor21} by using the Rankin-Selberg convolution for $\GL_n\times\GL_1$, can be deduced by the same argument as in our proof of Theorem \ref{thm:VSF} from the 
$\pi$-Poisson summation formula in \cite[Theorem 4.7]{JL23}. We omit further details. 
\end{rmk}


\section{On the Godement-Jacquet Kernels}\label{sec-GJK}

For any $\pi\in\CA_\cusp(\RG_n)$, the goal of this section is to define the Godement-Jacquet kernels for $L_f(s,\pi)$ and their dual kernels, and to
prove the $\pi$-versions of \cite[Theorem 1.1]{Clo22}, which can be viewed as the case of $n=1$ and is recalled in Theorem \ref{thm:CTh11}. 

\subsection{Godement-Jacquet kernel and its dual}\label{ssec-GJKD}
We recall from \cite[Section 4.2]{JL23} the global 
zeta integral for the standard $L$-function $L(s,\pi)$ as stated in \eqref{zetaG10} is 
\begin{align}\label{zetaG1}
    \CZ(s,\phi)=\int_{\BA^\times}\phi(x)|x|_\BA^{s-\frac{1}{2}}\ud^\times x
\end{align}
for any $\phi\in\CS_\pi(\BA^\times)$. 
By \cite[Theorem 4.6]{JL23} the zeta integral $\CZ(s,\phi)$ converges absolutely for $\Re(s)>\frac{n+1}{2}$, admits analytic continuation to an entire function in $s\in\BC$, and satisfies the functional equation 
\begin{align}
    \CZ(s,\phi)=\CZ(1-s,\CF_{\pi,\psi}(\phi))
\end{align}
where $\CF_{\pi,\psi}$ is the $\pi$-Fourier transform as defined in \eqref{eq:1-FO}. As explained in \cite{JL23}, this is a reformulation of the Godement-Jacquet theory for the 
standard $L$-functions $L(s,\pi)$. 

Consider the fibration through the idele norm map $|\cdot|_\BA$:
\[
1\to\BA^1\to\BA^\times\to\BR^\times_+\to1
\]
where $\BR^\times_+=\{x\in\BR^\times\ \colon\ x>0\}$ and $\BA^1=\{x\in\BA^\times\ \colon\ |x|_\BA=1\}$. 
One can have a suitable Haar measure $\ud^\times \Fa$ on $\BA^{1}$ that is compatible with the Haar measures $\ud^\times x$ 
on $\BA^{\times}$ and the Haar measure $\ud^\times t$ on $\BR^{\times}_+=\BA^{\times}/\BA^{1}$. Write $\BA^\times=\BA_\infty^\times\times\BA_f^\times$, where $\BA_\infty^\times=\prod_{\nu\in|k|_\infty}k_\nu^\times$, and $\BA_f^\times$ is the subset of $\BA^\times$ consisting of elements $(x_\nu)\in\BA^\times$ with $x_\nu=1$ for all $\nu\in|k|_\infty$. 

When $\Re(s)>\frac{n+1}{2}$, the absolutely convergent zeta integral $\CZ(s,\phi)$ as in \eqref{zetaG1} can be written as 
\begin{align}\label{zeta1+1}
    \int_{\BA^{\times}}\phi(x)|x|_\BA^{s-\frac{1}{2}}\ud^{\times}x
    =\int_{1}^{\infty}\int_{\BA^{1}}\phi(t\Fa)t^{s-\frac{1}{2}}\ud^\times\Fa\ud^{\times}t+\int_{0}^{1}\int_{\BA^{1}}\phi(t\Fa)t^{s-\frac{1}{2}}\ud^\times\Fa\ud^{\times}t.
\end{align}

\begin{prp}\label{zeta>1}
    The first integral on the right-hand side of \eqref{zeta1+1}:  
\begin{align}
    \int_{1}^{\infty}\int_{\BA^{1}}\phi(t\Fa)t^{s-\frac{1}{2}}\ud^\times\Fa\ud^{\times}t
    \end{align}
    converges absolutely at any $s\in\BC$ and is holomorphic as a function in $s\in\BC$, for any $\phi\in\CS_\pi(\BA^\times)$.
\end{prp}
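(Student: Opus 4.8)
The plan is to control the inner integral $\int_{\BA^1}\phi(t\Fa)\ud^\times\Fa$ for $t\geq 1$ and show it decays fast enough in $t$ to force absolute convergence and holomorphy. First I would observe that for a factorizable $\phi=\otimes_\nu\phi_\nu\in\CS_\pi(\BA^\times)$, at almost all finite places $\phi_\nu=\BL_{\pi_\nu}$, which by \cite[Lemma 5.3]{JL23} is supported in $\Fo_\nu\setminus\{0\}$ and equals $1$ on $\Fo_\nu^\times$. Thus the support of $\phi$ in $\BA^\times$ is contained in a set of the form (compact in the archimedean and ramified finite places) $\times\ \prod_{\nu}'(\Fo_\nu\setminus\{0\})$, and in particular $\phi(x)=0$ unless $|x|_\BA$ lies in a discrete subset of $\BR^\times_+$ bounded below, intersected with the archimedean contribution. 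The key analytic input is that the $\pi$-theta function $\Theta_\pi(x,\phi)=\sum_{\alpha\in k^\times}\phi(\alpha x)$ converges absolutely and locally uniformly on $\BA^\times$ (Theorem \ref{thm:PSF}, recalled from \cite[Theorem 4.7]{JL23}), hence is bounded on compact subsets of $\BA^\times$; I would use this to bound $\int_{\BA^1}|\phi(t\Fa)|\ud^\times\Fa$.

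Concretely, the second step is the standard unfolding: since $k^\times\backslash\BA^1$ has finite volume and $\phi$ restricted to the fiber $t\cdot\BA^1$ is a Schwartz-type function, I would write
\begin{align*}
\int_{\BA^1}|\phi(t\Fa)|\ud^\times\Fa
=\int_{k^\times\backslash\BA^1}\Bigl(\sum_{\alpha\in k^\times}|\phi(\alpha t\Fa)|\Bigr)\ud^\times\Fa
\leq \vol(k^\times\backslash\BA^1)\cdot\sup_{\Fa}\ \Theta_\pi(t\Fa,|\phi|),
\end{align*}
and then invoke the rapid decay of the theta function (or directly of $\phi$) as $t\to\infty$: the archimedean Schwartz decay of $\phi_\infty$ together with the $\Fo_\nu$-support at the finite places shows $\Theta_\pi(t\Fa,|\phi|)\ll_N t^{-N}$ for every $N$, uniformly in $\Fa\in\BA^1$. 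Granting this, $\int_1^\infty\int_{\BA^1}|\phi(t\Fa)|\,t^{\Re(s)-1/2}\ud^\times\Fa\ud^\times t$ converges for every $s\in\BC$, giving absolute convergence; holomorphy then follows by Morera's theorem (or by differentiating under the integral sign, justified by the same $t^{-N}$ bound, which is locally uniform in $s$).

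The main obstacle I anticipate is making the rapid-decay estimate $\Theta_\pi(t\Fa,|\phi|)\ll_N t^{-N}$ genuinely uniform in $\Fa\in\BA^1$ rather than merely pointwise; this is where the structure of $\CS_\pi(\BA^\times)$ must be used carefully. One clean route is to reduce to a single factorizable $\phi$ by linearity, bound $|\phi|$ above by $\phi_\infty^0\otimes\phi_f^0$ where $\phi_\infty^0\in\CS(\BA_\infty)$ is a genuine Schwartz function dominating $|\det g|^{-n/2}$-twisted data (using $\CS_\pi(\BA_\infty^\times)\subset$ pullback of Schwartz functions, cf.\ \eqref{CSC}) and $\phi_f^0=\prod_\nu \mathbf{1}_{\Fo_\nu}$ at good places times compactly supported pieces at bad ones; then the finite part restricts the summation over $\alpha$ to a lattice-like set and the archimedean Schwartz bound handles the tail in $t$ uniformly over the compact $\BA^1/(\text{open compact})$ directions. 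Alternatively, and perhaps more in the spirit of the paper, one quotes the convergence statement in Theorem \ref{thm:PSF} directly: it asserts local uniform convergence on $\BA^\times$, so on the compact slice $\{|x|_\BA=t\}\cong\BA^1$ (for fixed $t$) the sum is uniformly bounded, and the $t$-dependence is extracted from the archimedean factor alone, which is the only non-compact direction. Either way, once uniformity is in hand the rest is routine.
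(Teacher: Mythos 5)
Your overall skeleton coincides with the paper's: unfold the inner integral over a compact fundamental domain $E$ of $k^\times\backslash\BA^1$ (Tate's construction via the units and the class group), bound the resulting theta series $\sum_{\alpha\in k^\times}|\phi(\alpha t\Fa)|$ uniformly over $E$, and deduce decay in $t$ of arbitrarily high polynomial order, whence absolute convergence and holomorphy for all $s$. But the step where you actually produce the $t$-decay would fail as written. You propose to dominate the finite part by $\prod_\nu\mathbf{1}_{\Fo_\nu}$ at good places (and ``compactly supported pieces'' at bad ones) and then to extract $t^{-N}$ ``from the archimedean factor alone.'' Neither half is right: the local $\pi_\nu$-Schwartz functions and the basic functions $\BL_{\pi_\nu}$ are not bounded on their supports --- they are only controlled by $|\phi_\nu(x)|\leq C_\nu|x|_\nu^{-c}$ for $c$ exceeding a threshold $b_\pi$ depending on $\pi$ (this is \cite[Lemmas 5.2, 5.3 and Proposition 5.5]{JL23}), and their supports are fractional ideals minus $\{0\}$, which are not compact in $k_\nu^\times$. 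Moreover, in the paper's normalization $t$ is embedded through a single archimedean place, so the archimedean lattice points $\alpha t\Fa_\infty$ do not uniformly escape to infinity in a way that yields $t^{-N}$ from Schwartz decay alone; the paper only uses that the archimedean partial theta series is \emph{bounded} uniformly in $t\geq 1$.

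The missing ingredient is the transfer of the finite-part bound to a power of $t$: since $|\alpha t\Fa|_\BA=t$, one has $|(\alpha t\Fa)_f|_\BA^{-c}=|(\alpha t\Fa)_\infty|_\BA^{\,c}\,t^{-c}$, so $|\phi(\alpha t\Fa)|\leq C_1\,|\phi_\infty((\alpha t\Fa)_\infty)|\cdot|(\alpha t\Fa)_\infty|_\infty^{\,c}\cdot t^{-c}$. The extra factor $|(\alpha t\Fa)_\infty|_\infty^{\,c}$ is then absorbed because $\phi_\infty(\cdot)|\cdot|_\infty^{c}$ is of rapid decay and $\alpha$ is confined (by the support of $\phi_f$) to a lattice $\Fm$, so the lattice sum is bounded by a constant $C_3$ uniformly in $t\geq 1$ and $\Fa\in E$. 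This yields $\int_{\BA^1}|\phi(t\Fa)|\ud^\times\Fa\leq C(c)\,t^{-c}$ for every $c>b_\pi$, with a $c$-dependent constant, hence absolute convergence of the outer integral for $\Re(s)<c+\tfrac12$ and, letting $c\to\infty$, for all $s\in\BC$. Your proposal needs exactly this pivot --- decay in $t$ harvested from the finite part's polynomial bound, not from the archimedean Schwartz decay --- to close.
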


\begin{proof}
Let $\phi_f=\otimes_{\nu}\phi_{\nu}\in\CS_{\pi_f}(\BA_f^{\times})$ be a factorizable $\pi$-Schwartz function. Let $S=S(\pi,\psi,\phi_f)$ be a finite subset $S$ of $|k|=|k|_\infty\cup|k|_f$ (the set of all local places of $k$) that contains $|k|_\infty$ and such that for any $\nu\notin S$ both $\pi_{\nu}$ and $\psi_{\nu}$ are unramified and $\phi_{\nu}=\BL_{\pi_{\nu}}$, the basic function in $\CS_{\pi_{\nu}}(k_{\nu}^{\times})$ as in \cite[Theorem 3.4]{JL23}. Write 
$S_f=S\cap|k|_f=\{\nu_1,\nu_2,\cdots,\nu_\kappa\}$. 
According to \cite[Proposition 5.5 and Lemma 5.2]{JL23}, there is a positive real number $s_{\pi}$, which depends only on the given $\pi\in\CA_\cusp(\RG_n)$, 
such that for any real number $a_0>s_{\pi}$, the limit 
$\lim_{|x|_{\nu}\rightarrow 0}\phi_{\nu}(x)|x|_{\nu}^{a_0}=0$
holds for every $\phi_\nu\in\CS_{\pi_\nu}(k_\nu^\times)$ and for every $\nu\in|k|$. From the definition of the $\pi_\nu$-Schwartz space $\CS_{\pi_\nu}(k_\nu^\times)$ in \eqref{piSS} and \cite[Proposition 3.7]{JL23}, we know that $\phi_{\nu}(x)=0$ when $|x|_{\nu}$ is large enough for all $\nu<\infty$. Hence for every $\nu\in S_f$, there is a constant $C_{\nu}>0$ such that
$|\phi_{\nu}(x)|\leq C_{\nu}|x|_{\nu}^{-a_0}$.
By \cite[Lemma 5.3]{JL23}, there is a positive real number $b_{\pi}>s_{\pi}>0$, which also depends only on the given $\pi$, such that for any $b_0>b_{\pi}$, we have that $|\BL_{\pi_{\nu}}(x)|\leq|x|_{\nu}^{-b_0}$
holds for every $\nu\notin S$. 
It is clear that for any constant $c>b_{\pi}$ and constant $C_1$ with $\displaystyle{\max_{\nu\in S_f}\{C_\nu,1\}\leq C_1}$, we must have that the inequality:
\begin{align}\label{es-phif}
    |\phi_f(x_f)|\leq C_1|x_f|_{\BA}^{-c}
\end{align}
holds for every $x_f\in\BA^{\times}_f$.

We first estimate the inner integral, which can be written as 
\begin{align}\label{es1}
  \int_{\BA^{1}} |\phi(t\Fa)|\ud^\times\Fa=\int_{\BA^{1}/k^{\times}}\sum_{\gamma\in k^{\times}}|\phi(\gamma t\Fa)|\ud^\times\Fa.  
\end{align}
Fix a $\nu_0\in|k|_\infty$ and a section $\BR_+^{\times}\rightarrow k_{\nu_0}^{\times}\hookrightarrow\BA^{\times}$ of the norm map $\BA^{\times}\rightarrow\BR^{\times}_+$ and view 
$t\in\BR_+^{\times}$ as the $\nu_0$-component of $\BA^\times$. 
Define 
\[
\ell:\BA^{1}\cap( \BA_\infty^\times \times \Fo_f^\times)\rightarrow \BR^{r}, \quad 
\Fa\mapsto (\cdots,\log |\Fa|_{\nu},\cdots)_{\nu\in |k|_{\infty}-\{\nu_0\}}
\]
where $\Fo_f^\times=\prod_{\nu<\infty}\Fo_{\nu}^{\times}$, and $r:=r_1+r_2-1$ with $r_1$ being the number of real places and $r_2$ the number of complex ones. 
Let $\{\epsilon_i\}_{1\leq i\leq r}$ be a basis for the group of units in the ring of integers in $k$ modulo the group of roots of unity in $k$, and set  
\begin{align*}
    P=\left\{\sum_{i=1}^r x_i\ell(\epsilon_i)\ \colon 0\leq x_i< 1,\;\forall 1\leq i\leq r  \right\}\quad {\rm and}\quad 
    E_0=\left\{\Fa\in \ell^{-1}(P) \ \colon 0\leq \arg \Fa_{\nu_0}<\frac{2\pi}{\hbar_k}  \right\}
\end{align*}
where $\hbar_k$ is the class number of $k$. We choose representatives $\Fa^{(1)},\cdots,\Fa^{(\hbar_k)}$ of idele classes, and define $\displaystyle{E:=\cup_{i=1}^{\hbar_k} E_0\Fa^{(i)}}$. Then $E$ is a fundamental domain of $k^{\times}\backslash\BA^{1}$ according to \cite[Theorem 4.3.2]{Tat67}, which is compact. Hence we can write \eqref{es1} as 
\begin{align}\label{es2}
  \int_{\BA^{1}} |\phi(t\Fa)|\ud^\times\Fa=\int_E \sum_{\gamma\in k^{\times}}|\phi(\gamma t\Fa)|\ud^\times\Fa.  
\end{align}

Without loss of generality, we may take $\phi=\phi_\infty\otimes\phi_f\in\CS_\pi(\BA^\times)=\CS_{\pi_\infty}(\BA_\infty^\times)\otimes\CS_{\pi_f}(\BA_f^\times)$.
Write $t\Fa=(\alpha_\infty,\alpha_f)\in\BA^\times=\BA_\infty^\times\times\BA_f^\times$.  By \eqref{es-phif}, we have that 
\[
|\phi(\gamma t\Fa)|
=|\phi_{\infty}(\gamma\alpha_{\infty})\cdot\phi_f(\gamma\alpha_f)|\leq C_1|\phi_{\infty}(\gamma\alpha_{\infty})|\cdot|\gamma\alpha_f|_f^{-c}
\]
for any constant $c>b_\pi$. Since $\Fa\in\BA^1$, we must have that 
\[
|\gamma\alpha_f|_f^{-c}=|\gamma\alpha_\infty|_\infty^{c}\cdot|\gamma(\alpha_\infty,\alpha_f)|_\BA^{-c}
=
|\gamma\alpha_\infty|_\infty^{c}\cdot|\gamma t\Fa|_\BA^{-c}
=
|\gamma\alpha_\infty|_\infty^{c}\cdot|t|_\BA^{-c}
=
|\gamma\alpha_\infty|_\infty^{c}\cdot t^{-c}. 
\]
Hence we obtain 
\begin{align}\label{es3}
    |\phi(\gamma t\Fa)|\leq C_1|\phi_{\infty}(\gamma\alpha_{\infty})|\cdot|\gamma\alpha_\infty|_\infty^{c}\cdot t^{-c}. 
\end{align}
Since $\Fa$ belongs to a compact set $E$, the Archimedean part of $\Fa$ belongs to a compact subset of $\BA_\infty^\times$. Hence there is a constant $C_2$ such that 
\[
\sum_{\gamma\in k^{\times}}|\phi(\gamma t\Fa)|\leq C_2\cdot t^{-c}\cdot\sum_{\gamma\in k^{\times}} |\phi_{\infty}(\gamma t)|\cdot|\gamma t|_\infty^{c}.
\]
For $\phi_{\infty}\in\CS_{\pi_{\infty}}(\BA_{\infty}^{\times})$, we know from \cite[Proposition 3.7]{JL23} that 
$\phi_{\infty}(x)|x|_\infty^{c}$ 
for any constant $c$ is of rapid decay as $|x|_\infty\to\infty$. From the choice of the fundamental domain $E$, we must have that $\alpha_f\in\Fo_{f}^{\times}$. Due to \cite[Lemma 5.3]{JL23}, there are integers $e_1,\cdots,e_\kappa$ such that for $\gamma\in k^{\times}$, if $\phi(\gamma t\Fa)\neq 0$, then $\gamma\in \Fm:=\Fp_1^{e_1}\cdots\Fp_\kappa^{e_\kappa} $. According to \cite[Proposition 5.2]{Neu99}, the image of $\Fm$ in $\BA^{\times}_{\infty}$ is a lattice, and there is a constant $C_3$ such that the (partial) theta series 
\[
\sum_{\gamma\in k^{\times}} |\phi_{\infty}(\gamma t)|\cdot|\gamma t|_\infty^{c}\leq C_3.
\]
Thus we obtain that $\sum_{\gamma\in k^{\times}}|\phi(\gamma t\Fa)|\leq C_2C_3 t^{-c}$
and there is a constant $C_4$ such that
$\int_{\BA^{1}}|\phi(t\Fa)|\ud\Fa\leq C_4 t^{-c}$.
It follows that the integral
$\int_{1}^{\infty}\int_{\BA^{1}}\phi(t\Fa)t^{s-\frac{1}{2}}\ud^\times\Fa\ud^{\times}t$ 
converges absolutely as long as $\Re(s)<c+\frac{1}{2}$ for any $c>b_\pi$. 
Since $c$ is arbitrarily large with $c>b_\pi$, we obtain that the integral 
\[
\int_{1}^{\infty}\int_{\BA^1}\phi(t\Fa)t^{s-\frac{1}{2}}\ud^\times\Fa\ud^{\times} t
\]
converges absolutely for any $s\in\BC$ and hence is holomorphic as a function in $s\in\BC$.

Since a general element in $\CS_{\pi}(\BA^{\times})$ is a finite linear combination of the factorizable functions, it is clear that the above statement for the integrals 
hold for general $\phi\in\CS_{\pi}(\BA^{\times})$.
\end{proof}

From the above proof, we also obtain 
\begin{cor}\label{inner-c}
    For any $t\in\BR^{\times}_+$, the inner integral
\begin{align}\label{int-Fa}
    \int_{\BA^{1}}\phi(t\Fa)\ud^\times\Fa
\end{align}
always converges absolutely for any $\phi\in\CS_\pi(\BA^\times)$.
\end{cor}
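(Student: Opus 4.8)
The plan is to extract this statement as an immediate by-product of the estimate established inside the proof of Proposition \ref{zeta>1}, rather than rerunning the whole argument. First I would reduce to the case of a factorizable $\pi$-Schwartz function $\phi=\phi_\infty\otimes\phi_f$, since a general element of $\CS_\pi(\BA^\times)$ is a finite linear combination of such functions and absolute convergence is preserved under finite sums. For the factorizable case, I would reuse the chain of bounds \eqref{es-phif}--\eqref{es3}: choosing a fundamental domain $E$ for $k^\times\backslash\BA^1$ as in the proof of Proposition \ref{zeta>1}, one writes $\int_{\BA^1}|\phi(t\Fa)|\ud^\times\Fa = \int_E \sum_{\gamma\in k^\times}|\phi(\gamma t\Fa)|\ud^\times\Fa$, and the key inequality produced there is precisely $\sum_{\gamma\in k^\times}|\phi(\gamma t\Fa)|\leq C_2C_3\, t^{-c}$ uniformly for $\Fa\in E$, for any fixed constant $c>b_\pi$.

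From this bound the corollary follows directly: since $E$ has finite volume with respect to $\ud^\times\Fa$, integrating the uniform estimate over $E$ gives $\int_{\BA^1}|\phi(t\Fa)|\ud^\times\Fa \leq C_4\, t^{-c}<\infty$ for every fixed $t\in\BR^\times_+$. In particular the inner integral \eqref{int-Fa} converges absolutely. The one point I would be slightly careful about is that the estimate in the proof of Proposition \ref{zeta>1} was stated for $t\geq 1$ (that is where the domain of the outer integral forced attention), whereas the corollary asserts convergence for all $t\in\BR^\times_+$; but the bound $\sum_{\gamma\in k^\times}|\phi(\gamma t\Fa)|\leq C_2 t^{-c}\sum_{\gamma}|\phi_\infty(\gamma t)|\,|\gamma t|_\infty^c$ together with the lattice/theta-series control $\sum_\gamma |\phi_\infty(\gamma t)|\,|\gamma t|_\infty^c\leq C_3$ holds for any fixed $t>0$ — the rapid decay of $\phi_\infty(x)|x|_\infty^c$ as $|x|_\infty\to\infty$ and the fact that the relevant sum is over a lattice $\Fm$ shifted by $t$ are valid regardless of the size of $t$, so no additional work is needed there.

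I do not expect any genuine obstacle here: the corollary is labelled as following "from the above proof," and indeed all the analytic input has already been assembled. The only task is to observe that the uniform-in-$\Fa$ bound on $E$ at a fixed $t$, combined with $\vol_{\ud^\times\Fa}(E)<\infty$, is exactly the assertion of absolute convergence of $\int_{\BA^1}\phi(t\Fa)\ud^\times\Fa$. I would phrase the proof in two or three sentences that point back to the displayed estimates \eqref{es2}--\eqref{es3} in the proof of Proposition \ref{zeta>1} and to the compactness of the fundamental domain $E$, and then invoke the reduction to finite linear combinations to conclude for general $\phi\in\CS_\pi(\BA^\times)$.
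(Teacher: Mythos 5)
Your proposal is correct and is essentially the paper's own (implicit) argument: the paper derives Corollary \ref{inner-c} directly from the estimates \eqref{es2}--\eqref{es3} in the proof of Proposition \ref{zeta>1}, exactly as you do, and your observation that the pointwise bound at a fixed $t$ does not require $t\geq 1$ is the right way to justify the statement for all $t\in\BR^\times_+$. No gaps.
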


By using the $\pi$-Poisson summation formula (Theorem \ref{thm:PSF}), we obtain 

\begin{prp}\label{prp:intFa}
For any $t\in\BR^\times_+$, the following identity 
\[
\int_{\BA^{1}}\phi(t\Fa)t^s\ud^\times\Fa=\int_{\BA^{1}}\CF_{\pi,\psi}(\phi)(t^{-1}\Fa)t^{s}\ud^\times\Fa
\]
holds for any $\phi\in\CS_\pi(\BA^\times)$.
\end{prp}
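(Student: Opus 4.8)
The plan is to deduce the identity directly from the $\pi$-Poisson summation formula (Theorem~\ref{thm:PSF}) by unfolding the integrals along the discrete subgroup $k^\times\subset\BA^1$. Since $t^s$ is a constant independent of the integration variable $\Fa$, it factors out of both sides, so it suffices to prove
\[
\int_{\BA^{1}}\phi(t\Fa)\ud^\times\Fa=\int_{\BA^{1}}\CF_{\pi,\psi}(\phi)(t^{-1}\Fa)\ud^\times\Fa .
\]

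First I would pass to the quotient measure on $\BA^1/k^\times$, which is compatible with $\ud^\times\Fa$ since $k^\times$ is discrete in the unimodular group $\BA^1$. By Corollary~\ref{inner-c} the left-hand integral converges absolutely, so Fubini--Tonelli legitimizes the unfolding
\[
\int_{\BA^{1}}\phi(t\Fa)\ud^\times\Fa
=\int_{\BA^{1}/k^\times}\sum_{\gamma\in k^\times}\phi(\gamma t\Fa)\ud^\times\Fa
=\int_{\BA^{1}/k^\times}\Theta_\pi(t\Fa,\phi)\ud^\times\Fa,
\]
where the $\pi$-theta function $\Theta_\pi$ is absolutely convergent by Theorem~\ref{thm:PSF}. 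Then I would apply Theorem~\ref{thm:PSF} pointwise in $\Fa$, replacing $\Theta_\pi(t\Fa,\phi)$ by $\Theta_{\wt{\pi}}\big((t\Fa)^{-1},\CF_{\pi,\psi}(\phi)\big)=\Theta_{\wt{\pi}}(t^{-1}\Fa^{-1},\CF_{\pi,\psi}(\phi))$, using that $t\in\BR^\times_+$ is a scalar so that $(t\Fa)^{-1}=t^{-1}\Fa^{-1}$.

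Next I would carry out the change of variables $\Fa\mapsto\Fa^{-1}$: inversion is an automorphism of the unimodular group $\BA^1$ preserving $\ud^\times\Fa$, and it descends to $\BA^1/k^\times$, so the integral becomes $\int_{\BA^1/k^\times}\Theta_{\wt{\pi}}(t^{-1}\Fa,\CF_{\pi,\psi}(\phi))\ud^\times\Fa$. Finally, since $\CF_{\pi,\psi}(\phi)\in\CS_{\wt{\pi}}(\BA^\times)$ and $\wt{\pi}\in\CA_\cusp(\RG_n)$, Corollary~\ref{inner-c} applied to $\wt{\pi}$ and $\CF_{\pi,\psi}(\phi)$ guarantees absolute convergence of $\int_{\BA^1}\CF_{\pi,\psi}(\phi)(t^{-1}\Fa)\ud^\times\Fa$, so I may re-fold the sum over $k^\times$ back into an integral over $\BA^1$, obtaining the right-hand side. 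Multiplying through by $t^s$ yields the stated identity.

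The main obstacle I anticipate is purely bookkeeping: verifying the compatibility of the Haar measures on $\BA^\times$, $\BA^1$, $k^\times\bs\BA^1$ and $\BR^\times_+$ fixed earlier in this section, and invoking the absolute-convergence statements (Corollary~\ref{inner-c}, for both $\pi$ and $\wt{\pi}$) at exactly the points where Fubini is used, so that the unfolding, the inversion substitution, and the refolding are all unconditionally valid. Once these justifications are in place, the identity is an immediate consequence of the $\pi$-Poisson summation formula.
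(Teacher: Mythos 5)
Your proposal is correct and follows essentially the same route as the paper: unfold the integral over a fundamental domain for $k^\times$ in $\BA^1$ (the paper uses the explicit domain $E$ rather than the quotient $\BA^1/k^\times$, but this is the same thing), apply the $\pi$-Poisson summation formula pointwise, perform the measure-preserving inversion $\Fa\mapsto\Fa^{-1}$, and refold, with all steps justified by the absolute convergence from Corollary~\ref{inner-c}. Your version is, if anything, slightly more careful than the paper's in spelling out the inversion substitution and in noting that refolding on the dual side requires Corollary~\ref{inner-c} for $\wt{\pi}$ and $\CF_{\pi,\psi}(\phi)$.
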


\begin{proof}
From \eqref{int-Fa}, the integral $\int_{\BA^{1}}\phi(t\Fa)t^s\ud^\times\Fa$ 
converges absolutely for any $t\in\BR^\times_+$ and for any $\phi\in\CS_\pi(\BA^\times)$. 
We write 
\begin{align*}
    \int_{\BA^{1}}\phi(t\Fa)t^s\ud^\times\Fa
&=\sum_{\alpha\in k^{\times}}\int_{\alpha E}\phi(t\Fa)t^{s}\ud^\times\Fa\\
&=\sum_{\alpha\in k^{\times}}\int_E \phi(\alpha t\Fa)t^s\ud^\times\Fa
=\int_E \left( \sum_{\alpha\in k^{\times}} \phi(\alpha t\Fa)  \right)t^s\ud^\times\Fa
\end{align*}
where $E$ is the fundamental domain of $k^\times$ in $\BA^1$ as above, which is compact. By Theorem \ref{thm:PSF}:
\[
\sum_{\alpha\in k^{\times}} \phi(\alpha t\Fa) = \sum_{\alpha\in k^{\times}}\CF_{\pi,\psi}(\phi)(\frac{\alpha}{t\Fa}),
\]
we obtain that 
\begin{align*}
\int_{\BA^{1}}\phi(t\Fa)t^s\ud^\times\Fa
=\int_E \left(\sum_{\alpha\in k^{\times}}\CF_{\pi,\psi}(\phi)(\frac{\alpha}{t\Fa})\right)t^s\ud^\times\Fa
=\int_{\BA^{1}}\CF_{\pi,\psi}(\phi)(t^{-1}\Fa)t^s\ud^\times\Fa,
\end{align*}
where all changes of the order of integrations are verified due to the absolute convergence. 
\end{proof}

Applying Proposition \ref{prp:intFa} to the second integral on the right-hand side of \eqref{zeta1+1}, we obtain that for $\Re(s)>\frac{n+1}{2}$, 
\begin{align}\label{zeta<1}
    \int_{0}^{1}\int_{\BA^{1}}\phi(t\Fa)t^{s-\frac{1}{2}}\ud^\times\Fa\ud^{\times}t
    &=
    \int_{0}^{1}\int_{\BA^{1}}\CF_{\pi,\psi}(\phi)(t^{-1}\Fa)t^{s-\frac{1}{2}}\ud^\times\Fa\ud^{\times}t
    =
    \int_{1}^\infty\int_{\BA^{1}}\CF_{\pi,\psi}(\phi)(t\Fa)t^{\frac{1}{2}-s}\ud^\times\Fa\ud^{\times}t.
\end{align}
By Proposition \ref{zeta>1} and \eqref{zeta<1}, we obtain the following 

\begin{cor}\label{cor:zeta<1} 
The second integral in \eqref{zeta1+1}:
$\int_{0}^{1}\int_{\BA^{1}}\phi(t\Fa)t^{s-\frac{1}{2}}\ud^\times\Fa\ud^{\times}t$ 
converges absolutely for $\Re(s)>\frac{n+1}{2}$ and has analytic continuation to an entire function in $s\in\BC$. Moreover, the following identity 
\[
\int_{0}^{1}\int_{\BA^{1}}\phi(t\Fa)t^{s-\frac{1}{2}}\ud^\times\Fa\ud^{\times}t
=
\int_{1}^\infty\int_{\BA^{1}}\CF_{\pi,\psi}(\phi)(t\Fa)t^{\frac{1}{2}-s}\ud^\times\Fa\ud^{\times}t
\]
holds by analytic continuation for $s\in\BC$, where the integral on the right-hand side converges absolutely for all $s\in\BC$.
\end{cor}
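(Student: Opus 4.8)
The plan is to derive the statement by combining Proposition~\ref{zeta>1} --- applied both to $\pi$ itself and to its contragredient $\wt\pi$ --- with the identity \eqref{zeta<1}, which has already been obtained from the $\pi$-Poisson summation formula. First I would record the absolute convergence of the second integral on the half-plane $\Re(s)>\frac{n+1}{2}$. By the fibration $\BA^\times\to\BR^\times_+$ through the idele norm and the corresponding factorization of Haar measures, one has
\[
\int_{\BA^\times}|\phi(x)|\,|x|_\BA^{\sigma-\frac12}\,\ud^\times x
=\int_0^\infty\int_{\BA^1}|\phi(t\Fa)|\,t^{\sigma-\frac12}\,\ud^\times\Fa\,\ud^\times t,
\]
and for $\sigma=\Re(s)>\frac{n+1}{2}$ the left-hand side is finite by \cite[Theorem 4.6]{JL23}; restricting the outer $t$-integral to $(0,1)$ only decreases the value, so the second integral in \eqref{zeta1+1} converges absolutely for $\Re(s)>\frac{n+1}{2}$, hence is holomorphic there.

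Next I would invoke Proposition~\ref{zeta>1} with $\pi$ replaced by $\wt\pi\in\CA_\cusp(\RG_n)$ and with the $\wt\pi$-Schwartz function $\CF_{\pi,\psi}(\phi)\in\CS_{\wt\pi}(\BA^\times)$. This yields that
\[
s'\mapsto\int_1^\infty\int_{\BA^1}\CF_{\pi,\psi}(\phi)(t\Fa)\,t^{s'-\frac12}\,\ud^\times\Fa\,\ud^\times t
\]
converges absolutely for every $s'\in\BC$ and is entire in $s'$. Substituting $s'=1-s$ shows that the right-hand side of the asserted identity, $\int_1^\infty\int_{\BA^1}\CF_{\pi,\psi}(\phi)(t\Fa)\,t^{\frac12-s}\,\ud^\times\Fa\,\ud^\times t$, is absolutely convergent for all $s\in\BC$ and entire in $s$.

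Finally, the identity \eqref{zeta<1} says that on the region $\Re(s)>\frac{n+1}{2}$ the second integral in \eqref{zeta1+1} agrees with this entire function. Since the second integral is holomorphic on that half-plane and the right-hand side is entire on all of $\BC$, uniqueness of analytic continuation forces the second integral to extend to an entire function of $s$, equal throughout $\BC$ to $\int_1^\infty\int_{\BA^1}\CF_{\pi,\psi}(\phi)(t\Fa)\,t^{\frac12-s}\,\ud^\times\Fa\,\ud^\times t$; this gives both assertions of the corollary. The reduction to factorizable $\phi$ is handled, as elsewhere, by linearity, every element of $\CS_\pi(\BA^\times)$ being a finite linear combination of such. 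I do not expect a genuine obstacle here: the only point needing care is the legitimacy of applying Proposition~\ref{zeta>1} to $\wt\pi$, which is immediate since $\wt\pi$ is again irreducible cuspidal and $\CF_{\pi,\psi}$ carries $\CS_\pi(\BA^\times)$ into $\CS_{\wt\pi}(\BA^\times)$.
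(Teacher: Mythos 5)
Your proposal is correct and follows essentially the same route as the paper: the identity \eqref{zeta<1} (derived from Proposition~\ref{prp:intFa}, i.e.\ the $\pi$-Poisson summation formula) gives the equality on the half-plane of convergence, Proposition~\ref{zeta>1} applied to $\wt{\pi}$ and $\CF_{\pi,\psi}(\phi)\in\CS_{\wt{\pi}}(\BA^\times)$ shows the right-hand side is entire and absolutely convergent for all $s$, and uniqueness of analytic continuation finishes the argument. Your added justifications (the restriction of the outer $t$-integral to $(0,1)$ for the initial convergence claim, and the legitimacy of applying Proposition~\ref{zeta>1} to the contragredient) are exactly the points the paper leaves implicit.
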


Set $\BA^{>1}:=\{x\in\BA^\times\ \colon\ |x|_\BA>1\}$.  By combining \eqref{zeta1+1} with \eqref{zeta<1}, we obtain that when $\Re(s)>\frac{n+1}{2}$
\begin{align}\label{zeta11}
\int_{\BA^{\times}}\phi(x)|x|_\BA^{s-\frac{1}{2}}\ud^{\times}x
=\int_{\BA^{>1}}\phi(x)|x|_\BA^{s-\frac{1}{2}}\ud^{\times}x+\int_{\BA^{>1}}\CF_{\pi,\psi}(\phi)(x)|x|_\BA^{\frac{1}{2}-s}\ud^{\times}x,
\end{align}
which holds for all $s\in\BC$ by analytic continuation. 
From the proof of Proposition \ref{zeta>1}, both integrals on the right-hand side converge absolutely when $s\in\BC$ belongs to the vertical strip 
$\frac{1}{2}-c<\Re(s)<\frac{1}{2}+c$ for any constant $c$ with $c>\max\{b_\pi,b_{\wt{\pi}}\}$. 
Hence they converge absolutely at any $s\in\BC$.

We are going to calculate the integral $\int_{\BA^{>1}}\phi(x)|x|_{\BA}^{s-\frac{1}{2}}\ud^{\times}x$ in another way. 
For $x=(x_\nu)\in\BA^\times$, we write $x=x_\infty\cdot x_f$ with $x_\infty\in \BA_\infty^\times$ and $x_f\in\BA_f^\times$. For $x\in\BA^{>1}$, 
we have that $|x|=|x_\infty|_{\BA}\cdot|x_f|_{\BA}>1$ and $|x_f|_{\BA}>|x_\infty|_{\BA}^{-1}$. 
For $\phi=\phi_\infty\otimes\phi_f\in\CS_\pi(\BA^\times)=\CS_{\pi_\infty}(\BA_\infty^\times)\otimes\CS_{\pi_f}(\BA_f^\times)$,  we write
\begin{align}\label{zeta>1-2}
   \int_{\BA^{>1}}\phi(x)|x|_{\BA}^{s-\frac{1}{2}}\ud^{\times}x
   =\int_{\BA_\infty^\times} \phi_{\infty}(x_{\infty})|x_{\infty}|_{\BA}^{s-\frac{1}{2}}\ud^{\times}x_{\infty}
   \int_{\BA_f^{\times}}^{>|x_{\infty}|^{-1}}\phi_f(x_f)|x_f|_{\BA}^{s-\frac{1}{2}}\ud^{\times}x_f, 
\end{align}
for any $s\in\BC$, where the inner integral is taken over $\{ x_f\in\BA_f^{\times}\ \colon\  |x_f|_{\BA}> |x_{\infty}|_{\BA}^{-1} \}$. By the Fubini theorem, we know 
(from the proof of Proposition \ref{prp:HKTD}) that the inner integral
\[
\int_{\BA_f^{\times}}^{>|x_{\infty}|_{\BA}^{-1}}\phi_f(x_f)|x_f|_{\BA}^{s-\frac{1}{2}}\ud^{\times}x_f
\]
converges absolutely for any $s\in\BC$ and any $x_\infty\in\BA_\infty^\times$.  

\begin{dfn}[Godement-Jacquet Kernels]\label{dfn:CK}
For any $\pi=\pi_\infty\otimes\pi_f\in\CA_\cusp(\RG_n)$, take any $\phi_f\in\CS_{\pi_f}(\BA^\times)$, the Godement-Jacquet kernels associated with $\pi$ are defined to be 
\[
H_{\pi,s}(x_\infty,\phi_f):=
    |x_{\infty}|_{\BA}^{s-\frac{1}{2}}
   \int_{\BA_f^{\times}}^{>|x_{\infty}|_{\BA}^{-1}}\phi_f(x_f)|x_f|_{\BA}^{s-\frac{1}{2}}\ud^\times x_f,
\]
for $x_\infty\in\BA_\infty^\times$ and for all $s\in\BC$.     
\end{dfn}

From \eqref{zeta>1-2}, we obtain that 
\begin{align}\label{CKpi1}
    \int_{\BA^{>1}}\phi(x)|x|_{\BA}^{s-\frac{1}{2}}\ud^{\times}x
    =\int_{\BA_\infty^\times}\phi_{\infty}(x_{\infty})H_{\pi,s}(x_{\infty},\phi_f)\ud^{\times} x_{\infty}.
\end{align}
In the spirit of \cite{Clo22}, to each $\pi\in\CA_\cusp(\RG_n)$, we define the {\bf dual kernel} of the Godement-Jacquet kernel $H_{\pi,s}(x_\infty,\phi_f)$ associated with $\pi$ to be 
\begin{align}\label{CKd}
    K_{\pi,s}(x_\infty,\phi_f):=
    |x_{\infty}|_{\BA}^{s-\frac{1}{2}}
   \int_{\BA_f^{\times}}^{>|x_{\infty}|_{\BA}^{-1}}\CF_{\pi_f,\psi_f}(\phi_f)(x_f)|x_f|_{\BA}^{s-\frac{1}{2}}\ud^\times x_f,
\end{align}
for $x_\infty\in\BA_\infty^\times$ and for all $s\in\BC$.  

With a suitable choice of the functions $\phi_f$, the kernel functions $H_{\pi,s}(x_\infty,\phi_f)$ and $K_{\pi,s}(x_\infty,\phi_f)$ may have simple expressions. We refer to 
Proposition \ref{prp:CKpi} for details. We establish the distribution property for $H_{\pi,s}(x_\infty,\phi_f)$ and $K_{\pi,s}(x_\infty,\phi_f)$.

\begin{prp}\label{prp:HKTD}
Set $\FB_\infty:=\{x_{\infty}\in\BA_{\infty}\ \colon\ |x_{\infty}|_{\BA}=0\}$ and write 
$\BA_\infty=\BA_\infty^\times\cup\FB_\infty$. 
For any $\phi_f\in\CS_{\pi_f}(\BA_f^\times)$ and for any $s\in\BC$, the Godement-Jacquet kernel function $H_{\pi,s}(x_\infty,\phi_f)$ and its dual kernel function $K_{\pi,s}(x_\infty,\phi_f)$ on $\BA_\infty^\times$ enjoy the following properties. 
\begin{enumerate}
    \item Both $H_{\pi,s}(x_\infty,\phi_f)$ and $K_{\pi,s}(x_\infty,\phi_f)$ vanish to infinity order along $\FB_\infty$.
    \item Both $H_{\pi,s}(x_\infty,\phi_f)$ and $K_{\pi,s}(x_\infty,\phi_f)$ have unique canonical extension across $\FB_\infty$ to the whole space $\BA_{\infty}$.
    \item Both $H_{\pi,s}(x_\infty,\phi_f)$ and $K_{\pi,s}(x_\infty,\phi_f)$ are tempered distributions on $\BA_\infty$.
\end{enumerate}
\end{prp}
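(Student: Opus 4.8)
The plan is to reduce all three assertions to the support and decay properties of $\pi_f$-Schwartz functions, together with the distributional-extension machinery of Miller--Schmid \cite{MS04-JFA}. We may assume $\phi_f=\otimes_\nu\phi_\nu$ is factorizable. Since $\CF_{\pi_f,\psi_f}(\phi_f)=\otimes_\nu\CF_{\pi_\nu,\psi_\nu}(\phi_\nu)\in\CS_{\widetilde\pi_f}(\BA_f^\times)$ and $K_{\pi,s}(x_\infty,\phi_f)=H_{\widetilde\pi,s}(x_\infty,\CF_{\pi_f,\psi_f}(\phi_f))$, it suffices to prove the statements for $H_{\pi,s}$; those for $K_{\pi,s}$ follow by replacing $(\pi,\phi_f)$ by $(\widetilde\pi,\CF_{\pi_f,\psi_f}(\phi_f))$. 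The key elementary input is that $\phi_f$ has support bounded in absolute value: each finite component satisfies $\phi_\nu(x)=0$ for $|x|_\nu$ large by \cite[Proposition 3.7]{JL23}, and $\phi_\nu=\BL_{\pi_\nu}$ is supported in $\Fo_\nu\setminus\{0\}$ for almost all $\nu$, so there is $M>0$ with $\phi_f(x_f)=0$ whenever $|x_f|_\BA>M$ (equivalently, its support is the fractional ideal recorded before the proposition). For Part (1): if $|x_\infty|_\BA<M^{-1}$, the domain $\{x_f\in\BA_f^\times:|x_f|_\BA>|x_\infty|_\BA^{-1}\}$ in \eqref{GJK} misses the support of $\phi_f$, so the inner integral and hence $H_{\pi,s}(x_\infty,\phi_f)$ vanish identically. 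As $\{x_\infty\in\BA_\infty:|x_\infty|_\BA<M^{-1}\}$ is an open neighborhood of $\FB_\infty$, the function $H_{\pi,s}(\cdot,\phi_f)$ vanishes identically near $\FB_\infty$; in particular it vanishes to infinite order along $\FB_\infty$.

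Part (2) follows at once: extending $H_{\pi,s}(\cdot,\phi_f)$ by zero across $\FB_\infty$ gives a function on $\BA_\infty$ that agrees with the original on $\BA_\infty^\times$ and is $C^\infty$ (identically $0$) near $\FB_\infty$; because of the infinite-order vanishing this is the canonical extension in the sense of Miller--Schmid \cite{MS04-JFA}, i.e.\ the unique extension across $\FB_\infty$ carrying no distributional component supported on $\FB_\infty$. For Part (3) the remaining point is moderate growth at infinity. On $\BA_\infty^\times$ the function depends on $x_\infty$ only through $X:=|x_\infty|_\BA$, equalling $X^{s-\frac12}c(X)$ with $c(X)=\int_{|x_f|_\BA>X^{-1}}\phi_f(x_f)|x_f|_\BA^{s-\frac12}\ud^\times x_f$ (a convergent integral for all $s$, by the discussion preceding the proposition and the estimates in the proof of Proposition \ref{zeta>1}); it is locally bounded and, by Part (1), vanishes for $X$ small, so growth must only be controlled as $X\to\infty$. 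Changing variables $x_f\mapsto x_f^{-1}$ and applying the pointwise bound $|\phi_f(x_f)|\le C_1|x_f|_\BA^{-c}$ valid for every $c>b_\pi$ --- this is exactly \eqref{es-phif}, established inside the proof of Proposition \ref{zeta>1} --- together with a standard count of fractional ideals of bounded norm, one obtains $|c(X)|\ll_c X^{c+\frac32-\Re s+o(1)}$ and hence $|H_{\pi,s}(x_\infty,\phi_f)|\ll|x_\infty|_\BA^{A}$ for a suitable $A=A(\Re s)$. (Alternatively, moderate growth can be extracted from \eqref{CKpi1}, which realizes $H_{\pi,s}(\cdot,\phi_f)$ as a functional on $\CS_{\pi_\infty}(\BA_\infty^\times)$ whose pairings converge absolutely by Proposition \ref{zeta>1}.) Combining the infinite-order vanishing along $\FB_\infty$, the local boundedness on the rest of $\BA_\infty^\times$, and this polynomial growth, the canonical extension is a tempered distribution on $\BA_\infty$ --- by the Miller--Schmid formalism \cite{MS04-JFA}, or directly since a locally bounded measurable function that vanishes near $\FB_\infty$ and grows at most polynomially at infinity defines a tempered distribution on the real vector space $\BA_\infty$.

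The same argument applies verbatim to $K_{\pi,s}(x_\infty,\phi_f)$ after the substitution $(\pi,\phi_f)\rightsquigarrow(\widetilde\pi,\CF_{\pi_f,\psi_f}(\phi_f))$, using that $\CF_{\pi_f,\psi_f}(\phi_f)\in\CS_{\widetilde\pi_f}(\BA_f^\times)$ inherits the bounded-support and decay properties of a $\widetilde\pi_f$-Schwartz function via \cite[Proposition 3.7]{JL23}. The one genuinely technical step is the uniform-in-$X$ growth estimate for $c(X)$, which is where the decay of $\pi$-Schwartz functions from \cite{JL23} (as packaged in the proof of Proposition \ref{zeta>1}) must be combined with an ideal-counting bound; everything else --- the vanishing in Part (1), the zero-extension in Part (2), and the passage to a tempered distribution in Part (3) --- is then formal.
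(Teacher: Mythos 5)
Your proposal is correct and follows essentially the same route as the paper's proof: reduce $K_{\pi,s}$ to $H_{\widetilde\pi,s}$ applied to $\CF_{\pi_f,\psi_f}(\phi_f)$, use the fact that the support of $\phi_f$ is a fractional ideal to show the inner integral vanishes identically once $|x_\infty|_\BA$ is small (giving Parts (1) and (2) via the Miller--Schmid canonical-extension formalism), and combine the pointwise bound \eqref{es-phif} with a count of integral ideals of bounded norm to get polynomial growth in $|x_\infty|_\BA$ and hence temperedness. The only differences are cosmetic: the paper makes the ideal count explicit via the Wiener--Ikehara bound $\sum_{n\le x}a(n)\ll x$ and records the precise exponent $c+\tfrac52-\Re(s)$, whereas you leave these as "standard"; your intermediate change of variables $x_f\mapsto x_f^{-1}$ is unnecessary but harmless.
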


\begin{proof}
By definition, we have that $K_{\pi,s}(x_{\infty},\phi_f)=H_{\widetilde{\pi},s}(x_{\infty},\CF_{\pi,\psi}(\phi_f))$. 
It is enough to show that Properties (1), (2), and (3) hold for the kernel function $H_{\pi,s}(x_{\infty},\phi_f)$.  
We prove (1) and (2) by using the work of S. Miller and W. Schmid in \cite{MS04-JFA} (in particular \cite[Definition 2.4, Lemma 2.8, Definition 2.6]{MS04-JFA}). Then we prove (3) 
by showing that $H_{\pi,s}(x_{\infty},\phi_f)$ is of polynomial growth as the Eucilidean norm of $x_{\infty}$ tends to $\infty$ (\cite[Theorem 25.4]{Tre67}). 

Without loss of generality, we may assume that $\phi_f=\otimes_{\nu}\phi_{\nu}\in\CS_{\pi_f}(\BA_f^\times)$ is factorizable. 
Let $T\subset|k|_f$ be a finite set such that for $\nu\notin T$, both $\psi_{\nu}$ and $\pi_{\nu}$ are unramified and $\phi_{\nu}(x)=\BL_{k_{\nu}}(x)$, the basic function in 
$\CS_{\pi_\nu}(k_\nu^\times)$. 
According to \cite[Lemma 5.3]{JL23}, there are integers $\{e_{\nu}\}_{\nu\in T}$ such that the support of $\phi_{f}$ is contained in
$\left(\prod_{\nu\in T}\left(\Fp_{\nu}^{e_{\nu}}\setminus\{0\}\right)\times\prod_{\nu\notin T}\left(\Fo_{\nu}\setminus\{0\}\right)\right)\bigcap\BA_f^{\times}$. 
According to (\ref{es-phif}), for any $c>b_{\pi}$, there is a constant $C_1$ such that
$|\phi_f(x_f)|\leq C_1|x_f|_{\BA}^{-c}$ 
for any $x_f\in\BA_f^{\times}$. Write
\[
\BA_f^{\times}=\bigsqcup_{\alpha=(\alpha_{\nu})}\left(\prod_{\nu\in|k|_f}\varpi_{\nu}^{\alpha_{\nu}}\Fo_{\nu}^{\times}\right),
\]
where $\varpi_{\nu}$ is the local uniformizer in $k_{\nu}$ and $\alpha$ runs over the algebraic direct sum $\oplus_{\nu\in|k|_f}\BZ$. Then for $x$ in the $\alpha=(\alpha_{\nu})$ component, we have $|x|_{f}=\prod_{\nu\in|k|_f}q_{\nu}^{-\alpha_{\nu}}$ and the inequality: $|x|_f\leq|x_{\infty}|_{\BA}$ is equivalent to the inequality: $\prod_{\nu\in|k|_f}q_{\nu}^{\alpha_{\nu}}<|x_{\infty}|_{\BA}$. 
We may write a fractional ideal $\Fl$ in $k$ as $\prod_{\nu}\Fp_{\nu}^{\alpha_{\nu}}$. We set 
$\Fm=\Fm_T:=\prod_{\nu\in T}\Fp_{\nu}^{e_{\nu}}$, 
which is the fractional ideal depending on the support of $\phi_f$. 

According to the normalization of our Haar measure on $\BA_f^{\times}$, we have that 
\begin{align*}
    \int_{\BA_f^{\times}}^{>|x_{\infty}|_{\BA}^{-1}}\left|\phi_f(x_f)|x_f|_{\BA}^{s-\frac{1}{2}}\right|\ud^{\times}x
    &\leq C_1\cdot\int_{\BA_f^{\times}}^{>|x_{\infty}|_{\BA}^{-1}}|x_f|_{\BA}^{-c+\Re(s)-\frac{1}{2}}\ud^{\times}x_f
    \leq C_1\cdot\sum_{\Fl\subset \Fm, \FN(\Fl)<|x_{\infty}|_{\BA}}\FN(\Fl)^{c+\frac{1}{2}-\Re(s)},
\end{align*}
where the last summation runs over all fractional ideals of $k$ that are contained in $\Fm$ with absolute norm less than or equal to $|x_{\infty}|_{\BA}$. 
Write $\Fj=\Fm^{-1}\Fl$ and obtain that 
\[
\sum_{\Fl\subset \Fm, \FN(\Fl)<|x_{\infty}|_{\BA}}\FN(\Fl)^{c+\frac{1}{2}-\Re(s)}
=\sum_{\Fj\subset \Fo, \FN(\Fj)<\frac{|x_{\infty}|_{\BA}}{\FN(\Fm)}} \FN(\Fm)^{c+\frac{1}{2}-\Re(s)}\cdot\FN(\Fj)^{c+\frac{1}{2}-\Re(s)}
\]
Let $a(n)$ be the number of ideals $\Fj\subset \Fo$ with $\FN(\Fj)=n$. According to the Wiener-Ikehara theorem (\cite[Corollary 8.8]{MV07}), there is a constant $C^{\prime}$ such that
$\sum_{n\leq x}a(n)\leq C^{\prime}x$ 
for all $x\geq 0$, and in particular $a(n)\leq C^{\prime}n$. We obtain that 
\begin{align*}
    \sum_{\Fj\subset \Fo, \FN(\Fj)<\frac{|x_{\infty}|_{\BA}}{\FN(\Fm)}}\FN(\Fm)^{c+\frac{1}{2}-\Re(s)}\cdot\FN(\Fj)^{c+\frac{1}{2}-\Re(s)}
    &= \FN(\Fm)^{c+\frac{1}{2}-\Re(s)}\sum_{n\leq\frac{|x_{\infty}|_{\BA}}{\FN(\Fm)} }a(n)n^{c+\frac{1}{2}-\Re(s)}\\
    &\leq \FN(\Fm)^{c+\frac{1}{2}-\Re(s)}C^{\prime}\sum_{n\leq\frac{|x_{\infty}|_{\BA}}{\FN(\Fm)}}n^{c+\frac{3}{2}-\Re(s)}.
\end{align*}
For a fixed $s\in\BC$, and any fixed $c>\max\{b_{\pi},\Re(s)-\frac{3}{2}\}$, we have that 
\[
\sum_{n\leq\frac{|x_{\infty}|_{\BA}}{\FN(\Fm)}}n^{c+\frac{3}{2}-\Re(s)}
\leq\int_{1}^{ \frac{|x_{\infty}|_{\BA}}{\FN(\Fm)}+1}x^{c+\frac{3}{2}-\Re(s)}\ud x
\leq \frac{   \left(  \frac{|x_{\infty}|_{\BA}}{\FN(\Fm)}+1   \right) ^{c+\frac{5}{2}-\Re(s)} }{c+\frac{5}{2}-\Re(s)}.
\]
When $|x_{\infty}|_{\BA}\geq \FN(\Fm)$, we deduce that 
\[
\frac{   \left(  \frac{|x_{\infty}|_{\BA}}{\FN(\Fm)}     +1   \right) ^{c+\frac{5}{2}-\Re(s)} }{c+\frac{5}{2}-\Re(s)}
\leq \frac{   2^{c+\frac{5}{2}-\Re(s)}\FN(\Fm)^{\Re(s)-c-\frac{5}{2}}|x_{\infty}|_{\BA}^{c+\frac{5}{2}-\Re(s)}  }{c+\frac{5}{2}-\Re(s)}.
\]
Hence we obtain that 
\begin{align}\label{estofH}
\int_{\BA_f^{\times}}^{>|x_{\infty}|_{\BA}^{-1}}\left|\phi_f(x_f)|x_f|_{\BA}^{s-\frac{1}{2}}\ud^{\times}x_f       \right|
     \leq\frac{2^{c+\frac{5}{2}-\Re(s)}C_1C^{\prime}}{(c+\frac{5}{2}-\Re(s))\FN(\Fm)^2}|x_{\infty}|_{\BA}^{c+\frac{5}{2}-\Re(s)}
     =\widetilde{C}|x_{\infty}|_{\BA}^{c+\frac{5}{2}-\Re(s)},
\end{align}
for any $c>\max\{b_{\pi},\Re(s)-\frac{3}{2}\}$ and $|x_{\infty}|_{\BA}>\FN(\Fm)$, where 
$
\widetilde{C}=\frac{2^{c+\frac{5}{2}-\Re(s)}C_1C^{\prime}}{(c+\frac{5}{2}-\Re(s))\FN(\Fm)^2}
$
is a constant depending on $k$, $\phi_f$, $s$, $c$, and is independent of $|x_{\infty}|_{\BA}$. Moreover, from the above calculation, we obtain that  
\begin{align}\label{zero}
    \int_{\BA_f^{\times}}^{>|x_{\infty}|_{\BA}^{-1}}\left|\phi_f(x_f)|x_f|_{\BA}^{s-\frac{1}{2}}\ud^{\times}x_f  
     \right|=0
\end{align}
if $|x_{\infty}|_{\BA}\leq\FN(\Fm)$. From \eqref{zero}, it is clear that 
the kernel function $H_{s,\pi}(x_{\infty},\phi_f)$ vanishes at some neighborhood for any point $x_{\infty}\in \FB_\infty$. 
By \cite[Definition 2.4, Lemma 2.8, Definition 2.6]{MS04-JFA}, $H_{s,\pi}(x_{\infty},\phi_f)$ vanishes of infinity order at $\FB_\infty$ and has a unique canonical extension across $\FB_\infty$ to the whole $\BA_{\infty}$, which we still denote by $H_{\pi,s}(x_{\infty},\phi_f)$. We establish Properties (1) and (2). 

For Property (3), because of the estimate in \eqref{estofH}, the kernel $H_{\pi,s}(x_{\infty},\phi_f)$ is of polynomial growth as the Eucilidean norm of $x_{\infty}$ tends to $\infty$. Hence $H_{\pi,s}(x_{\infty},\phi_F)$ is tempered as a distribution on $\BA_\infty$ according to \cite[Theorem 25.4]{Tre67}.
\end{proof}

\subsection{$\pi_\infty$-Fourier transform}\label{ssec-FTGJK}

From \eqref{CKd}, we obtain for $\phi=\phi_\infty\otimes\phi_f\in\CS_\pi(\BA^\times)$ that 
\begin{align}\label{CKd1}
    \int_{\BA^{>1}}\CF_{\pi,\psi}(\phi)(x)|x|^{\frac{1}{2}-s}\ud^{\times}x
    =
    \int_{\BA_\infty^\times}\CF_{\pi_\infty,\psi_\infty}(\phi_{\infty})(x_{\infty})K_{\pi,1-s}(x_\infty,\phi_f)\ud^\times x_\infty, 
\end{align}
which converges absolutely for all $s\in\BC$. 
The following is the duality relation of the Godement-Jacquet kernels $H_{\pi,s}(x_\infty,\phi_f)$ and $K_{\pi,s}(x,\phi_f)$ via the $\pi_\infty$-Fourier transform when $s\in\BC$ 
is such that $L_f(s,\pi_f)=0$, which is part of \cite[Theorem 1.1]{Clo22} for $\pi\in\CA_\cusp(\RG_n)$. 

\begin{prp}\label{prp:FTGJK}
For any $\pi\in\CA_\cusp(\RG_n)$, take $\phi=\phi_\infty\otimes\phi_f\in\CS_\pi(\BA^\times)$. Then the Godement-Jacquet kernel $H_{\pi,s}(x,\phi_f)$ associated with $\pi$ 
and its dual kernel $K_{\pi,s}(x,\phi_f)$ enjoy the following identity:
\[
H_{\pi,s}(x,\phi_f)
=-
\CF_{\pi_\infty,\psi_\infty}(K_{\pi,1-s}(\cdot,\phi_f))(x)
=
-\int_{\BA_\infty^{\times}}k_{\pi_{\infty},\psi_{\infty}}(x y)K_{\pi,1-s}(y,\phi_f)\ud^{\times}y.
\]
as distributions on $\BA_\infty^\times$ if $s$ is a zero of $L_f(s,\pi_f)$, where $k_{\pi_{\infty},\psi_{\infty}}$ is the $\pi_\infty$-kernel function as given in \eqref{kernel-ar} 
that gives the $\pi_\infty$-Fourier transform as a convolution integral operator as in \eqref{FO-k}.
\end{prp}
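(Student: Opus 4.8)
The plan is to derive the identity from the global relation \eqref{zeta11} together with the two integral expressions \eqref{CKpi1} and \eqref{CKd1}, by isolating the contribution of the finite places from that of $\pi_\infty$. Recall that \eqref{zeta11} reads
\[
\CZ(s,\phi)=\int_{\BA^{>1}}\phi(x)|x|_\BA^{s-\frac12}\ud^\times x+\int_{\BA^{>1}}\CF_{\pi,\psi}(\phi)(x)|x|_\BA^{\frac12-s}\ud^\times x,
\]
which holds for all $s\in\BC$ by analytic continuation, and that $\CZ(s,\phi)=\CZ_\infty(s,\phi_\infty)\cdot\CZ_f(s,\phi_f)$ factors as a product of the local zeta integrals, with the finite part being (a specialization of, cf. the Euler product used in Section~\ref{sec-GJK}) the completed $L$-function times elementary factors; in particular the full finite-place contribution is divisible by $L_f(s,\pi_f)$. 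First I would substitute \eqref{CKpi1} and \eqref{CKd1} into \eqref{zeta11} to obtain, for every $\phi_\infty\in\CS_{\pi_\infty}(\BA_\infty^\times)$,
\[
\CZ(s,\phi)=\int_{\BA_\infty^\times}\phi_\infty(x_\infty)H_{\pi,s}(x_\infty,\phi_f)\ud^\times x_\infty
+\int_{\BA_\infty^\times}\CF_{\pi_\infty,\psi_\infty}(\phi_\infty)(x_\infty)K_{\pi,1-s}(x_\infty,\phi_f)\ud^\times x_\infty .
\]

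Next I would rewrite the second term using the fact that $\CF_{\pi_\infty,\psi_\infty}$ is the convolution operator with kernel $k_{\pi_\infty,\psi_\infty}$ as in \eqref{FO-k}, and that it is formally self-adjoint in the appropriate sense (the adjoint being given by $\CF_{\wt\pi_\infty,\psi_\infty^{-1}}$, equivalently the substitution $x_\infty\mapsto x_\infty^{-1}$ inside the kernel); this moves the $\pi_\infty$-Fourier transform off $\phi_\infty$ and onto $K_{\pi,1-s}$, giving
\[
\int_{\BA_\infty^\times}\phi_\infty(x_\infty)\Bigl(H_{\pi,s}(x_\infty,\phi_f)+\CF_{\pi_\infty,\psi_\infty}(K_{\pi,1-s}(\cdot,\phi_f))(x_\infty)\Bigr)\ud^\times x_\infty=\CZ(s,\phi).
\]
Now specialize $s$ to a zero of $L_f(s,\pi_f)$. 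Since $\CZ(s,\phi)$ carries the factor $L_f(s,\pi_f)$ (while $\CZ_\infty(s,\phi_\infty)$ is entire and the elementary factors contribute no pole there), the right-hand side vanishes identically in $\phi_\infty$ at such $s$. Because $\phi_\infty\in\CS_{\pi_\infty}(\BA_\infty^\times)$ is arbitrary and this space is large enough to separate distributions on $\BA_\infty^\times$ (it contains $\CC_c^\infty(\BA_\infty^\times)$ by \eqref{CSC} applied place by place), we conclude that the distribution inside the parentheses vanishes on $\BA_\infty^\times$, which is exactly the asserted identity $H_{\pi,s}(x,\phi_f)=-\CF_{\pi_\infty,\psi_\infty}(K_{\pi,1-s}(\cdot,\phi_f))(x)$, and the final integral expression follows from \eqref{FO-k}.

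The main obstacle I anticipate is the justification of the adjointness/change-of-variables step that transfers $\CF_{\pi_\infty,\psi_\infty}$ from $\phi_\infty$ to $K_{\pi,1-s}$: here $K_{\pi,1-s}(\cdot,\phi_f)$ is only a tempered distribution on $\BA_\infty$ (Proposition~\ref{prp:HKTD}), not a Schwartz function, so the pairing $\int_{\BA_\infty^\times}\CF_{\pi_\infty,\psi_\infty}(\phi_\infty)(x_\infty)K_{\pi,1-s}(x_\infty,\phi_f)\ud^\times x_\infty$ must be interpreted as the action of the distribution on the test function, and one must check that $\CF_{\pi_\infty,\psi_\infty}$ extends to (and is continuous on) the relevant space of tempered distributions, with the adjoint relation valid there — this is where the Miller–Schmid extension theory from \cite{MS04-JFA} invoked in Proposition~\ref{prp:HKTD}, together with the mapping properties of $\CF_{\pi_\infty,\psi_\infty}$ on $\CS_{\pi_\infty}(\BA_\infty^\times)$, will be needed. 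A secondary point requiring care is the precise bookkeeping of the elementary (non-$L$-function) factors in $\CZ_f(s,\phi_f)$ to ensure the vanishing of $\CZ(s,\phi)$ at a zero of $L_f(s,\pi_f)$ is clean; with the test vector $\phi_f$ chosen as in the setup of Section~\ref{sec-GJK} this is straightforward, and I would simply cite that computation.
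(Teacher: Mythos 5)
Your proposal is correct and follows essentially the same route as the paper: substitute \eqref{CKpi1} and \eqref{CKd1} into \eqref{zeta11}, use the vanishing of $\CZ(s,\phi)$ at a zero of $L_f(s,\pi_f)$ (via the factorization $\CZ(s,\phi)=\CZ_\infty(s,\phi_\infty)\,L_f(s,\pi_f)\,\prod_\nu\CZ_\nu^*(s,\phi_\nu)$), transfer $\CF_{\pi_\infty,\psi_\infty}$ from $\phi_\infty$ to $K_{\pi,1-s}$ using the symmetric kernel $k_{\pi_\infty,\psi_\infty}(xy)$ and Fubini, and conclude by varying $\phi_\infty$ over $\CC_c^\infty(\BA_\infty^\times)$. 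The ``adjointness obstacle'' you flag is handled in the paper exactly as you suggest it could be: the $\pi_\infty$-Fourier transform of the distribution $K_{\pi,1-s}$ is \emph{defined} by the convolution integral $\int_{\BA_\infty^\times}k_{\pi_\infty,\psi_\infty}(xy)K_{\pi,1-s}(y)\ud^\times y$, with the interchange of integrals justified by the absolute convergence established in Proposition~\ref{zeta>1} and the estimate \eqref{estofH}, so no separate extension theory is needed beyond Fubini.
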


\begin{proof}
For $\Re(s)>\frac{n+1}{2}$, we have 
\[
\CZ(s,\phi)=\int_{\BA^{\times}}\phi(x)|x|_{\BA}^{s-\frac{1}{2}}\ud^{\times}x=\prod_{\nu\in|k|}\CZ_\nu(s,\phi_\nu).
\]
By the reformulation of the Godement-Jacquet local theory in \cite[Theorem 3.4]{JL23}, we obtain that 
\[
\int_{\BA^{\times}}\phi(x)|x|_{\BA}^{s-\frac{1}{2}}\ud^{\times}x
=
\CZ_\infty(s,\phi_\infty)\cdot L_f(s,\pi_f)\cdot\prod_{\nu\in|k|}\CZ^*_\nu(s,\phi_\nu)
\]
where at almost all finite local places $\nu$ with $\phi_\nu$ equal to the basic function $\BL_{\pi_\nu}$, we have that $\CZ^*(s,\phi_\nu)=1$, for the remaining finite local 
places $\nu$, where 
$\CZ^*_\nu(s,\phi_\nu):=L(s,\pi_\nu)^{-1}\CZ_\nu(s,\phi_\nu)$
is holomorphic in $s\in\BC$, and 
$\CZ_\infty(s,\phi_\infty):=\prod_{\nu\in|k|_\infty}\CZ_\nu(s,\phi_\nu)$ is holomorphic in $s\in\BC$ if $\phi_\infty\in\CC_c^\infty(\BA_\infty^\times)$.  
Hence we obtain that $\prod_{\nu\in|k|}\CZ^*_\nu(s,\phi_\nu)$ is a finite product of holomorphic functions. From \eqref{zeta11}, we have 
\[
Z(s,\phi)
=\int_{\BA^{>1}}\phi(x)|x|_{\BA}^{s-\frac{1}{2}}\ud^{\times}x+\int_{\BA^{>1}}\CF_{\pi,\psi}(\phi)(x)|x|_{\BA}^{\frac{1}{2}-s}\ud^{\times}x
\]
for all $s\in\BC$ by analytic continuation.
Hence we obtain that if $s\in\BC$ is such that $L_f(s,\pi_f)=0$, then we must have that 
\begin{align}\label{CK11}
    \int_{\BA^{>1}}\phi(x)|x|_{\BA}^{s-\frac{1}{2}}\ud^{\times}x=-\int_{\BA^{>1}}\CF_{\pi,\psi}(\phi)(x)|x|_{\BA}^{\frac{1}{2}-s}\ud^{\times}x.
\end{align}
Note that by Proposition \ref{zeta>1} both integrals converges absolutely for any $s\in\BC$. 
From \eqref{CKd1}, we have that 
\[
\int_{\BA^{>1}}\CF_{\pi,\psi}(\phi)(x)|x|^{\frac{1}{2}-s}\ud^{\times}x
    =
    \int_{\BA_\infty^\times}\CF_{\pi_\infty,\psi_\infty}(\phi_{\infty})(x_{\infty})K_{\pi,1-s}(x_\infty,\phi_f)\ud^\times x_\infty
\]
is absolutely convergent according to (\ref{estofH}). By \cite[Theorem 5.1]{JL22}, which is recalled in \eqref{FO-k}, there is a $\pi_\infty$-kernel function $k_{\pi_\infty,\psi_\infty}$, such that 
for any $\phi_\infty\in\CC_c^{\infty}(\BA_\infty^\times)$
\[
\CF_{\pi_\infty,\psi_\infty}(\phi_\infty)(x_\infty)=(k_{\pi_\infty,\psi_\infty}*\phi_\infty^\vee)(x_\infty)
=\int_{\BA_\infty^{\times}}k_{\pi_{\infty},\psi_{\infty}}(x_\infty y_\infty)\phi_{\infty}(y_\infty)\ud^{\times}y_\infty.  
\]
Since $\CF_{\pi_{\infty},\psi_{\infty}}(\phi_{\infty})\in\CS_{\pi_{\infty},\psi_{\infty}}(\BA^{\times}_{\infty})$,  By using the Fubini's theorem and Proposition \ref{zeta>1} again, 
we obtain that 
\begin{align*}
    \int_{\BA^{>1}}\CF_{\pi,\psi}(\phi)(x)|x|^{\frac{1}{2}-s}\ud^{\times}x
    &=\int_{\BA_\infty^\times}
    \int_{\BA_\infty^{\times}}k_{\pi_{\infty},\psi_{\infty}}(x_\infty y_\infty)\phi_{\infty}(y_\infty)\ud^{\times}y_\infty
    K_{\pi,1-s}(x_\infty,\phi_f)\ud^\times x_\infty\\
    &=\int_{\BA_\infty^\times}\phi_{\infty}(y_\infty)
    \int_{\BA_\infty^{\times}}k_{\pi_{\infty},\psi_{\infty}}(x_\infty y_\infty)K_{\pi,1-s}(x_\infty,\phi_f)\ud^{\times}x_\infty
    \ud^\times y_\infty.
\end{align*}
By definition as in \eqref{FO-k}, we write the $\pi_\infty$-Fourier transform of the dual kernel $K_{\pi,1-s}(x_\infty,\phi_f)$, viewed as a distribution on $\BA_\infty^\times$, 
to be 
\[
\CF_{\pi_\infty,\psi_\infty}(K_{\pi,1-s}(\cdot,\phi_f))(y_\infty)
=
\int_{\BA_\infty^{\times}}k_{\pi_{\infty},\psi_{\infty}}(x_\infty y_\infty)K_{\pi,1-s}(x_\infty,\phi_f)\ud^{\times}x_\infty.
\]
Hence we obtain that 
\[
\int_{\BA^{>1}}\CF_{\pi,\psi}(\phi)(x)|x|^{\frac{1}{2}-s}\ud^{\times}x
=
\int_{\BA_\infty^\times}\phi_{\infty}(y_\infty)\CF_{\pi_\infty,\psi_\infty}(K_{\pi,1-s}(\cdot,\phi_f))(y_\infty)\ud^\times y_\infty.
\]
By combining \eqref{CK11} with \eqref{CKpi1}, we obtain the following identity as distributions on $\BA_\infty^\times$
\[
\int_{\BA_\infty^\times}\phi_{\infty}(y_\infty)\CF_{\pi_\infty,\psi_\infty}(K_{\pi,1-s}(\cdot,\phi_f))(y_\infty)\ud^\times y_\infty
=-
\int_{\BA_\infty^\times}\phi_{\infty}(x_{\infty})H_{\pi,s}(x_{\infty},\phi_f)\ud^{\times} x_{\infty}
\]
for all $\phi_\infty\in\CC_c^\infty(\BA_\infty^\times)$. Therefore, as distributions on $\BA_\infty^\times$, we have that 
\[
\CF_{\pi_\infty,\psi_\infty}(K_{\pi,1-s}(\cdot,\phi_f))(x_\infty)=-H_{\pi,s}(x_{\infty},\phi_f).
\]
\end{proof}

For any $\pi=\pi_\infty\otimes\pi_f\in\CA_\cusp(\RG_n)$, we write that 
$
L_f(s,\pi_f)=\prod_{\nu<\infty}L(s,\pi_\nu)
$
when $\Re(s)$ is sufficiently positive. By \cite[Corollary 3.8]{JL23} and the theory of Mellin transforms, we obtain

\begin{prp}\label{prp:phip}
    For any $\nu\in|k|_f$, there is a function $\phi_\nu\in\CS_{\pi_\nu}(k_\nu^{\times})$ such that
\[
\int_{k_\nu^{\times}}\phi_\nu(x)|x|_\nu^{s-\frac{1}{2}}\ud ^{\times}x=L(s,\pi_\nu)
\]
holds as functions in $s\in\BC$ by meromorphic continuation.
\end{prp}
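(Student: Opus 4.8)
The goal is to produce, for each finite place $\nu$, a $\pi_\nu$-Schwartz function $\phi_\nu$ whose Mellin transform (the local $\GL_1$-zeta integral) equals the local $L$-factor $L(s,\pi_\nu)$. The plan is to reduce this to two inputs cited earlier in the excerpt: first, the reformulation of the local Godement-Jacquet theory in \cite[Theorem 3.4]{JL23}, which says precisely that the collection of local zeta integrals $\CZ(s,\phi_\nu,\chi_\nu)=\int_{k_\nu^\times}\phi_\nu(x)\chi_\nu(x)|x|_\nu^{s-\frac12}\ud^\times x$, as $\phi_\nu$ ranges over $\CS_{\pi_\nu}(k_\nu^\times)$, generates the same fractional ideal (in the $p$-adic case) as $L(s,\pi_\nu\times\chi_\nu)$, so that in particular with $\chi_\nu$ trivial the $L$-factor $L(s,\pi_\nu)$ is itself attained by some $\phi_\nu$; and second, the inclusion $\CC_c^\infty(k_\nu^\times)\subset\CS_{\pi_\nu}(k_\nu^\times)$ from \eqref{CSC}, which guarantees there are ``enough'' test functions to perform a Mellin inversion argument if needed.

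First I would treat the non-Archimedean case. Here the unramified places are immediate: when $\pi_\nu$ is unramified, the basic function $\BL_{\pi_\nu}\in\CS_{\pi_\nu}(k_\nu^\times)$ of \cite[Theorem 3.4]{JL23} has Mellin transform exactly $L(s,\pi_\nu)$ by its defining normalization, so take $\phi_\nu=\BL_{\pi_\nu}$. For the remaining finitely many ramified finite places, I would invoke \cite[Theorem 3.4]{JL23} directly: the fractional ideal of $\BC[q_\nu^{-s},q_\nu^{s}]$ generated by the zeta integrals $\CZ(s,\phi_\nu)$ over all $\phi_\nu\in\CS_{\pi_\nu}(k_\nu^\times)$ is the ideal generated by $L(s,\pi_\nu)$; since that ideal is principal and $L(s,\pi_\nu)$ is a generator, some finite $\BC[q_\nu^{-s},q_\nu^s]$-linear combination of zeta integrals of $\pi_\nu$-Schwartz functions equals $L(s,\pi_\nu)$, and multiplying a $\pi_\nu$-Schwartz function by $|x|_\nu^m$ or translating it only shifts its zeta integral by a monomial in $q_\nu^{\pm s}$, so this linear combination is itself the zeta integral of a single $\phi_\nu\in\CS_{\pi_\nu}(k_\nu^\times)$. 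This gives the identity as a genuine equality of rational functions in $q_\nu^{-s}$, hence as meromorphic functions in $s$.

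For the Archimedean places I would argue similarly but with the Mellin-transform/Paley-Wiener machinery in place of the polynomial-ring argument: by \cite[Theorem 3.4]{JL23} and its Archimedean counterpart, the zeta integrals $\CZ(s,\phi_\nu)$ for $\phi_\nu\in\CS_{\pi_\nu}(k_\nu^\times)$ are exactly the holomorphic multiples of $L(s,\pi_\nu)$ that are bounded in vertical strips, and $L(s,\pi_\nu)$ itself arises this way (take the holomorphic multiple to be $1$); here one uses \eqref{CSC} to know that the Mellin transforms of $\CC_c^\infty(k_\nu^\times)$-functions already exhaust the entire, rapidly-decreasing-in-strips functions, so dividing such a function's Mellin transform is unnecessary — $L(s,\pi_\nu)$ is directly in the image. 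Assembling the local statements at all $\nu\in|k|_f$ completes the proof, and noting that $L_f(s,\pi_f)=\prod_{\nu<\infty}L(s,\pi_\nu)$ for $\Re(s)$ large then follows by taking the (locally finite, by the unramified normalization) product.

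The main obstacle is purely a matter of being careful about \emph{principality and attainment}: \cite[Theorem 3.4]{JL23} as quoted phrases the $p$-adic statement in terms of the fractional ideal \emph{generated} by the zeta integrals, so one must check that $L(s,\pi_\nu)$ lies in that ideal (which is the content of the cited theorem, since the ideal equals $\BC[q_\nu^{-s},q_\nu^s]L(s,\pi_\nu)$ and $1\in\BC[q_\nu^{-s},q_\nu^s]$) and that a generator is realized by a \emph{single} Schwartz function rather than only by a linear combination — this is where the stability of $\CS_{\pi_\nu}(k_\nu^\times)$ under multiplication by $|x|_\nu^{\BZ}$ and under the action of $k_\nu^\times$ by translation is used, together with \eqref{CSC} to absorb any corrections. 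Once that bookkeeping is done the result is formal.
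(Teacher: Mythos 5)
Your proposal is correct and is essentially the argument the paper intends: the paper disposes of this proposition with a one-line appeal to \cite[Theorem 3.4, Corollary 3.8]{JL23} and ``the theory of Mellin transforms,'' and your unpacking --- basic function at unramified places, and at ramified places the fractional-ideal statement plus the observation that $\CS_{\pi_\nu}(k_\nu^\times)$ is a $\BC[q_\nu^{s},q_\nu^{-s}]$-module under multiplicative translation, so the generator $L(s,\pi_\nu)$ is attained by a single function --- is the standard way to make that citation precise. One small correction: multiplying $\phi_\nu$ by $|x|_\nu^{m}$ replaces $\CZ(s,\phi_\nu)$ by $\CZ(s+m,\phi_\nu)$ rather than by a monomial multiple, so it is only the translation $\phi_\nu(\varpi_\nu^{-m}\cdot)$, which contributes the factor $q_\nu^{m/2}q_\nu^{-ms}$, that absorbs the Laurent-polynomial coefficients; also the Archimedean paragraph is superfluous since the statement concerns only $\nu\in|k|_f$. (Note that the paper's later Lemma \ref{lem-testvec} gives, by a different explicit newvector construction following \cite{Hum21}, a refinement of this proposition with prescribed support and $\Fo_\nu^\times$-invariance, which is what is actually needed for the Dirichlet-series version of the kernels.)
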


For any $\phi_{\infty}\in\CS_{\pi_{\infty}}(\BA_\infty^{\times})$, take $\phi^\star=\phi_\infty\otimes\phi_f^\star$, where $\phi_f^\star:=\otimes_\nu\phi_\nu$ with $\phi_\nu$ as given in Proposition \ref{prp:phip} and $\phi_{\nu}=\BL_{\nu}$, the basic function, for almost all $\nu$. It is clear that such a function $\phi$ belongs to the $\pi$-Schwartz space $\CS_{\pi}(\BA^{\times})$. As in \eqref{zetaG1}, the zeta integral
\[
\CZ(s,\phi^\star)=\int_{\BA^{\times}}\phi^\star(x)|x|_{\BA}^{s-\frac{1}{2}}\ud^{\times}x
\]
converges absolutely when $\Re(s)>\frac{n+1}{2}$ and can be written as 
\begin{align}\label{zetaL}
    \CZ(s,\phi^\star)
    =\CZ(s,\phi_\infty)\cdot\CZ(s,\phi^\star_f)
    =\CZ(s,\phi_\infty)\cdot L_f(s,\phi_f),
\end{align}
where 
$\CZ(s,\phi^\star_f)=\prod_{\nu\in|k|_f}\CZ(s,\phi_\nu)=\prod_{\nu\in|k|_f}L(s,\pi_\nu)$ 
when $\Re(s)>\frac{n+1}{2}$. We set 
\begin{align}\label{GJKstar}
    H_{\pi,s}(x):=H_{\pi,s}(x,\phi^\star_f)\quad {\rm and}\quad     K_{\pi,s}(x):=K_{\pi,s}(x,\phi^\star_f),
\end{align}
and call $H_{\pi,s}(x)$ the {\bf Godement-Jacquet kernel} associated with the Euler product $L_f(s,\pi_f)$, and $K_{\pi,s}(x)$ its {\bf dual kernel}. 

\begin{thm}\label{thm:H=FK}
For any $\pi\in\CA_\cusp(\RG_n)$, take $\phi^\star=\phi_\infty\otimes\phi^\star_f\in\CS_\pi(\BA^\times)$ with $\phi_f^\star:=\otimes_{\nu\in|k|_f}\phi_\nu$ where $\phi_\nu$ is as given in Proposition \ref{prp:phip}. Then the Godement-Jacquet kernel $H_{\pi,s}(x)$ associated with the Euler product $L_f(s,\pi)$ 
and its dual kernel $K_{\pi,s}(x)$ enjoy the following identity:
\begin{align}\label{CK-pi*}
H_{\pi,s}(x)
=-
\CF_{\pi_\infty,\psi_\infty}(K_{\pi,1-s})(x)
=
-\int_{\BA_\infty^{\times}}k_{\pi_{\infty},\psi_{\infty}}(x y)K_{\pi,1-s}(y)\ud^{\times}y.
\end{align}
as distributions on $\BA_\infty^\times$ if and only if $s$ is a zero of $L_f(s,\pi_f)$. 
\end{thm}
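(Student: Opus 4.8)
The plan is to extract a single ``master identity'' from \eqref{zeta11} and the factorization \eqref{zetaL}, from which both directions of the equivalence follow immediately. Fix $s\in\BC$. For every $\phi_\infty\in\CC_c^\infty(\BA_\infty^\times)$ put $\phi^\star=\phi_\infty\otimes\phi_f^\star$, so that $\phi^\star\in\CS_\pi(\BA^\times)$ by Proposition \ref{prp:phip}. By \eqref{zeta11} (valid for all $s$ by analytic continuation, with both integrals absolutely convergent by Proposition \ref{zeta>1}),
\[
\CZ(s,\phi^\star)=\int_{\BA^{>1}}\phi^\star(x)|x|_\BA^{s-\frac12}\ud^\times x+\int_{\BA^{>1}}\CF_{\pi,\psi}(\phi^\star)(x)|x|_\BA^{\frac12-s}\ud^\times x .
\]
Using \eqref{CKpi1}, \eqref{CKd1} and the interchange of integrations against the $\pi_\infty$-kernel $k_{\pi_\infty,\psi_\infty}$ carried out in the proof of Proposition \ref{prp:FTGJK}, the first integral equals $\langle H_{\pi,s}(\cdot,\phi_f^\star),\phi_\infty\rangle$ and the second equals $\langle\CF_{\pi_\infty,\psi_\infty}(K_{\pi,1-s}(\cdot,\phi_f^\star)),\phi_\infty\rangle$, where $\langle\cdot,\cdot\rangle$ denotes the pairing of a distribution on $\BA_\infty^\times$ against the test function $\phi_\infty$. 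On the other hand, \eqref{zetaL} gives $\CZ(s,\phi^\star)=\CZ(s,\phi_\infty)\cdot L_f(s,\pi_f)$ for $\Re(s)$ large, hence for all $s\in\BC$ since $\CZ(\cdot,\phi^\star)$, $\CZ(\cdot,\phi_\infty)$ and $L_f(\cdot,\pi_f)$ all admit analytic continuation. Combining the two expressions for $\CZ(s,\phi^\star)$ yields the master identity
\[
\langle\, H_{\pi,s}+\CF_{\pi_\infty,\psi_\infty}(K_{\pi,1-s})\,,\,\phi_\infty\,\rangle=\CZ(s,\phi_\infty)\cdot L_f(s,\pi_f),\qquad \forall\,\phi_\infty\in\CC_c^\infty(\BA_\infty^\times),
\]
where $H_{\pi,s}=H_{\pi,s}(\cdot,\phi_f^\star)$ and $K_{\pi,1-s}=K_{\pi,1-s}(\cdot,\phi_f^\star)$ as in \eqref{GJKstar}.

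From the master identity the theorem is immediate. If $s$ is a zero of $L_f(s,\pi_f)$, the right-hand side vanishes for all $\phi_\infty\in\CC_c^\infty(\BA_\infty^\times)$, so $H_{\pi,s}+\CF_{\pi_\infty,\psi_\infty}(K_{\pi,1-s})=0$ as distributions on $\BA_\infty^\times$, which is \eqref{CK-pi*}; this recovers, for the test vector $\phi_f^\star$, the content of Proposition \ref{prp:FTGJK}. Conversely, if \eqref{CK-pi*} holds, then the left-hand side vanishes for all $\phi_\infty$, so $\CZ(s,\phi_\infty)\cdot L_f(s,\pi_f)=0$ for every $\phi_\infty\in\CC_c^\infty(\BA_\infty^\times)$. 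Since $x_\infty\mapsto|x_\infty|_\BA^{s-\frac12}$ is continuous and nowhere vanishing on $\BA_\infty^\times$, the linear functional $\phi_\infty\mapsto\CZ(s,\phi_\infty)=\int_{\BA_\infty^\times}\phi_\infty(x_\infty)|x_\infty|_\BA^{s-\frac12}\ud^\times x_\infty$ is not identically zero on $\CC_c^\infty(\BA_\infty^\times)$ (take, for instance, $\phi_\infty\ge0$ not identically zero and supported in a small neighbourhood of $1\in\BA_\infty^\times$), so we may pick $\phi_\infty$ with $\CZ(s,\phi_\infty)\ne0$ and conclude $L_f(s,\pi_f)=0$.

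The only step that is not completely formal is the identification of $\int_{\BA^{>1}}\CF_{\pi,\psi}(\phi^\star)(x)|x|_\BA^{\frac12-s}\ud^\times x$ with the distributional pairing $\langle\CF_{\pi_\infty,\psi_\infty}(K_{\pi,1-s}),\phi_\infty\rangle$: this requires interchanging the $x_\infty$- and $y_\infty$-integrations against $k_{\pi_\infty,\psi_\infty}(x_\infty y_\infty)$, which is justified exactly as in the proof of Proposition \ref{prp:FTGJK}, using the absolute-convergence bound \eqref{estofH}, Proposition \ref{zeta>1}, and the fact that $\phi_\infty\in\CC_c^\infty(\BA_\infty^\times)$. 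Everything else — the factorization \eqref{zetaL} for the special vector built from Proposition \ref{prp:phip}, the relevant analytic continuations, and the distribution property of $H_{\pi,s}$ and $K_{\pi,s}$ from Proposition \ref{prp:HKTD} — is already in hand, so no input beyond Proposition \ref{prp:FTGJK} is needed. In essence the theorem says that \eqref{CK-pi*} is equivalent to $\CZ(s,\phi^\star)=0$ for all $\phi_\infty$, and that for the test vector $\phi^\star$ this vanishing is equivalent to $L_f(s,\pi_f)=0$.
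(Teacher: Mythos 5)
Your proposal is correct and follows essentially the same route as the paper: the paper proves the forward direction by citing Proposition \ref{prp:FTGJK} and the converse by combining the factorization $\CZ(s,\phi^\star)=\CZ_\infty(s,\phi_\infty)\cdot L_f(s,\pi_f)$ with the computations in the proof of that proposition and a choice of $\phi_\infty$ with $\CZ_\infty(s,\phi_\infty)\neq 0$, which is exactly the content of your ``master identity.'' Your packaging of both directions into the single pairing identity is a tidy reorganization, but the underlying ingredients (equations \eqref{zeta11}, \eqref{CKpi1}, \eqref{CKd1}, \eqref{zetaL}, and the interchange of integrations justified by \eqref{estofH}) are identical to the paper's.
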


\begin{proof}
By Proposition \ref{prp:FTGJK}, we only need to consider that if \eqref{CK-pi*} holds, then $s\in\BC$ is such that $L_f(s,\pi_f)=0.$

By the choice of $\phi^\star=\phi_\infty\otimes\phi^\star_f$, we have from Proposition \ref{prp:phip} that 
\[
\int_{\BA^{\times}}\phi(x)|x|^{s-\frac{1}{2}}\ud^{\times}x
=
\CZ_\infty(s,\phi_\infty)\cdot L_f(s,\pi_f).
\]
From the proof of Proposition \ref{prp:FTGJK}, we deduce that if  \eqref{CK-pi*} holds, then we must have that 
$
\CZ_\infty(s,\phi_\infty)\cdot L_f(s,\pi_f)=0
$
for any $\phi_\infty\in\CC_c^\infty(\BA_\infty^\times)$. It is clear that one is able to choose a particular test function $\phi_\infty$ such that $\CZ_\infty(s,\phi_\infty)\neq 0$. 
Hence we get that $L_f(s,\pi_f)=0$. 
\end{proof}

\subsection{Clozel's theorem for $\pi$}\label{ssec-CThpi}

In \cite[Section 1]{Clo22}, Clozel defines the Tate kernel and its dual kernel associated with the Dirichlet series expression of the Dedekind zeta function $\zeta_k(s)$ 
of the ground number field $k$ and prove Theorem 1.1 of \cite{Clo22} by two methods, one is an approach from the Tate functional equation and the other is a more 
classical approach from analytic number theory. For a general $\pi\in\CA_\cusp(\RG_n)$, we define in Definition \ref{dfn:CK} and \eqref{CKd} the Godement-Jacquet kernels 
$H_{\pi,s}(x,\phi_f)$ and their dual kernels $K_{\pi,s}(x,\phi_f)$ via the $\pi_f$-Fourier transform $\CF_{\pi_f,\psi_f}$ associated with the global functional equation 
in the reformulation of the Godement-Jacquet theory. By using the testing functions for the local zeta integrals and the local $L$-factors $(L(s,\pi_\nu)$ at all finite local places 
$\nu\in|k|_f$ (Proposition \ref{prp:phip}), we obtain the $\pi$-version of \cite[Theorem 1.1]{Clo22} when the kernel functions are related to the $L$-function $L(s,\pi)$ with the 
Euler product expression (Theorem \ref{thm:H=FK}). In order to obtain the $\pi$-version of \cite[Theorem 1.1]{Clo22} when the kernel functions are related to the $L$-function $L(s,\pi)$ with its Dirichlet series expression, we are going to refine the structure of the testing functions in Proposition \ref{prp:phip} by using the construction in \cite{Hum21}. 

\begin{lem}\label{lem-testvec}
For $\nu\in|k|_f$, assume that $(\pi_{\nu},V_{\pi_{\nu}})\in\Pi_{k_\nu}(\RG_n)$ is generic. Then there exists a function $\phi_{\nu}\in\CS_{\pi_{\nu}}(k_{\nu}^{\times})$ such that 
    \[
    \CZ_\nu(s,\phi_\nu):=\int_{k_{\nu}^{\times}}\phi_{\nu}(x)|x|_{\nu}^{s-\frac{1}{2}}\ud^{\times}x=L(s,\pi_{\nu}),
    \]
     the support of $\phi_{\nu}$ is contained in $\Fo_{\nu}\setminus\{0\}$, and $\phi_{\nu}$ is invariant under the action of $\Fo_{\nu}^{\times}$.
\end{lem}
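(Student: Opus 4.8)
The plan is to build $\phi_\nu$ from the essential (new) Whittaker vector of $\pi_\nu$, and then to extract the three properties from new-vector theory together with the identification $\CS_{\pi_\nu}(k_\nu^\times)=\CW_{\pi_\nu}(k_\nu^\times)$ of Proposition~\ref{S=W}. Since the $\pi_\nu$-Whittaker-Schwartz space does not depend on the additive character (Corollary~\ref{WSpi}), I would first fix $\psi_\nu$ of conductor $\Fo_\nu$. For such $\psi_\nu$, the new-vector theory of Jacquet--Piatetski-Shapiro--Shalika, in the explicit form recorded in \cite{Hum21}, produces the essential vector $W^\circ\in\CW(\pi_\nu,\psi_\nu)$: it is invariant under right translation by $K_1(\Fp_\nu^{c})=\{k\in\GL_n(\Fo_\nu)\ \colon\ \text{the last row of }k\text{ is }\equiv(0,\dots,0,1)\bmod\Fp_\nu^{c}\}$, where $c=c(\pi_\nu)$ is the conductor exponent of $\pi_\nu$, and, normalized by $W^\circ(\RI_n)=1$, it satisfies
\[
\Psi(s,W^\circ,\mathbf{1})=\int_{k_\nu^\times}W^\circ\!\left(\diag(x,\RI_{n-1})\right)|x|_\nu^{s-\frac{n-1}{2}}\,\ud^\times x=L(s,\pi_\nu).
\]
I would then set $\phi_\nu(x):=W^\circ\!\left(\diag(x,\RI_{n-1})\right)|x|_\nu^{1-\frac{n}{2}}$. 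By \eqref{WSS} and Proposition~\ref{S=W} this lies in $\CW_{\pi_\nu}(k_\nu^\times)=\CS_{\pi_\nu}(k_\nu^\times)$, and the displayed identity rewrites exactly as $\CZ_\nu(s,\phi_\nu)=L(s,\pi_\nu)$, which is the first assertion.

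For the invariance under $\Fo_\nu^\times$: if $u\in\Fo_\nu^\times$ then $\diag(u,\RI_{n-1})$ has last row $(0,\dots,0,1)$ and so belongs to $K_1(\Fp_\nu^{c})$; writing $\diag(ux,\RI_{n-1})=\diag(x,\RI_{n-1})\,\diag(u,\RI_{n-1})$ and using the right-$K_1(\Fp_\nu^{c})$-invariance of $W^\circ$ gives $W^\circ(\diag(ux,\RI_{n-1}))=W^\circ(\diag(x,\RI_{n-1}))$, whence $\phi_\nu(ux)=\phi_\nu(x)$ because $|ux|_\nu=|x|_\nu$. The support statement would then follow by matching power series: by \eqref{CSC} and \cite[Proposition~3.7]{JL23} every element of $\CS_{\pi_\nu}(k_\nu^\times)$ vanishes for $|x|_\nu$ large, so, fixing a uniformizer $\varpi_\nu$ and setting $u=q_\nu^{\frac{1}{2}-s}$, the $\Fo_\nu^\times$-invariance of $\phi_\nu$ yields
\[
\CZ_\nu(s,\phi_\nu)=\vol(\Fo_\nu^\times)\sum_{m\in\BZ}\phi_\nu(\varpi_\nu^m)\,u^m ,
\]
a Laurent series in $u$ with only finitely many negative-exponent terms, convergent for $\Re(s)$ large. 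On the other hand $L(s,\pi_\nu)=\prod_i(1-\alpha_i q_\nu^{-s})^{-1}$ is a power series in $q_\nu^{-s}=q_\nu^{-1/2}u$, hence in $u$, with no negative powers; comparing the two expansions forces $\phi_\nu(\varpi_\nu^m)=0$ for all $m<0$, i.e. $\mathrm{supp}\,\phi_\nu\subseteq\Fo_\nu\setminus\{0\}$ (and $\phi_\nu\not\equiv0$, since $L(s,\pi_\nu)\not\equiv0$).

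I expect the only real difficulty to be bookkeeping: extracting from \cite{Hum21} the precise statements that, for $\psi_\nu$ of conductor $\Fo_\nu$, the essential vector has Rankin--Selberg integral against the trivial character \emph{exactly} equal to $L(s,\pi_\nu)$ --- not merely a generator of the fractional ideal $\BC[q_\nu^{\pm s}]L(s,\pi_\nu)$ attached to arbitrary Whittaker vectors --- and that it is fixed by the mirabolic congruence subgroup $K_1(\Fp_\nu^{c})$; the normalization of $\psi_\nu$ genuinely matters here, which is why the freedom of Corollary~\ref{WSpi} is used at the outset. The remaining steps --- membership in $\CS_{\pi_\nu}(k_\nu^\times)$, the torus invariance, and the power-series comparison --- are routine. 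A variant that avoids \cite{Hum21} altogether is to start from any $\phi_\nu^{0}\in\CS_{\pi_\nu}(k_\nu^\times)$ with $\CZ_\nu(s,\phi_\nu^{0})=L(s,\pi_\nu)$ as given by Proposition~\ref{prp:phip} and replace it by its average $\phi_\nu(x):=\vol(\Fo_\nu^\times)^{-1}\int_{\Fo_\nu^\times}\phi_\nu^{0}(ux)\,\ud^\times u$ (a finite sum, by local constancy); one then checks from the defining fibration \eqref{fibration} that $\CS_{\pi_\nu}(k_\nu^\times)$ is stable under the dilations $\phi\mapsto\phi(u\,\cdot\,)$ because $\CS(\Mat_n(k_\nu))$ and $\CC(\pi_\nu)$ are stable under left translation, that the Mellin transform against the trivial character is unchanged since $|u|_\nu=1$, and that the support assertion follows from the same power-series comparison.
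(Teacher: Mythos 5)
Your proposal is essentially correct but runs on the opposite side of the local theory from the paper. The paper works entirely with the Godement--Jacquet integral: it quotes \cite[Theorem 1.2]{Hum21} for an explicit matrix coefficient $\varphi_{\pi_\nu}$ (newvector against dual newvector) and an explicit Schwartz--Bruhat function $f_\nu$ supported in $\RM_n(\Fo_\nu)$ with $\int f_\nu(g)\varphi_{\pi_\nu}(g)|\det g|^{s+\frac{n-1}{2}}\ud g=L(s,\pi_\nu)$, and then reads off both the support in $\Fo_\nu\setminus\{0\}$ and the $\Fo_\nu^\times$-invariance directly from the explicit form of $f_\nu$ via the fiber integration \eqref{fibration} (with a separate elementary treatment of $n=1$). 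You instead work in the Whittaker model, and your two auxiliary devices --- deducing $\Fo_\nu^\times$-invariance from right $K_1(\Fp_\nu^c)$-invariance of the newvector, and deducing the support statement by comparing the Laurent expansion of $\CZ_\nu(s,\phi_\nu)$ in $q_\nu^{-s}$ with the Taylor expansion of $L(s,\pi_\nu)$ --- are both sound and arguably slicker than the paper's direct verification; the power-series comparison in particular shows that the support condition is automatic for \emph{any} $\Fo_\nu^\times$-invariant $\phi_\nu\in\CS_{\pi_\nu}(k_\nu^\times)$ whose zeta integral equals $L(s,\pi_\nu)$.

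The one point I would not dismiss as bookkeeping is the input $\Psi(s,W^\circ,\mathbf{1})=L(s,\pi_\nu)$. What \cite{Hum21} proves is the Godement--Jacquet statement used by the paper, and what the original essential-vector theorem of \cite{JPSS81} addresses is the $\GL_n\times\GL_{n-1}$ integral against an unramified spherical vector; the $\GL_n\times\GL_1$ integral of $W^\circ$ restricted to $\diag(x,\RI_{n-1})$, \emph{without} the auxiliary integration over the $(n-2)$ unipotent coordinates that appears on the dual side of the functional equation (compare Proposition \ref{F=W}), is exactly the configuration where the later corrections and explicit computations of Jacquet and Matringe are needed, and for $n\geq 3$ it does not follow from the bare statement of Th\'eor\`eme (4). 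So your main route needs a citation to Matringe's computation of the essential Whittaker function on $\diag(x,\RI_{n-1})$ rather than to \cite{Hum21}. Fortunately your fallback is complete and independent of all of this: starting from Proposition \ref{prp:phip}, averaging over $\Fo_\nu^\times$ is a finite sum because $\xi$ and $\varphi_\pi$ in \eqref{fibration} are left-invariant under a small congruence subgroup, the zeta integral is unchanged since $|u|_\nu=1$, and the support then follows from your power-series comparison. That variant proves the lemma without genericity and without any newvector theory, so I would lead with it.
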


\begin{proof}
   If $n=1$, then $\pi_{\nu}$ is a quasi-character of $k_{\nu}^{\times}$. If $\pi_{\nu}$ is unramified, it is well-known that one takes $\phi_{\nu}(x)=|x|_{\nu}^{\frac{1}{2}}1_{\Fo_{\nu}}(x)$ with $1_{\Fo_{\nu}}$ the characteristic function of $\Fo_{\nu}$, and has the following identity
   \[
   \CZ_{\nu}(s,\phi_{\nu})=\int_{k_{\nu}^{\times}}\phi_{\nu}(x)|x|_{\nu}^{s-\frac{1}{2}}\ud^{\times} x=\frac{1}{1-\pi_{\nu}(\varpi_{\nu})q^{-s}}=L(s,\pi_{\nu}),
   \]
which holds for all $s\in\BC$ by meromorphic continuation , where $\varpi_{\nu}$ is the uniformizer of $k_{\nu}$. It is clear that in this case $\phi_{\nu}$ is supported on $k_{\nu}^{\times}\cap\Fo_{\nu}=\Fo_{\nu}\setminus\{0\}$ and is invariant under $\Fo_{\nu}^{\times}$.
If $\pi_{\nu}$ is ramified, then we know that $L(s,\pi_{\nu})=1$. We can take a function $\phi_\nu$ such that $\phi_\nu(x)=1$ if $x\in\Fo_\nu$ and $\phi_\nu(x)=0$ otherwise. 
Then according to our normalization of the Haar measure, we obtain by an easy computation that 
\[
   \CZ_{\nu}(s,\phi_{\nu})=\int_{k_{\nu}^{\times}}\phi_{\nu}(x)|x|_{\nu}^{s-\frac{1}{2}}\ud^{\times} x=1=L(s,\pi_{\nu}).
   \]
It is clear that in this case, $\phi_{\nu}$ is supported in $\Fo_{\nu}\setminus\{0\}$ and is invariant under the action of $\Fo_{\nu}^{\times}$.

In the following, we assume that $n\geq 2$. 
    For each non-negative integer $m$, we define the congruence subgroup $K_0(\Fp_{\nu}^m)$ as in \cite{Hum21} to be
    \[
    K_0(\Fp_{\nu}^m):=\{x=(x_{ij})\in \RG_n(\Fo_{\nu})\ \colon\  x_{n,1},\cdots,x_{n,n-1}\in\Fp_{\nu}^m  \}.
    \]
    According to the classification of irreducible generic representations and \cite[Theorem 5]{JPSS81}, there is a minimal positive integer $c(\pi_{\nu})$ for which the vector space
    \[
    V_{\pi_{\nu}}^{K_0(\Fp_{\nu}^{c(\pi_{\nu})})}:=\{v\in V_{\pi_{\nu}}\ \colon\  \pi_{\nu}(x)v=\omega_{\pi_{\nu}}(x_{n,n})v,\  \forall x\in K_0(\Fp_{\nu}^{c(\pi_{\nu})})  \}
    \]
    is non-trivial and in fact of dimension one. Choose $v^{\circ}\in V_{\pi_{\nu}}^{K_0(\Fp_{\nu}^{c(\pi_{\nu})})}$ and $\widetilde{v^{\circ}}\in V_{\widetilde{\pi_{\nu}}}^{K_0(\Fp_{\nu}^{c(\wt{\pi_{\nu}})})}$, respectively,  such that the matrix coefficient 
    $
    \varphi_{\pi_{\nu}}(g):=\langle \pi_{\nu}(g)v^{\circ},\widetilde{v^{\circ}} \rangle
    $
    has value $1$ at $\RI_n$. Since $n\geq 2$, we may take a Schwartz-Bruhat function $f_\nu\in \CS(\RM_{n}(k_{\nu}))$ of the form:
    \begin{align*}
        f_\nu(x)=\begin{cases}
          \frac{\omega_{\pi_{\nu}}^{-1}(x_{n,n})}{\mathrm{vol}(K_0(\Fp_{\nu}^{c(\pi_{\nu})}))}& \mathrm{if}\;
           x\in\RM_{n}(\Fo_{\nu}) \;\mathrm{with}\;x_{n,1},\cdots,x_{n,n-1}\in\Fp_{\nu}^{c(\pi_{\nu})}\;\mathrm{and}\;x_{n,n}\in\Fo_{\nu}^{\times},   \\
         0 & \mathrm{otherwise}.
                \end{cases}
    \end{align*}
Then by \cite[Theorem 1.2]{Hum21}, when $\Re s$ is sufficiently positive, one has that 
\[
\int_{\RG_n(k_{\nu})}f_\nu(g)\varphi_{\pi_{\nu}}(g)|\det g|^{s+\frac{n-1}{2}}\ud g=L(s,\pi_{\nu}).
\]
According to \cite[Propositon 3.2, Theorem 3.4]{JL23}, the fiber integration as defined in \eqref{fibration} yields that 
\begin{align}\label{fi1}
    \phi_{\pi_{\nu}}(x)=|x|_{\nu}^{\frac{n}{2}}\int_{\RG_n(F)_x}f_\nu(g)\varphi_{\pi_{\nu}}(g)\ud_xg, 
\end{align}
where $\RG_n(k_{\nu})_x$ is the fiber at $x$ of the determinant map as in \eqref{fiber}, 
is well defined and when $\Re(s)$ is sufficiently positive, we have that
\begin{align}\label{testf}
    \int_{k_{\nu}^{\times}}\phi_{\pi_{\nu}}(x)|x|_{\nu}^{s-\frac{1}{2}}\ud^{\times}x=L(s,\pi_{\nu}).
\end{align}

It remains to verify the invariance property for this function $\phi_\nu$. If $x\notin\Fo_{\nu}$, we must have that 
$
\RG_n(k_{\nu})_x\cap\RM_{n}(\Fo_{\nu})=\emptyset.
$
Thus for any $g\in\RG_n(k_{\nu})_x$ with $x\notin\Fo_{\nu}$, we must have that $f_\nu(g)=0$. By the fiber integration in \eqref{fi1}, we have that $\phi_{\pi_{\nu}}(x)=0$ when 
$x\notin\Fo_{\nu}$. Moreover, for any $u\in\Fo_{\nu}^{\times}$, we have
\[
\phi_{\pi_{\nu}}(xu)
=|xu|_{\nu}^{\frac{n}{2}}\int_{\RG_n(k_\nu)_{xu}}f_\nu(g)\varphi_{\pi_{\nu}}(g)\ud_xg
=|x|_{\nu}^{\frac{n}{2}}\int_{\RG_n(k_\nu)_x}f_\nu(hu^*)\varphi_{\pi_{\nu}}(hu^*)\ud_xh,
\]
where 
$
u^*:=\mathrm{diag}(u,1,1,\cdots,1).
$
Since $u^*\in K_0(\Fp_{\nu}^{c(\pi_{\nu})})$, one can see at once that $f_\nu(hu^*)=f_\nu(h)$ and $\varphi_{\pi_{\nu}}(hu^*)=\varphi_{\pi_{\nu}}(h)$. Therefore we obtain that  $\phi_{\pi_{\nu}}(xu)=\phi_{\pi_{\nu}}(x)$ for any $u\in\Fo_{\nu}^{\times}$.
\end{proof}

Let $\phi_f^{\circ}=\otimes_{\nu}\phi_{\nu}$ be the particularly chosen function such that for ramified places we take $\phi_{\nu}$ as given in Lemma \ref{lem-testvec} since each local component $\pi_{\nu}$ of $\pi$ is irreducible and generic when $n\geq 2$. Denote by 
\begin{align}\label{GJKcirc}
    \CH_{\pi,s}(x):=H_{\pi,s}(x,\phi_f^\circ)
\end{align}
the Godement-Jacquet kernel as in \eqref{GJKstar}, with $\phi_f^\star$ replaced by the particularly chosen $\phi_f^\circ$.

\begin{prp}\label{prp:CKpi}
    For any $\pi\in\CA_\cusp(\RG_n)$, the Godement-Jacquet kernel $\CH_{\pi,s}(x)$ as defined in \eqref{GJKcirc} enjoys the following expression: 
    \[
     H_{\pi,s}(x)=|x|^{s-\frac{1}{2}}\sum_{n\leq |x|}a_n n^{-s}, 
    \]
    as a function in $x\in\BA_\infty^\times$ for all $s\in\BC$.
\end{prp}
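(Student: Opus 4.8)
The plan is to unwind Definition~\ref{dfn:CK} for the particular choice $\phi_f^\circ=\otimes_{\nu\in|k|_f}\phi_\nu$ and to convert the adelic integral over $\BA_f^\times$ into a Dirichlet series indexed by the nonzero integral ideals of $\CO$. First I would record the relevant structural features of $\phi_f^\circ$: each local factor $\phi_\nu$ --- the function of Lemma~\ref{lem-testvec} at the finitely many ramified places, and the basic function $\BL_{\pi_\nu}$ elsewhere, which by \cite[Lemma~5.3]{JL23} and Lemma~\ref{basvswt} is supported in $\Fo_\nu\setminus\{0\}$ and depends only on $|x|_\nu$ --- is supported in $\Fo_\nu\setminus\{0\}$, invariant under $\Fo_\nu^\times$, and satisfies $\CZ_\nu(s,\phi_\nu)=L(s,\pi_\nu)$. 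Consequently $\phi_f^\circ$ is supported on the integral ideles, is invariant under $\prod_{\nu<\infty}\Fo_\nu^\times$, and therefore factors through the map $\BA_f^\times\to\{\text{nonzero integral ideals of }\CO\}$, $x_f\mapsto\Fa(x_f)$; writing $\phi_f^\circ(x_f)=b_{\Fa(x_f)}$ we also have $|x_f|_\BA=\FN(\Fa(x_f))^{-1}$.

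Next I would compute the unrestricted Mellin transform of $\phi_f^\circ$ in two ways. On the one hand, by Lemma~\ref{lem-testvec}, $\int_{\BA_f^\times}\phi_f^\circ(x_f)|x_f|_\BA^{s-\frac12}\ud^\times x_f=\prod_{\nu<\infty}\CZ_\nu(s,\phi_\nu)=\prod_{\nu<\infty}L(s,\pi_\nu)=L_f(s,\pi_f)$ for $\Re(s)$ sufficiently positive. On the other hand, partitioning $\BA_f^\times$ into the cosets of $\prod_{\nu<\infty}\Fo_\nu^\times$ --- one for each integral ideal $\Fa$, all of a common volume $V$, and on each of which both $\phi_f^\circ$ and $|x_f|_\BA$ are constant --- gives $V\sum_{\Fa}b_\Fa\FN(\Fa)^{\frac12-s}$. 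Writing $L_f(s,\pi_f)=\sum_{n\geq1}a_nn^{-s}$ for its Dirichlet series expansion and comparing coefficients of $n^{-s}$ (after grouping the ideal sum by norm) yields $V\sum_{\FN(\Fa)=n}b_\Fa=a_n n^{-\frac12}$ for every $n\geq1$; in particular the otherwise-unspecified constant $V$ cancels in what follows.

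Finally I would restrict the $x_f$-integral defining $H_{\pi,s}(x_\infty,\phi_f^\circ)$ to the domain $|x_f|_\BA>|x_\infty|_\BA^{-1}$. Since $|\cdot|_\BA$ is constant on the cosets of $\prod_{\nu<\infty}\Fo_\nu^\times$, this domain is exactly the finite union of cosets with $\FN(\Fa)<|x_\infty|_\BA$, and the integral converges absolutely there by Proposition~\ref{prp:HKTD}. Hence the inner integral equals $V\sum_{\FN(\Fa)<|x_\infty|_\BA}b_\Fa\FN(\Fa)^{\frac12-s}=\sum_{n<|x_\infty|_\BA}\bigl(V\sum_{\FN(\Fa)=n}b_\Fa\bigr)n^{\frac12-s}=\sum_{n<|x_\infty|_\BA}a_nn^{-s}$, and multiplying by $|x_\infty|_\BA^{s-\frac12}$ gives the asserted formula. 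The only delicate point --- and it is the main thing to watch --- is the boundary term $\FN(\Fa)=|x_\infty|_\BA$: it contributes only when $|x_\infty|_\BA$ is a rational integer realized as an ideal norm, a measure-zero locus in $\BA_\infty^\times$, so it does not affect the resulting function on $\BA_\infty^\times$, and reconciling the strict inequality in the integration domain with the convention in Definition~\ref{dfn:CK} is precisely what allows one to write $\sum_{n\leq|x|}$ rather than $\sum_{n<|x|}$. Everything else is a direct ideal-theoretic rewriting of the definition.
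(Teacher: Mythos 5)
Your proof is correct and follows essentially the same route as the paper's: decompose $\BA_f^\times$ into $\prod_{\nu<\infty}\Fo_\nu^\times$-cosets indexed by integral ideals, use the support and $\Fo_\nu^\times$-invariance of each factor of $\phi_f^\circ$ to rewrite both the full and the truncated Mellin transforms as ideal sums, and identify the resulting coefficients with the $a_n$ by uniqueness of Dirichlet series coefficients. Your observation about the strict versus non-strict inequality at $\FN(\Fa)=|x_\infty|_\BA$ is a detail the paper's proof passes over silently (its displayed integral uses $\geq|x_\infty|_\BA^{-1}$ while Definition \ref{dfn:CK} uses $>$), and the measure-zero remark is the right way to reconcile the two.
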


\begin{proof}
Write
\begin{align}\label{adeledec}
    \BA^{\times}_f=\bigsqcup_{\alpha=(\alpha_{\nu})}\left( \prod_{\nu}\varpi_{\nu}^{\alpha_{\nu}}\Fo_{\nu}^{\times}    \right),
\end{align}
where $\varpi_{\nu}$ is the local uniformizer in $k_{\nu}$ and $\alpha$ runs over the algebraic direct sum $\oplus_{\nu\in|k|_f}\BZ$. Consider the integral
\begin{align}\label{H-int}
\int_{\BA_f^{\times}}^{ \geq| x_{\infty}|_{\BA}^{-1}}\phi^{\circ}_f(x_f)|x_f|_{\BA}^{s-\frac{1}{2}}\ud^{\times}x_f,
\end{align}
where $\phi_f^{\circ}=\otimes_{\nu}\phi_{\nu}$ is as given in \eqref{GJKcirc}. 
We know $\phi_{\nu}$ is supported on the the ring $\Fo_\nu$ of $\nu$-integers according to Lemma~\ref{lem-testvec} and \cite[Lemma 5.3]{JL23}. It follows that we may assume $\alpha_{\nu}\geq 0$ for all $\nu<\infty$. 
If $x_f$ belongs to the $\alpha=(\alpha_{\nu})$-component of (\ref{adeledec}), then $|x_f|=\prod_{\nu}q_{\nu}^{-\alpha_{\nu}}$, where $q_{\nu}$ is the cardinality of 
the residue field of $k_{\nu}$. The range $|x_f|_{\BA}\geq |x_{\infty}|_{\BA}^{-1}$ of the integral in \eqref{H-int} is equivalent to the condition that 
$\prod_{\nu}q_{\nu}^{\alpha_{\nu}}\leq |x_{\infty}|_{\BA}$. Since the integrand in \eqref{H-int} is invariant under $\prod_{\nu}\Fo_{\nu}^{\times}$, we obtain that 
\begin{align}\label{valofphif}
\phi^{\circ}_f(x_f)|x_f|_{\BA}^{s-\frac{1}{2}}
=
\phi^{\circ}_f((\varpi_{\nu}^{\alpha_{\nu}}))(\prod_{\nu}q_{\nu}^{\alpha_{\nu}})^{\frac{1}{2}}(\prod_{\nu}q_{\nu}^{-\alpha_{\nu}})^s
\end{align}
with $(\varpi_{\nu}^{\alpha_{\nu}})\in\BA_f^\times$. We may write any fractional ideal $\Fl$ in $k$, in a unique way,  as $\Fl=\prod_{\nu}\Fp_{\nu}^{e_{\nu}}$ with 
$(e_\nu)\in \oplus_{\nu\in|k|_f}\BZ$, and regard the function $\phi_f^\circ$ as 
\[
\phi_f^{\circ}\ \colon\ \Fl=\prod_{\nu}\Fp_{\nu}^{e_{\nu}} \mapsto \phi_f^{\circ}((\varpi_{\nu}^{e_{\nu}}))=\prod_{\nu\in|k|_f}\phi_{\nu}(\varpi_{\nu}^{e_{\nu}})
\]
Then the function $\phi_f^\circ$ is supported on the set of integral ideals and (\ref{valofphif}) can be written as 
\[
\phi^{\circ}_f(x_f)|x_f|_{\BA}^{s-\frac{1}{2}}
=
\phi_f^\circ(\Fl)\cdot \FN(\Fl)^{\frac{1}{2}}\cdot \FN(\Fl)^{-s},
\]
for any fractional ideal $\Fl=\prod_{\nu\in|k|_f}\Fp_{\nu}^{\alpha_{\nu}}$. 
According to the normalization of the Haar measure, the integral \eqref{H-int} is equal to 
\[
\sum_{\FN(\Fl) \leq |x_{\infty}|_{\BA}}\phi_f^\circ(\Fl)\cdot \FN(\Fl)^{\frac{1}{2}}\cdot \FN(\Fl)^{-s}
=
\sum_{n\leq |x_{\infty}|_{\BA}}\left(\sum_{\FN(\Fl)=n}\phi^{\circ}_f(\Fl)\right)n^{\frac{1}{2}}n^{-s}.
\]
where the summation runs over all the integral ideals $\Fl$ of $k$.

On the other hand, for the particularly given Schwartz function $\phi^{\circ}_f\in\CS_{\pi_f}(\BA_f^\times)$, we have that 
\begin{align*}
L_f(s,\pi_f)
&=\sum_{n=1}^{\infty}a_nn^{-s}=\int_{\BA^{\times}_f}\phi^{\circ}_f(x_f)|x_f|_{\BA}^{s-\frac{1}{2}}\ud^{\times}x_f
=\lim_{|x_{\infty}|\rightarrow \infty}\int_{\BA_f^{\times}}^{ \geq| x_{\infty}|_{\BA}^{-1}}\phi^{\circ}_f(x_f)|x_f|_{\BA}^{s-\frac{1}{2}}\ud^{\times}x_f\\
&=\lim_{|x_{\infty}|_{\BA}\rightarrow \infty}\sum_{\FN(\Fl) \leq |x_{\infty}|_{\BA}}\phi_f^\circ(\Fl)\cdot \FN(\Fl)^{\frac{1}{2}}\cdot \FN(\Fl)^{-s}
=\sum_{n=1}^{\infty}\left(\sum_{\FN(\Fl)=n}\phi^{\circ}_f(\Fl)\right)n^{\frac{1}{2}}n^{-s}
\end{align*}
for $\Re(s)$ is sufficiently positive. 
By using the uniqueness of the coefficients of the Dirichlet series (see \cite[Theorem 1.6]{MV07}), we obtain that 
$a_n=
\sum_{\FN(\Fl)=n}\left(\phi^{\circ}_f(\Fl)\right)n^{\frac{1}{2}}$, 
and hence
\begin{align*}
\int_{\BA_f^{\times}}^{ \geq| x_{\infty}|_{\BA}^{-1}}\phi^{\circ}_f(x_f)|x_f|_{\BA}^{s-\frac{1}{2}}\ud^{\times}x_f
=\sum_{n\leq |x_{\infty}|_{\BA}}a_n n^{-s}.
\end{align*}
\end{proof}

In order to define the dual kernel, we consider the local functional equation in the reformulation of the local Godement-Jacquet theory in \cite[Theorem 3.10]{JL23}:
\begin{align}
    \CZ(1-s,\CF_{\pi_{\nu},\psi_{\nu}}(\phi_{\nu}))=\gamma(s,\pi_{\nu},\psi_{\nu})\cdot\CZ(s,\phi_{\nu}). 
\end{align}
By Proposition \ref{prp:phip} and \cite[Theorem 3.4]{JL23}, we have that 
\[
\gamma(s,\pi_{\nu},\psi_{\nu})\cdot L(s,\pi_{\nu})=\epsilon(s,\pi_{\nu},\psi_{\nu})\cdot L(1-s,\wt{\pi}_{\nu}).
\]
Hence for $\Re(s)$ sufficiently negative, we obtain that 
\begin{align}\label{zetaCF}
    \CZ_f(1-s,\CF_{\pi_f,\psi_f}(\phi_f))
    &=
    \int_{\BA^{\times}_f}\CF_{\pi_f,\psi_f}(\phi_f)(x_f)|x_f|_{\BA}^{\frac{1}{2}-s}\ud^{\times} x_f
    =\left(\prod_{\nu<\infty}\epsilon(s,\pi_{\nu},\psi_{\nu})\right)\cdot L_f(1-s,\wt{\pi}_f).
\end{align}
If we write 
\begin{align}\label{dual series}
\left(\prod_{\nu<\infty}\epsilon(1-s,\pi_{\nu},\psi_{\nu})\right)L(s,\widetilde{\pi})=\sum_{n=1}^{\infty}a_n^*n^{-s},
\end{align}
with $a_n^*\in\BC$, then following the argument as in the proof of Proposition \ref{prp:CKpi}, we can obtain a Dirichlet series expression for  
the dual kernel of the Godement-Jacquet kernel $H_{\pi,s}(x,\phi^{\circ}_f)$. 

\begin{prp}\label{prp:CKDpi}
    With $\phi_f^{\circ}$ as in Proposition \ref{prp:CKpi}, the dual kernel of the Godement-Jacquet kernel $\CH_{\pi,s}(x)$ as in \eqref{GJKcirc} is given by 
    \[
    \CK_{\pi,s}(x):=K_{\pi,s}(x,\phi_f^{\circ})=|x|^{s-\frac{1}{2}}\sum_{n\leq |x|}a_n^*n^{-s},
    \]
    where $\{a_n^*\}$ is defined via \eqref{dual series}.
\end{prp}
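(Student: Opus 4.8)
The plan is to mirror the computation already carried out in the proof of Proposition \ref{prp:CKpi}, but now applied to the dual side, i.e. to the function $\CF_{\pi_f,\psi_f}(\phi_f^\circ)$ in place of $\phi_f^\circ$. By Definition \ref{dfn:CK} and \eqref{CKd} we have
\[
\CK_{\pi,s}(x)=K_{\pi,s}(x,\phi_f^\circ)=|x|_\BA^{s-\frac12}\int_{\BA_f^\times}^{>|x|_\BA^{-1}}\CF_{\pi_f,\psi_f}(\phi_f^\circ)(x_f)|x_f|_\BA^{s-\frac12}\ud^\times x_f,
\]
so everything reduces to understanding the Schwartz function $\CF_{\pi_f,\psi_f}(\phi_f^\circ)=\otimes_\nu\CF_{\pi_\nu,\psi_\nu}(\phi_\nu)$ and evaluating the truncated Mellin integral against it exactly as before.

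First I would record that each local factor $\CF_{\pi_\nu,\psi_\nu}(\phi_\nu)$ is again a $\pi_\nu$-Schwartz function supported on $\Fo_\nu\setminus\{0\}$ and invariant under $\Fo_\nu^\times$: the support statement follows from \cite[Lemma 5.3]{JL23} applied to $\wt\pi_\nu$ together with the fact that, for almost all $\nu$, $\CF_{\pi_\nu,\psi_\nu}(\BL_{\pi_\nu})=\BL_{\wt\pi_\nu}$ by \cite[Theorem 3.10]{JL23}; the $\Fo_\nu^\times$-invariance persists because the $\pi_\nu$-Fourier transform is built from the standard Fourier transform on $\RM_n(k_\nu)$, which commutes with the right action of the maximal compact. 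Consequently $\CF_{\pi_f,\psi_f}(\phi_f^\circ)$, viewed as a function on fractional ideals $\Fl=\prod_\nu\Fp_\nu^{e_\nu}$ via $\Fl\mapsto\prod_\nu\CF_{\pi_\nu,\psi_\nu}(\phi_\nu)(\varpi_\nu^{e_\nu})$, is supported on integral ideals, and the same bookkeeping as in \eqref{valofphif} gives
\[
\CF_{\pi_f,\psi_f}(\phi_f^\circ)(x_f)|x_f|_\BA^{s-\frac12}=\CF_{\pi_f,\psi_f}(\phi_f^\circ)(\Fl)\cdot\FN(\Fl)^{\frac12}\cdot\FN(\Fl)^{-s}
\]
when $x_f$ lies in the component indexed by $\Fl$.

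Next I would identify the coefficients. By the local functional equation \cite[Theorem 3.10]{JL23} and the identity $\gamma(s,\pi_\nu,\psi_\nu)L(s,\pi_\nu)=\epsilon(s,\pi_\nu,\psi_\nu)L(1-s,\wt\pi_\nu)$, the defining property $\CZ_\nu(s,\phi_\nu)=L(s,\pi_\nu)$ from Lemma \ref{lem-testvec} forces
\[
\CZ_\nu(1-s,\CF_{\pi_\nu,\psi_\nu}(\phi_\nu))=\epsilon(s,\pi_\nu,\psi_\nu)\cdot L(1-s,\wt\pi_\nu),
\]
so that $\CZ_f(1-s,\CF_{\pi_f,\psi_f}(\phi_f^\circ))=\bigl(\prod_{\nu<\infty}\epsilon(s,\pi_\nu,\psi_\nu)\bigr)L_f(1-s,\wt\pi_f)$ as in \eqref{zetaCF}. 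Writing this Dirichlet series as $\sum_n a_n^* n^{-s}$ via \eqref{dual series} and invoking uniqueness of Dirichlet coefficients (\cite[Theorem 1.6]{MV07}) exactly as in Proposition \ref{prp:CKpi}, one gets $a_n^*=\FN(\Fl)^{\frac12}\sum_{\FN(\Fl)=n}\CF_{\pi_f,\psi_f}(\phi_f^\circ)(\Fl)$, hence
\[
\int_{\BA_f^\times}^{>|x_\infty|_\BA^{-1}}\CF_{\pi_f,\psi_f}(\phi_f^\circ)(x_f)|x_f|_\BA^{s-\frac12}\ud^\times x_f=\sum_{n\le|x_\infty|_\BA}a_n^* n^{-s},
\]
and multiplying by $|x_\infty|_\BA^{s-\frac12}$ yields the asserted formula for $\CK_{\pi,s}(x)$. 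Absolute convergence for all $s\in\BC$ and the fact that the truncated integral is a genuine function of $x$ are already supplied by Proposition \ref{prp:HKTD}.

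The only genuinely new point, and hence the main obstacle, is the claim that $\CF_{\pi_\nu,\psi_\nu}(\phi_\nu)$ is supported on $\Fo_\nu\setminus\{0\}$ and $\Fo_\nu^\times$-invariant for \emph{every} finite $\nu$, including the ramified ones where $\phi_\nu$ is the explicit test vector of Lemma \ref{lem-testvec} rather than a basic function. For the support one should argue that $\CF_{\pi_\nu,\psi_\nu}$ sends $\CS_{\pi_\nu}(k_\nu^\times)$ into $\CS_{\wt\pi_\nu}(k_\nu^\times)$ and that, by \eqref{FOFT} and the compact support of the underlying Schwartz–Bruhat function $f_\nu$ on $\RM_n(\Fo_\nu)$ constructed in the proof of Lemma \ref{lem-testvec}, the resulting function is supported on $\Fo_\nu\setminus\{0\}$; the $\Fo_\nu^\times$-invariance follows because $f_\nu$ is bi-invariant under $K_0(\Fp_\nu^{c(\pi_\nu)})$ (in particular under $\mathrm{diag}(\Fo_\nu^\times,1,\dots,1)$) and the matrix coefficient $\varphi_{\pi_\nu}$ shares this invariance, so the fiber integral \eqref{fi1} defining $\CF_{\pi_\nu,\psi_\nu}(\phi_\nu)$ — or more precisely the fiber integral computing the $\pi_\nu$-Fourier transform via \eqref{eq:1-FO} applied to $\CF_{\GJ}(f_\nu)$ — inherits it. Once this local structure is in place, the remainder is the same routine Mellin–Dirichlet bookkeeping as in Proposition \ref{prp:CKpi}, and I would simply say ``following the argument in the proof of Proposition \ref{prp:CKpi}'' for those steps.
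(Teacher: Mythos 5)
There is a genuine gap, and it sits exactly at the point you yourself flag as ``the only genuinely new point'': the claim that $\CF_{\pi_\nu,\psi_\nu}(\phi_\nu)$ is supported on $\Fo_\nu\setminus\{0\}$ at every finite place. This is false at the ramified places, and the justification you offer for it runs backwards. By \eqref{FOFT}, the Schwartz--Bruhat function underlying the Godement--Jacquet transform of $\phi_\nu$ is the additive Fourier transform $\CF_{\psi_\nu}(f_\nu)$; since $f_\nu$ is supported in a small compact open subset of $\RM_n(\Fo_\nu)$ cut out by congruences modulo $\Fp_\nu^{c(\pi_\nu)}$, its Fourier transform is supported on the (large) dual lattice, typically $\RM_n(\Fp_\nu^{-m})$ for some $m>0$: shrinking the support of $f_\nu$ enlarges the support of $\CF_{\psi_\nu}(f_\nu)$, it does not keep it inside $\RM_n(\Fo_\nu)$. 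Equivalently, on the Mellin side one has $\CZ_\nu(1-s,\CF_{\pi_\nu,\psi_\nu}(\phi_\nu))=\epsilon(s,\pi_\nu,\psi_\nu)L(1-s,\wt{\pi}_\nu)$ with $\epsilon(s,\pi_\nu,\psi_\nu)=\epsilon(\tfrac12,\pi_\nu,\psi_\nu)\,q_\nu^{-c_\nu(s-\frac12)}$, and whenever the conductor exponent $c_\nu$ is positive this places the support of $\CF_{\pi_\nu,\psi_\nu}(\phi_\nu)$ in $\Fp_\nu^{-c_\nu}\setminus\{0\}$, which strictly contains $\Fo_\nu\setminus\{0\}$.

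Consequently the function $\Fl\mapsto\CF_{\pi_f,\psi_f}(\phi_f^{\circ})(\Fl)$ is supported on the fractional ideals contained in a fixed fractional ideal $\Fn=\prod_\nu\Fp_\nu^{a_\nu}$ whose exponents $a_\nu$ may be negative at the places of $S_f$, not on the integral ideals, and your identification of the truncated integral with the partial sums $\sum_{n\le|x|}a_n^*n^{-s}$ does not go through as written. This is precisely where the paper's proof takes a different (and necessary) route: it only invokes \cite[Lemma 5.3]{JL23} to place the support in such an $\Fn$, establishes the $\Fo_\nu^\times$-invariance by the explicit computation $f_\nu((u^{*})^{-1}y)=f_\nu(y)$ inside the fiber integral (a step your proposal does reproduce correctly), and then writes $a_n^*$ as a sum over all fractional ideals $\Fl\subset\Fn$ with $\FN(\Fl)=n$, grouping the generalized Dirichlet coefficients by norm rather than by integrality. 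The remaining ingredients of your proposal --- the local functional equation, \eqref{zetaCF}, \eqref{dual series}, and uniqueness of Dirichlet coefficients --- coincide with the paper's, so the repair is to drop the integrality claim and redo the bookkeeping relative to $\Fn$ as the paper does.
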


\begin{proof}

We first claim that each local component of $\CF_{\pi_f,\psi_f}(\phi_f^{\circ})$ is invariant under $\Fo_{\nu}^{\times}$. In fact, if $n=1$, the claim is clear because the classical Fourier transform of an $\Fo_{\nu}^{\times}$-invariant functions is still $\Fo_{\nu}^{\times}$-invariant by changing variables. If $n\geq 2$, at ramified places, since $\phi_{\nu}$ is as given by the fiber integration of $f_{\nu}$ and $\varphi_{\nu}$ as in Lemma \ref{lem-testvec}, we know from \cite[Equation (3.17)]{JL23} that
\[
\CF_{\pi_{\nu},\psi_{\nu}}(\phi_\nu)(x)=|x|_{\nu}^{\frac{n}{2}}\int_{\GL_n(k_{\nu})_x}\CF_{\psi_{\nu}}(f_{\nu})(g)\varphi_{\pi_{\nu}}(g^{-1})\ud g,
\]
where $\CF_{\psi_{\nu}}$ is the classical Fourier transform given by (\ref{eq:FTMAT}). If 
$
u^*:=\mathrm{diag}(u,1,\cdots,1)
$
for any $u\in\Fo_{\nu}^{\times}$, then we already know $\varphi_{\pi_{\nu}}\left((gu^*)^{-1}\right)=\varphi_{\pi_{\nu}}(g^{-1})$. Since
\[
\CF_{\psi_{\nu}}(f_{\nu})(gu^*)=\int_{\mathrm{M}_n(k_{\nu})} \psi_{\nu}(\tr(xu^*y))f_{\nu}(y)\ud^+y=\int_{\mathrm{M}_n(k_{\nu})} \psi_{\nu}(\tr(xy))f_{\nu}((u^{*})^{-1}y)\ud^+y,
\]
and by the definition of $f_{\nu}$, we see that $f_{\nu}((u^{*})^{-1}y)=f_{\nu}(y)$, we know $\CF_{\pi_{\nu},\psi_{\nu}}(\phi_{\nu})$ is invariant under $\Fo_{\nu}^{\times}$. At the remaining unramified places where $\phi_{\nu}=\BL_{\pi_{\nu}}$, we know $\CF_{\pi_{\nu},\psi_{\nu}}(\BL_{\pi_{\nu}})=\BL_{\widetilde{\pi_{\nu}}}$ and by \cite[Lemma 5.3]{JL23} we know $\CF_{\pi_{\nu},\psi_{\nu}}(\phi_{\nu})$ is invariant under $\Fo_{\nu}^{\times}$. Let $S_f$ be as in Proposition~\ref{zeta>1} for $\phi_f^{\circ}$. Then there are integers $a_1,\cdots,a_{\kappa}$ such that the support of $\CF_{\pi_f,\psi_f}$ is contained in
\begin{align}\label{anu}
\left(\prod_{\nu\in S_f}(\Fp_{\nu}^{a_{\nu}}\setminus\{0\})\times\prod_{\nu\notin S_f}(\Fo_{\nu}\setminus\{0\})\right)\cap\BA_f^{\times}.
\end{align}
Write
$\BA_f^{\times}=\bigsqcup_{\alpha=(\alpha_{\nu})}\left(\prod_{\nu\in|k|_f}\varpi_{\nu}^{\alpha_{\nu}}\Fo_{\nu}^{\times}\right)$. 
It is clear that $\CF_{\pi_f,\psi_f}(\phi_f^{\circ})$ is constant on each $\alpha$-component and supported on the $\alpha=(\alpha_{\nu})_{\nu}$-component with $\alpha_{\nu}\geq a_{\nu}$ for $\nu\in S_f$ and $\alpha_{\nu}\geq0$ for $\nu\notin S_f$. 
We may write any fraction ideal $\Fl$ in $k$ in a unique way as $\Fl=\prod_{\nu}\Fp_{\nu}^{e_{\nu}}$ and regard the function $\CF_{\pi_f,\psi_f}(\phi_f^{\circ})$ as a function on the set of fractional ideals sending $\Fl$ to
$
\prod_{\nu\in|k|_f}\CF_{\pi_{\nu},\psi_{\nu}}(\phi_{\nu})(\varpi_{\nu}^{e_{\nu}}).
$
Then we obtain that 
\[
\CF_{\pi_f,\psi_f}(\phi^{\circ}_f)(x_f)|x_f|_{\BA}^{s-\frac{1}{2}}
=
\phi_f^\circ(\Fl)\cdot \FN(\Fl)^{\frac{1}{2}}\cdot \FN(\Fl)^{-s}
\]
for $x$ in the $\alpha=(\alpha_{\nu})_{\nu}$-component, where $\Fl=\prod_{\nu\in|k|_f}\Fp_{\nu}^{\alpha_{\nu}}$. Write $\Fn=\prod_{\nu\in|k|_f}\Fp^{a_{\nu}}$, where for $\nu\in S_f$ $a_{\nu}$'s are defined from (\ref{anu}) and for $\nu\notin S_f$ we define $a_{\nu}=0$. Then by the same argument, we obtain that 
\[
a_n^*=\sum_{\Fl\subset\Fn,\CN(\Fl)=n}\CF_{\pi_f,\psi_f}(\phi_f^{\circ})(\Fl)\FN(\Fl)^{\frac{1}{2}}\FN(\Fl)^{-s},
\]
where the summation runs over all fractional ideal $\Fl$ of $k$ that are contained in $\Fn$ with norm $n$
and
$
\CK_{\pi,s}(x)=|x|^{s-\frac{1}{2}}\sum_{n\leq |x|}a_n^*n^{-s}. 
$
\end{proof}

Therefore we obtain a $\pi$-version of \cite[Theorem 1.1]{Clo22} when the kernel functions $\CH_{\pi,s}$ and $\CK_{\pi,s}$ are given in terms of  the $L$-function $L_f(s,\pi_f)$ with its Dirichlet series expression.

\begin{thm}\label{thm:CTh-pi}
For any $\pi\in\CA_\cusp(\RG_n)$, if the Godement-Jacquet kernel $\CH_{\pi,s}$ and its dual $\CK_{\pi,s}$ are defined as in Propositions \ref{prp:CKpi} and \ref{prp:CKDpi}, respectively, 
then 
\[
\CH_{\pi,s}(x)
=-
\CF_{\pi_\infty,\psi_\infty}(\CK_{\pi,1-s})(x)
=
-\int_{\BA_\infty^{\times}}k_{\pi_{\infty},\psi_{\infty}}(x y)\CK_{\pi,1-s}(y)\ud^{\times}y
\]
as distributions on $\BA_\infty^\times$ if and only if $s$ is a zero of $L_f(s,\pi_f)$. Any unexplained notation is the same as in Theorem \ref{thm:H=FK}.
\end{thm}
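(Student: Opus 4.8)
The plan is to deduce Theorem~\ref{thm:CTh-pi} directly from Theorem~\ref{thm:H=FK} (equivalently Theorem~\ref{H=FK}), since the only difference between the two statements is the choice of test function at the ramified finite places: Theorem~\ref{thm:H=FK} uses $\phi_f^\star=\otimes_\nu\phi_\nu$ with $\phi_\nu$ coming from Proposition~\ref{prp:phip}, while Theorem~\ref{thm:CTh-pi} uses the refined $\phi_f^\circ=\otimes_\nu\phi_\nu$ with $\phi_\nu$ from Lemma~\ref{lem-testvec}, which in addition is supported in $\Fo_\nu\setminus\{0\}$ and $\Fo_\nu^\times$-invariant. The key point is that Lemma~\ref{lem-testvec} still guarantees $\CZ_\nu(s,\phi_\nu)=L(s,\pi_\nu)$ for every finite place $\nu$, so that $\CZ_f(s,\phi_f^\circ)=L_f(s,\pi_f)$ exactly as in \eqref{zetaL}. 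Hence $\phi^\circ:=\phi_\infty\otimes\phi_f^\circ$ is an element of $\CS_\pi(\BA^\times)$ satisfying all the hypotheses that were used in the proof of Theorem~\ref{thm:H=FK}, and Propositions~\ref{prp:CKpi} and~\ref{prp:CKDpi} identify $\CH_{\pi,s}(x)=H_{\pi,s}(x,\phi_f^\circ)$ and $\CK_{\pi,s}(x)=K_{\pi,s}(x,\phi_f^\circ)$ with the Dirichlet-series expressions $|x|^{s-\frac12}\sum_{n\le|x|}a_nn^{-s}$ and $|x|^{s-\frac12}\sum_{n\le|x|}a_n^*n^{-s}$.

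First I would invoke Proposition~\ref{prp:FTGJK}, applied to $\phi=\phi^\circ$, to get the ``if'' direction: when $s$ is a zero of $L_f(s,\pi_f)$, one has
\[
H_{\pi,s}(x,\phi_f^\circ)=-\CF_{\pi_\infty,\psi_\infty}(K_{\pi,1-s}(\cdot,\phi_f^\circ))(x)=-\int_{\BA_\infty^\times}k_{\pi_\infty,\psi_\infty}(xy)K_{\pi,1-s}(y,\phi_f^\circ)\ud^\times y
\]
as distributions on $\BA_\infty^\times$, which by the identifications above is exactly the asserted identity for $\CH_{\pi,s}$ and $\CK_{\pi,1-s}$. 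Then for the ``only if'' direction I would repeat the argument in the proof of Theorem~\ref{thm:H=FK}: from \eqref{zetaL} applied to $\phi^\circ$ we have $\CZ(s,\phi^\circ)=\CZ_\infty(s,\phi_\infty)\cdot L_f(s,\pi_f)$, and combining \eqref{zeta11} with \eqref{CKpi1} and \eqref{CKd1}, the identity \eqref{CK-pi*} for $\CH_{\pi,s},\CK_{\pi,s}$ forces $\CZ_\infty(s,\phi_\infty)\cdot L_f(s,\pi_f)=0$ for every $\phi_\infty\in\CC_c^\infty(\BA_\infty^\times)$; choosing $\phi_\infty$ with $\CZ_\infty(s,\phi_\infty)\ne0$ yields $L_f(s,\pi_f)=0$.

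The one genuinely new verification is that Lemma~\ref{lem-testvec} really does produce a test function lying in $\CS_{\pi_\nu}(k_\nu^\times)$ with the stated $L$-factor identity and the extra support/invariance properties; but this is precisely the content of Lemma~\ref{lem-testvec}, which I am entitled to assume, so the remaining work is purely bookkeeping: check that all the absolute-convergence and Fubini steps in the proof of Proposition~\ref{prp:FTGJK} and Theorem~\ref{thm:H=FK} go through verbatim with $\phi_f^\star$ replaced by $\phi_f^\circ$ (they do, since the only properties used there were $\phi_f\in\CS_{\pi_f}(\BA_f^\times)$, the estimate \eqref{es-phif}, the support statement \cite[Lemma 5.3]{JL23}, and $\CZ_f(s,\phi_f)=L_f(s,\pi_f)$, all of which hold for $\phi_f^\circ$). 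I do not expect any serious obstacle; the mild subtlety is simply to state clearly that Propositions~\ref{prp:CKpi} and~\ref{prp:CKDpi} have already rewritten $\CH_{\pi,s}$ and $\CK_{\pi,s}$ as the Dirichlet series in the statement, so that the distributional identity transported from Theorem~\ref{thm:H=FK} is literally the assertion of Theorem~\ref{thm:CTh-pi}.
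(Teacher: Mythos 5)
Your proposal is correct and is essentially the paper's own (largely implicit) argument: the paper presents Theorem \ref{thm:CTh-pi} as an immediate consequence of Propositions \ref{prp:CKpi} and \ref{prp:CKDpi} together with the proof of Theorem \ref{thm:H=FK}, the only change being the substitution of $\phi_f^\star$ by the refined test function $\phi_f^\circ$ from Lemma \ref{lem-testvec}, which still satisfies $\CZ_\nu(s,\phi_\nu)=L(s,\pi_\nu)$ so that both directions of the equivalence carry over verbatim. Your explicit check that the convergence and Fubini steps in Proposition \ref{prp:FTGJK} depend only on properties shared by $\phi_f^\circ$ is exactly the bookkeeping the paper leaves to the reader.
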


\bibliographystyle{alpha}
	\bibliography{references}

\begin{thebibliography}{GGPS69}

\bibitem[Art13]{Arthur13}
J.~Arthur.
\newblock {\em The endoscopic classification of representations: Orthogonal and symplectic groups}, volume~61 of {\em American Mathematical Society Colloquium Publications}.
\newblock AMS, Providence, RI, 2013.

\bibitem[Bar05]{BE05}
E.~Baruch.
\newblock Bessel functions for {${\rm GL}(n)$} over a {$p$}-adic field.
\newblock In {\em Automorphic representations, {$L$}-functions and applications: progress and prospects}, volume~11 of {\em Ohio State Univ. Math. Res. Inst. Publ.}, pages 1--40. de Gruyter, Berlin, 2005.

\bibitem[BK00]{BK00}
A.~Braverman and D.~Kazhdan.
\newblock {$\gamma$}-functions of representations and lifting.
\newblock {\em Geom. Funct. Anal.}, pages 237--278, 2000.
\newblock With an appendix by V. Vologodsky, GAFA 2000 (Tel Aviv, 1999).

\bibitem[Clo22]{Clo22}
Laurent Clozel.
\newblock A universal lower bound for certain quadratic integrals of automorphic $l$-functions. with an appendix by l. clozel and peter sarnak, 2022.

\bibitem[Cog14]{C14}
J.~Cogdell.
\newblock Bessel functions for {$GL_2$}.
\newblock {\em Indian J. Pure Appl. Math.}, 45(5):557--582, 2014.

\bibitem[Con99]{Cn99}
A.~Connes.
\newblock Trace formula in noncommutative geometry and the zeros of the {R}iemann zeta function.
\newblock {\em Selecta Math. (N.S.)}, 5(1):29--106, 1999.

\bibitem[Cor21]{Cor21}
A.~Corbett.
\newblock Vorono\"{\i} summation for {${\rm GL}_n$}: collusion between level and modulus.
\newblock {\em Amer. J. Math.}, 143(5):1361--1395, 2021.

\bibitem[CPS04]{CPS04}
J.~Cogdell and I.~Piatetski-Shapiro.
\newblock Remarks on {R}ankin-{S}elberg convolutions.
\newblock In {\em Contributions to automorphic forms, geometry, and number theory}, pages 255--278. Johns Hopkins Univ. Press, Baltimore, MD, 2004.

\bibitem[GGPS69]{GGPS}
I.~Gelfand, M.~Graev, and I.~Pyatetskii-Shapiro.
\newblock {\em Representation theory and automorphic functions.}
\newblock W. B. Saunders Co., Philadelphia, Pa.-London-Toronto, Ont.,, 1969.

\bibitem[GJ72]{GJ72}
R.~Godement and H.~Jacquet.
\newblock {\em Zeta functions of simple algebras}.
\newblock Lecture Notes in Mathematics, Vol. 260. Springer-Verlag, Berlin-New York, 1972.

\bibitem[GL06]{GL06}
D.~Goldfeld and X.~Li.
\newblock Voronoi formulas on {${\rm GL}(n)$}.
\newblock {\em Int. Math. Res. Not.}, pages Art. ID 86295, 25, 2006.

\bibitem[GL08]{GL08}
D.~Goldfeld and X.~Li.
\newblock The {V}oronoi formula for {${\rm GL}(n,\Bbb R)$}.
\newblock {\em Int. Math. Res. Not. IMRN}, (2):Art. ID rnm144, 39, 2008.

\bibitem[HR79]{HR79}
E.~Hewitt and K.~Ross.
\newblock {\em Abstract harmonic analysis. {V}ol. {I}}, volume 115 of {\em Grundlehren der Mathematischen Wissenschaften}.
\newblock Springer-Verlag, Berlin-New York, second edition, 1979.

\bibitem[Hum21]{Hum21}
P.~Humphries.
\newblock Test vectors for non-{A}rchimedean {G}odement-{J}acquet zeta integrals.
\newblock {\em Bull. Lond. Math. Soc.}, 53(1):92--99, 2021.

\bibitem[Igu78]{IJ78}
J.-I. Igusa.
\newblock {\em Forms of higher degree}, volume~59 of {\em Tata Institute of Fundamental Research Lectures on Mathematics and Physics}.
\newblock Tata Institute of Fundamental Research, Bombay; New Delhi, 1978.

\bibitem[IT13]{IT13}
A.~Ichino and N.~Templier.
\newblock On the {V}orono\u{\i} formula for {${\rm GL}(n)$}.
\newblock {\em Amer. J. Math.}, 135(1):65--101, 2013.

\bibitem[Jac79]{Jac79}
H.~Jacquet.
\newblock Principal {$L$}-functions of the linear group.
\newblock In {\em Automorphic forms, representations and {$L$}-functions, {P}art 2}, Proc. Sympos. Pure Math., XXXIII, pages 63--86. Amer. Math. Soc., Providence, R.I., 1979.

\bibitem[Jac09]{J09}
H.~Jacquet.
\newblock Archimedean {R}ankin-{S}elberg integrals.
\newblock In {\em Automorphic forms and {$L$}-functions {II}. {L}ocal aspects}, volume 489 of {\em Contemp. Math.}, pages 57--172. Amer. Math. Soc., Providence, RI, 2009.

\bibitem[JL70]{JL70}
H.~Jacquet and R.~Langlands.
\newblock {\em Automorphic forms on {${\rm GL}(2)$}}.
\newblock Lecture Notes in Mathematics, Vol. 114. Springer-Verlag, Berlin-New York, 1970.

\bibitem[JL22]{JL22}
D.~Jiang and Z.~Luo.
\newblock Certain {F}ourier operators on {${\rm GL}_1$} and local {L}anglands gamma functions.
\newblock {\em Pacific J. Math.}, 318(2):339--374, 2022.

\bibitem[JL23]{JL23}
D.~Jiang and Z.~Luo.
\newblock Certain {F}ourier operators and their associated {P}oisson summation formulae on $\mathrm{GL}_1$.
\newblock {\em Pacific J. Math., accepted}, 2023.

\bibitem[JPSS81]{JPSS81}
H.~Jacquet, I.~Piatetski-Shapiro, and J.~Shalika.
\newblock Conducteur des repr\'{e}sentations g\'{e}n\'{e}riques du groupe lin\'{e}aire.
\newblock {\em C. R. Acad. Sci. Paris S\'{e}r. I Math.}, 292(13):611--616, 1981.

\bibitem[JPSS83]{JPSS83}
H.~Jacquet, I.~Piatetskii-Shapiro, and J.~Shalika.
\newblock Rankin-{S}elberg convolutions.
\newblock {\em Amer. J. Math.}, 105(2):367--464, 1983.

\bibitem[Kna94]{Kna94}
A.~Knapp.
\newblock Local {L}anglands correspondence: the {A}rchimedean case.
\newblock In {\em Motives}, volume~55 of {\em Proc. Sympos. Pure Math.}, pages 393--410. Amer. Math. Soc., Providence, RI, 1994.

\bibitem[Kna01]{Kna01}
A.~Knapp.
\newblock {\em Representation theory of semisimple groups}.
\newblock Princeton Landmarks in Mathematics. Princeton University Press, Princeton, NJ, 2001.

\bibitem[MS04a]{MS04-JFA}
S.~Miller and W.~Schmid.
\newblock Distributions and analytic continuation of {D}irichlet series.
\newblock {\em J. Funct. Anal.}, 214(1):155--220, 2004.

\bibitem[MS04b]{MS04}
S.~Miller and W.~Schmid.
\newblock Summation formulas, from {P}oisson and {V}oronoi to the present.
\newblock In {\em Noncommutative harmonic analysis}, volume 220 of {\em Progr. Math.}, pages 419--440. Birkh\"{a}user Boston, MA, 2004.

\bibitem[MS06]{MS06}
S.~Miller and W.~Schmid.
\newblock Automorphic distributions, {$L$}-functions, and {V}oronoi summation for {${\rm GL}(3)$}.
\newblock {\em Ann. of Math. (2)}, 164(2):423--488, 2006.

\bibitem[MS11]{MS11}
S.~Miller and W.~Schmid.
\newblock A general {V}oronoi summation formula for {$GL(n,\Bbb Z)$}.
\newblock In {\em Geometry and analysis. {N}o. 2}, volume~18 of {\em Adv. Lect. Math. (ALM)}, pages 173--224. Int. Press, Somerville, MA, 2011.

\bibitem[MV07]{MV07}
H.~Montgomery and R.~Vaughan.
\newblock {\em Multiplicative number theory. {I}. {C}lassical theory}, volume~97 of {\em Cambridge Studies in Advanced Mathematics}.
\newblock Cambridge University Press, Cambridge, 2007.

\bibitem[Neu99]{Neu99}
J.~Neukirch.
\newblock {\em Algebraic number theory}, volume 322 of {\em Grundlehren der mathematischen Wissenschaften}.
\newblock Springer-Verlag, Berlin, 1999.

\bibitem[Ngo20]{Ngo20}
B.~C. Ngo.
\newblock Hankel transform, {L}anglands functoriality and functional equation of automorphic {$L$}-functions.
\newblock {\em Jpn. J. Math.}, 15(1):121--167, 2020.

\bibitem[Qi20]{Qi20}
Z.~Qi.
\newblock Theory of fundamental {B}essel functions of high rank.
\newblock {\em Mem. Amer. Math. Soc.}, 267(1303), 2020.

\bibitem[Sou84]{SD84}
D.~Soudry.
\newblock The {$L$} and {$\gamma $} factors for generic representations of {${\rm GSp}(4,\,k)\times {\rm GL}(2,\,k)$} over a local non-{A}rchimedean field {$k$}.
\newblock {\em Duke Math. J.}, 51(2):355--394, 1984.

\bibitem[Sou01]{Sou01}
C.~Soul\'{e}.
\newblock On zeroes of automorphic {$L$}-functions.
\newblock In {\em Dynamical, spectral, and arithmetic zeta functions}, volume 290 of {\em Contemp. Math.}, pages 167--179. Amer. Math. Soc., Providence, RI, 2001.

\bibitem[Tat67]{Tat67}
J.~Tate.
\newblock Fourier analysis in number fields, and {H}ecke's zeta-functions.
\newblock In {\em Algebraic {N}umber {T}heory}, pages 305--347. Thompson, Washington, D.C., 1967.

\bibitem[Tre67]{Tre67}
F.~Treves.
\newblock {\em Topological vector spaces, distributions and kernels.}
\newblock Academic Press, New York-London,, 1967.

\bibitem[Wei95]{W95}
A.~Weil.
\newblock Fonction z\^{e}ta et distributions.
\newblock In {\em S\'{e}minaire {B}ourbaki, {V}ol. 9}, pages Exp. No. 312, 523--531. Soc. Math. France, Paris, 1995.

\bibitem[Zel75]{Z75}
D.~Zelobenko.
\newblock Complex harmonic analysis on semisimple {L}ie groups.
\newblock In {\em Proceedings of the {I}nternational {C}ongress of {M}athematicians, {V}ol. 2}, pages 129--134. Canad. Math. Congress, Montreal, Que., 1975.

\bibitem[ZN66]{ZN66}
D.~Zelobenko and M.~Naimark.
\newblock Description of completely irreducible representations of a semi-simple complex {L}ie group.
\newblock {\em Dokl. Akad. Nauk SSSR}, 171:25--28, 1966.

\end{thebibliography}

\end{document}